\newcommand{\F}{\mathbb{F}}
\newcommand{\gauss}[3]{\genfrac{[}{]}{0pt}{}{#1}{#2}_{#3}}
\newcommand{\SPG}[2]{\operatorname{PG}(#1,#2)}
\newcommand{\PG}{\operatorname{PG}}
\newcommand{\SV}[2]{\mathbb{F}_{#2}^{#1}}
\newcommand{\aspace}{\SPG{v-1}{\mathbb{F}_q}}
\newcommand{\vek}[1]{\mathbf{#1}}
\newcommand{\mat}[1]{\mathbf{#1}}
\newcommand{\hdist}{\mathrm{d}}
\newcommand{\hweight}{\mathrm{w}}
\newcommand{\sdist}{\mathrm{d}_{\mathrm{S}}}
\newcommand{\rdist}{\mathrm{d}_{\mathrm{R}}}
\newcommand{\smax}{\mathrm{A}}
\newcommand{\imat}{\mathbf{I}}
\newcommand{\rk}{\operatorname{rk}}
\newcommand{\deficiency}{\sigma}
\newcommand{\points}{\mathcal{P}}
\newcommand{\card}[1]{\##1}
\newcommand{\frobenius}{\mathrm{F}}
\newtheorem{Construction}{Construction}
\newtheorem{theorem}{Theorem}
\newtheorem{lemma}{Lemma}
\newtheorem{corollary}{Corollary}
\newtheorem{definition}{Definition}
\newtheorem{example}{Example}
\begin{document}

\title{Partial spreads and vector space partitions}

\author{Thomas Honold, Michael Kiermaier, and Sascha Kurz}
\address{Thomas Honold, Zhejiang University, 310027 Hangzhou, China}
\email{honold@zju.edu.cn}
\address{Michael Kiermaier,University of Bayreuth, 95440 Bayreuth, Germany}
\email{michael.kiermaier@uni-bayreuth.de}
\address{Sascha Kurz, University of Bayreuth, 95440 Bayreuth, Germany}
\email{sascha.kurz@uni-bayreuth.de}

\abstract{Constant-dimension codes with the maximum possible minimum
  distance have been studied under the name of partial spreads in
  Finite Geometry for several decades. Not surprisingly, for this
  subclass typically the sharpest bounds on the maximal code size are
  known. 
  % The seminal works of Andr\'e, Segre, Beutelspacher, and Drake
  % \& Freeman date back to 1954, 1964, 1975, and 1979,
  % respectively.
  The seminal works of Beutelspacher and Drake \& Freeman on partial
  spreads date back to 1975, and 1979, respectively. From then until
  recently, there was almost no progress besides some computer-based
  constructions and classifications. It turns out that vector space
  partitions provide the appropriate theoretical framework and can be
  used to improve the long-standing bounds in quite a few cases. Here,
  we provide a %%an 
  historic account on partial spreads and an
  interpretation of the classical results from a modern perspective.  
  %%modern point of view. 
  To this end, we introduce all required methods from the theory
  of vector space partitions and Finite Geometry in a tutorial
  style. We guide the reader to the current frontiers of research in
  that field, including a detailed description of the recent improvements.
%% and end up with results that improve the known bounds for partial spreads for
%%allmost all parameters.
}}
\maketitle

\section{Introduction}
Let $\mathbb{F}_q$ be the finite field with $q$ elements, where $q>1$
is a prime power.  By $\mathbb{F}_q^v$ we denote the standard vector
space of dimension $v\ge 1$ over $\mathbb{F}_q$, whose vectors are the
$v$-tuples $\vek{x}=(x_1,\dots,x_v)$ with $x_i\in\mathbb{F}_q$. The set of all
subspaces of $\mathbb{F}_q^v$, ordered by the incidence relation
$\subseteq$, is called \emph{($v-1$)-dimensional %%(coordinate)
  projective geometry over $\mathbb{F}_q$} and denoted by
$\aspace$. It forms a finite modular geometric lattice
with meet $X\wedge Y=X\cap Y$, join $X\vee Y=X+Y$, and rank function
$X\mapsto\dim(X)$. Employing this algebraic notion of dimension instead of
the geometric one, we will use the term
\emph{$k$-subspace} to denote a $k$-dimensional vector subspace of
$\mathbb{F}_q^v$.\footnote{Using the algebraic dimension
  has certain advantages---for example, the existence criterion $v=tk$
  for spreads (cf.\ Theorem~\ref{thm_spread}) looks ugly when
  stated in terms of the geometric dimensions: $v'=t(k'-1)+1$.}
%or of any other $v$-dimensional vector space $V$ over $\mathbb{F}_q$. 
The important geometric interpretation of subspaces
will still be visible in the terms \emph{points, lines, planes,
  solids, hyperplanes} (denoting $1$-, $2$-, $3$-, $4$- and
$(v-1)$-subspaces, respectively), and in general through our
extensive use of geometric language.

% The atoms of $\aspace$ are the $1$-dimensional
% subspaces of $\mathbb{F}_q^v$. They are called \emph{points}
% Instead of $\aspace$ we
% will mainly use the notation $\SV{v}{q}$ in the following, pointing to
% fact that the dimensions differ by one in both notations. 

In the same way as $\SV{v}{q}$,
an arbitrary $v$-dimensional vector space $V$ over $\F_q$ gives rise
to a projective geometry $\PG(V)$, and the terminology introduced
before (and thereafter) applies to this general case as well. Since a
vector space isomorphism $V\cong\F_q^v$ induces a geometric
isomorphism (``collineation'') $\PG(V)\cong\PG(\F_q^v)=\PG(v-1,\F_q)$,
we could in principle avoid the use of non-standard vector spaces, but
only at the expense of flexibility---for example, the Singer
representation of the point-hyperplane design of $\aspace$ is
best developed using the field extension $\F_{q^v}/\F_q$ as ambient vector
space $V$ (and not $\SV{v}{q}$, which would require a discussion of matrix
representations of finite fields).

The set of all $k$-subspaces of an $\F_q$-vector space $V$
will be denoted by $\gauss{V}{k}{q}$. The sets $\gauss{V}{k}{q}$ form
finite analogues of the Gra\ss mann varieties studied in Algebraic
Geometry.  In terms of $v = \dim(V)$, the cardinality of
$\gauss{V}{k}{q}$ is given by the Gaussian binomial coefficient
$$
\gauss{v}{k}{q} :=
\begin{cases}
	\frac{(q^v-1)(q^{v-1}-1)\cdots(q^{v-k+1}-1)}{(q^k-1)(q^{k-1}-1)\cdots(q-1)} & \text{if }0\leq k\leq v\text{;}\\
	0 & \text{otherwise,}
\end{cases}
$$
% which does not depend on the precise choice of the $\F_q$-vector
% space $V$. The latter fact is true for all of the remaining
% definitions of this chapter, so that we will stick to the specialized
% notation $\mathbb{F}_q^v$. 
which are polynomials of degree $k(v-k)$ in $q$ (if they are nonzero)
and represent $q$-analogues of the ordinary binomial coefficients in
the sense that $\lim_{q\to 1}\gauss{v}{k}{q}=\binom{k}{k}$. Their most
important combinatorial properties are described in
\cite[Sect.~3.3]{andrews1998theory} and \cite[Ch.~24]{lint-wilson92}.

Making the connection with the main topic of this book, the geometry
$\aspace$ serves as input and output alphabet of the so-called
\emph{linear operator channel (LOC)}, a clever model for information
transmission in coded packet networks subject to noise
\cite{koetter-kschischang08}.\footnote{The use of distributed coding
  at the nodes of a packet-switched network, generally referred to as
  \emph{Network Coding}, is described in
  \cite{guang-zhang14,medard-sprintson12,yeung-li-cai06} and elsewhere
  in this volume.}  The relevant metrics on the LOC are given by the
\emph{subspace distance}
$ d_S(X,Y):=\dim(X+Y)-\dim(X\cap Y)=2\cdot\dim(X+Y)-\dim(X)-\dim(Y)$,
which can also be seen as the graph-theoretic distance in the Hasse
diagram of $\aspace$, and the \emph{injection distance}
$ d_I(X,Y):=\max\left\{\dim(X),\dim(Y)\right\}-\dim(X\cap Y) $.  A set
$\mathcal{C}$ of subspaces of $\F_q^v$ is called a \emph{subspace
  code} and serves as a channel code for the LOC in the same way as
classical linear codes over $\F_q$ do for the $q$-ary symmetric
channel.\footnote{except that attention is usually restricted to
  ``one-shot subspace codes'', i.e.\ subsets of the alphabet, which
  makes no sense in the classical case} The \emph{minimum (subspace)
  distance} of $\mathcal{C}$ is given by
$d = \min\{d_S(X,Y) \mid X,Y\in\mathcal{C}, X \neq Y\}$.  If all
elements of $\mathcal{C}$ have the same dimension, we call
$\mathcal{C}$ a \emph{constant-dimension code}.  For a
constant-dimension code $\mathcal{C}$ we have $d_S(X,Y) = 2d_I(X,Y)$
for all $X,Y\in\mathcal{C}$, so that we can restrict attention to the
subspace distance. Constant-dimension codes are the most suitable for
coding purposes, and the quest for good system performance leads
straight to the problem of determining the maximum possible cardinality
$\smax_q(v,d;k)$ of a constant-dimension-$k$ code in $\F_q^v$
with minimum subspace distance $d$. For
two codewords $X$ and $Y$ of dimension $k$ the inequality
$d_S(X,Y)\geq d$ is equivalent to
$\dim(X\cap Y)\le k-d/2$.\footnote{Note that the distance
  between codewords of the same dimension, and hence also the minimum
  distance of a constant-dimension code, is an even integer.} Thus, the
maximum possible minimum distance of a constant-dimension code with
codewords of dimension $k$ is $2k$. This extremal case has been
studied under the name ``partial spreads'' in Finite
Geometry for several decades. A \emph{partial $k$-spread} in $\F_q^v$
is a collection of $k$-subspaces with pairwise trivial,
i.e., zero-dimensional intersection. 
% If the all-zero vector
% $\textbf{0}$ is removed from the codewords, then the resulting objects
% can by packed into $\F_q^v\backslash \textbf{0}$. 
Translating this notion into Projective Geometry and identifying
thereby, as usual, subspaces of $\F_q^v$ with their sets of incident
points, we have that a partial $k$-spread in $\F_q^v$ is the same as
a set of mutually disjoint $k$-subspaces, or ($k-1$)-dimensional flats
in the geometric view, of the geometry $\aspace$.\footnote{In other
  words, partial spreads are just packings of the point set of a
  projective geometry $\aspace$ into subspaces of equal dimension.}

With the history of partial spreads in mind, it comes as no surprise
that the sharpest bounds on the maximal code sizes $\smax_q(v,d;k)$ of
constant-dimension codes are typically known for this special
subclass. The primary goal of this survey is to collect all available
information on the numbers $\smax_q(v,2k;k)$ and present this
information in an accessible way. Following standard practice in
Finite Geometry, we will refer to partial $k$-spreads of size
$\smax_q(v,2k;k)$ as \emph{maximal partial $k$-spreads}.\footnote{The
  weaker property of \emph{complete} (i.e., inclusion-maximal) partial
  spreads will not be considered.}

In the case of a perfect packing, i.e., a partition of the point set
of $\aspace$, we speak of a \emph{$k$-spread}. Partitions into
subspaces of possibly different dimensions are equivalent to vector
space partitions. A \emph{vector space partition} $\mathcal{C}$ of
$\SV{v}{q}$ is a collection of nonzero subspaces with the property
that every non-zero vector is contained in a unique member of
$\mathcal{C}$. If $\mathcal{C}$ contains $m_d$ subspaces of dimension
$d$, then $\mathcal{C}$ is said to be of type
$k^{m_k}\dotsm 1^{m_1}$. Zero frequencies $m_d=0$ are usually
suppressed.\footnote{Since $\sum_{X\in\mathcal{C}}\dim(X)=\sum_ddm_d=v$, the
type of $\mathcal{C}$ can be viewed as an ordinary integer partition of
$v$.}
%% Subspaces of dimension~$1$ are called \emph{holes}. If there is at least one non-hole, then $\mathcal{P}$ is called non-trivial. 
So, partial $k$-spreads are just the special case of vector space
partitions, in which all members have dimension either $k$ or
$1$. For $k\geq 2$ (the case $k=1$ is trivial)
the members of dimension $1$ correspond to points not covered by a
$k$-subspace of the partial spread and are called \emph{holes} in this
context.

Although vector space partitions can be seen as a mixed-dimension
analogue of partial spreads, they are not usable as subspace codes of
their own.\footnote{The subspace distance $d_S(X,Y)$ depends not only
  on $\dim(X\cap Y)$ but also on $\dim(X)$ and $\dim(Y)$, which are
  not constant in this case.}  However, it turns out that they provide
an appropriate framework to study bounds on the sizes of partial
spreads.

There is a vast amount of related work that we will not cover in this
survey: Partial spreads have also been studied for combinatorial
designs and in polar spaces; for the latter see,
e.g.,~\cite{beule-klein-metsch11,eisfeldt}. In the special case $v=2k$
spreads can be used to define translation planes and provide a rich
source for constructing non-desarguesian projective planes
\cite{hughes-piper73,johnson-jha-biliotti07,lavrauw-polverino11}.
Also motivated by this geometric point-of-view, partial $k$-spreads in
$\F_q^{2k}$ of size close to the maximum size (given by
Theorem~\ref{thm_spread}) have been studied extensively. Most of this
research has focused on partial spread replacements and complete
partial spreads, while we consider only partial spreads of maximum
cardinality and hence do not touch the case $v=2k$ (except for
Theorem~\ref{thm_spread}). The classification of all (maximal) partial
spreads up to isomorphism, see e.g.~\cite{MR2475427}, is also not
treated here. Further, there is a steady stream of literature that
characterizes the existing types of vector space partitions in
$\mathbb{F}_2^v$ for small dimensions $v$. Here, we touch only briefly
on some results that are independent of the ambient space dimension
$v$ and refer to \cite{heden2012survey} otherwise.

The remaining part of this chapter is structured as follows. In
Section~\ref{sec_classical} we review some, mostly classical, bounds
and constructions for partial spreads. After introducing the concept
of $q^r$-divisible sets and codes in
Section~\ref{section_q_r_divisible}, we are able to obtain improved
upper bounds for partial spreads in Theorems~\ref{main_theorem_1}
and~\ref{main_theorem_2}. Constructions for $q^r$-divisible sets are
presented in Section~\ref{sec_constructions}, some non-existence
results for $q^r$-divisible sets are presented in
Section~\ref{sec_non_existence}, and we close this survey with a collection of
open research problems in Section~\ref{sec_open_problems}.

\section{Bounds and constructions for partial spreads}
\label{sec_classical}

Counting points in $\F_q^v$ and $\F_q^k$ gives the obvious upper
bound
$\smax_q(v,2k;k)\le
\gauss{v}{1}{q}/\gauss{k}{1}{q}=\left(q^v-1\right)/\left(q^k-1\right)$
for the size of a partial $k$-spread in $\F_q^v$. Equality corresponds
to the case of spreads, for which a handy existence criterion is known
from the work of Segre in 1964.\footnote{Segre in turn built to some
  extent on work of Andr\'e, who had earlier 
  considered the special case $v=2k$ in his seminal paper on
  translation planes~\cite{andre1954nicht}.}
% Essentially, based on the
% idea of the \emph{Andr\'e-Bruck-Bose construction}
% \cite{andre1954nicht,bruck-bose64,lavrauw-polverino11}, i.e., the
% connection to translation planes, %%from 1954,
% %% Seite 163 bei Andre, Chapter 6, Section 1.3 im Nova Buch
% there is the following complete characterization by Segre from 1964.

\begin{theorem}[{\cite[\S VI]{segre1964teoria}, 
  \cite[p.~29]{dembowski2012finite}}]
  \label{thm_spread}%(\cite{andre1954nicht}; see also \cite{segre1964teoria}, \cite[p.~29]{dembowski2012finite}, Result 2.1 in \cite{beutelspacher1975partial})
  %% Paragraph VI bei Segre
  $\mathbb{F}_q^v$ contains a $k$-spread if and only if $k$ is a
  divisor of $v$.
% where we assume $1\le k\le v$ and $k,v\in\mathbb{N}$.
\end{theorem}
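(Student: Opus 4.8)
The plan is to prove both directions separately. For the **necessity** of $k \mid v$, I would use a simple counting argument: if $\mathcal{S}$ is a $k$-spread of $\F_q^v$, then every nonzero vector lies in exactly one member of $\mathcal{S}$, so the $q^k-1$ nonzero vectors of each member partition the $q^v-1$ nonzero vectors of $\F_q^v$. Hence $|\mathcal{S}| = (q^v-1)/(q^k-1)$, which forces $(q^k-1) \mid (q^v-1)$. Writing $v = ak+b$ with $0 \le b < k$, one computes $q^v - 1 \equiv q^b - 1 \pmod{q^k-1}$, and since $0 \le q^b - 1 < q^k - 1$ this residue can vanish only if $b = 0$. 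Thus $k \mid v$. (Equivalently, over $\mathbb{Z}$ one has $(q^k-1)\mid(q^v-1)$ iff $k\mid v$, a standard fact.)

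For the **sufficiency**, suppose $v = tk$ for some positive integer $t$. The cleanest construction uses the field tower $\F_q \subseteq \F_{q^k} \subseteq \F_{q^v}$: regard $\F_{q^v}$ as a $t$-dimensional vector space over $\F_{q^k}$, and hence as a $v$-dimensional vector space over $\F_q$. The $\F_{q^k}$-subspaces of $\F_{q^v}$ of $\F_{q^k}$-dimension $1$ are in particular $\F_q$-subspaces of $\F_q$-dimension $k$; since the $1$-dimensional $\F_{q^k}$-subspaces partition the nonzero vectors of $\F_{q^v}$ (every nonzero element of a vector space over a field lies in a unique $1$-dimensional subspace), this collection is precisely a $k$-spread of $\F_q^v$. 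Note the borderline case $t = 1$: here $\F_{q^v}$ itself is the single member, and the "spread'' is the trivial partition $\{\F_q^v\}$, which is consistent with the point-counting formula giving $|\mathcal{S}| = 1$.

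The main thing to be careful about is not an obstacle so much as a verification: that the $1$-dimensional $\F_{q^k}$-subspaces really do have $\F_q$-dimension exactly $k$ (clear, since $[\F_{q^k}:\F_q] = k$), and that they are pairwise disjoint as $\F_q$-subspaces apart from $0$ — but this is immediate because two distinct $1$-dimensional subspaces of any vector space meet only in $0$, and this property is about the underlying set, independent of which ground field we view them over. An alternative to the field-tower construction, worth mentioning, is the Desarguesian spread obtained from a Singer cycle: the multiplicative group $\F_{q^v}^{*}$ acts regularly on the points of $\SPG{v-1}{\F_q}$, and the subgroup of order $(q^v-1)/(q^k-1)$ together with its cosets, viewed as point sets, yields the same spread. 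I would present the field-tower version as the primary argument since it makes the $\F_q$-dimension count transparent and needs no facts about collineation groups.
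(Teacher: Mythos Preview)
Your proof is correct and follows essentially the same approach as the paper: the necessity direction is dismissed as the elementary number-theoretic fact that $(q^k-1)\mid(q^v-1)$ iff $k\mid v$, and the sufficiency direction constructs the Desarguesian spread via field reduction, taking the $1$-dimensional $\F_{q^k}$-subspaces of a $t$-dimensional $\F_{q^k}$-vector space (the paper uses $(\F_{q^k})^t$, you use $\F_{q^v}$, but these are the same up to isomorphism). Your additional remarks on the $t=1$ boundary case and the Singer-cycle alternative are fine embellishments but not needed.
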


Since $\frac{q^v-1}{q^k-1}$ is an integer if and only if $k$ divides
$v$ (an elementary number theory exercise), only the constructive part
needs to be shown.
% In this case a $k$-spread can be constructed from
% the so-called subfield construction or field reduction, see
% e.g.~\cite{lavrauw2015field} for an extensive review. 
To this end we write $v=kt$ for a suitable integer $t$, take the
ambient space $V$ as the restriction (``field reduction'') of
$(\F_{q^k})^t$ to $\F_q$, which clearly has dimension $v$, and
define the $k$-spread $\mathcal{S}$ in $V/\F_q$ as the set of $1$-subspaces of 
$V/\F_{q^k}$. That $\mathcal{S}$ is indeed a $k$-spread,
is easily verified: Each member of $\mathcal{S}$ has dimension $k$
over $\F_q$; the members form a vector space partition of $V$ (this
property does not depend on the particular field of scalars); and the
size $\gauss{t}{1}{q^k}=\frac{q^v-1}{q^k-1}$ of $\mathcal{S}$ is as
required.\footnote{Alternatively, the member of $\mathcal{S}$
  containing a nonzero vector $\vek{x}$ is the $k$-subspace
  $\F_{q^k}\vek{x}$ of $V/\F_q$.}
% points, i.e., the $1$-dimensional $q^k$-subspaces, of $\F_{q^k}^t$
% clearly form a partition.  Each such point consists of $q^k-1$
% non-zero vectors. Considering $\F_{q^k}$ as a $k$-dimensional
% $q$-vector space maps the points of $\F_{q^k}^t$ to $k$-dimensional
% subspaces of $\F_q^v$ with trivial intersection. 

\begin{example}
  \label{ex:q=2,v=4,k=2}
We consider the parameters $q=3$, $v=4$, and $k=2$. Using canonical
representatives in $\F_9\simeq \F_3[x]/(x^2+1)$, the
$\gauss{2}{1}{9}=10$ points in $\F_9^2$ are generated by
$$
  \begin{pmatrix}0\\1\end{pmatrix},
  \begin{pmatrix}1\\0\end{pmatrix},
  \begin{pmatrix}1\\1\end{pmatrix},
  \begin{pmatrix}1\\2\end{pmatrix},
  \begin{pmatrix}1\\x\end{pmatrix},
  \begin{pmatrix}1\\x+1\end{pmatrix},
  \begin{pmatrix}1\\x+2\end{pmatrix},
  \begin{pmatrix}1\\2x\end{pmatrix},
  \begin{pmatrix}1\\2x+1\end{pmatrix},
  \begin{pmatrix}1\\2x+2\end{pmatrix}.
$$
The particular 
point $P=\F_9\left(\begin{smallmatrix}1\\x+1\end{smallmatrix}\right)
=\left\{
  \left(\begin{smallmatrix}0\\0\end{smallmatrix}\right),
  \left(\begin{smallmatrix}1\\x+1\end{smallmatrix}\right),
  \left(\begin{smallmatrix}2\\2x+2\end{smallmatrix}\right),
  \left(\begin{smallmatrix}x\\x+2\end{smallmatrix}\right),
  \left(\begin{smallmatrix}x+1\\2x\end{smallmatrix}\right),
  \left(\begin{smallmatrix}x+2\\1\end{smallmatrix}\right),\right.$ \linebreak[4] 
\noindent  
  $\left.
  \left(\begin{smallmatrix}2x\\2x+1\end{smallmatrix}\right),
  \left(\begin{smallmatrix}2x+1\\x\end{smallmatrix}\right),
  \left(\begin{smallmatrix}2x+2\\2\end{smallmatrix}\right)
\right\}$
on the projective line $\SPG{1}{\F_9}$ defines a $2$-subspace of
$\F_9^2/\F_3\cong\F_3^4$, whose $8$ associated points are
$\F_3\vek{x}$, $\vek{x}\in P$,
$\vek{x}\neq\left(\begin{smallmatrix}0\\0\end{smallmatrix}\right)$;
and similarly for the other points of $\SPG{1}{\F_9}$. These ten
$2$-subspaces form the $2$-spread $\mathcal{S}$.

Using any $\F_3$-isomorphism $\F_9^2/\F_3\cong\F_3^4$, we can
translate $\mathcal{S}$ into a $2$-spread  $\mathcal{S}'$ of the
standard vector space
$\F_3^4$. Taking, for example, coordinates with respect to the basis
$(1,x)$ of $\F_9/\F_3$ and extending to $\F_9^2$ in the obvious
way translates $P$ into the $2$-subspace of $\F_3^4$ with vectors
$$
\begin{pmatrix}0\\0\\0\\0\end{pmatrix},
\begin{pmatrix}1\\0\\1\\1\end{pmatrix},
\begin{pmatrix}2\\0\\2\\2\end{pmatrix},
\begin{pmatrix}0\\1\\2\\1\end{pmatrix},
\begin{pmatrix}1\\1\\0\\2\end{pmatrix},
\begin{pmatrix}2\\1\\1\\0\end{pmatrix},
\begin{pmatrix}0\\2\\1\\2\end{pmatrix},
\begin{pmatrix}1\\2\\0\\1\end{pmatrix},
\begin{pmatrix}2\\2\\2\\0\end{pmatrix}.
$$  
The other members of $\mathcal{S}'$ are obtained in the same way.
\end{example}

From now on
  % \marginpar{Vielleicht kann man mit dieser
  % Generalvoraussetzung die Bedingungen fuer $t$ und $r$ in den vielen
  % Theoremen weglassen. Es w\"urde das Lesen fl\"ussiger machen.}
we assume that $k$ does not divide $v$ and write $v=tk+r$
with $1\leq r\leq k-1$. Since the cases $t\in\{0,1\}$ are trivial
($\smax_q(r,2k;k)=0$ and $\smax_q(k+r,2k;k)=1$), we also assume
$t\geq 2$. The stated upper bound then takes the form
\begin{equation}
  \label{eq:1st_ub}
  \smax_q(v,2k;k)\leq\left\lfloor\frac{q^v-1}{q^k-1}\right\rfloor
  =\frac{q^{tk+r}-q^r}{q^k-1}+\left\lfloor\frac{q^r-1}{q^k-1}\right\rfloor
  %=\frac{q^{tk+r}-q^r}{q^k-1},
  =\sum_{s=0}^{t-1}q^{sk+r}=q^r\gauss{t}{1}{q^k}.
\end{equation}
We also see from this computation that the number of holes of a
partial $k$-spread is at least
$\frac{q^v-1}{q-1}\bmod\frac{q^k-1}{q-1}
=\frac{q^r-1}{q-1}$.
% an integer
% bound %%(\ref{first_upper_bound})
% by rounding down to
% $\smax_q(v,2k;k)\le\left\lfloor\frac{q^v-1}{q^k-1}\right\rfloor$, since
% $\smax_q(v,2k;k)$ obviously is an integer. 
However, as we will see later, this bound can be
improved further.

In accordance with \eqref{eq:1st_ub} we make the following definition
(similar to that in \cite{beutelspacher1975partial}): The number
$\deficiency$ defined by
$\smax_q(v;2k;k)=\sum_{s=0}^{t-1}q^{sk+r}-\deficiency$ is called the
\emph{deficiency} of the maximal partial $k$-spreads in
$\F_q^v$.\footnote{This makes sense also for $r=0$:
  Spreads are assigned deficiency $\deficiency=0$.} From
\eqref{eq:1st_ub} we have $\deficiency\geq 0$. In terms of the
deficiency, the minimum possible number of holes is
$\deficiency\cdot\frac{q^k-1}{q-1}+\frac{q^r-1}{q-1}$.

Our next goal is to derive a good lower bound for $\smax_q(v,2k;k)$
(equivalently, a lower bound for the corresponding deficiency) by
constructing a large partial $k$-spread in $\F_q^v$. For this we will
employ a special case of the \emph{echelon-Ferrers
  construction}
% \footnote{The Etzion-\textsc{Ferrer}s
%   construction is named after \textsc{Wassili Ferrer} from
%   Akademgorodok/Novosibirsk who is the husband of \textsc{Natalia
%     Silberstein} and has proposed the construction in the first
%   place.} 
for general subspace codes~\cite{etzion2009error}, which
involves only standard maximum rank distance codes of full row rank.

To this end, recall that every $k$-subspace $X$ of $\F_q^v$ is the row
space of a unique ``generating'' matrix $\mat{A}\in \F_q^{k\times v}$ in reduced
row-echelon form, which can be obtained by applying the Gaussian
elimination algorithm to an arbitrary generating matrix of $X$. This
matrix $\mat{A}$ is called \emph{canonical matrix} of $X$, and is uniquely
specified by its $k$ pivot columns $1\leq j_1<j_2<\dots<j_k\leq v$
(forming a $k\times k$ identity submatrix of $\mat{A}$) and the
complementary submatrix $\mat{B}\in\F_q^{k\times(v-k)}$, which 
has zero entries in positions $(i,j)$ with $j\leq j_i-i$ but otherwise
can be arbitrary. The $k$-set $\{j_1,\dots,j_k\}$ will be named
\emph{pivot set} of $X$. The positions of the 
unrestricted entries in $\mat{B}$ form the Ferrers diagram of an
integer partition, as shown in the following for the cases $v=8$, $k=3$,
$(i_1,i_2,i_3)=(1,2,3)$, $(2,4,7)$.
\begin{equation}
  \label{eq:etzion-ferrers}
  \ytableausetup{centertableaux,boxsize=1em}
  \setlength{\arraycolsep}{1em}
  \begin{array}{crl}
    \text{matrix shape}&
    \multicolumn{1}{c}{\text{Ferrers diagram}}&
    \multicolumn{1}{c}{\text{integer partition}}\\[1ex]
    \setlength{\arraycolsep}{.33em}
    \begin{pmatrix}
      1&0&0&*&*&*&*&*\\
      0&1&0&*&*&*&*&*\\
      0&0&1&*&*&*&*&*
    \end{pmatrix}&\ydiagram[\bullet]{5,5,5}&15=5+5+5\\[4ex]
    \setlength{\arraycolsep}{.33em}
    \begin{pmatrix}
      0&1&*&0&*&*&0&*\\
      0&0&0&1&*&*&0&*\\
      0&0&0&0&0&0&1&*
    \end{pmatrix}&\reflectbox{\ydiagram[\bullet]{4,3,1}}&8=4+3+1
  \end{array}
\end{equation}
The following lemma %% Lemma 
is a special case of \cite[Lemma
2]{etzion2009error}.
\begin{lemma}
  \label{lma:disjoint}
  If subspaces $X$, $Y$ of $\F_q^v$ have disjoint pivot sets, they are
  itself disjoint (i.e., $X\cap Y=\{\vek{0}\}$).
\end{lemma}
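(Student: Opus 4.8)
The plan is to prove the auxiliary fact that, for any subspace $Z$ of $\F_q^v$, the \emph{leading position} of a nonzero vector $\vek{z}\in Z$---the index of its first nonzero coordinate---always lies in the pivot set of $Z$. Granting this, the lemma follows at once: if some $\vek{z}\in X\cap Y$ were nonzero, its leading position would belong simultaneously to the pivot set of $X$ and to the pivot set of $Y$, contradicting the hypothesis that these two sets are disjoint. Hence $X\cap Y=\{\vek{0}\}$.

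To establish the auxiliary fact, let $\mat{A}\in\F_q^{m\times v}$ be the canonical matrix of $Z$ (so $m=\dim Z$), with pivot columns $j_1<\dots<j_m$; recall that the $i$-th row $\mat{A}_i$ has its first nonzero entry, equal to $1$, in column $j_i$, and that the $j_i$-th column of $\mat{A}$ is the standard unit vector $\vek{e}_i$. Writing an arbitrary nonzero $\vek{z}\in Z$ as $\vek{z}=\sum_{i=1}^m c_i\mat{A}_i$, let $i_0$ be the least index with $c_{i_0}\neq 0$.

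The key step is to pin down the leading position of $\vek{z}$. For each column index $j<j_{i_0}$ and each $i\geq i_0$, the entry $(\mat{A}_i)_j$ vanishes, since the pivot of $\mat{A}_i$ sits at $j_i\geq j_{i_0}>j$; the rows with $i<i_0$ contribute nothing because $c_i=0$. Thus $\vek{z}$ is zero in every column before $j_{i_0}$. In column $j_{i_0}$ itself only $\mat{A}_{i_0}$ has a nonzero entry, because that column of $\mat{A}$ equals $\vek{e}_{i_0}$; hence the $j_{i_0}$-th coordinate of $\vek{z}$ equals $c_{i_0}\neq 0$. Therefore the leading position of $\vek{z}$ is exactly $j_{i_0}$, a member of the pivot set of $Z$, as required.

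I do not anticipate a real obstacle; the only point demanding care is to use the full reduced row-echelon structure of the canonical matrix---in particular that each pivot column is a unit vector and that the pivot positions increase strictly with the row index---rather than merely the normalization of the leading entries, since it is the unit-column property that eliminates the contributions of the rows $\mat{A}_i$ with $i>i_0$ in column $j_{i_0}$. The argument is completely symmetric in $X$ and $Y$, so no separate reasoning is needed for the two subspaces.
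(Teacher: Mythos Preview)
Your proof is correct and follows exactly the same idea as the paper's: the paper's one-line argument simply asserts that a nonzero vector in $X$ has its pivot (first nonzero position) in the pivot set of $X$, and similarly for $Y$, whence the conclusion. Your version supplies the details of this assertion by explicitly using the reduced row-echelon structure of the canonical matrix, but the approach is identical.
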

\begin{proof}
  A nonzero vector in $X$ must have its pivot (first nonzero position)
  in the pivot set of $X$, and similarly for $Y$. The result follows.
\qed\end{proof}
Now we focus on the special case in which the pivot set is
$\{1,\dots,k\}$, i.e., the canonical matrix has the ``systematic'' form
$\mat{A}=(\imat_k|\mat{B})$. For matrices
$\mat{A},\mat{B}\in\mathbb{F}_q^{k\times v}$ the \emph{rank distance}
is defined as
$\rdist(\mat{A},\mat{B}):=\operatorname{rk}(\mat{A}-\mat{B})$.  The
subspace distance of two $k$-subspaces with pivot set $\{1,\dots,k\}$
can be computed from the rank distance of the corresponding canonical
matrices:

\begin{lemma}[{\cite[Prop.~4]{silva2008rank}}]
  \label{lma:lifting}
  Let $X,X'$ be $k$-subspaces of $\F_q^v$ with canonical matrices
  $(\imat_k|\mat{B})$ and $(\imat_k|\mat{B}')$, respectively. Then
  $\sdist(X,X')=2\cdot\rdist(\mat{B},\mat{B}')$.  
  \footnote{More
    generally, this formula holds if $X$ and $X'$ have the same pivot
    set and $\mat{B},\mat{B}'\in\F_q^{k\times(v-k)}$ denote the
    corresponding complementary submatrices in their canonical matrices;
    see e.g.~\cite[Corollary 3]{MR2801585}.}
\end{lemma}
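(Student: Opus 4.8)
The plan is to get everything from a single rank computation, using the identity $\sdist(X,X')=2\dim(X+X')-\dim(X)-\dim(X')$ recorded in the introduction. Since $\dim(X)=\dim(X')=k$, the whole problem reduces to determining $\dim(X+X')$.

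First I would observe that $X+X'$ is spanned by the rows of the two generating matrices taken together, hence is the row space of the block matrix
\[
  \mat{M}=\begin{pmatrix}\imat_k&\mat{B}\\\imat_k&\mat{B}'\end{pmatrix}\in\F_q^{2k\times v},
\]
so that $\dim(X+X')=\rk(\mat{M})$. Next, subtracting the first block of $k$ rows from the second---an elementary, rank-preserving row operation---turns $\mat{M}$ into $\left(\begin{smallmatrix}\imat_k&\mat{B}\\\mat{0}&\mat{B}'-\mat{B}\end{smallmatrix}\right)$. The leading $\imat_k$ makes the first $k$ rows linearly independent of all the others, and the last $k$ rows are supported on the final $v-k$ coordinates, so the rank splits: $\rk(\mat{M})=k+\rk(\mat{B}'-\mat{B})=k+\rdist(\mat{B},\mat{B}')$. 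Plugging $\dim(X+X')=k+\rdist(\mat{B},\mat{B}')$ back into the distance identity yields $\sdist(X,X')=2\bigl(k+\rdist(\mat{B},\mat{B}')\bigr)-2k=2\cdot\rdist(\mat{B},\mat{B}')$.

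There is no genuine obstacle: the argument is a few lines of linear algebra, and the only points deserving a word are that the rank of a block-triangular matrix with an identity block in the pivot position adds, and that $\rk(\mat{B}'-\mat{B})$ is symmetric in $\mat{B}$ and $\mat{B}'$---both obvious. For the generalisation mentioned in the footnote (an arbitrary common pivot set $\{j_1,\dots,j_k\}$) I would precompose with the coordinate permutation of $\F_q^v$ sending $j_1,\dots,j_k$ to $1,\dots,k$ in order; this is an isometry of the subspace metric, it carries the two canonical matrices to $(\imat_k\mid\mat{B})$ and $(\imat_k\mid\mat{B}')$ with the same complementary submatrices $\mat{B},\mat{B}'$, and the computation above then applies verbatim.
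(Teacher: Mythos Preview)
Your proof is correct and follows exactly the same route as the paper: stack the two generating matrices, row-reduce to block-triangular form with an identity block, read off $\dim(X+X')=k+\rk(\mat{B}'-\mat{B})$, and plug into the subspace-distance formula. The only addition is your remark on the footnote generalisation via a coordinate permutation, which the paper leaves to the cited reference.
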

\begin{proof}
  The matrix $\left(
    \begin{smallmatrix}
      \imat_k&\mat{B}\\\imat_k&\mat{B}'
    \end{smallmatrix}\right)$ generates $X+X'$ and reduces via
  Gaussian elimination to $\left(
    \begin{smallmatrix}
      \imat_k&\mat{B}\\\mat{0}&\mat{B}'-\mat{B}
    \end{smallmatrix}\right)$. Hence
  $\dim(X+X')=k+\rk(\mat{B}'-\mat{B})=k+\rdist(\mat{B},\mat{B}')$ and
  $\sdist(X,X')=2\dim(X+X')-2k=2\,\rdist(\mat{B},\mat{B}')$.
\qed\end{proof}
The so-called \emph{lifting construction}
\cite[Sect.~IV.A]{silva2008rank} associates with a matrix code
$\mathcal{B}\subseteq\F_q^{k\times(v-k)}$ the constant-dimension code
$\mathcal{C}$ in $\F_q^v$ whose codewords are the $k$-spaces generated by
$(\imat_k|\mat{B})$, $\mat{B}\in\mathcal{B}$. By
Lemma~\ref{lma:lifting}, the code $\mathcal{C}$ is isometric to
$\mathcal{B}$ with scale factor $2$. In particular, $\mathcal{C}$ is
a partial $k$-spread if and only if $\mathcal{B}$ has minimum rank
distance $\rdist(\mathcal{B})=k$.

\begin{lemma}
  \label{lma:mrd}
  There exists a partial $k$-spread $\mathcal{S}$ of size $q^{v-k}$ in $\F_q^v$
  whose codewords cover precisely the points outside the
  ($v-k$)-subspace $S=\{\vek{x}\in\F_q^v;x_1=x_2=\dots=x_k=0\}$.
\end{lemma}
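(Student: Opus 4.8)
The plan is to exhibit an explicit maximum rank distance (MRD) code $\mathcal{B}\subseteq\F_q^{k\times(v-k)}$ of full row rank and apply the lifting construction described above. Recall that $v=tk+r$ with $0\le r<k$ and $v-k=(t-1)k+r\ge k$ (since $t\ge 2$), so $k\le v-k$, and in particular $\F_q^{k\times(v-k)}$ admits Gabidulin codes. First I would invoke the existence of a linear MRD code $\mathcal{B}\subseteq\F_q^{k\times(v-k)}$ with minimum rank distance $\rdist(\mathcal{B})=k$ and cardinality $\card{\mathcal{B}}=q^{(v-k)(k-(k-1))}=q^{v-k}$ — this is the Singleton-type bound in the rank metric met with equality by Gabidulin codes, valid precisely because the number of rows $k$ does not exceed the number of columns $v-k$. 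Since the minimum rank distance equals $k$, every nonzero matrix in $\mathcal{B}$ has rank $k$, i.e.\ full row rank; equivalently, the difference of any two distinct codewords has full row rank.

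Next I would lift $\mathcal{B}$ to the constant-dimension code $\mathcal{S}=\{\,\mathrm{rowspace}(\imat_k\mid\mat{B}) : \mat{B}\in\mathcal{B}\,\}$ in $\F_q^v$. By Lemma~\ref{lma:lifting}, for distinct $\mat{B},\mat{B}'\in\mathcal{B}$ we have $\sdist(X,X')=2\,\rdist(\mat{B},\mat{B}')\ge 2k$, and since all codewords have dimension $k$ this forces $X\cap X'=\{\vek{0}\}$; hence $\mathcal{S}$ is a partial $k$-spread. The distinct matrices $\mat{B}$ yield distinct subspaces (the canonical matrix is unique), so $\card{\mathcal{S}}=\card{\mathcal{B}}=q^{v-k}$.

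It remains to check the covering claim. Let $S=\{\vek{x}\in\F_q^v : x_1=\dots=x_k=0\}$, a $(v-k)$-subspace. Every member $X=\mathrm{rowspace}(\imat_k\mid\mat{B})$ of $\mathcal{S}$ has pivot set $\{1,\dots,k\}$, so by the pivot argument of Lemma~\ref{lma:disjoint} every nonzero vector of $X$ has its first nonzero coordinate among the first $k$ positions; thus $X\cap S=\{\vek{0}\}$, so no point of $S$ is covered. Conversely, take any point outside $S$, i.e.\ a nonzero vector $\vek{x}=(x_1,\dots,x_v)$ with $(x_1,\dots,x_k)\ne\vek{0}$. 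Write $\vek{x}=(\vek{u}\mid\vek{w})$ with $\vek{u}\in\F_q^k\setminus\{\vek0\}$ and $\vek{w}\in\F_q^{v-k}$; I want $\mat{B}\in\mathcal{B}$ with $\vek{u}\mat{B}=\vek{w}$, for then $\vek{u}(\imat_k\mid\mat{B})=(\vek{u}\mid\vek{w})=\vek{x}$, so $\F_q\vek{x}$ lies on the codeword $\mathrm{rowspace}(\imat_k\mid\mat{B})$. The map $\mathcal{B}\to\F_q^{v-k}$, $\mat{B}\mapsto\vek{u}\mat{B}$, is $\F_q$-linear on the $\F_q$-linear code $\mathcal{B}$, and its kernel is $\{\mat{B}\in\mathcal{B}:\vek{u}\mat{B}=\vek0\}$; any nonzero such $\mat{B}$ would have $\vek{u}$ in its left kernel, contradicting that every nonzero element of $\mathcal{B}$ has full row rank $k$. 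Hence the map is injective, and comparing cardinalities $\card{\mathcal{B}}=q^{v-k}=\card{\F_q^{v-k}}$ shows it is a bijection, so the required $\mat{B}$ exists and is unique. Since $\mathcal{S}$ is a partial $k$-spread its codewords are pairwise disjoint, so each point outside $S$ is covered exactly once, and $\mathcal{S}$ covers precisely the complement of $S$.

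The only real input is the existence of linear MRD codes of the stated size and minimum distance in $\F_q^{k\times(v-k)}$ — the Gabidulin construction — and the rest is bookkeeping with pivots and a dimension count; the mild subtlety to flag is that one needs $k\le v-k$, guaranteed here by $t\ge 2$, for such an MRD code (with $k$ rows) to exist with full row rank.
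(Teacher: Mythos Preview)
Your proof is correct and follows essentially the same approach as the paper. The only difference is cosmetic: the paper constructs the MRD code explicitly by taking a matrix representation $M\colon\F_{q^n}\to\F_q^{n\times n}$ of the field extension $\F_{q^n}/\F_q$ (with $n=v-k$) and truncating each matrix to its first $k$ rows, whereas you invoke Gabidulin codes as a black box; the resulting code $\mathcal{B}$, the lifting step, and the covering argument (solving $\vek{u}\mat{B}=\vek{w}$ by injectivity plus cardinality) are the same in both.
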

\begin{proof}
  Write $n=v-k$ and consider a matrix representation
  $M\colon\F_q^{n\times n}$ of $\F_{q^n}/\F_q$, obtained by
  expressing the multiplication maps
  $\mu_\alpha\colon\F_{q^n}\to\F_{q^n}$, $x\mapsto\alpha x$ (which are
  linear over $\F_q$) in terms of a fixed basis of
  $\F_{q^n}/\F_q$. Then $M(\alpha+\beta)=M(\alpha)+M(\beta)$,
  $M(\alpha\beta)=M(\alpha)M(\beta)$, $M(1)=\imat_n$, and hence all
  matrices in $M(\F_{q^n})$ are invertible and have mutual rank
  distance $n$.\footnote{In ring-theoretic terms, the matrices in
    $M(\F_{q^n})$ form a maximal subfield of the ring of $n\times n$ matrices
    over $\F_q$.}

  Now let $\mathcal{B}\subseteq\F_q^{k\times n}$ be the matrix code
  obtained from $M(\F_{q^n})$ by deleting the last $n-k$ rows, say, of
  every matrix. Then $\card{\mathcal{B}}=q^n$ and
  $\rdist(\mathcal{B})=k$. Hence by applying the lifting construction to
  $\mathcal{B}$ we obtain a partial $k$-spread $\mathcal{S}$ in
  $\F_q^v$ of size $q^n=q^{v-k}$ (Lemma~\ref{lma:lifting}). 

  The codewords in $\mathcal{S}$ cover only points outside $S$
  (compare the proof of Lemma~\ref{lma:disjoint}). It remains to show
  that every such point is covered. This can be done by a counting
  argument or in the following more direct fashion: Let
  $\vek{a}\in\F_q^k\setminus\{\vek{0}\}$, $\vek{b}\in\F_q^{v-k}$ be
  arbitrary vectors and consider the equation $\vek{a}\mat{X}=\vek{b}$ for
  $\mat{X}\in\mathcal{B}$. Since
  $\rk(\mat{X}-\mat{X'})=k$ for $\mat{X}\neq\mat{X}'$, the $q^{v-k}$
  elements $\vek{a}\mat{X}$, $\mat{X}\in\mathcal{B}$, are distinct and
  hence account for all elements in $\F_q^{v-k}$. Thus the equation
  has a solution $\mat{B}\in\mathcal{B}$, and the point
  $P=\F_q(\vek{a}|\vek{b})=\F_q\vek{a}(\imat_k|\mat{B})$ is covered
  by the codeword in $\mathcal{S}$ with canonical matrix
  $(\imat_k|\mat{B})$.
\qed\end{proof}
Now we are ready for the promised construction of large partial
$k$-spreads.
\begin{theorem}[\cite{beutelspacher1975partial}]
  \label{thm:multicomponent}
  Let $v,k$ be positive integers satisfying $v=tk+r$, $t\geq 2$ and
  $1\leq r\leq k-1$. There exists a partial $k$-spread $\mathcal{S}$
  in $\F_q^v$ of size
  \begin{equation*}
    \card{\mathcal{S}}=1+\sum_{s=1}^{t-1}q^{v-sk}=1+\sum_{s=1}^{t-1}q^{sk+r},
  \end{equation*}
  and hence we have
  $\smax_q(v,2k;k)\geq1+\sum_{s=1}^{t-1}q^{sk+r}$.
  %=q^v+q^{v-k}+q^{v-2k}+\dots+q^{k+r}+1$.
\end{theorem}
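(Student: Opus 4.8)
The plan is to prove the statement by induction on $t\ge 1$, peeling off one ``component'' at a time with Lemma~\ref{lma:mrd}. Write $f(t):=1+\sum_{s=1}^{t-1}q^{sk+r}$ for the target size; note that $f(1)=1$, and for $t=1$ the space $\F_q^{k+r}$ (with $1\le r\le k-1$, so $k\le k+r$) trivially contains a single $k$-subspace, which is a partial $k$-spread of size $f(1)$. This serves as the base case, the hypothesis $t\ge 2$ of the theorem being the first genuinely interesting instance.

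For the inductive step I would assume $t\ge 2$ and that $\F_q^{(t-1)k+r}$ contains a partial $k$-spread of size $f(t-1)$. Applying Lemma~\ref{lma:mrd} in $\F_q^v$ with $v=tk+r$ yields a partial $k$-spread $\mathcal{S}_0$ with $\card{\mathcal{S}_0}=q^{v-k}=q^{(t-1)k+r}$ whose members cover only points outside the $(v-k)$-subspace $S=\{\vek{x}\in\F_q^v;x_1=\dots=x_k=0\}$; since a $k$-subspace meeting $S$ nontrivially would contain a point of $S$, this forces $X\cap S=\{\vek{0}\}$ for every $X\in\mathcal{S}_0$. As $S$ is an $\F_q$-vector space of dimension $(t-1)k+r$, the induction hypothesis (transported along any $\F_q$-isomorphism $S\cong\F_q^{(t-1)k+r}$) provides a partial $k$-spread $\mathcal{S}_1$ with $\mathcal{S}_1$ consisting of subspaces of $S$ and $\card{\mathcal{S}_1}=f(t-1)$.

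Finally I would set $\mathcal{S}:=\mathcal{S}_0\cup\mathcal{S}_1$. Its members are pairwise disjoint: two members of $\mathcal{S}_0$ are disjoint, two members of $\mathcal{S}_1$ are disjoint, and a member $X\in\mathcal{S}_0$ is disjoint from $S$ and hence from every $Y\in\mathcal{S}_1$ since $Y\subseteq S$. As $\mathcal{S}_0$ lives outside $S$ while $\mathcal{S}_1$ lies inside $S$, the two collections are disjoint, so $\card{\mathcal{S}}=q^{(t-1)k+r}+f(t-1)=q^{(t-1)k+r}+1+\sum_{s=1}^{t-2}q^{sk+r}=f(t)$, which closes the induction and gives the stated lower bound on $\smax_q(v,2k;k)$. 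Re-indexing the exponent by $v-sk=(t-s)k+r$ yields the equivalent expression $1+\sum_{s=1}^{t-1}q^{v-sk}$ in the statement.

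There is no real obstacle here: the construction is just a telescoping application of Lemma~\ref{lma:mrd}. The two points deserving a line of care are (i) that $\mathcal{S}_0$ genuinely satisfies $X\cap S=\{\vek{0}\}$, and not merely $\dim(X\cap S)<k$, which is immediate from the pivot-set argument behind Lemmas~\ref{lma:disjoint} and~\ref{lma:mrd} (every nonzero vector of such an $X$ has its leading nonzero coordinate among positions $1,\dots,k$); and (ii) that passing from $v$ to $v-k$ preserves the hypotheses, i.e.\ $t\mapsto t-1\ge 1$ and $r$ unchanged with $1\le r\le k-1$. I expect the bookkeeping of the geometric sum, rather than any geometry, to be the most error-prone part of writing this up.
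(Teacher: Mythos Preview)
Your proof is correct and follows precisely the approach of the paper: induction on $t$ with anchor $t=1$ (where a single $k$-subspace suffices), and inductive step obtained by combining the layer $\mathcal{S}_0$ of size $q^{v-k}$ from Lemma~\ref{lma:mrd} with a partial $k$-spread in $S\cong\F_q^{v-k}$ supplied by the induction hypothesis. The paper's own proof is stated in two sentences and you have simply unpacked the details (including the disjointness check between $\mathcal{S}_0$ and $\mathcal{S}_1$) faithfully.
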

The corresponding bound for the deficiency is $\deficiency\leq q^r-1$. It
depends only on $k$ and the residue $r=v\bmod k$. 

It had been
conjectured in \cite[Sect.~2.2]{eisfeldt} that $\deficiency=q^r-1$ in
general, but this conjecture was later disproved in \cite{spreadsk3}
by exhibiting a maximal partial plane spread of size $34$ in $\F_2^8$,
which has deficiency $2^2-2$.
%cf.\ also Corollary~\ref{cor_partial_spread_k3}.

% differs from the upper bound
% \eqref{eq:1st_ub} by 
% $q^r-1$.
\begin{proof}
  The proof is by induction on $t$, using Lemma~\ref{lma:lifting} and
  applying the inductive hypothesis to $S\cong\F_q^{v-k}$. The case
  $v=k+r$, in which $\smax_q(v,2k;k)=1$, serves as the anchor of the
  induction.
\qed\end{proof}
The partial spread $\mathcal{S}$ exhibited in the proof of
Theorem~\ref{thm:multicomponent} consists of $t-1$ ``layers''
$\mathcal{S}_1$, \ldots, $\mathcal{S}_{t-1}$ of decreasing sizes
$\card{\mathcal{S}_s}=q^{v-sk}$, whose codewords are obtained from matrix
representations of $\F_{q^{v-sk}}$ and have their pivots in
positions $(s-1)k+1$, $(s-1)k+2$, \ldots, $sk$ (hence vanish on the
first $(s-1)k$ coordinates). The union $\bigcup_{s=1}^{t-1}\mathcal{S}_s$ leaves
exactly the points of a ($k+r$)-subspace $S$ of $\F_q^v$ (the span of the
last $k+r$ standard unit vectors) uncovered. Finally, one
further $k$-subspace $S_0$ of $S$ is selected to form
$\mathcal{S}=\bigcup_{s=1}^{t-1}\mathcal{S}_s\cup\{S_0\}$.\footnote{The
  space $S_0$ has been named \emph{moving subspace} of $\mathcal{S}$,
  since it can be freely ``moved'' within $S$ without affecting the
  partial spread property of $\mathcal{S}$.}

\begin{example}
  \label{ex:q=2,v=5,k=3}
  We consider the particular case $q=2$, $v=5$, $k=3$, in which
  $\card{\mathcal{S}}=2^3+1=9$. In this case there is only one layer
  $\mathcal{S}_1$, which can be obtained from a matrix representation
  of $\F_{8}$ as follows: Representing $\F_{8}$ as $\F_2(\alpha)$
  with $\alpha^3+\alpha+1=0$, we first express the powers $\alpha^j$,
  $0\leq j\leq 6$ in terms of the basis
  $1,\alpha,\alpha^2$ of $\F_{8}/\F_2$, as in the
  following matrix:
  \begin{equation}
    \label{eq:powers}
    \mat{M}=
    \begin{array}{r||ccccccc|cc}
      &\alpha^0&\alpha^1&\alpha^2&\alpha^3&\alpha^4&\alpha^5&\alpha^6&
      \alpha^0&\alpha^1\\\hline\hline
      \alpha^0&1&0&0&1&0&1&1&1&0\\
      \alpha^1&0&1&0&1&1&1&0&0&1\\
      \alpha^2&0&0&1&0&1&1&1&0&0
    \end{array}
  \end{equation}
  The seven consecutive $3\times 3$ submatrices of this matrix, which
  has been extended to the right in order to mimic the cyclic
  wrap-around, form a matrix field isomorphic to $\F_8$ (with
  $\mat{0}\in\F_2^{3\times 3}$ added). Similarly, the code
  $\mathcal{B}\subset\F_2^{2\times 3}$ is obtained by extracting the
  first seven consecutive $3\times 2$ submatrices, adding
  $\mat{0}\in\F_2^{3\times 2}$, and transposing; cf.\ the proof of
  Lemma~\ref{lma:mrd}). Prepending the $2\times 2$ identity matrix
  then gives the canonical matrices of the $8$ codewords of
  $\mathcal{S}_1$\,:
   \begin{gather*}
     %\label{eq:C_1}
     \left(
       \begin{array}{cc|ccc}
         1&0&0&0&0\\
         0&1&0&0&0
       \end{array}\right),\;
     \left(
       \begin{array}{cc|ccc}
         1&0&1&0&0\\
         0&1&0&1&0
       \end{array}\right),\;
    \left(
       \begin{array}{cc|ccc}
         1&0&0&1&0\\
         0&1&0&0&1
       \end{array}\right),\;
     \left(
       \begin{array}{cc|ccc}
         1&0&0&0&1\\
         0&1&1&1&0
       \end{array}\right),\\
     \left(
       \begin{array}{cc|ccc}
         1&0&1&1&0\\
         0&1&0&1&1
       \end{array}\right),\;
     \left(
       \begin{array}{cc|ccc}
         1&0&0&1&1\\
         0&1&1&1&1
       \end{array}\right),\;
    \left(
       \begin{array}{cc|ccc}
         1&0&1&1&1\\
         0&1&1&0&1
       \end{array}\right),\;
     \left(
       \begin{array}{cc|ccc}
         1&0&1&0&1\\
         0&1&1&0&0
       \end{array}\right).
   \end{gather*}
   Finally, from the seven
   lines in the plane $S=\{\vek{x}\in\F_2^5;x_1=x_2=0\}$ a 9th
   codeword $L_0$ (moving line) is selected to form
   $\mathcal{S}=\mathcal{S}_1\cup\{L_0\}$.

   The partial line spread $\mathcal{S}$ is in fact maximal, as we
   will see in a moment, and represents one of the $4$ isomorphism
   types of maximal partial line spreads in
   $\PG(\F_2^5)=\PG(4,\F_2)$.\footnote{The full classification,
     including also partial line spreads of smaller size, can be found
     in \cite{Gordon-Shaw-Soicher-2004-unpublished}. To our best
     knowledge there is only one further nontrivial parameter case, where a
     classification of maximal (proper) partial spreads is known, viz.\
     the case of plane spreads in $\PG(6,\F_2)$, settled in
     \cite{hkk_partial_plane_spreads}.}
\end{example}

Now we will reduce the upper bound \eqref{eq:1st_ub} by a summand of
$q-1$, which is sufficient to settle the case $r=1$ and hence
determine the numbers $\smax_q(tk+1,2k;k)$. The key ingredient will be
the observation that a partial $k$-spread induces in every hyperplane
a vector space partition, whose members have dimension $k$, $k-1$, or
$1$. Before turning to the general case, which is a little technical,
we continue the preceding example and illustrate the method for
partial line spreads in $\F_2^5$.

The geometry $\PG(4,\F_2)$ has $31$ points, each line containing $3$
points, and thus it is conceivable that a partial line spread
$\mathcal{S}$ of size
$10$ exists in $\PG(4,\F_2)$. But in fact it does not. To prove this,
consider a hyperplane (solid) $H$ in $\PG(4,\F_2)$. If $H$ contains
$\alpha$ lines of $\mathcal{S}$, it meets the remaining
$\card{\mathcal{S}}-\alpha$ lines in a point, giving the constraint $\alpha\cdot
3+(\card{\mathcal{S}}-\alpha)\cdot 1\leq 15$, the total number of points in
$H$. This is equivalent to $\card{\mathcal{S}}\leq 15-2\alpha$. In order to
complete the proof, we need to show that there exists a hyperplane
containing at least $3$ lines of $\mathcal{S}$. This can be done by
an averaging argument. On average, a hyperplane contains
\begin{equation}
  \label{eq:Haverage}
  %%\frac{1}{2^5-1}\sum_H\card{\{L\in\mathcal{S};L\subset H\}}
  %%=\frac{1}{2^5-1}\sum_{L\in\mathcal{S}}\card{\{H;H\supset L\}}
  %%=\frac{\card{\mathcal{S}}(2^3-1)}{2^5-1}=\frac{7}{31}\cdot\card{\mathcal{S}}
  \sum_H \frac{\card{\{L\in\mathcal{S};L\subset H\}}}{2^5-1}
  =\sum_{L\in\mathcal{S}} \frac{\card{\{H;H\supset L\}}}{2^5-1}
  =\frac{\card{\mathcal{S}}(2^3-1)}{2^5-1}=\frac{7}{31}\cdot\card{\mathcal{S}}
\end{equation}
lines of $\mathcal{S}$. If $\card{\mathcal{S}}\geq 9$, this number is
$>2$, implying the desired conclusion $\deficiency\geq 1$.\footnote{In
  this particular case one may also argue as follows: If
  $\card{\mathcal{S}}=10$ then there is only one hole and the
  hyperplane constraint becomes $3\alpha+(10-\alpha)+h=15$, where
  $h\in\{0,1\}$. This forces $\alpha=2$ and $h=1$, i.e., every
  hyperplane should contain the hole. This is absurd, of course.}

% For the general case we make the following definition (similar to that
% in \cite{beutelspacher1975partial}): The number $\deficiency$ defined by
% $\smax_q(v;2k;k)=\sum_{s=0}^{t-1}q^{sk+r}-\deficiency$ is called the
% \emph{deficiency} of the maximal partial $k$-spreads in $\F_q^v$. The
% bound \eqref{eq:1st_ub} is equivalent to $\deficiency\geq 0$, and
% Theorem~\ref{thm:multicomponent} says $\deficiency\leq q^r-1$. 

% Given the $(v\,\operatorname{mod}\,k)$-term, a specific
% parameterization is useful: Write $v=kt+r$, where $1\le r\le k-1$, and
% $\smax_q(kt+r,2k;k)=q^r\cdot\frac{q^{kt}-1}{q^k-1}-s$. 
% Lemma~\ref{lemma_multi_component}
% gives $s\le q^r-1$ and there was the wrong conjecture that this bound
% is sharp. %%For the other direction,
% $s\ge q-1$ and $s>\frac{q^r-1}{2}-\frac{q^{2r-k}}{5}$ is known, see
% e.g.\ \cite{eisfeldt} and the details presented later on.

The general case is the subject of the following

\begin{theorem}[{\cite[Th.~2.7(a)]{eisfeldt}}]
  \label{thm:lb_def}
  The deficiency of a maximal $k$-spread in $\F_q^v$, where $k$ does not divide $v$, is at least $q-1$.
\end{theorem}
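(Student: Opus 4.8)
The plan is to carry out, in full generality, the averaging argument sketched above for partial line spreads in $\F_2^5$; the only extra ingredient is the elementary fact that the number of holes inside a hyperplane is determined modulo $q^{k-1}$. Let $\mathcal{S}$ be a maximal partial $k$-spread in $\F_q^v$, put $N=\card{\mathcal{S}}$, and let $\sigma$ be its deficiency, so that $N=\sum_{s=0}^{t-1}q^{sk+r}-\sigma$ and, as computed above, the number of holes equals $h=\sigma\gauss{k}{1}{q}+\gauss{r}{1}{q}$; note $h\ge 1$ since $r\ge 1$. Suppose, for a contradiction, that $\sigma\le q-2$. For a hyperplane $H$, each member of $\mathcal{S}$ either lies in $H$ --- say $b_H$ of them do --- or meets $H$ in a $(k-1)$-subspace, so counting the $\gauss{v-1}{1}{q}$ points of $H$ gives $b_H\gauss{k}{1}{q}+(N-b_H)\gauss{k-1}{1}{q}+h_H=\gauss{v-1}{1}{q}$, where $h_H\ge 0$ is the number of holes inside $H$. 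Equivalently $h_H=\gauss{v-1}{1}{q}-N\gauss{k-1}{1}{q}-q^{k-1}b_H$; reducing modulo $q^{k-1}$ and comparing with $h=\gauss{v}{1}{q}-N\gauss{k}{1}{q}$, using $\gauss{v}{1}{q}\equiv\gauss{v-1}{1}{q}$ and $\gauss{k}{1}{q}\equiv\gauss{k-1}{1}{q}\pmod{q^{k-1}}$, shows $h_H\equiv h\pmod{q^{k-1}}$ for every hyperplane $H$.

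Next I would exhibit a hyperplane poor in holes. Double counting incident pairs (hole, hyperplane through it) gives $\sum_H h_H=h\gauss{v-1}{1}{q}$; dividing by the number $\gauss{v}{1}{q}$ of hyperplanes and using $\gauss{v-1}{1}{q}<\gauss{v}{1}{q}/q$ yields a hyperplane $H_0$ with $h_{H_0}<h/q$. The hypothesis $\sigma\le q-2$ forces $h<(q-1)\gauss{k}{1}{q}=q^k-1$, hence $h_{H_0}<q^{k-1}$. Since $h_{H_0}$ is a non-negative integer with $h_{H_0}\equiv h\pmod{q^{k-1}}$, and since $h=\sigma q^{k-1}+\bigl(\sigma\gauss{k-1}{1}{q}+\gauss{r}{1}{q}\bigr)$ with $0\le\sigma\gauss{k-1}{1}{q}+\gauss{r}{1}{q}\le(q-1)\gauss{k-1}{1}{q}=q^{k-1}-1$, this pins down $h_{H_0}=\sigma\gauss{k-1}{1}{q}+\gauss{r}{1}{q}$.

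Finally I would substitute this value into $h_{H_0}<h/q$ and simplify by means of the identity $\gauss{k}{1}{q}-q\gauss{k-1}{1}{q}=1$, obtaining $(q-1)\gauss{r}{1}{q}<\sigma$, that is $\sigma>q^r-1\ge q-1$, which contradicts $\sigma\le q-2$; hence $\sigma\ge q-1$. The step that needs the most care is this closing bookkeeping: one must check that $\sigma\le q-2$ genuinely forces $h<q^k$ --- so that a single hyperplane already realizes the residue of $h$ modulo $q^{k-1}$ --- and that this residue equals $\sigma\gauss{k-1}{1}{q}+\gauss{r}{1}{q}$ exactly rather than a smaller value; the remaining manipulations are routine counting together with the congruences $\gauss{v}{1}{q}\equiv\gauss{v-1}{1}{q}$, $\gauss{k}{1}{q}\equiv\gauss{k-1}{1}{q}\pmod{q^{k-1}}$ and the identity $\gauss{k}{1}{q}=q\gauss{k-1}{1}{q}+1$.
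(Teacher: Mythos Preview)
Your argument is correct. The only place to tighten the exposition is the line ``using $\gauss{v-1}{1}{q}<\gauss{v}{1}{q}/q$'': strictly speaking you are using that the \emph{average} $h\gauss{v-1}{1}{q}/\gauss{v}{1}{q}$ is strictly below $h/q$ (which holds since $h\ge 1$), so that the minimum $h_{H_0}$ is as well. Everything else is clean, including the residue computation and the final bookkeeping.

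Your route, however, differs from the paper's. The paper counts \emph{spread members} in a hyperplane, shows by averaging that some hyperplane contains at least $1+\sum_{s=0}^{t-2}q^{sk+r}$ codewords, and then substitutes this into the packing bound \eqref{eq:lb_def_p1}; the resulting algebra is a somewhat lengthy chain of manipulations that eventually yields $\card{\mathcal{S}}\le\sum_{s=0}^{t-1}q^{sk+r}-(q-1)+\frac{q^r-q}{q^{k-1}-1}$. You instead count \emph{holes}, exploit the congruence $h_H\equiv h\pmod{q^{k-1}}$ to pin down $h_{H_0}$ exactly, and finish with two lines of arithmetic. This is essentially Beutelspacher's original argument, which the paper mentions (without details) just after Corollary~\ref{cor:r=1}, and it foreshadows the $q^{k-1}$-divisibility framework developed in Section~\ref{section_q_r_divisible}. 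The paper's approach avoids the congruence and works entirely with spread members, at the cost of heavier computation; your approach buys brevity by using the divisibility structure of the hole set from the outset.
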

\begin{proof}
  Reasoning as in the preceding example gives
  $\alpha\cdot\frac{q^k-1}{q-1}+(\card{\mathcal{S}}-\alpha)\frac{q^{k-1}-1}{q-1}
  \leq\frac{q^{tk+r-1}-1}{q-1}$ and hence the bound
  \begin{equation}
    \label{eq:lb_def_p1}
    \card{\mathcal{S}}\leq\frac{q^{tk+r-1}-1
      -\alpha(q^k-q^{k-1})}{q^{k-1}-1}
  \end{equation}
  for any partial $k$-spread $\mathcal{S}$ having a hyperplane incident
  with $\alpha$ members of $\mathcal{S}$.
  
  Now suppose $\card{\mathcal{S}}=1+\sum_{s=1}^{t-1}q^{sk+r}$, the same
  size as the partial $k$-spread in Theorem~\ref{thm:multicomponent}. In this
  case the average number of codewords contained in a hyperplane is
  \begin{align*}
    \frac{q^{(t-1)k+r}-1}{q^{tk+r}-1}\left(1+\sum_{s=1}^{t-1}q^{sk+r}\right)
    &=\frac{1}{q^{tk+r}-1}
    \left(\sum_{s=t}^{2t-2}q^{sk+2r}-\sum_{s=1}^{t-2}q^{sk+r}-1\right)\\
    &=\frac{1}{q^{tk+r}-1}
    \left(\sum_{s=t}^{2t-2}q^{sk+2r}-\sum_{s=0}^{t-2}q^{sk+r}+q^r-1\right)\\
    &=\sum_{s=0}^{t-2}q^{sk+r}+\frac{q^r-1}{q^{tk+r}-1}.
  \end{align*}
  It follows that $\mathcal{S}$, and likewise all partial $k$-spreads
  of deficiency $\leq q^r-1$, have a hyperplane containing at least
  $1+\sum_{s=0}^{t-2}q^{sk+r}$ codewords. Substituting this number
  into \eqref{eq:lb_def_p1} gives
  \begin{align*}
    \card{\mathcal{S}}&\leq\frac{1}{q^{k-1}-1}\left(q^{tk+r-1}-1
                   -\biggl(1+\sum_{s=0}^{t-2}q^{sk+r}\biggr)(q^k-q^{k-1})\right)\\
    &=\frac{1}{q^{k-1}-1}\left(q^{tk+r-1}-1-q^k+q^{k-1}
      -\sum_{s=1}^{t-1}q^{sk+r}+\sum_{s=0}^{t-2}q^{sk+r+k-1}\right)\\
    &=1+\sum_{s=1}^{t-2}q^{sk+r}+\frac{q^{tk+r-1}-q^k-q^{(t-1)k+r}+q^{r+k-1}}
      {q^{k-1}-1}\\
    &=1+\sum_{s=1}^{t-1}q^{sk+r}+\frac{q^{r+k-1}-q^k}{q^{k-1}-1}\\
    &=1+\sum_{s=1}^{t-1}q^{sk+r}+q^r-q+\frac{q^r-q}{q^{k-1}-1}\\
    &=\sum_{s=0}^{t-1}q^{sk+r}-(q-1)+\frac{q^r-q}{q^{k-1}-1},
  \end{align*}
  valid now for any partial $k$-spread $\mathcal{S}$ in $\F_q^v$. Since
  the last summand is $<1$, we obtain the desired conclusion
  $\deficiency\geq q-1$.
  % Let $\mathcal{C}$ be a partial $k$-spread in $\F_{q}^{kt+1}$ of
  % cardinality $q\cdot \frac{q^{kt}-1}{q^k-1}-s$, where $s\le
  % q-1$. Since each codeword is contained in $\gauss{k(t-1)+1}{1}{q}$
  % hyperplanes and the number of hyperplanes is given by
  % $\gauss{kt+1}{1}{q}$, the average number of codewords per hyperplane
  % is
  % $\frac{|\mathcal{C}|\cdot
  %   \gauss{k(t-1)+1}{1}{q}}{\gauss{kt+1}{1}{q}}>q\cdot
  % \frac{q^{k(t-1)}-1}{q^k-1}$.  Thus, there exists a hyperplane
  % containing at least $\alpha:=q\cdot \frac{q^{k(t-1)}-1}{q^k-1}+1$
  % codewords.  Since
  % $\alpha\cdot\gauss{k}{1}{q}+\left(|\mathcal{C}|-\alpha\right)\cdot
  % \gauss{k-1}{1}{q} \le \gauss{kt}{1}{q}$, we have
  % $ |\mathcal{C}|\le q\cdot \frac{q^{kt}-1}{q^k-1}-(q-1) $.
\qed\end{proof}

Theorem~\ref{thm:lb_def} has the following immediate corollary,
established by Beutelspacher in 1975, which settles the case $r=1$
completely.
% and in particular determines, together with
% Theorem~\ref{thm_spread}, the size of all maximal partial line spreads
% in $\F_q^v$.
%
% Note that $v\equiv {r\pmod k}$, so
% that the residue class $r$ seems to play a major role. Besides the
% case of $r=0$, see Theorem~\ref{thm_spread}, the next case $r=1$ is
% solved in full generality in 1975 by Beutelspacher:
\begin{corollary}[{\cite[Th.~4.1]{beutelspacher1975partial}}; see also
  \cite{hong1972general} for the special case $q=2$]
  \label{cor:r=1}
  %% essentially Theorem 4.1, see also Storme, Eisfeld lecture notes
  For integers $k\geq 2$ and $v=tk+1$ with $t\geq 1$
  we have
  $\smax_q(v,2k;k)=1+\sum_{s=1}^{t-1}q^{sk+1}$,\footnote{\label{fn:written}
    This can also written as $\smax_q(v,2k;k)=
    q^1\cdot
    \frac{q^{v-1}-1}{q^k-1}-q+1=\frac{q^v-q^{k+1}+q^k-1}{q^k-1}$.}
  with corresponding deficiency $\deficiency=q-1$.\footnote{The
    corresponding number of holes is $q^k$.}
\end{corollary}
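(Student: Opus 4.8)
The plan is to sandwich the deficiency $\deficiency$ between the lower bound coming from the construction in Theorem~\ref{thm:multicomponent} and the upper bound coming from the hyperplane-averaging argument in Theorem~\ref{thm:lb_def}, and to observe that for $r=1$ these two bounds coincide. So this is genuinely a corollary: essentially no new work is required beyond assembling the two theorems already proved.

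First I would record the lower bound for $\smax_q(v,2k;k)$. For $v=tk+1$ with $t\geq 2$ and $k\geq 2$ the hypotheses of Theorem~\ref{thm:multicomponent} hold with $r=1$, producing a partial $k$-spread of size $1+\sum_{s=1}^{t-1}q^{sk+1}$; equivalently $\deficiency\leq q^r-1=q-1$. Second, since $v=tk+1$ and $k\geq 2$ we have $k\nmid v$, so Theorem~\ref{thm:lb_def} applies and gives $\deficiency\geq q-1$. Combining the two inequalities yields $\deficiency=q-1$, hence
\[
  \smax_q(v,2k;k)=\sum_{s=0}^{t-1}q^{sk+1}-(q-1)
  =q+\sum_{s=1}^{t-1}q^{sk+1}-(q-1)
  =1+\sum_{s=1}^{t-1}q^{sk+1},
\]
which is the asserted value. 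The only boundary issue is the degenerate case $t=1$, i.e.\ $v=k+1$: here the two theorems above were stated under the standing assumption $t\geq 2$, but the claimed formula evaluates to $1+\sum_{s=1}^{0}q^{sk+1}=1$, which is exactly the trivial value $\smax_q(k+1,2k;k)=1$ noted earlier, so the case is immediate.

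The point to stress is that there is no real obstacle: the content is simply that the generic bounds $\deficiency\geq q-1$ (Theorem~\ref{thm:lb_def}) and $\deficiency\leq q^r-1$ (Theorem~\ref{thm:multicomponent}) pinch together precisely when $r=1$. As a consistency check matching the footnoted statement, the corresponding number of holes is $\deficiency\cdot\frac{q^k-1}{q-1}+\frac{q^r-1}{q-1}=(q^k-1)+1=q^k$.
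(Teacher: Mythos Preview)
Your proof is correct and matches the paper's approach exactly: the paper presents this as an immediate corollary of Theorem~\ref{thm:lb_def} (giving $\deficiency\geq q-1$) combined with the construction of Theorem~\ref{thm:multicomponent} (giving $\deficiency\leq q^r-1=q-1$ when $r=1$). Your explicit handling of the degenerate case $t=1$ is a nice addition that the paper leaves implicit.
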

In particular, maximal partial line spreads in $\F_q^v$, $v$ odd (the
case where no line spreads exist), have
size $q^{v-2}+q^{v-4}+\dots+q^3+1$, deficiency $q-1$, and $q^2$ holes.

In his original proof of the corollary Beutelspacher considered the set
of holes $N$ and the average number of holes per hyperplane, which is
less than the total number of holes divided by $q$. An important
insight
  % \marginpar{Der Beweis von Theorem~\ref{thm:lb_def} zeigt
  % eigentlich, dass die Beutelspachersche Einsicht gar nicht so crucial
  % ist. Die Bemerkung sollte man also vielleicht noch anpassen.} 
was the relation $\card{N}\equiv\card{(H\cap N)}\pmod{q^{k-1}}$ for each
hyperplane $H$, i.e., the number of holes per hyperplane satisfies a
certain modulo constraint. We will see this concept in full generality
in Section~\ref{section_q_r_divisible}. In terms of integer linear
programming, the upper bound is obtained by an integer rounding
cut. The construction in \cite[Theorem 4.2]{beutelspacher1975partial}
recursively uses arbitrary $k'$-spreads, so that it is more general
than the one of Theorem~\ref{thm:multicomponent}.

% \begin{corollary}
%   $\smax_2(2m,4;2)=\frac{2^{2m}-1}{3}$ and
%   $\smax_2(2m+1,4;2)=\frac{2^{2m+1}-5}{3}$ for all $m\in\mathbb{N}_{\ge
%   2}$. 

  %% \smax_2(3,4;2)=1 ist auch durch die Formeln abgedeckt.
  %%\begin{itemize}
  %%  \item[(a)] \quad$\!\!\!\smax_2(2m,4;2)=\frac{2^{2m}-1}{3}$;
  %%  \item[(b)] \,\,$\smax_2(2m+1,4;2)=\frac{2^{2m+1}-5}{3}$.
  %%\end{itemize}
% \end{corollary}
  
For a long time the best known upper bound on $\smax_q(v,2k;k)$, i.e., the best
known lower bound on $\deficiency$, was the one obtained by Drake and
Freeman in 1979:
\begin{theorem}[Corollary 8 in \cite{nets_and_spreads}]
  \label{thm_partial_spread_4}
  % If $v=kt+r$ with $0<r<k$, then 
  % $$
  %   \smax_q(v,2k;k)\le \sum_{i=0}^{t-1} q^{ik+r} -\left\lfloor\theta\right\rfloor-1
  %   =q^r\cdot \frac{q^{kt}-1}{q^k-1}-\left\lfloor\theta\right\rfloor-1,
  % $$
  The deficiency of a maximal partial $k$-spread in $\F_q^v$ 
  is at least $\lfloor\theta\rfloor+1=\lceil\theta\rceil$,\footnote{Assuming 
  $1+4q^k(q^k-q^r)=1+4q^{k+r}(q^{k-r}-1)=(2z-1)^2=1+4z(z-1)$ for some integer $z>1$ implies 
  $q^{k+r}\mid z$ or $q^{k+r}\mid z-1$, so that $z\ge q^{k+r}$, which is impossible for $(k,r)\neq (1,0)$. Thus, 
  $2\theta\notin \mathbb{Z}$, so that $\theta\notin\mathbb{Z}$ and $\lfloor\theta\rfloor+1=\lceil\theta\rceil$.}
  where $2\theta=\sqrt{1+4q^k(q^k-q^r)}-(2q^k-2q^r+1)$.
  % \marginpar{Man
  %   k\"onnte $\lfloor\theta\rfloor+1$ durch $\lceil\theta\rceil$
  %   ersetzen, weil $\theta$ als Summe aus einer halbzahligen
  %   und einer ganz algebraischen Zahl keine ganze Zahl sein kann.}
\end{theorem}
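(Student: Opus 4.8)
The plan is to push the hyperplane-averaging technique of Theorem~\ref{thm:lb_def} one step further. The key insight there was the linear constraint \eqref{eq:lb_def_p1}, which bounds $\card{\mathcal{S}}$ from above in terms of the maximal number $\alpha_{\max}$ of spread members contained in a single hyperplane, together with an averaging argument producing a hyperplane with many members. To get the stronger Drake--Freeman bound, I would instead combine \emph{two} counting identities: one counting incidences (spread member, hyperplane) and one counting incidences weighted quadratically, i.e.\ counting triples (hyperplane, member, member) or equivalently computing $\sum_H \alpha_H^2$ where $\alpha_H = \card{\{S'\in\mathcal{S}; S'\subseteq H\}}$. The first moment is the one already computed in \eqref{eq:Haverage}-style, giving $\sum_H \alpha_H = \card{\mathcal{S}}\cdot\gauss{tk+r-k}{1}{q}$ (the number of hyperplanes through a fixed $k$-subspace). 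For the second moment, note that two distinct members $S',S''$ of a partial $k$-spread intersect trivially, so $S'+S''$ has dimension $2k$ and the number of hyperplanes containing both is $\gauss{tk+r-2k}{1}{q}$ when $2k\le tk+r$ (and $0$ otherwise); this yields $\sum_H \alpha_H(\alpha_H-1) = \card{\mathcal{S}}(\card{\mathcal{S}}-1)\gauss{tk+r-2k}{1}{q}$.

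Having both moments, I would apply the standard variance/Cauchy--Schwarz inequality: there exists a hyperplane $H$ with $\alpha_H \ge \bar\alpha := (\sum_H\alpha_H)/(\text{number of hyperplanes})$, but more usefully one extracts a hyperplane whose $\alpha_H$ is at least the ratio $\big(\sum_H\alpha_H^2\big)/\big(\sum_H\alpha_H\big)$, which is genuinely larger and incorporates the second-moment data. Plugging this lower bound for $\alpha_{\max}$ into the hyperplane point-count constraint \eqref{eq:lb_def_p1} turns the whole thing into a single inequality involving $\card{\mathcal{S}}$ quadratically. Writing $\card{\mathcal{S}} = \sum_{s=0}^{t-1}q^{sk+r} - \deficiency = \frac{q^v-q^r}{q^k-1} - \deficiency + \lfloor(q^r-1)/(q^k-1)\rfloor$ and simplifying, the inequality should reduce to a quadratic in $\deficiency$ (or equivalently in the number of holes), and its larger admissible root is exactly $\theta$ as defined in the statement, with $2\theta = \sqrt{1+4q^k(q^k-q^r)} - (2q^k-2q^r+1)$. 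Since $\deficiency$ is an integer and $\theta\notin\mathbb{Z}$ by the footnote's argument, one concludes $\deficiency \ge \lceil\theta\rceil = \lfloor\theta\rfloor+1$.

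The main obstacle I expect is the algebra of identifying the root of the resulting quadratic with the stated $\theta$: the second-moment count introduces Gaussian binomials $\gauss{tk+r-2k}{1}{q}$ and $\gauss{tk+r-k}{1}{q}$, and one must massage the inequality so that the dependence on $t$ (equivalently on $v$) disappears and only $q^k$ and $q^r$ survive in the quadratic. This is the cleanness that makes the Drake--Freeman bound depend only on $k$ and $r$, not on $v$, and getting the bookkeeping exactly right — in particular handling the hole term $\frac{q^r-1}{q-1}$ and the floor in \eqref{eq:1st_ub} correctly — is where care is needed. A secondary subtlety is ensuring the extracted hyperplane actually satisfies $\alpha_H \ge (\sum\alpha_H^2)/(\sum\alpha_H)$ with the right direction of inequality; this is just the observation that a weighted average of the $\alpha_H$ with weights $\alpha_H$ itself is at most the maximum, but it must be set up so the subsequent substitution into \eqref{eq:lb_def_p1} (which is decreasing in $\alpha$) goes the right way. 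Once these are in place, the rest is the elementary-number-theory check already sketched in the footnote showing $2\theta\notin\mathbb{Z}$, hence $\theta\notin\mathbb{Z}$.
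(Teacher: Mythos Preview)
Your plan has a genuine gap: the inequality you obtain by combining the second-moment bound $\alpha_{\max}\ge\bigl(\sum_H\alpha_H^2\bigr)/\bigl(\sum_H\alpha_H\bigr)$ with the hyperplane constraint \eqref{eq:lb_def_p1} is \emph{vacuous}. Writing $h_H$ for the number of holes in $H$, one has $h_H=A(m)-q^{k-1}\alpha_H$ with $A(m)=\gauss{v-1}{1}{q}-m\gauss{k-1}{1}{q}$, and your combined inequality $\sum_H\alpha_H^2\le C'(m)\sum_H\alpha_H$ (where $C'(m)=A(m)/q^{k-1}$ is the maximal admissible $\alpha_H$) rewrites as $\sum_H h_H\alpha_H\ge 0$. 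But double-counting triples $(P,S,H)$ with $P$ a hole, $S\in\mathcal{S}$, and $P,S\subseteq H$ gives exactly
\[
\sum_H h_H\alpha_H \;=\; m\cdot h\cdot\gauss{v-k-1}{1}{q},
\]
where $h$ is the total number of holes. So your quadratic collapses to $mh\ge 0$ and yields nothing beyond the trivial bound \eqref{eq:1st_ub}. (For a concrete check, take $q=2$, $v=8$, $k=3$: your inequality becomes $3825m-105m^2\ge 0$, i.e.\ $m\le 36$, which is just $\lfloor 255/7\rfloor$.) The step ``is genuinely larger and incorporates the second-moment data'' is the illusion: the two moments of $\alpha_H$ you compute are determined by $m$ alone, and feeding them back into the linear constraint $h_H\ge 0$ cannot produce anything new, since $h_H\ge 0$ and $\alpha_H\ge 0$ already force each summand $h_H\alpha_H\ge 0$.

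What is actually needed is an extra structural fact making a sharper quadratic nonnegative term-by-term. The paper's route (Theorem~\ref{main_theorem_2} with $y=k$, via Lemma~\ref{lemma_hyperplane_types_arithmetic_progression_c}) uses the $q^{k-1}$-divisibility of the hole counts: since $h_H\equiv h\pmod{q^{k-1}}$, one has $(h_H-a)(h_H-a-q^{k-1})\ge 0$ for suitable consecutive admissible values, and summing this over $H$ gives a genuine quadratic constraint whose root is the stated $\theta$. The original Drake--Freeman argument achieves the same end by passing to a group-constructible net and invoking the Bose--Bush variance bound for orthogonal arrays---a second-moment argument, yes, but applied to the net structure (points and parallel classes), not to hyperplane--spread incidences. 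Your first and second moments are the right objects, but you must pair them with divisibility (or switch to the net model) rather than with the bare inequality $h_H\ge 0$.
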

%%In their proof 
The authors concluded from the existence of a partial spread the
existence of a (group constructible) $(s,r,\mu)$-net and applied
\cite[Theorem~1B]{bose1952orthogonal}---a necessary existence
criterion formulated for orthogonal arrays of strength $2$ by Bose and
Bush in 1952. The underlying proof technique can be further traced
back to \cite{plackett1946design} and is strongly related to the
classical second-order Bonferroni Inequality
\cite{bonferroni1936teoria,galambos1977bonferroni}; %%,galambos1996bonferroni}
%%in Probability Theory 
see also 
\cite[Section 2.5]{honold2015constructions} for an application to
bounds for subspace codes. 
  
Given Theorem~\ref{thm_spread} and Corollary~\ref{cor:r=1}, the first
open binary case is $\smax_2(8,6;3)$.  The construction from
Theorem~\ref{thm:multicomponent} gives a partial spread of cardinality
$33$, while Theorem~\ref{thm_partial_spread_4} implies an upper bound
of $34$. As already mentioned, in 2010 El-Zanati et
al. \cite{spreadsk3} found a sporadic partial plane spread in $\F_2^8$
of cardinality $34$ by a computer search. Together with the following
easy lemma, this completely answers the situation for partial plane
spreads in $\F_2^v$; see Corollary~\ref{cor_partial_spread_k3} below.

\begin{lemma}
  \label{lemma_extend_construction}
  For fixed $q$, $k$ and $r$ the deficiency $\deficiency$ is a
  non-increasing function of $v=kt+r$.  
  % If $\smax_q(kt'+r,2k;k)\ge q^r\cdot\frac{q^{kt'}-1}{q^k-1}-s$ for some
  % integer $s$, then
  % $\smax_q(kt+r,2k;k)\ge q^r\cdot\frac{q^{kt}-1}{q^k-1}-s$ for all
  % $t\ge t'$.
\end{lemma}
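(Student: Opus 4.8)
The plan is to show that a maximal partial $k$-spread in $\F_q^{v}$, where $v = kt+r$ with $1 \le r \le k-1$, gives rise to a partial $k$-spread in $\F_q^{v-k}$ of deficiency at most that of the original one. Equivalently, I want to ``peel off'' one layer of a large partial spread in the bigger space to land in the smaller space without increasing the deficiency. Since the deficiency $\deficiency(v)$ is defined by $\smax_q(v,2k;k) = \sum_{s=0}^{t-1} q^{sk+r} - \deficiency(v)$, and the leading-term structure $\sum_{s=0}^{t-1} q^{sk+r} = q^{r} + q^{k}\bigl(\sum_{s=0}^{t-2} q^{sk+r}\bigr)$ relates the two ``spread capacities'' cleanly, a spread of deficiency $\deficiency$ in $\F_q^{v-k}$ should lift to one of deficiency $\deficiency$ in $\F_q^{v}$, which already proves $\deficiency(v) \le \deficiency(v-k)$.

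Concretely, first I would take a maximal partial $k$-spread $\mathcal{S}'$ in $\F_q^{v-k}$, of size $\smax_q(v-k,2k;k) = \sum_{s=0}^{t-2} q^{sk+r} - \deficiency(v-k)$. Identify $\F_q^{v-k}$ with the subspace $S = \{\vek{x}\in\F_q^v : x_1=\dots=x_k=0\}$. By Lemma~\ref{lma:mrd} there is a partial $k$-spread $\mathcal{S}_1$ of size $q^{v-k}$ whose members cover exactly the points of $\F_q^{v}$ outside $S$; moreover each member of $\mathcal{S}_1$ has pivot set $\{1,\dots,k\}$, while each member of $\mathcal{S}'$ lies inside $S$ and hence has pivots only among $\{k+1,\dots,v\}$. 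So the pivot sets are disjoint, and by Lemma~\ref{lma:disjoint} every member of $\mathcal{S}_1$ is disjoint from every member of $\mathcal{S}'$. Hence $\mathcal{S} := \mathcal{S}_1 \cup \mathcal{S}'$ is a partial $k$-spread in $\F_q^v$ of size
\begin{equation*}
  q^{v-k} + \sum_{s=0}^{t-2} q^{sk+r} - \deficiency(v-k)
  = \sum_{s=0}^{t-1} q^{sk+r} - \deficiency(v-k),
\end{equation*}
where I used $q^{v-k} = q^{(t-1)k+r}$ together with the index shift $q^{sk+r}\mapsto q^{(s+1)k+r}$ on the sum. This shows $\smax_q(v,2k;k) \ge \sum_{s=0}^{t-1} q^{sk+r} - \deficiency(v-k)$, i.e.\ $\deficiency(v) \le \deficiency(v-k)$, which is exactly the claimed monotonicity.

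The step I expect to be the only real point requiring care is the bookkeeping of pivot sets: I must make sure that the partial spread of Lemma~\ref{lma:mrd} really has all its members with pivot set $\{1,\dots,k\}$ (which it does, being a lifted code $(\imat_k|\mat{B})$) and that embedding $\mathcal{S}'$ into $S$ really places all its pivots in $\{k+1,\dots,v\}$ (which follows because vectors of $S$ vanish on the first $k$ coordinates). Everything else — the size count and the arithmetic identity $q^{(t-1)k+r} + \sum_{s=0}^{t-2} q^{sk+r} = \sum_{s=0}^{t-1} q^{sk+r}$ — is routine. One could alternatively phrase the whole argument as a single induction step inside the proof of Theorem~\ref{thm:multicomponent}, but the stand-alone formulation above makes the monotonicity statement transparent. \qed
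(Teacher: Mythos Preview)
Your concrete proof (second paragraph onward) is correct and is essentially the paper's own argument: embed a maximal partial $k$-spread of $\F_q^{v-k}$ into the last $v-k$ coordinates of $\F_q^v$, adjoin the lifted MRD layer of size $q^{v-k}$ from Lemma~\ref{lma:mrd}, and compare sizes to get $\deficiency(v)\le\deficiency(v-k)$. One cosmetic remark: your opening sentence describes the plan in the wrong direction (going from $\F_q^v$ down to $\F_q^{v-k}$), which would not prove the monotonicity; you then self-correct at the end of the paragraph and in the actual argument, so just delete or rewrite that first sentence.
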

\begin{proof}
  Let $\mathcal{S}$ be a maximal partial $k$-spread in $\F_q^{tk+r}$
  and $\deficiency$ its deficiency, so that
  $\smax_q(tk+r,2k;k)=\sum_{s=0}^{t-1}q^{sk+r}-\deficiency$.  We can
  embed $\mathcal{S}$ into $\F_q^{(t+1)k+r}$ by prepending $k$ zeros
  to each codeword. Then Lemma~\ref{lma:mrd} can be applied and yields
  a partial $k$-spread $\mathcal{S}'$ in $\F_q^{(t+1)k+r}$ of size
  $q^{tk+r}$, whose codewords are disjoint from those in $\mathcal{S}$.
  % (since $\mathcal{S}$ is contained in
  % $S'=\{\vek{x}\in\F_q^{(t+1)k+r};x_1=\dots=x_k=0\}\cong\F_q^{tk+r})$. 
  This implies
  $\smax_q((t+1)k+r,2k;k)\geq\card{\mathcal{S}\cup\mathcal{S}'}
  =\sum_{s=0}^{t}q^{sk+r}-\deficiency$, and hence the deficiency
  $\deficiency'$ of a maximal partial $k$-spread in $\F_q^{(t+1)k+r}$
  satisfies $\deficiency'\leq\deficiency$.
  % such that the non-zero entries of
  % the pivot vectors of the corresponding codewords are contained in
  % the last $kt'+r$ coordinates. So, we can apply the construction of
  % Theorem~\ref{thm:multicomponent} using only the first $k(t-t')$
  % coordinates and append the embed code $\mathcal{C}$, see
  % Lemma~\ref{lemma_different_ef}.
\qed\end{proof}

So, any improvement of the best known lower bound for a single
parameter case gives rise to an infinite series of improved lower
bounds.  Unfortunately, so far, the sporadic construction in
\cite{spreadsk3} is the only known example being strictly superior to
the general construction of Theorem~\ref{thm:multicomponent}.

\begin{corollary}
  \label{cor_partial_spread_k3}
  For each integer $m\ge 2$ we have
  $\smax_2(3m,6;3)=\frac{2^{3m}-1}{7}$,
  $\smax_2(3m+1,6;3)=\frac{2^{3m+1}-9}{7}$, and
  $\smax_2(3m+2,6;3)=\frac{2^{3m+2}-18}{7}$. The corresponding
    deficiencies are $0$, $1$ and $2$, respectively.
\end{corollary}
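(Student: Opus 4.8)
The plan is to assemble Corollary~\ref{cor_partial_spread_k3} from the pieces already in place, treating the three residue classes $r=0,1,2$ separately.

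For $r=0$ (i.e.\ $v=3m$), Theorem~\ref{thm_spread} directly gives the existence of a $3$-spread, so $\smax_2(3m,6;3)=\frac{2^{3m}-1}{2^3-1}=\frac{2^{3m}-1}{7}$ and $\deficiency=0$; nothing further is needed.

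For $r=1$ (i.e.\ $v=3m+1$, so $t=m$, $k=3$), I would simply invoke Corollary~\ref{cor:r=1}, which asserts $\smax_q(tk+1,2k;k)=1+\sum_{s=1}^{t-1}q^{sk+1}$ with deficiency $q-1$. Specializing $q=2$, $k=3$, $t=m$ gives deficiency $1$ and, using the closed form in footnote~\ref{fn:written}, $\smax_2(3m+1,6;3)=\frac{2^{3m+1}-2^4+2^3-1}{7}=\frac{2^{3m+1}-9}{7}$. This case is entirely routine once Corollary~\ref{cor:r=1} is granted.

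The only genuinely substantive case is $r=2$ (i.e.\ $v=3m+2$, $t=m$, $k=3$). Here the lower bound $\smax_2(3m+2,6;3)\geq\sum_{s=0}^{m-1}2^{3s+2}-2$ comes from combining the sporadic partial plane spread of size $34$ in $\F_2^8$ found in \cite{spreadsk3} (which has deficiency $2^2-2=2$, and note $m=2$, $v=8$ is the anchor case) with Lemma~\ref{lemma_extend_construction}, which propagates deficiency $\deficiency=2$ to all larger $v$ in the same residue class; equivalently one directly uses Lemma~\ref{lma:mrd} repeatedly to extend the size-$34$ example, as in the proof of Lemma~\ref{lemma_extend_construction}. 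For the matching upper bound I would apply Theorem~\ref{thm_partial_spread_4} with $q=2$, $k=3$, $r=2$: then $1+4q^k(q^k-q^r)=1+4\cdot 8\cdot(8-4)=129$, and $2\theta=\sqrt{129}-(16-8+1)=\sqrt{129}-9$, so $\theta=(\sqrt{129}-9)/2\approx 1.179$, giving $\lceil\theta\rceil=2$; hence $\deficiency\geq 2$. Thus $\deficiency=2$ exactly, and the closed form follows from $\smax_2(3m+2,6;3)=\sum_{s=0}^{m-1}2^{3s+2}-2=2^2\cdot\frac{2^{3m}-1}{7}-2=\frac{2^{3m+2}-4-14}{7}=\frac{2^{3m+2}-18}{7}$. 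The main obstacle is purely bookkeeping: making sure the Drake--Freeman bound in Theorem~\ref{thm_partial_spread_4} is evaluated correctly (the arithmetic of $\sqrt{129}$ and the ceiling) and that the arithmetic reconciling the three summation formulas with the claimed fractions $\frac{2^{3m}-1}{7}$, $\frac{2^{3m+1}-9}{7}$, $\frac{2^{3m+2}-18}{7}$ checks out in each residue class; there is no conceptual difficulty once the cited theorems are in hand.
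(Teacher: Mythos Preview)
Your proposal is correct and matches the argument the paper sketches in the text preceding the corollary: Theorem~\ref{thm_spread} for $r=0$, Corollary~\ref{cor:r=1} for $r=1$, and for $r=2$ the sporadic size-$34$ example in $\F_2^8$ combined with Lemma~\ref{lemma_extend_construction} for the lower bound and Theorem~\ref{thm_partial_spread_4} (whose value of $\theta$ depends only on $q,k,r$, hence applies to all $m\geq 2$) for the matching upper bound. The arithmetic checks you perform are exactly what is needed.
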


Very recently, the case $q=r=2$ was completely settled. For $k=3$ the answer
is given in the preceding corollary, and for $k\geq 4$ by the following
\begin{theorem}[{\cite[Theorem 5]{kurzspreads}}]
  \label{thm_spread_exact_value_3}
  For integers $k\ge 4$ and $v=tk+2$ with $t\geq 1$ we have
  $\deficiency=3$ and 
  $\smax_2(kt+2,2k;k)=1+\sum_{s=1}^{t-1}2^{tk+2}=\frac{2^{kt+2}-3\cdot
    2^{k}-1}{2^k-1}$.\footnote{Thus in all these cases $\deficiency=2^2-1$
    and the partial spreads of Theorem~\ref{thm:multicomponent} are
    maximal. This notably differs from the case
    $k=3$.}
  %% $t\in\mathbb{N}_{\ge 2}$
  %% and $k\in\mathbb{N}_{\ge 4}$.
\end{theorem}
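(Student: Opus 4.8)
The plan is to match the lower bound $\smax_2(kt+2,2k;k)\ge 1+\sum_{s=1}^{t-1}2^{sk+2}$, which is Theorem~\ref{thm:multicomponent} specialised to $q=2$, $r=2$ (and which already gives $\deficiency\le 2^r-1=3$), with the lower bound $\deficiency\ge 3$ on the deficiency. Two small deficiencies are cheaply excluded: $\deficiency=0$ is impossible because $k\nmid kt+2$ (Theorem~\ref{thm_spread}), and the Drake--Freeman bound (Theorem~\ref{thm_partial_spread_4}) gives $\deficiency\ge 2$ here --- a one-line estimate shows that its parameter $\theta$ satisfies $1<\theta<2$ whenever $2^k>6$, so $\lceil\theta\rceil=2$. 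Hence everything reduces to excluding $\deficiency=2$; since the case $t=1$ is trivial ($\smax_2(k+2,2k;k)=1$, $\deficiency=3$), assume $t\ge 2$.

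Suppose, for contradiction, that $\mathcal{S}\subseteq\F_2^v$, $v=kt+2$, is a partial $k$-spread of deficiency $2$, i.e.\ $|\mathcal{S}|=2+\sum_{s=1}^{t-1}2^{sk+2}$, and let $N$ be its set of holes. As in the point count behind \eqref{eq:lb_def_p1}, every hyperplane $H$ incident with $a_H$ members of $\mathcal{S}$ satisfies $|N|-|N\cap H|=2^{k-1}\bigl(2^{v-k}+a_H-|\mathcal{S}|\bigr)$, so $N$ is a $2^{k-1}$-divisible set of points, of cardinality $|N|=2(2^k-1)+3=2^{k+1}+1$. (This is the prototype of the general phenomenon, developed systematically in Section~\ref{section_q_r_divisible}, that hole sets of vector space partitions are divisible.) The theorem now reduces to the claim that, \emph{for $k\ge 4$, no $2^{k-1}$-divisible set --- equivalently, no projective $2^{k-1}$-divisible binary code --- of $2^{k+1}+1$ points exists}.

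To establish this I would invoke the non-existence results for $q^r$-divisible sets from Section~\ref{sec_non_existence} with $q=2$, $r=k-1$, $n=2^{k+1}+1$, which rest on the interplay of the divisibility congruences with the standard counting identities for projective codes (and with the downward iteration: the holes of $N$ lying in a fixed hyperplane again form a divisible set, in a space of one smaller dimension). The leverage is that $n\equiv 1\pmod{2^{k-1}}$, so $2^{k-1}$-divisibility confines $|N\cap H|$ to the five values $1$, $2^{k-1}+1$, $2^k+1$, $3\cdot 2^{k-1}+1$, $2^{k+1}+1$, and this short spectrum, fed into the counting identities, turns out to be too rigid to be realised once $k\ge 4$. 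One concrete route is to check that $2^{k+1}+1$ is not a non-negative integer combination of the cardinalities $2^k-1,\,2^k,\,2^{k+1}-1,\,2^{k+1},\dots$ of the ``atomic'' $2^{k-1}$-divisible sets, and that no exotic such set of this size exists. With the reduction above this yields $\deficiency=3$ and the displayed value of $\smax_2(kt+2,2k;k)$.

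The hard part is precisely this last claim, and its subtlety is exposed by the fact that the analogous statement is \emph{false} for $k=3$: there the putative hole set has cardinality $2^{k+1}+1=17$, and a $4$-divisible set of $17$ points genuinely exists --- it is realised by the sporadic maximal partial plane spread of size $34$ in $\F_2^8$ underlying Corollary~\ref{cor_partial_spread_k3}. So the argument cannot be formal in $q$ and $r$ alone, and the real content is to pin down why a length of $2^{k+1}+1$ stops supporting a $2^{k-1}$-divisible set once $k\ge 4$ --- essentially the tension between the large divisibility constant $2^{k-1}$ and the very short list of hyperplane intersection numbers it forces.
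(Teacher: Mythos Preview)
Your reduction is correct and matches the paper's systematic approach: the lower bound is Theorem~\ref{thm:multicomponent}, and the upper bound amounts (via Theorem~\ref{thm:psp-vsp}(ii)) to the non-existence of a $2^{k-1}$-divisible point set of cardinality $2^{k+1}+1$ for $k\ge 4$. The detour through Theorem~\ref{thm_spread} and Drake--Freeman to dispose of $\deficiency\in\{0,1\}$ is harmless but unnecessary: the corresponding hole-set sizes $3$ and $2^k+2$ are excluded by the very same divisible-set argument.

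The gap is that you stop at ``I would invoke the non-existence results'' and never actually close the case. The paper does this in two ways. First, and most directly, it remarks right after Theorem~\ref{thm_partial_spread_asymptotic} that setting $q=r=2$ there covers Theorem~\ref{thm_spread_exact_value_3}: the hypothesis $k>\gauss{r}{1}{q}$ becomes $k>3$, and the conclusion is $\deficiency=q^r-1=3$. Second, at the level of divisible sets, Theorem~\ref{thm_exclusion_q_r} with $q=2$ and $r=k-1$ applies because $2^{k+1}+1\le (k-1)\,2^{k}$ for $k\ge 4$, and one checks in a line that $2^{k+1}+1$ is not of the form $a(2^k-1)+b\cdot 2^k$ with $a,b\in\mathbb{N}_0$. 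This is precisely your ``non-negative integer combination'' remark, but the point is that Theorem~\ref{thm_exclusion_q_r} is what certifies that no ``exotic'' configuration escapes this test in the stated range; without it your sketch is not a proof. Your observation that the argument must break at $k=3$ is exactly right and pinpoints why the range hypothesis $n\le rq^{r+1}$ in Theorem~\ref{thm_exclusion_q_r} matters: for $k=3$ one has $n=17>2\cdot 8$.

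As a side remark, the paper also notes that the original proof in \cite{kurzspreads} proceeds ``very similar to'' Theorem~\ref{thm:lb_def}, i.e.\ by a direct iterated hyperplane-averaging argument rather than through the divisible-set abstraction; so there is a second, more hands-on route that you did not pursue.
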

% As $\smax_q(k+2,2k;k)=1$ for $k\ge 2$, the assumption $v\ge 2k+2$ is
% no restriction. The case $k=3$ is covered in
% Corollary~\ref{cor_partial_spread_k3}. For $k=1,2$ the remainder of
% $v$ is strictly smaller than $2$.  In other words, the binary case
% $v\equiv 2\pmod k$ is completely resolved.  

The technique used to prove this theorem is very similar to the one
presented in the proof of Theorem~\ref{thm:lb_def}.

\begin{corollary}
  \label{cor_spread_k_4}
  We have $\smax_2(4m,8;4)=\frac{2^{4m}-1}{15}$.
  $\smax_2(4m+1,8;4)=\frac{2^{4m+1}-17}{15}$,
  $\smax_2(4m+2,8;4)=\frac{2^{4m+2}-49}{15}$, and
  $\frac{2^{4m+3}-113}{15}\le
  \smax_2(4m+3,8;4)\le\frac{2^{4m+3}-53}{15}$ for all $m\ge 2$. The
  corresponding deficiencies are $0$, $1$, $3$ and
  $3\leq\deficiency\leq 7$, respectively
\end{corollary}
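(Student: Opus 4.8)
The plan is to dispose of the four residue classes $r=v\bmod 4\in\{0,1,2,3\}$ one at a time, in each case invoking the appropriate earlier result with $q=2$ and $k=4$; the remaining work is purely computational, namely rewriting each abstract formula in the closed form stated here by evaluating a finite geometric series. Throughout one writes $v=4m+r$, so that $t=m$, and the hypothesis $m\geq 2$ (equivalently $t\geq 2$) ensures that all the quoted statements genuinely apply --- in particular Theorem~\ref{thm:multicomponent} and Theorem~\ref{thm_partial_spread_4}, whose standing assumptions include $t\geq 2$ (note that the $r=3$ upper bound below really does fail for $m=1$, where $\smax_2(7,8;4)=1$).

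For $r=0$, i.e.\ $v=4m$, I would simply note that $4\mid v$, so Theorem~\ref{thm_spread} yields a $4$-spread and $\smax_2(4m,8;4)$ attains the trivial bound $(2^{4m}-1)/(2^4-1)=(2^{4m}-1)/15$, with deficiency $0$. For $r=1$, i.e.\ $v=4m+1$, I would apply Corollary~\ref{cor:r=1} (valid since $k=4\geq 2$ and $t=m$), which gives deficiency $q-1=1$ and $\smax_2(4m+1,8;4)=1+\sum_{s=1}^{m-1}2^{4s+1}$; summing the geometric series ($\sum_{s=1}^{m-1}2^{4s+1}=\tfrac{2^{4m+1}-32}{15}$) turns this into $\tfrac{2^{4m+1}-17}{15}$. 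For $r=2$, i.e.\ $v=4m+2$, since $k=4\geq 4$ I would invoke Theorem~\ref{thm_spread_exact_value_3} directly, obtaining deficiency $2^2-1=3$ and $\smax_2(4m+2,8;4)=\tfrac{2^{4m+2}-3\cdot 2^4-1}{2^4-1}=\tfrac{2^{4m+2}-49}{15}$.

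The case $r=3$, i.e.\ $v=4m+3$, is the only one where I expect to end up with a gap rather than an exact value, and its two bounds come from different sources. For the lower bound I would use the multicomponent construction of Theorem~\ref{thm:multicomponent}, which furnishes a partial $4$-spread of size $1+\sum_{s=1}^{m-1}2^{4s+3}=1+\tfrac{2^{4m+3}-128}{15}=\tfrac{2^{4m+3}-113}{15}$, i.e.\ deficiency at most $2^r-1=7$. For the upper bound I would invoke the Drake--Freeman bound of Theorem~\ref{thm_partial_spread_4} with $q=2$, $k=4$, $r=3$: one computes $1+4q^k(q^k-q^r)=1+4\cdot 16\cdot 8=513$, so that $2\theta=\sqrt{513}-17\in(5,6)$, hence $\theta\in(2,3)$ and $\lceil\theta\rceil=3$; thus $\deficiency\geq 3$, and plugging this into $\smax_2(4m+3,8;4)=\sum_{s=0}^{m-1}2^{4s+3}-\deficiency$ together with $\sum_{s=0}^{m-1}2^{4s+3}=\tfrac{2^{4m+3}-8}{15}$ yields the upper bound $\tfrac{2^{4m+3}-53}{15}$.

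I do not anticipate any genuine obstacle: all the heavy lifting is already done in the cited results, and what remains is elementary. The only things that need a modicum of care are the index bookkeeping --- aligning the ranges $1\le s\le t-1$ (with $t=m$) that appear in the cited constructions with the displayed closed forms, and separately the shift to $0\le s\le t-1$ for the trivial-bound side --- and confirming that $\theta$ lies strictly between $2$ and $3$, so that the Drake--Freeman ceiling contributes exactly $3$ to the deficiency and not $2$.
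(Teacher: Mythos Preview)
Your proposal is correct and matches the paper's (implicit) approach: the corollary is stated without proof immediately after Theorem~\ref{thm_spread_exact_value_3}, and the intended derivation is precisely the case split you describe, invoking Theorem~\ref{thm_spread}, Corollary~\ref{cor:r=1}, Theorem~\ref{thm_spread_exact_value_3}, and the pair Theorem~\ref{thm:multicomponent}/Theorem~\ref{thm_partial_spread_4} for $r=0,1,2,3$ respectively. One small inaccuracy: your parenthetical that the $r=3$ upper bound ``really does fail for $m=1$'' is not quite right---numerically $(2^7-53)/15=5\geq 1=\smax_2(7,8;4)$ still holds---but the standing assumption $t\geq 2$ in the paper is the correct justification for the hypothesis $m\geq 2$.
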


As a consequence, the first unknown binary case is now
$129\le\smax_2(11,8;4)\le 133$.\footnote{The upper bound can be
  sharpened to $132$, as we will see later.} For $r=2$ and $q=3$ the
upper bound of Theorem~\ref{thm_partial_spread_4} has been decreased
by $1$:

\begin{lemma}[cf.\ {\cite[Lemma 4]{kurzspreads}}]
  \label{lemma_spread_upper_bound_3_q}
  For integers $t\ge 2$ and $k\ge 4$, we have $\deficiency\geq 5$ and 
  $\smax_3(kt+2,2k;k) \le
  \frac{3^{kt+2}-3^2}{3^k-1}-5$.%\frac{3^2+1}{2}$.
\end{lemma}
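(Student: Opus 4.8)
The plan is to rule out the single remaining value $\deficiency=4$. By the Drake--Freeman bound (Theorem~\ref{thm_partial_spread_4}) one has $\deficiency\ge\lceil\theta\rceil$; for $q=3$, $r=2$ the quantity under the root equals $(2\cdot 3^k-9)^2-80$, so $2\theta=\sqrt{(2\cdot 3^k-9)^2-80}-(2\cdot 3^k-17)$, which lies strictly between $6$ and $8$ for every $k\ge 4$ (since $2\cdot 3^k-9>21$), hence $\lceil\theta\rceil=4$ and $\deficiency\ge 4$ holds already. It therefore suffices to show that no maximal partial $k$-spread $\mathcal{S}$ in $\F_3^{v}$, $v=kt+2$, $t\ge 2$, $k\ge 4$, has deficiency exactly $4$, i.e., $\card{\mathcal{S}}=\sum_{s=0}^{t-1}3^{sk+2}-4$. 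Assume one exists; then its hole set $N$ has cardinality $\card N=4\cdot\frac{3^k-1}{2}+\frac{3^2-1}{2}=2\cdot 3^k+2$.

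First I would record the divisibility of $N$. For any hyperplane $H$, intersecting the members of $\mathcal{S}$ with $H$ yields a vector space partition of $H$ of type $k^{a_H}(k-1)^{\card{\mathcal{S}}-a_H}1^{h_H}$, where $a_H$ is the number of members of $\mathcal{S}$ contained in $H$ and $h_H=\card{(N\cap H)}$. Counting points of $H$ gives $h_H=\frac{3^{v-1}-1}{2}-a_H\cdot\frac{3^k-1}{2}-(\card{\mathcal{S}}-a_H)\cdot\frac{3^{k-1}-1}{2}$; since $\frac{3^k-1}{2}\equiv\frac{3^{k-1}-1}{2}\pmod{3^{k-1}}$, the dependence on $a_H$ disappears modulo $3^{k-1}$, whence $h_H\equiv\card N\equiv 2\pmod{3^{k-1}}$ (note $2<3^{k-1}$ and $2\cdot 3^k\equiv 0$ as $k\ge 4$). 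Thus $N$ is a $3^{k-1}$-divisible point set of $2\cdot 3^k+2$ points (in $\PG(v-1,\F_3)$, or equivalently in its own span).

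The contradiction then comes from the non-existence of such a small $3^{k-1}$-divisible set. The ``basic'' $3^{k-1}$-divisible point sets are the disjoint unions of subspaces of dimension $j\ge k$, contributing $\frac{3^j-1}{2}$ points, and of affine parts $\F_3^{j+1}\setminus\F_3^{j}$ with $j\ge k$, contributing $3^j$ points; their cardinalities generate a numerical semigroup. The only generators not exceeding $2\cdot 3^k+2$ are $\frac{3^k-1}{2}$, $3^k$ and $\frac{3^{k+1}-1}{2}$, and one checks directly that $a\cdot\frac{3^k-1}{2}+b\cdot 3^k+c\cdot\frac{3^{k+1}-1}{2}=2\cdot 3^k+2$, equivalently $(a+2b+3c-4)\cdot 3^k=a+c+4$, has no solution in nonnegative integers. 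Since $2\cdot 3^k+2$ is small---far below the size at which non-basic $3^{k-1}$-divisible sets first appear---the classification and non-existence results for $q^r$-divisible sets developed in Section~\ref{sec_non_existence} (equivalently, the improved bounds of Theorems~\ref{main_theorem_1} and~\ref{main_theorem_2}) exclude $N$. Hence $\deficiency\ge 5$, and rearranging $\smax_3(kt+2,2k;k)=\sum_{s=0}^{t-1}3^{sk+2}-\deficiency=\frac{3^{kt+2}-3^2}{3^k-1}-\deficiency$ yields the claimed bound.

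The main obstacle is precisely this last step. Excluding a $3^{k-1}$-divisible set of exactly $2\cdot 3^k+2$ points is not elementary: a naive double count over hyperplanes in the spirit of the proof of Theorem~\ref{thm:lb_def} recovers only the Drake--Freeman value $\deficiency\ge 4$, because the first two incidence moments $\sum_H h_H$ and $\sum_H h_H(h_H-1)$ turn out to be consistent with a distribution of the $h_H$ supported on $\{3^{k-1}+2,\ 2\cdot 3^{k-1}+2\}$, so no second-order inequality forces a contradiction. What is really needed is the finer structural analysis of small $q^r$-divisible sets, together with the routine---but careful---verification that $2\cdot 3^k+2$ lies in its range of applicability and is genuinely not attained.
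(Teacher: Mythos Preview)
Your overall strategy---reduce to showing that the hole set $N$, a $3^{k-1}$-divisible set of size $2\cdot 3^k+2$, cannot exist---is sound, but the argument as written has a genuine gap: you never actually exclude such a set. You gesture at ``the classification and non-existence results \ldots\ in Section~\ref{sec_non_existence} (equivalently, \ldots\ Theorems~\ref{main_theorem_1} and~\ref{main_theorem_2})'' and then, in the next paragraph, concede that this is the ``main obstacle''. That is not a proof. In particular, Theorems~\ref{main_theorem_1} and~\ref{main_theorem_2} are statements about $\smax_q$, not non-existence results for divisible sets per se; invoking them here would make the argument circular relative to the exposition (Lemma~\ref{lemma_spread_upper_bound_3_q} precedes both). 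What you actually need is Theorem~\ref{thm_exclusion_q_r} with $q=3$, $r=k-1$, $a=2$, $b=0$: the excluded interval is $\bigl[4\cdot\gauss{k}{1}{3}+3,\;5\cdot\gauss{k}{1}{3}-1\bigr]=\bigl[2\cdot 3^k+1,\;\tfrac{5\cdot 3^k-7}{2}\bigr]$, which contains $2\cdot 3^k+2$, and the side condition $a\le r-1$ becomes $2\le k-2$, i.e.\ $k\ge 4$. Your observation that the bare second-moment inequality (Lemma~\ref{lemma_hyperplane_types_arithmetic_progression_c}) does not suffice is correct---one computes $\tau_3(2\cdot 3^k+2,\,3^{k-1},\,m)=3^{2k}(m-4)(m-5)+8\cdot 3^k(5-m)+20$, which is positive for every integer $m$---so the inductive refinement behind Theorem~\ref{thm_exclusion_q_r} is essential.

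By contrast, the paper does not go through the hole set at all at this point. It indicates that the proof is ``very similar to that used in the proof of Theorem~\ref{thm:lb_def}'': one works directly with the partial spread, uses an averaging argument to find a hyperplane (and then a codimension-$2$ subspace) containing many spread members, and bounds $\card{\mathcal{S}}$ from the resulting packing constraint. This is self-contained at the Section~\ref{sec_classical} level and does not rely on the $q^r$-divisible machinery developed later. Your route becomes clean once you cite Theorem~\ref{thm_exclusion_q_r} precisely, but as presented it defers the only nontrivial step.
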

Again, the proof technique is very similar to that used in the proof of
Theorem~\ref{thm:lb_def}.

%%The construction of 
Theorem~\ref{thm:multicomponent} is asymptotically optimal for
$k\gg r=v\bmod k$, as recently shown by N{\u{a}}stase and Sissokho:
\begin{theorem}[{\cite[Theorem 5]{nastase2016maximum}}]
  \label{thm_partial_spread_asymptotic}
  %Suppose $v=tk+r$ with $t\geq 1$ and $0<r<k$. 
  If $k>\gauss{r}{1}{q}$
  then $\deficiency=q^r-1$ and
  $\smax_q(v,2k;k)=1+\sum_{s=1}^{t-1}q^{sk+r}$.\footnote{This
    corresponds again to the upper bound $\deficiency=q^r-1$.}
  %=\frac{q^{kt+r}-q^{k+r}+q^k-1}{q^k-1}$.
\end{theorem}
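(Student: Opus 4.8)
The lower bound $\smax_q(v,2k;k)\ge 1+\sum_{s=1}^{t-1}q^{sk+r}$, i.e.\ $\deficiency\le q^r-1$, is already supplied by Theorem~\ref{thm:multicomponent}, so the only remaining task is the reverse inequality $\deficiency\ge q^r-1$. Writing $n$ for the number of holes of a maximal partial $k$-spread $\mathcal{S}$ in $\F_q^v$, so that $n=\deficiency\cdot\gauss{k}{1}{q}+\gauss{r}{1}{q}$, this is equivalent to the lower bound $n\ge q^k\gauss{r}{1}{q}$ on the number of holes.

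The object of study is the hole set $N$. Intersecting $\mathcal{S}$ with a hyperplane $H$ of $\F_q^v$ produces a vector space partition of $H$ of type $k^{\alpha}(k-1)^{\beta}1^{n_H}$ with $\alpha+\beta=\card{\mathcal{S}}$, where $\alpha$ counts the members contained in $H$ and $n_H=\card{N\cap H}$; comparing the numbers of points of $\aspace$ and of $H$ gives the exact identity $n_H=n-q^{v-1}+\beta\,q^{k-1}$, and in particular $n_H\equiv n\pmod{q^{k-1}}$, so $N$ is a $q^{k-1}$-divisible point set in $\aspace$ in the sense of Section~\ref{section_q_r_divisible}. A second, inductive, role of hyperplanes is this: the $\alpha$ members inside $H$ form a partial $k$-spread of $H\cong\F_q^{tk+(r-1)}$, so an induction on $r$ — anchored at $r=1$ by Corollary~\ref{cor:r=1}, the hypothesis passing to $r-1$ because $\gauss{r-1}{1}{q}<\gauss{r}{1}{q}<k$ — bounds $\alpha$ from above by $\smax_q(tk+r-1,2k;k)=1+\sum_{s=1}^{t-1}q^{sk+r-1}$; equivalently, $n_H+\beta\,\gauss{k-1}{1}{q}\ge q^k\gauss{r-1}{1}{q}$ for every hyperplane $H$.

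The crux is to rule out $n<q^k\gauss{r}{1}{q}$. Here one cannot get away with soft arguments: averaging the identity $n_H=n-q^{v-1}+\beta q^{k-1}$ over all hyperplanes only reproduces the global relation $\card{\mathcal{S}}\gauss{k}{1}{q}=\gauss{v}{1}{q}-n$, and even the constraint $n_H\ge 0$ combined with the inductive bound on $\alpha$ falls short — if $\card{\mathcal{S}}$ exceeds the target by a single unit, integrality of the numbers $\alpha(H)$ already forbids any hyperplane from being rich enough to force a contradiction. What is needed is the structure theory of $q^{k-1}$-divisible point sets prepared in Section~\ref{sec_non_existence}: assuming $n<q^k\gauss{r}{1}{q}$ one writes $n=\gauss{r}{1}{q}+j\gauss{k}{1}{q}$ with $0\le j\le q^r-2$, uses that every hyperplane intersection number of $N$ lies in the residue class $\gauss{r}{1}{q}\pmod{q^{k-1}}$ and, by the previous paragraph, is bounded below, feeds this into the standard incidence identities such as $\sum_H n_H=n\gauss{v-1}{1}{q}$ and $\sum_H\binom{n_H}{2}=\binom{n}{2}\gauss{v-2}{1}{q}$, and invokes the non-existence of $q^{k-1}$-divisible sets of such small cardinality to obtain a contradiction — a non-existence that holds precisely when $k>\gauss{r}{1}{q}$.

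I expect the genuine obstacle to be this final step: proving that a $q^{k-1}$-divisible point set of cardinality strictly below $q^k\gauss{r}{1}{q}$ cannot live inside $\aspace$, and verifying that it is exactly the hypothesis $k>\gauss{r}{1}{q}$ — and nothing weaker — that makes the relevant estimate close. The bookkeeping is delicate because the divisibility modulus $q^{k-1}$, the residue class $\gauss{r}{1}{q}$ of $n$, the inductive lower bounds on the $n_H$, and the target value $q^k\gauss{r}{1}{q}$ all interact, and the inductive hypothesis must be exploited at its full strength.
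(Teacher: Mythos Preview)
Your proposal is not a proof: you set up the problem correctly (the hole set $N$ is $q^{k-1}$-divisible, its cardinality is $n=\deficiency\cdot\gauss{k}{1}{q}+\gauss{r}{1}{q}$, and one must exclude $n<q^k\gauss{r}{1}{q}$), but you explicitly stop at what you call ``the genuine obstacle'' and never carry out the exclusion. The induction on $r$ and the lower bounds on $n_H$ you derive in the second paragraph are never actually used, and the third paragraph is a wish list (``feeds this into the standard incidence identities \ldots\ and invokes the non-existence'') rather than an argument.

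The paper's proof is short and takes a different route. It is the case $z=0$ of Theorem~\ref{main_theorem_1}: write $k=\gauss{r}{1}{q}+1+u$ with $u\ge 0$, suppose a partial $k$-spread of size $lq^k+2$ exists, and apply Lemma~\ref{lemma_application_partial_spreads} with $x=2$, $y=1$. That lemma---which packages the iterated averaging-plus-rounding of Lemma~\ref{lemma_average}, Corollary~\ref{corollary_iterated_average} and Corollary~\ref{cor_j_times}---descends through $k-1$ successive hyperplane sections to produce a $(v-k+1)$-subspace $U$ whose number of holes satisfies $L\le(z+y-1-u)q^y-(x-1)\gauss{y}{1}{q}=-uq-1<0$, an immediate contradiction. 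No induction on $r$, no lower bounds on $n_H$ from an inductive hypothesis, and none of the quadratic or cubic machinery of Section~\ref{sec_non_existence} are needed; only the first two standard equations enter. The hypothesis $k>\gauss{r}{1}{q}$ is exactly what allows the $k-1$ rounding steps (one per unit of $\gauss{r}{1}{q}$, plus one more) before the bound drops below zero. If you want to salvage your outline, the cleanest way is to invoke Theorem~\ref{thm_exclusion_q_r} with $r$ there replaced by $k-1$: the hole set is $q^{k-1}$-divisible of cardinality $n<q^k\gauss{r}{1}{q}\le(k-1)q^k$, so $n=a\gauss{k}{1}{q}+bq^k$ with $a,b\ge 0$; reducing mod $\gauss{k}{1}{q}$ forces $b\equiv\gauss{r}{1}{q}$, hence $b\ge\gauss{r}{1}{q}$ and $n\ge q^k\gauss{r}{1}{q}$, a contradiction. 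But that theorem lies downstream and itself rests on the same averaging descent.
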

Choosing $q=r=2$, this result
covers Theorem~\ref{thm_spread_exact_value_3}. The
same authors have refined their analysis, additionally using
Theorem~\ref{thm_length_of_tail} from the theory of vector space
partitions, to obtain improved upper bounds for some of the cases
$k\le \gauss{r}{1}{q}$, see \cite[Theorem 6 and 7]{nastase2016maximumII}.
Using the theory of $q^r$-divisible codes, presented in the next section, 
%%we can give a short and self-contained proof of the more comprehensive results stated in 
we extend their results further in Corollary~\ref{cor_main_theorem_1} 
and Theorem~\ref{main_theorem_2}.

\section{$q^r$-divisible sets and codes}
\label{section_q_r_divisible}

The currently most effective approach to good upper bounds for partial
spreads follows the original idea of Beutelspacher and considers the
set of holes as a stand-alone object. As it appears in the proof of
Beutelspacher, the number of holes in a hyperplane satisfies a certain
modulo constraint. In this section we consider sets of points in
$\aspace$ having the property that modulo some integer $\Delta>1$
the number of points in each hyperplane is the same. Such point sets
are equivalent to $\Delta$-divisible codes
\cite{ward1999introduction,ward2001divisible_survey} with projectively
distinct coordinate functionals (so-called \emph{projective} codes), and this
additional restriction forces $\Delta$ to be a power of the same prime
as $q$. Writing $q=p^e$, $p$ prime, and $\Delta=p^f$, we have
$\Delta=q^r$ with $r=f/e\in\frac{1}{e}\mathbb{Z}$.

We will derive several important properties of these
\emph{$q^r$-divisible sets} and \emph{codes} and in particular
observe that the set of holes of a partial spread is exactly of this
type. Without the notion of $q^r$-divisible sets and the reference to
the linear programming method, almost all results of this section are
contained in \cite{kurzspreadsII}. A more extensive introduction to
the topic, including constructions and relations to other
combinatorial objects, is currently in preparation
\cite{dsmta:q-r-divisble}.

In what follows, we denote the point set
of $\aspace$ by $\points$ and call for subsets
$\mathcal{C}\subseteq\points$ and subspaces
$X$ of $\F_q^v$ the integer $\card{(\mathcal{C}\cap
X)}=\card{\{P\in\mathcal{C};P\subseteq X\}}$ the \emph{multiplicity} of $X$
  with respect to $\mathcal{C}$.

\begin{definition}
  \label{def_delta_divisible}
  Let $\Delta>1$ be an integer. A set $\mathcal{C}$ of points in
  $\aspace$ is called \emph{weakly $\Delta$-divisible} if there exists
  $u\in\mathbb{Z}$ with
  $\card{(\mathcal{C}\cap H)}\equiv u\pmod{\Delta}$ for each hyperplane
  $H$ of $\aspace$. If $u\equiv\card{\mathcal{C}}\pmod{\Delta}$, we
  call $\mathcal{C}$ \emph{(strongly) $\Delta$-divisible}.
\end{definition}
Trivial cases are $\mathcal{C}=\emptyset$ (strongly $\Delta$-divisible for any
$\Delta$) and $\mathcal{C}=\points$ (weakly $\Delta$-divisible for any
$\Delta$, with largest strong divisor $\Delta=q^{v-1}$).\footnote{This
  is also true for $v=1$, where $\mathcal{C}=\emptyset,\points$
  exhausts all possibilities.}

It is well-known (see, e.g., \cite[Prop.~1]{tsfasman-vladut95,dodunekov1998codes}) that
the relation $C\to\mathcal{C}$, associating with a full-length linear
$[n,v]$ code $C$ over $\F_q$ the $n$-multiset $\mathcal{C}$ of points
in $\aspace$ defined by the columns of any generator matrix, induces a
one-to-one correspondence between classes of \mbox{(semi-)linearly}
equivalent spanning multisets and classes of \mbox{(semi-)monomially}
equivalent full-length linear codes. Point
sets %$\mathcal{C}\subseteq\points$
correspond in this way to projective linear codes, which are also
characterized by the condition $\hdist(C^\perp)\geq 3$. The importance
of the correspondence lies in the fact that it relates
coding-theoretic properties of $C$ to geometric or combinatorial
properties of $\mathcal{C}$. An example is the formula
\begin{equation}
  \label{eq:hweight-npoints}
  \hweight(\vek{a}\mat{G})=n-\card{\{1\leq j\leq
  n;\vek{a}\cdot\vek{g}_j=0\}}
  =n-\card{(\mathcal{C}\cap\vek{a}^\perp)},
\end{equation}
where $\hweight$ denotes the Hamming weight,
$\mat{G}=(\vek{g}_1|\dots|\vek{g}_n)\in\F_q^{v\times n}$ a generating
matrix of $C$, $\vek{a}\cdot\vek{b}=a_1b_1+\dots+a_vb_v$, and
$\vek{a}^\perp$ is the hyperplane in $\aspace$ with equation
$a_1x_1+\dots+a_vx_v=0$.

A linear code $C$ is said to be \emph{$\Delta$-divisible}
($\Delta\in\mathbb{Z}_{>1}$) if all nonzero codeword weights are
multiples of $\Delta$. Following the Gleason-Pierce-Ward Theorem on
the divisibility of self-dual codes (see, e.g.,
\cite[Ch.~9.1]{huffman-pless03}), a rich theory of divisible codes has
been developed over time, mostly by H.~N.~Ward; cf.\ his survey
\cite{ward2001divisible_survey}. One of Ward's results implies that
nontrivial weakly $\Delta$-divisible point sets in $\aspace$ are
strongly $\Delta$ divisible and exist only in the case
$\Delta=p^f$. The proof uses the so-called \emph{standard
  equations} for the hyperplane spectrum of $\mathcal{C}$, which we
state in the following lemma. The standard equations are equivalent to
the first three MacWilliams identities for the weight enumerators of
$C$ and $C^\perp$ (stated as Equation~\eqref{mac_williams_identies}
below), specialized to the case of projective linear codes. The
geometric formulation, however, seems more in line with the rest of
the paper.

% Taking the elements of $\mathcal{C}\subseteq\aspace$, i.e., one
% representing vector, as columns of a generator matrix, we obtain a
% \emph{linear $[n,k]$-code} over $\mathbb{F}_q$, where
% $n=|\mathcal{C}|$ and $\dim(\langle\mathcal{C}\rangle)=k\le v$. Since
% $\mathcal{C}$ is a set, the corresponding linear code is projective,
% i.e., the minimum distance of its dual code is at least $3$. The term
% $\Delta$-divisible translates to the fact that all weights of the
% corresponding linear code are divisible by $\Delta$, which is a very
% nice property.  In order to stress the geometric point of view we will
% choose the language of (projective) sets of points instead of
% projective linear codes. We remark that $\Delta$-divisible codes were
% introduced by Ward in 1981, see~\cite{ward1981divisible}.

% If $\Delta$ divides $q^{v-1}$, then each weakly $\Delta$-divisible
% set is (strongly) $\Delta$-divisible. 

%%First we observe that the variable $u$ in Definition~\ref{def_delta_divisible} can be computed from the cardinality $n$ of 
%%$\mathcal{C}$, which is the reason why it does not appear in the corresponding definition for divisible codes. To this end, 

% We first state the so-called \emph{standard equations}. (These are a special case of the MacWilliams identities~(\ref{mac_williams_identies}), 
% invoking the minimum distance of the the dual code.)%% of at least three.)
\begin{lemma}
  \label{lemma_standard_equations_q}
  Let $\mathcal{C}$ be a set of points in $\aspace$ with
  $\card{\mathcal{C}}=n$, and let $a_i$ hyperplanes of $\aspace$
  contain exactly $i$ points of $\mathcal{C}$ ($0\le i\le n$). Then we
  have
  \begin{eqnarray}
    \sum_{i=0}^{n}a_i &=& \gauss{v}{1}{q},\label{eq_ste1}\\
    \sum_{i=1}^{n}ia_i &=& n\cdot \gauss{v-1}{1}{q},\label{eq_ste2}\\
    \sum_{i=2}^{n}{i\choose 2} a_i &=& {n\choose 2}\cdot \gauss{v-2}{1}{q}\label{eq_ste3}.\footnotemark
  \end{eqnarray}
\end{lemma}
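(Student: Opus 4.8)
The plan is to prove the three \emph{standard equations} by double counting incidences between the points of $\mathcal{C}$ and the hyperplanes of $\aspace$, using the well-known fact that in $\aspace$ the number of hyperplanes through a fixed $j$-subspace is $\gauss{v-j}{1}{q}$. Equation~\eqref{eq_ste1} is just the total count of hyperplanes, $\gauss{v}{1}{q}$, sorted by their multiplicity with respect to $\mathcal{C}$. For \eqref{eq_ste2} I would count pairs $(P,H)$ with $P\in\mathcal{C}$, $P\subseteq H$: summing over hyperplanes gives $\sum_i i\,a_i$, while summing over points gives $\card{\mathcal{C}}$ times the number $\gauss{v-1}{1}{q}$ of hyperplanes through a fixed point, i.e.\ $n\gauss{v-1}{1}{q}$. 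For \eqref{eq_ste3} I would analogously count triples $(\{P,P'\},H)$ where $\{P,P'\}$ is an unordered pair of distinct points of $\mathcal{C}$ with both contained in $H$: the hyperplane side contributes $\sum_i\binom{i}{2}a_i$, and the point-pair side contributes $\binom{n}{2}$ times the number of hyperplanes containing the line $P\vee P'$, which is $\gauss{v-2}{1}{q}$ since two distinct points span a $2$-subspace.

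The one genuinely substantive point, and the place I expect the only real friction, is the claim that \emph{any} two distinct points of $\mathcal{C}$ span exactly a $2$-subspace, so that the count of common hyperplanes is uniformly $\gauss{v-2}{1}{q}$ rather than depending on the pair. This is precisely where the hypothesis that $\mathcal{C}$ is a \emph{set} of points (equivalently, that the associated code is projective) is used: if $\mathcal{C}$ were a genuine multiset, a repeated point would contribute a pair spanning only a $1$-subspace, and the right-hand side of \eqref{eq_ste3} would need a correction term. For an honest point set no such issue arises, and $\dim(P\vee P')=2$ for all distinct $P,P'\in\mathcal{C}$, so the counting goes through cleanly. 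I would state this explicitly as the key observation underpinning the third identity.

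To present the argument compactly I would introduce the incidence sums $N_j := \sum_{X}\card{\{H \text{ hyperplane};\, X\subseteq H\}}$ over appropriate families of subspaces $X$, note the basic identity $\card{\{H;\,X\subseteq H\}}=\gauss{v-\dim X}{1}{q}$, and then read off the three equations by choosing $X$ to range over $\{\vek{0}\}$ (giving \eqref{eq_ste1}), over the points of $\mathcal{C}$ (giving \eqref{eq_ste2}), and over the $2$-subspaces spanned by unordered pairs of distinct points of $\mathcal{C}$ (giving \eqref{eq_ste3}). Alternatively, and perhaps more in keeping with the coding-theoretic remark preceding the lemma, one can derive all three at once from the first three MacWilliams identities relating the weight distributions of $C$ and $C^{\perp}$ together with $\hdist(C^{\perp})\ge 3$ (which is exactly projectivity of $\mathcal{C}$) and the translation formula~\eqref{eq:hweight-npoints}; I would mention this as the reason the equations are ``standard,'' but carry out the self-contained double-counting proof as the main line, since it needs no external machinery beyond subspace counting in $\aspace$.
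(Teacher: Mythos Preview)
Your proposal is correct and follows essentially the same approach as the paper: double-count the tuples $(H)$, $(P,H)$, and $(\{P,P'\},H)$ with $P,P'\in\mathcal{C}$ distinct and contained in the hyperplane $H$, using that a $j$-subspace lies in $\gauss{v-j}{1}{q}$ hyperplanes. Your explicit remark that the set hypothesis is what makes \eqref{eq_ste3} work (distinct points span a $2$-subspace) matches the paper's footnote on the multiset correction term.
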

\footnotetext{The general (multiset) version of \eqref{eq_ste3} has an
additional summand of $q^{v-2}\cdot\sum_{P\in\points}\binom{\mathcal{C}(P)}{2}$ 
on the right-hand side, accounting for the fact that ``pairs of equal points''
are contained in $\gauss{v-1}{1}{q}$ hyperplanes.}
\begin{proof}
  Double-count incidences of the tuples $(H)$, $(P_1,H)$, and
  $(\{P_1,P_2\},H)$, where $H$ is a hyperplane and $P_1\neq P_2$ are
  points contained in $H$.  % \footnote{For $n=0,1$ some of the equations are trivially true.}
  % (If $n=1$, then Equation~(\ref{eq_ste3}) reads $0=0$.)
\qed\end{proof}
In the proof of Theorem~\ref{thm:divisible} we will need that
\eqref{eq_ste1} and \eqref{eq_ste2} remain true for any multiset
$\mathcal{C}$ of points in $\aspace$, provided points are counted with
their multiplicities in $\mathcal{C}$ and the cardinality
$\card{\mathcal{C}}$ is defined in the obvious way. We will also need the
following concept of a \emph{quotient multiset}. Let $\mathcal{C}$ be
a set of points in $\aspace$ and $X$ a
subspace of $\F_q^v$. Define the multiset $\mathcal{C}/X$ of points in
the quotient geometry $\PG(\F_q^v/X)$ by assigning to a point $Y/X$ of
$\PG(\F_q^v/X)$ (i.e., $Y$ satisfies $\dim(Y/X)=1$) the difference
$\card{(\mathcal{C}\cap Y)}-\card{(\mathcal{C}\cap
X)}=\card{(\mathcal{C}\cap Y\setminus X)}$ as multiplicity.\footnote{This
  definition can be extended to multisets $\mathcal{C}$ by defining
  the multiplicity of $Y/X$ in $\mathcal{C}/X$ as the sum of the
  multiplicities in $\mathcal{C}$ of all points in $Y\setminus X$.}
With this definition it is obvious that
$\card{(\mathcal{C}/X)}=\card{\mathcal{C}}-\card{(\mathcal{C}\cap X)}$.
In particular,
if $\mathcal{C}$ is an $n$-set and $X=P$ is a point then
$\card{(\mathcal{C}/P)}=n-1$ or $n$, according to whether $P\in\mathcal{C}$
or $P\notin\mathcal{C}$, respectively.\footnote{If
  $C\leftrightarrow\mathcal{C}$ then the multisets $\mathcal{C}/P$,
  $P\in\points$, are associated to the ($v-1$)-dimensional subcodes
  $D\subset C$, and the $n$ points $P\in\mathcal{C}$ correspond to
  the $n$ subcodes $D$ of effective length $n-1$ (``$D$ is $C$
  shortened at $P$''). This correspondence between points and subcodes  
  extends to a correlation between $\aspace$ and $\PG(C/\F_q)$, which
  includes the familiar correspondence between hyperplanes and codewords as a
  special case; see \cite{tsfasman-vladut95,dodunekov1998codes} for details.} 
\begin{theorem}
  \label{thm:divisible}
  Let $\mathcal{C}\neq\emptyset,\points$ be a weakly
  $\Delta$-divisible point set in $\aspace$, $n=\card{\mathcal{C}}$,
  and $C$ any linear $[n,v]$-code over $\F_q$ associated with
  $\mathcal{C}$ as described above.\footnote{It is not required that
    $\mathcal{C}$ is spanning; if it is not then (iii)
    sharpens to ``$\Delta$ is a divisor of
    $q^{\dim\langle\mathcal{C}\rangle-2}$''.} Then
  \begin{enumerate}[(i)]
  \item $\mathcal{C}$ is strongly $\Delta$-divisible;
  \item $C$ is $\Delta$-divisible;
  \item $\Delta$ is a divisor of $q^{v-2}$.
  \end{enumerate}
\end{theorem}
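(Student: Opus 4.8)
The plan is to prove the three statements in the cyclic order (iii)$\Rightarrow$(i)$\Rightarrow$(ii), or rather to extract everything from the standard equations of Lemma~\ref{lemma_standard_equations_q} together with an induction on the codimension via the quotient-multiset construction. First I would set up notation: let $u$ be the residue with $\card{(\mathcal{C}\cap H)}\equiv u\pmod{\Delta}$ for every hyperplane $H$, and write each hyperplane count as $\card{(\mathcal{C}\cap H)}=u+\Delta b_H$ for integers $b_H$. Plugging this into \eqref{eq_ste1} and \eqref{eq_ste2}—both of which, as the paper emphasises, hold for multisets—gives two linear relations; subtracting suitable multiples of \eqref{eq_ste1} from \eqref{eq_ste2} will show that $\Delta$ divides $n\gauss{v-1}{1}{q}-u\gauss{v}{1}{q}$. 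Since $\gauss{v}{1}{q}=q\gauss{v-1}{1}{q}+1$, this congruence can be massaged into $n\equiv u\cdot(\text{something})\pmod{\Delta}$; the point is to conclude $u\equiv n\pmod{\Delta}$, which is exactly strong divisibility (i). I would be careful here: the clean way is probably to first handle the prime-power structure, because a priori $\Delta$ need not be coprime to anything useful.

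For the prime-power claim and the exponent bound, the natural engine is induction on $v$ using quotients by points. Fix a point $P\in\points$. The key geometric fact is that hyperplanes of $\aspace$ through $P$ correspond bijectively to hyperplanes of $\PG(\F_q^v/P)$, and under this correspondence $\card{((\mathcal{C}/P)\cap \bar H)}=\card{(\mathcal{C}\cap H)}-\card{(\mathcal{C}\cap P)}$ for $H\supseteq P$. Hence if $\mathcal{C}$ is weakly $\Delta$-divisible then so is the quotient multiset $\mathcal{C}/P$ in the $(v-1)$-dimensional geometry (with the shifted residue $u-\card{(\mathcal{C}\cap P)}$). Iterating down to dimension $2$—where the ambient geometry is a projective line, every "hyperplane" is a single point, and a $\Delta$-divisible multiset of points on a line with $\Delta>1$ is forced to be very restricted—one reads off that $\Delta$ must be a power of the characteristic $p$ of $\F_q$ and that the exponent cannot exceed $v-2$. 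Concretely: after $v-2$ successive quotients we land in $\PG(1,\F_q)$, and there weak $\Delta$-divisibility with $\Delta=p^f$ demands $q^{v-2}\bmod\Delta$-type constraints; tracing the accumulated factors of $q$ gained at each quotient step yields $\Delta\mid q^{v-2}$, i.e.\ (iii). Statement (ii) is then immediate from (i), (iii) and the dictionary \eqref{eq:hweight-npoints}: the weight of a nonzero codeword $\vek{a}\mat{G}$ is $n-\card{(\mathcal{C}\cap\vek{a}^\perp)}\equiv n-u\equiv 0\pmod{\Delta}$ by strong divisibility, so $C$ is $\Delta$-divisible.

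An alternative, more self-contained route to (ii) and (iii) that avoids the quotient induction: translate to the code side directly. Strong divisibility of $\mathcal{C}$ is equivalent, via \eqref{eq:hweight-npoints}, to $\Delta$-divisibility of $C$, so once (i) is in hand, (ii) is a restatement. For (iii) one then invokes the divisible-codes machinery: a projective $\Delta$-divisible $[n,v]_q$ code has $\Delta$ a power of $p$ (this is Ward's theorem, cited in the text), and a short argument with the standard equations—or with the observation that the residual code of $C$ with respect to a minimum-weight codeword is again projective and $\Delta$-divisible but of dimension $v-1$—bootstraps the exponent bound $\Delta\mid q^{v-2}$. I expect the main obstacle to be the clean extraction of (i) from the standard equations without circularity, i.e.\ proving $u\equiv n\pmod\Delta$ \emph{before} knowing $\Delta$ is a prime power; the safe fix is to do the quotient induction first (establishing that $\Delta$ divides $q^{v-2}$ and in particular is a $p$-power), and only afterwards cash in the standard equations to upgrade "weakly" to "strongly". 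The footnote's sharper claim for non-spanning $\mathcal{C}$ falls out of the same induction applied inside the span $\langle\mathcal{C}\rangle$.
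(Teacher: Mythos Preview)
Your high-level outline is sound---quotient multisets plus the first two standard equations are indeed the right tools, and you correctly note that (i) and (ii) are reformulations of one another via \eqref{eq:hweight-npoints}. But there is a real gap in your plan for (iii). Iterating quotients down to $\PG(1,\F_q)$ does not help: once you pass to quotient \emph{multisets}, weak $\Delta$-divisibility on a projective line just says that all $q+1$ point multiplicities are congruent modulo $\Delta$, which places no constraint on $\Delta$ whatsoever (a constant multiset works for any $\Delta$). The ``accumulated factors of $q$'' you allude to have no visible source in your argument, and your fallback of invoking Ward's theorem is close to circular in this context. Your direct attack on (i) via \eqref{eq_ste1}--\eqref{eq_ste2} applied to $\mathcal{C}$ alone also stalls, as you yourself suspect: it gives only $\Delta\mid n\gauss{v-1}{1}{q}-u\gauss{v}{1}{q}$, from which $n\equiv u\pmod{\Delta}$ does not follow until (iii) is already in hand.

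The paper's argument is more economical and avoids the descent entirely. It takes \emph{two} single-step quotients, one by a point $P\notin\mathcal{C}$ and one by a point $Q\in\mathcal{C}$ (both exist since $\mathcal{C}\neq\emptyset,\points$). For (i): compare \eqref{eq_ste2} for $\mathcal{C}$ in $\aspace$ and for $\mathcal{C}'=\mathcal{C}/P$ in $\PG(v-2,\F_q)$; multiplying the latter by $q$ and subtracting collapses the Gaussian coefficients via $\gauss{v-1}{1}{q}-q\gauss{v-2}{1}{q}=1$, and together with \eqref{eq_ste1} one reads off $u\equiv n\pmod{\Delta}$ directly, with no prior knowledge that $\Delta$ is a $p$-power. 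For (iii): subtract \eqref{eq_ste2} for $\mathcal{C}'=\mathcal{C}/P$ (size $n$, residue $u$) and $\mathcal{C}''=\mathcal{C}/Q$ (size $n-1$, residue $u-1$), then use \eqref{eq_ste1} for both; the difference yields $\gauss{v-1}{1}{q}-\gauss{v-2}{1}{q}=q^{v-2}\equiv 0\pmod{\Delta}$. The idea you are missing is to exploit both kinds of base points---inside and outside $\mathcal{C}$---at the same codimension-one level, rather than pushing a single quotient all the way down.
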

\begin{proof}
  (i) and (ii) are equivalent in view of
  \eqref{eq:hweight-npoints}. 
  
  First we prove (i). Let $u$ be as in
  Definition~\ref{def_delta_divisible}. Choose a point
  $P\notin\mathcal{C}$ and let $\mathcal{C}'=\mathcal{C}/P$. Then
  $\mathcal{C}'$ is an $n$-multiset of points in
  $\PG(\F_q^v/P)\cong\PG(v-2,\F_q)$ with
  $\card{(\mathcal{C}'\cap H')}\equiv u\pmod{\Delta}$ for each
  hyperplane $H'$ of $\PG(\F_q^v/P)$. The hyperplane spectrum $(a_i')$
  of $\mathcal{C}'$ satisfies \eqref{eq_ste2} with $v$ replaced by
  $v-1$. Multiplying the identity for $\mathcal{C}'$ by $q$ and
  subtracting from it the identity for $\mathcal{C}$ gives
  \begin{equation*}
    \sum_{i\geq
      0}(u+i\Delta)(a_{u+i\Delta}-qa_{u+i\Delta}')=n\left(\frac{q^{v-1}-1}{q-1}
    -\frac{q^{v-1}-q}{q-1}\right)=n.
  \end{equation*}
Reading this equation modulo $\Delta$ and using \eqref{eq_ste1}
further gives
\begin{equation*}
  n\equiv u \sum_{i\geq
      0}(a_{u+i\Delta}-qa_{u+i\Delta}')=u\left(\frac{q^{v}-1}{q-1}
    -\frac{q^{v}-q}{q-1}\right)=u\pmod{\Delta},
\end{equation*}
as desired. Thus (i) and (ii) hold.

For the proof of (iii) we use a point $Q\in\mathcal{C}$ and its
associated quotient multiset
$\mathcal{C}''=\mathcal{C}/Q$, which satisfies $\card{\mathcal{C}''}=n-1$ and
$\card{(\mathcal{C}''\cap
H'')}\equiv u-1\pmod{\Delta}$ for each hyperplane
  $H''$ of $\PG(\F_q^v/Q)$. Subtracting \eqref{eq_ste2}
  for $\mathcal{C}'$ and $\mathcal{C}''$ gives
\begin{equation*}
    \sum_{i\geq
      0}(u+i\Delta)a_{u+i\Delta}'-\sum_{i\geq
      0}(u-1+i\Delta)a_{u-1+i\Delta}''
    %=n\cdot\frac{q^{v-2}-1}{q-1}-(n-1)\frac{q^{v-2}-1}{q-1}
    =\frac{q^{v-2}-1}{q-1}.
  \end{equation*}
  Again reading the equation modulo $\Delta$ and using \eqref{eq_ste1}
  gives
  \begin{equation*}
    \frac{q^{v-2}-1}{q-1}\equiv u \sum_{i\geq
      0}a_{u+i\Delta}'-(u-1)\sum_{i\geq
      0}a_{u-1+i\Delta}''
    %=u\cdot\frac{q^{v-1}-1}{q-1}-(u-1)\frac{q^{v-1}-1}{q-1}
    =\frac{q^{v-1}-1}{q-1}\pmod{\Delta}
  \end{equation*}
  or $q^{v-2}\equiv0\pmod{\Delta}$, as asserted.
\qed\end{proof}
Let us remark that Part~(ii) of Theorem~\ref{thm:divisible} also
follows from \cite[Th.~3]{ward1999introduction}, which asserts that a
not necessarily projective code $C$ satisfying the assumption of the
theorem must be either $\Delta$-divisible or the juxtaposition of a
$\Delta$-divisible code and a $v$-dimensional linear constant weight
code. Since the latter is necessarily a repetition of simplex codes,
this case does not occur for projective codes. Our proofs of (i),
(iii) use the very same ideas as in \cite{ward1999introduction},
translated into the geometric framework.\footnote{Readers may have
  noticed that, curiously, the 3rd standard equation (which
  characterizes projective codes) was not used at all in the proof.}

Part~(iii) of Theorem~\ref{thm:divisible} says that exactly
$\Delta$-divisible point sets in $\aspace$ exist only if $\Delta=q^r$
with $r\in\frac{1}{e}\mathbb{Z}$ and $r\leq v-1$;\footnote{By this we
  mean that $\Delta$ is the largest divisor in the sense of
  Definition~\ref{def_delta_divisible} or
  Theorem~\ref{thm:divisible}.} the whole point set $\points$ has
$\Delta=q^{v-1}$, and $v-2<r<v-1$ does not occur. Conversely, it is
not difficult to see that every divisor $\Delta>1$ of $q^{v-2}$ is the
largest divisor of some point set in $\aspace$.\footnote{If
  $t=\lfloor r\rfloor$ and $r'\in\{0,1,\dots,e-1\}$ is defined by $r=t+
  r'/e$, then the union of $p^{r'}$ parallel affine
  subspaces of dimension $t+1$ has this property.}
% \marginpar{Frage: Gibt
%   es umgekehrt f\"ur jedes solche $r$ eine Punktmenge mit gr\"o\ss tem
% Teiler $q^r$\,? Wir wissen das f\"ur alle $r\in\mathbb{Z}$. Ich
% w\"urde fast vermuten, da\ss\ es generell stimmt und man das mit
% affinen R\"aumen niederer Dimension als Bausteinen hinkriegt; z.B.\ die
% symmetrische Differenz zweier Ebenen in $\PG(3,\F_4)$ ist $8$-teilbar.}
% \begin{lemma}
%   \label{lemma_modulo_constraint_all_points}
%   If $\mathcal{C}\subseteq \aspace$ is weakly $\Delta$-divisible, where $\Delta$ divides $q^{v-1}$, then 
%   $|\mathcal{C}\cap H|\equiv |\mathcal{C}|\pmod{\Delta}$ for each hyperplane $H$.      
% \end{lemma}
% \begin{proof}
%   Choose $0\le u\le q-1$ such that $|\mathcal{C}\cap H|\equiv u\pmod{\Delta}$ for each hyperplane $H$.
%   Equation~(\ref{eq_ste1}) and Equation~(\ref{eq_ste2}) from Lemma~\ref{lemma_standard_equations_q} can be rewritten to
%   $$
%   (q-1)\cdot \left(\sum_{i\ge 0} a_{u+i\Delta}\right) = q^v-1\quad\!\!\!\text{and}\!\!\!\quad
%   (q-1)\cdot \left(\sum_{i\ge 0} (u+i\Delta)\cdot a_{u+i\Delta}\right) = n\cdot\left(q^{v-1}-1\right).
%   $$
%   The second equation minus $u$ times the first equation, then divided by $\Delta$ gives
%   $
%     (q-1)\cdot (\sum_{i\ge 0} ia_{u+i\Delta}) = (n-uq) \frac{q^{v-1}}{\Delta}-\frac{n-u}{\Delta}  
%   $,
%   so that $(n-u)/\Delta$ is an integer.
% \qed\end{proof}

In the proof of Theorem~\ref{thm:divisible} we have used that the
(weak) divisibility properties of $\mathcal{C}$ and its quotient
multisets $\mathcal{C}/X$ are the same. Now we consider the
restrictions $\mathcal{C}\cap X$, which correspond to residual codes
of the associated code $C$.
\begin{lemma}
  \label{lemma_heritable}
  Suppose that $\mathcal{C}$ is a $q^r$-divisible set of points in
  $\aspace$ and $X$ a ($v-j$)-subspace of $\F_q^v$ with $1\leq j<r$. Then the
  restriction $\mathcal{C}\cap X$ is $q^{r-j}$-divisible.
\end{lemma}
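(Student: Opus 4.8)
The plan is to prove the statement by induction on the codimension $j$, the crucial case being $j=1$. So suppose first that $X$ is a hyperplane of $\F_q^v$. A hyperplane of the subgeometry $\PG(X)$ is precisely a $(v-2)$-subspace $H'$ of $\F_q^v$ contained in $X$, and such an $H'$ lies in exactly $\gauss{2}{1}{q}=q+1$ hyperplanes $H_0=X,H_1,\dots,H_q$ of $\F_q^v$. The point set of $\aspace$ decomposes as the disjoint union of $H'$ and the $q+1$ sets $H_i\setminus H'$; intersecting with $\mathcal{C}$ and using $\card{(\mathcal{C}\cap(H_i\setminus H'))}=\card{(\mathcal{C}\cap H_i)}-\card{(\mathcal{C}\cap H')}$ gives
\begin{equation*}
  \card{\mathcal{C}}+q\cdot\card{(\mathcal{C}\cap H')}=\sum_{i=0}^{q}\card{(\mathcal{C}\cap H_i)}.
\end{equation*}
Since $\mathcal{C}$ is $q^r$-divisible, $\card{(\mathcal{C}\cap H_i)}\equiv\card{\mathcal{C}}\pmod{q^r}$ for each $i$, so the right-hand side is $\equiv(q+1)\card{\mathcal{C}}\pmod{q^r}$, whence $q\cdot\card{(\mathcal{C}\cap H')}\equiv q\cdot\card{\mathcal{C}}\pmod{q^r}$, i.e.\ $\card{(\mathcal{C}\cap H')}\equiv\card{\mathcal{C}}\pmod{q^{r-1}}$. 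As $\card{(\mathcal{C}\cap X)}=\card{(\mathcal{C}\cap H_0)}\equiv\card{\mathcal{C}}\pmod{q^{r-1}}$ as well, we get $\card{((\mathcal{C}\cap X)\cap H')}=\card{(\mathcal{C}\cap H')}\equiv\card{(\mathcal{C}\cap X)}\pmod{q^{r-1}}$. Since $r-1\geq 1$ (because $j=1<r$), this says exactly that $\mathcal{C}\cap X$ is $q^{r-1}$-divisible as a point set in $\PG(X)$.

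For the inductive step, let $X$ be a $(v-j)$-subspace with $2\leq j<r$, and choose a hyperplane $H$ of $\F_q^v$ with $X\subseteq H$. By the case just treated, $\mathcal{C}\cap H$ is $q^{r-1}$-divisible in $\PG(H)\cong\PG(v-2,\F_q)$. Now $X$ has codimension $j-1$ in $H$ and $j-1<r-1$, so applying the induction hypothesis inside $\PG(H)$ shows that $(\mathcal{C}\cap H)\cap X=\mathcal{C}\cap X$ is $q^{(r-1)-(j-1)}=q^{r-j}$-divisible, as desired.

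The argument is essentially routine; the only things requiring a little care are the bookkeeping between the ambient geometry $\aspace$ and the subgeometry $\PG(X)$ — in particular that hyperplanes of $\PG(X)$ are exactly the $(v-2)$-subspaces of $\F_q^v$ lying in $X$, that the pencil $H_0,\dots,H_q$ through $H'$ together with $H'$ itself covers every point of $\aspace$ exactly once, and that $\card{((\mathcal{C}\cap X)\cap H')}=\card{(\mathcal{C}\cap H')}$ — together with the observation $r-1\geq 1$ that turns the base-case congruence into a genuine divisibility statement. (One could also run a single pencil argument directly through a $(v-j-1)$-subspace of $X$, using the $\gauss{j+1}{1}{q}$ hyperplanes containing it, but then the inclusion–exclusion over the overlapping hyperplanes is messier than the two-step reduction used here.)
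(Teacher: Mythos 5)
Your proof is correct and follows essentially the same route as the paper: reduce by induction to the codimension-one case, then count $\mathcal{C}$ over the pencil of $q+1$ hyperplanes through a fixed $(v-2)$-subspace, read the resulting identity modulo $q^r$, and cancel a factor of $q$ to obtain the congruence modulo $q^{r-1}$. The only cosmetic difference is that you spell out the bookkeeping (that hyperplanes of $\PG(X)$ are the $(v-2)$-subspaces inside $X$, and that $\card{\mathcal{C}\cap X}\equiv\card{\mathcal{C}}$ passes to the modulus $q^{r-1}$) which the paper leaves implicit.
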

% It then follows by iteration that the restriction $\mathcal{C}\cap X$
% %of a $q^r$-divisible set $\mathcal{C}$ 
% to any ($v-j$)-subspace $X\subset\F_q^v$ with
% $j<r$ is $q^{r-j}$-divisible.
\begin{proof}
  By induction, it suffices to consider the case $j=1$, i.e., $X=H$ is
  a hyperplane in $\aspace$.

  The hyperplanes of $\PG(H)$ are the ($v-2$)-subspaces of $\F_q^v$
  contained in $H$. Hence the assertion is equivalent to
  $\card{\mathcal{C}\cap U}\equiv\card{\mathcal{C}}=u\pmod{q^{r-1}}$ for every
  ($v-2$)-subspace $U\subset\F_q^v$. By assumption we have
  $\card{(\mathcal{C}\cap H_i)}\equiv u\pmod{q^r}$ for
  the $q+1$ hyperplanes $H_1,\dots,H_{q+1}$ lying above $U$. This gives
  \begin{equation*}
    (q+1)u\equiv\sum_{i=1}^{q+1}\card{(\mathcal{C}\cap H_i)}
    =q\cdot\card{(\mathcal{C}\cap
      U)}+\card{\mathcal{C}}\equiv q\cdot\card{(\mathcal{C}\cap
      U)}+u\pmod{q^r}
  \end{equation*}
  and hence $u\equiv\card{(\mathcal{C}\cap
    U)}\pmod{q^{r-1}}$, as claimed.
    % It suffices to consider $j=1$, where $m\ge 1$. Choose $u$ such
    % that $|\mathcal{C}|\equiv|\mathcal{C}\cap H|\equiv u\pmod{q^{r}}$
    % for each hyperplane $H$. Let $S$ be a subspace of co-dimension $2$
    % with $|\mathcal{C}\cap S|\equiv\overline{u}\pmod{q^{r-1}}$.
    % Counting the total number of elements of $\mathcal{C}$ via the
    % $q+1$ hyperplanes containing $S$ gives
    % $(q+1)u-q\overline{u}\equiv u\pmod{q^{r}}$, i.e.,
    % $u\equiv \overline{u}\pmod{q^{r-1}}$.
\qed\end{proof}

% The term hyperplane translates to the \emph{residual code} of the corresponding linear code. We have 
% weakened modulo conditions on the number of elements of subspaces or weakened divisibility conditions for 
% the weights of the iterative residual codes. 

As mentioned at the beginning of this section, the set of holes of a
partial spread provides an example of a $q^r$-divisible set. The
precise statement is given in the following theorem,
which is formulated for general vector space partitions.

\begin{theorem}
  \label{thm:psp-vsp}
  \begin{enumerate}[(i)]
  \item Let $\mathcal{C}$ be a vector space partition of $\F_q^v$ of
    type $t^{m_t}\dotsm s^{m_s}1^{m_1}$ with $m_s>0$ (i.e.,
    $\mathcal{C}$ has a member of dimension $>1$ and $s$ chosen as
    the smallest such dimension). Then the points (i.e.,
    $1$-subspaces) in $\mathcal{C}$ form a $q^{s-1}$-divisible set.
  \item The holes of a partial $k$-spread in $\F_q^v$ form a
    $q^{k-1}$-divisible set.
  \end{enumerate}
\end{theorem}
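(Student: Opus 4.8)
The plan is to prove part (i) first and then obtain part (ii) as an immediate special case, since a partial $k$-spread with its holes is precisely a vector space partition of type $k^{m_k}1^{m_1}$, for which $s=k$ and hence $q^{s-1}=q^{k-1}$. So the real content is (i): showing that the $1$-subspaces occurring in a vector space partition of $\F_q^v$ of type $t^{m_t}\dotsm s^{m_s}1^{m_1}$ (with $s\geq 2$ the smallest dimension exceeding $1$) form a $q^{s-1}$-divisible point set $\mathcal{N}$. By Definition~\ref{def_delta_divisible} it suffices to find an integer $u$ with $\card{(\mathcal{N}\cap H)}\equiv u\pmod{q^{s-1}}$ for every hyperplane $H$; strong divisibility then comes for free via Theorem~\ref{thm:divisible}(i), and I will simply note this rather than track the residue of $\card{\mathcal{N}}$ by hand.

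First I would fix a hyperplane $H$ and count points of $\F_q^v$ lying in $H$ in two ways. On one side this is $\gauss{v-1}{1}{q}$. On the other side, every nonzero vector lies in a unique member of the partition $\mathcal{C}$, so
\begin{equation*}
  \gauss{v-1}{1}{q}=\sum_{X\in\mathcal{C}}\card{\{P\in\gauss{X}{1}{q}:P\subseteq H\}}
  =\sum_{X\in\mathcal{C}}\gauss{\dim(X\cap H)}{1}{q}.
\end{equation*}
For a member $X$ of dimension $d$ one has $\dim(X\cap H)\in\{d-1,d\}$, with $\dim(X\cap H)=d$ exactly when $X\subseteq H$. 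Splitting the sum according to whether $X$ is a point ($d=1$), a member of dimension $d\geq s$, I would write the contribution of the points as $\card{(\mathcal{N}\cap H)}$ (each point contributes $1$ if inside $H$, $0$ otherwise) and the contribution of the higher-dimensional members as a sum of terms $\gauss{d}{1}{q}$ or $\gauss{d-1}{1}{q}$. The key arithmetic observation is that for $d\geq s$ both $\gauss{d}{1}{q}=1+q+\dots+q^{d-1}$ and $\gauss{d-1}{1}{q}=1+q+\dots+q^{d-2}$ are congruent to $\gauss{s-1}{1}{q}$ modulo $q^{s-1}$; indeed $\gauss{d}{1}{q}-\gauss{s-1}{1}{q}=q^{s-1}+\dots+q^{d-1}$ is divisible by $q^{s-1}$, and likewise for $\gauss{d-1}{1}{q}$ when $d\geq s$ (when $d=s$ the term $\gauss{d-1}{1}{q}=\gauss{s-1}{1}{q}$ trivially). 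Hence modulo $q^{s-1}$ every higher-dimensional member contributes the \emph{same} residue $\gauss{s-1}{1}{q}$, independent of $H$ and of $d$.

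Putting this together, reduce the displayed identity modulo $q^{s-1}$:
\begin{equation*}
  \gauss{v-1}{1}{q}\equiv\card{(\mathcal{N}\cap H)}+\gauss{s-1}{1}{q}\cdot\sum_{d\geq s}m_d\pmod{q^{s-1}}.
\end{equation*}
The right-hand sum $\sum_{d\geq s}m_d$ is the number of members of dimension $\geq s$, a constant not depending on $H$. Therefore $\card{(\mathcal{N}\cap H)}\equiv\gauss{v-1}{1}{q}-\gauss{s-1}{1}{q}\cdot\sum_{d\geq s}m_d=:u\pmod{q^{s-1}}$ for every hyperplane $H$, which is exactly weak $q^{s-1}$-divisibility of $\mathcal{N}$. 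Invoking Theorem~\ref{thm:divisible}(i) upgrades this to strong $q^{s-1}$-divisibility, proving (i). For (ii), apply (i) to the vector space partition formed by a partial $k$-spread together with its holes: here the only dimensions present are $k$ and $1$, so $s=k$ and the holes form a $q^{k-1}$-divisible set.

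I do not expect a serious obstacle here; the one point requiring a little care is the degenerate behaviour when $s=2$ (then $q^{s-1}=q$, $\gauss{s-1}{1}{q}=\gauss{1}{1}{q}=1$) and the edge case $d=s$ where $X\cap H$ has dimension $s-1$ and contributes $\gauss{s-1}{1}{q}$ exactly rather than merely congruently — but both are absorbed by the uniform statement "$\gauss{d}{1}{q}\equiv\gauss{d-1}{1}{q}\equiv\gauss{s-1}{1}{q}\pmod{q^{s-1}}$ for all $d\geq s$", so no case distinction is actually needed in the write-up. The mildly delicate step is simply making sure the congruence $\gauss{d-1}{1}{q}\equiv\gauss{s-1}{1}{q}$ is quoted for $d\geq s$ (it fails for $d=s-1$ only trivially, and $d\geq s$ is all we use).
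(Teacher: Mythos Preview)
Your argument is correct. The paper uses the same double-counting idea but executes it a bit more directly: instead of counting points of $\F_q^v$ \emph{inside} a hyperplane $H$, it counts points in the affine complement $\points\setminus H$. Each non-hole member $X$ with $X\nsubseteq H$ contributes $\card{(X\setminus H)}=q^{\dim X-1}\equiv 0\pmod{q^{s-1}}$, and $\card{(\points\setminus H)}=q^{v-1}\equiv 0\pmod{q^{s-1}}$ as well, so the number of holes outside $H$, namely $\card{\mathcal{N}}-\card{(\mathcal{N}\cap H)}$, is $\equiv 0\pmod{q^{s-1}}$. This yields \emph{strong} $q^{s-1}$-divisibility immediately, without the detour through weak divisibility and Theorem~\ref{thm:divisible}(i). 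Your route via the congruences $\gauss{d}{1}{q}\equiv\gauss{d-1}{1}{q}\equiv\gauss{s-1}{1}{q}\pmod{q^{s-1}}$ is perfectly valid and amounts to the same computation read inside $H$ rather than outside; the only minor wrinkle is that Theorem~\ref{thm:divisible} formally excludes $\mathcal{N}=\emptyset$, a case you should dispose of separately (trivially) before invoking it.
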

\begin{proof}
  It is immediate that (i) implies (ii). For the proof of (i) let $H$
  be a hyperplane of $\aspace$. The points in $\points\setminus H$ are
  partitioned into the affine subspaces $X\setminus H$ for those
  $X\in\mathcal{C}$ satisfying $X\nsubseteq H$. If such a $t$-subspace
  $X$ is not a point,
  we have $t\geq s$ and hence $\card{(X\setminus H)}=q^{t-1}\equiv
  0\pmod{q^{s-1}}$. Moreover, we also have $\card{(\points\setminus
  H)}=q^{v-1}\equiv 0\pmod{q^{s-1}}$. It follows that the number of
  points in $\mathcal{C}$ that are not contained in $H$ is divisible
  by %%$p^{s-1}$ 
  $q^{s-1}$ 
  as well, completing the proof.
\qed\end{proof}

Theorem~\ref{thm:psp-vsp} explains our motivation for studying
$q^r$-divisible points sets in $\aspace$. Before delving deeper into
this topic, we pause for a few example applications to partial spreads,
which may help advertising our approach.

First we consider the problem of improving the upper bound
\eqref{eq:1st_ub} for the size of a partial $k$-spread in
$\aspace$. The bound is equivalent to
$\card{\mathcal{C}}\geq\frac{q^r-1}{q-1}$ for the corresponding hole
sets, which are $q^{k-1}$-divisible by
Theorem~\ref{thm:psp-vsp}(ii). But the smallest nontrivial
$q^{k-1}$-divisible point sets in $\aspace$ are the $k$-subspaces of
$\F_q^v$, since these are associated to the constant-weight-$q^{k-1}$
simplex code.\footnote{This follows, e.g., by applying the Griesmer
  bound to the associated linear code, which has minimum
  distance $\geq q^{k-1}$ and dimension $\geq k$.}  
%%\pmarginar{Meine
%%  \"Uberlegung war, dass das kleinstm\"ogliche Durchschnittsgewicht,
%%  $\sum_{i\geq 1}iA_i/\sum_{i\geq 1}A_i$, gleich $q^{k-1}$ ist und zum
%%  Simplexcode geh\"ort. Aber es geht nat\"urlich auch die
%%  Projektivit\"at ein, denn sonst k\"onnte es ja z.B. ein
%%  $1$-dimensionaler Code der L\"ange $q^{k-1}$ sein. Das Argument mit
%%  der Griesmer-Schranke scheint mir am einfachsten zu sein.}  
Thus $\card{\mathcal{C}}\geq\frac{q^k-1}{q-1}>\frac{q^r-1}{q-1}$, and
equality in \eqref{eq:1st_ub} is not possible. Together with
Theorem~\ref{thm:multicomponent} this already gives the numbers
$\smax_2(tk+1,2k;k)$.
% \marginpar{Frage: Reicht die
%   $q^{k-1}$-Divisibilit\"at der Lochmenge zusammen mit der Kongruenz
%   $\card{\mathcal{C}}\equiv\frac{q^r-1}{q-1}\bmod\frac{q^k-1}{q-1}$ aus, um
%   die Schranke $\deficiency\geq q-1$ aus Theorem~\ref{thm:lb_def} zu
%   kriegen?  Oder evtl. noch sch\"arferes? Steht dazu weiter hinten
%   etwas?}

The preceding argument gives $\smax_2(8,6;3)\leq 35$, 
and as a second application, we now exclude the existence of a partial plane
spread of size $35$ in $\F_2^8$. As already mentioned, this also
follows from the Drake-Freeman bound
(Theorem~\ref{thm_partial_spread_4}) and forms an important ingredient
in the determination of the numbers $\smax_2(v,6;3)$. The
hole set $\mathcal{C}$ of such a partial plane spread has size
$2^8-1-35\cdot 7=10$ and is $4$-divisible, i.e., it meets every
hyperplane in $2$ or $6$ points. 

We claim that
$\dim\langle\mathcal{C}\rangle=4$. The inequality
$\dim\langle\mathcal{C}\rangle\geq 4$ is immediate from
$\card{\mathcal{C}}=10$. The reverse inequality follows from the fact
that the linear code $C$ associated with $\mathcal{C}$
is doubly-even, hence self-orthogonal, but cannot be
self-dual.\footnote{Just recall that the length of any doubly-even self-dual
binary code must be a multiple of $8$.}

Given that $\dim\langle\mathcal{C}\rangle=4$, the existence of
$\mathcal{C}$ is readily excluded using the standard equations:
\begin{equation}
  \label{eq:ste_ex2}
  \begin{array}{rrrrl}
    a_2&+&a_6&=&15,\\
    2a_2&+&6a_6&=&10\cdot 7,\\
    \binom{2}{2}a_2&+&\binom{6}{2}a_6&=&\binom{10}{2}\cdot 3.
  \end{array}
\end{equation}
The unique solution of the first two equations is $a_2=5$, $a_6=10$
(corresponding to the $2$-fold repetition of the $[5,4,2]$ even-weight
code), but it does not satisfy the third equation (since this code is
not projective).\footnote{Adding $20=5\cdot 4$, which accounts for the $5$
  pairs of equal points in the code, to the right-hand side
  ``corrects'' the third equation.}
 
% \begin{lemma}
%   \label{lemma_exclude_cardinality_one}
%   No non-trivial $\Delta$-divisible $\mathcal{C}\subseteq\aspace$ with $n=|\mathcal{C}|=1$ exists. %% for $\Delta>1$. 
% \end{lemma}
% \begin{proof}
%   In Definition~\ref{def_delta_divisible} we have to choose $u=1$ due to $\Delta\ge 2$, so that all hyperplanes have to 
%   contain the unique point, which is not possible. 
% \qed\end{proof}

As already mentioned, the standard equations in
Lemma~\ref{lemma_standard_equations_q} have a natural generalization
in the language of linear codes. %see e.g.~\cite{huffman2010fundamentals}.
To this end let $\mathcal{C}$ be a point set of size
$\card{\mathcal{C}}=n$ in $\PG(k-1,\F_q)$,
which is spanning\footnote{This assumption is necessary for the
relation $A_i=(q-1)a_{n-i}$ to hold.}, and $C$
the corresponding projective linear $[n,k]$ code over $\mathbb{F}_q$. 
The hyperplane spectrum $(a_i)_{0\leq i\leq n}$ of $\mathcal{C}$ and
the weight distribution $(A_i)_{0\leq i\leq n}$ of $C$ are related by
$A_i=(q-1)a_{n-i}$ for $1\leq i\leq n$ (supplemented by $A_0=1$,
$a_n=0$) and hence provide the same information about
$\mathcal{C}$. The famous \emph{MacWilliams Identities},
\cite{macwilliams63}
\begin{equation}
  \label{mac_williams_identies}
  \sum_{j=0}^{n-i} {{n-j}\choose i} A_j=q^{k-i}\cdot \sum_{j=0}^i
  {{n-j}\choose{n-i}}A_j^\perp\quad\text{for }0\le i\le n, 
\end{equation} 
relate the weight distributions $(A_i)$, $(A_i^\perp)$ of the (primal)
code $C$ and the dual code
$C^\perp=\{\vek{y}\in\F_q^n;x_1y_1+\dots+x_ny_n=0\text{ for all
}\vek{x}\in C\}$. They can be solved for $A_i$ (or $A_i^\perp$),
resulting in linear relations whose coefficients are values of 
\emph{Krawtchouk polynomials}; see,
e.g.,~\cite[Ch.~7.2]{huffman2010fundamentals} for details.
% we denote the number of hyperplanes having an intersection of
% cardinality $i$ with $\mathcal{C}$, by $A_i$ the number of codewords
% of $\mathcal{L}$ of weight $i$ and by $A_i^\perp$ the number of
% codewords of weight $i$ of the dual code $\mathcal{L}^\perp$, which
% has dimension $n-k$. The connection between the $a_i$ and $A_i$ is
% given by $A_i=(q-1)a_{n-i}$ for all $0<i\le n$, $A_0=1$, and $a_n=0$.
%
% The \emph{weight distribution} $(A_0^\perp,\dots,A_n^\perp)$ of the 
% dual code $\mathcal{L}^\perp$ can be computed from the weight distribution $(A_0,\dots,A_n)$ of the (primal) code $\mathcal{L}$. 
% One way\footnote{Another way uses the \emph{weight enumerator} $W_{\mathcal{L}}(x,y)=\sum_{i=0}^n A_i(\mathcal{L})y^ix^{n-i}$. With 
% this, the weight enumerator of the dual code is given by $W_{\mathcal{C}^\perp}(x,y)=\frac{1}{\card{\mathcal{C}}}\cdot W_{\mathcal{C}}(x+(q-1)y,x-y)$. }
% % vertauschte Bedeutung von x und y im Vergleich zu manchen Buechern
% to write down the underlying relation are the so-called \emph{MacWilliams identities}:
% \begin{equation}
%   \label{mac_williams_identies}
%   \sum_{j=0}^{n-i} {{n-j}\choose i} A_j=q^{k-i}\cdot \sum_{j=0}^i
%   {{n-j}\choose{n-i}}A_j^\perp\quad\text{for }0\le i\le n, 
% \end{equation} 
% where, additionally, $A_0^\perp=1$. The fact that the $A_i^\perp$ are uniquely determined by the $A_i$ can e.g.\ be seen 
% by providing explicit equations for each $A_i^\perp$ in dependence of the $A_j$. Those formulas involve the so-called 
% \emph{Krawtchouk polynomials} \cite{krawtchouk1929generalisation}.    
In our case we have $A_1^\perp=A_2^\perp=0$, since $C^\perp$ has
minimum distance $d^\perp\geq 3$, and the first three equations in
\eqref{mac_williams_identies} are equivalent to the equations in
Lemma~\ref{lemma_standard_equations_q}.

Of course the $A_i$ and the $A_i^\perp$ in (\ref{mac_williams_identies}) have to be non-negative integers. Omitting the integrality condition 
yields the so-called \emph{linear programming method}, see e.g.\ \cite[Section 2.6]{huffman2010fundamentals}, where the $A_i$ and $A_i^\perp$ 
are variables satisfying the mentioned constraints.\footnote{Typically, the $A_i^\perp$ are removed from the formulation using the 
explicit formulas based on the Krawtchouk polynomials, which may of course also be done automatically in the preprocessing step of 
a customary linear programming solver.} Given some further constraints on the weights of the code and/or the dual code, one may check 
whether the corresponding polyhedron %%is empty or 
contains non-negative rational solutions. In general, this is a very powerful 
approach %%\footnote{The work of Delsarte, see e.g.~\cite{delsarte1973algebraic,delsarte1998association}, tells us, that, essentially, 
%%there is some kind of linear programming bound if we have an underlying association scheme, see \cite{zieschang2006theory} for an introduction.} 
%%for an application on bounds for projective codes, cf.\ also \cite{bachoc2008new,barg2013new}.
and was used to compute bounds for codes with a given minimum
distance; see \cite{delsarte1972bounds,lloyd1957binary}.  Here we
consider a subset of the MacWilliams identities and use analytical
arguments.\footnote{The use of a special polynomial, like we will do,
  is well known in the context of the linear programming method, see
  e.g.~\cite[Section 18.1]{bierbrauer2005introduction}.}

%% Since the hyperplane of $q^r$-divisible sets are $q^{r-1}$-divisible, more and more cardinalities can be excluded if $r$ 
%%increases. 
By considering the average number of points per hyperplane, we can
guarantee the existence of a hyperplane containing a relatively small
number of points of $\mathcal{C}$. If this number is nonzero,
Lemma~\ref{lemma_heritable} allows us to lower-bound
$\card{\mathcal{C}}$ by induction.\footnote{This result is not \textit{new} at all. In
    \cite{beutelspacher1975partial} Beutelspacher used such an average
    argument in his upper bound on the size of partial
    spreads. Recently, N{\u{a}}stase and Sissokho used it in
    \cite[Lemma 9]{nastase2016maximum}. In coding theory it is well
    known in the context of the Griesmer bound. One may also interpret
    it as an easy implication of the first two MacWilliams identities,
    see Lemma~\ref{lemma_hyperplane_types_arithmetic_progression_2}
    and
    Corollary~\ref{cor_nonexistence_arithmetic_progression_2}.}

\begin{lemma}
  \label{lemma_average}
  %Let $\mathcal{C}$ be a $q^r$-divisible point set of size $\card{C} = n \geq 1$.
  %Then there is a hyperplane $H$ such that for $n' = \card{(\mathcal{C} \cap H)}$ we have
  %\[
  %	n' < \frac{n}{q}\qquad\text{and}\qquad n' \equiv n\pmod{p^r}\text{.}
  %\]
  Suppose that $\mathcal{C}\neq\emptyset$ in $\aspace$ is $q^r$-divisible with
  $\card{\mathcal{C}}=a\cdot q^{r+1}+b$ for some $a,b\in\mathbb{Z}$ and $y\in\mathbb{N}_{0}$ 
  with $y\equiv (q-1)b\pmod{q^{r+1}}$. Then there exists a hyperplane $H$ such
  that $\card{(\mathcal{C}\cap H)}\le (a-1)\cdot q^{r}+\frac{b+y}{q}\in\mathbb{Z}$ and
  $\card{(\mathcal{C}\cap H)}\equiv b\pmod {q^{r}}$.
\end{lemma}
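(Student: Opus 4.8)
The plan is to combine two ingredients: the modular constraint on hyperplane multiplicities enjoyed by every $q^r$-divisible set, and the classical averaging bound over all hyperplanes (the device already used in the proof of Theorem~\ref{thm:lb_def}). Write $n=\card{\mathcal{C}}=aq^{r+1}+b$. Since $\mathcal{C}$ is $q^r$-divisible, every hyperplane $H$ of $\aspace$ meets $\mathcal{C}$ in $\card{(\mathcal{C}\cap H)}\equiv\card{\mathcal{C}}\pmod{q^r}$ points (strong divisibility; by Theorem~\ref{thm:divisible} even the weak version would force this), and since $aq^{r+1}\equiv 0\pmod{q^r}$ this says $\card{(\mathcal{C}\cap H)}\equiv b\pmod{q^r}$ for \emph{every} hyperplane. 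That already yields the congruence asserted for the hyperplane we are going to produce; it remains only to find one of small multiplicity.

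For that I would average. By \eqref{eq_ste2}, $\sum_H\card{(\mathcal{C}\cap H)}=n\gauss{v-1}{1}{q}$, and the number of hyperplanes is $\gauss{v}{1}{q}$, so the mean multiplicity of a hyperplane is $\bar m:=n\cdot\frac{q^{v-1}-1}{q^v-1}$. This is \emph{strictly} smaller than $n/q$, because $q(q^{v-1}-1)=q^v-q<q^v-1$ whenever $q>1$. Hence some hyperplane $H$ satisfies $m:=\card{(\mathcal{C}\cap H)}\le\bar m<\frac nq=aq^r+\frac bq$.

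It remains to round $m$ down within its residue class. Since $y\ge 0$, $aq^r+\frac bq\le aq^r+\frac{b+y}{q}$, hence $m<aq^r+\frac{b+y}{q}$. The hypothesis $y\equiv(q-1)b\pmod{q^{r+1}}$ is precisely what makes the right-hand side a valid comparison point: it gives $b+y\equiv qb\equiv 0\pmod q$, so $\frac{b+y}{q}\in\mathbb{Z}$, and $b+y\equiv qb\pmod{q^{r+1}}$, so $aq^r+\frac{b+y}{q}\equiv b\pmod{q^r}$. Thus $m$ and $aq^r+\frac{b+y}{q}$ are integers congruent modulo $q^r$ with $m$ strictly the smaller, so $m\le aq^r+\frac{b+y}{q}-q^r=(a-1)q^r+\frac{b+y}{q}$; this value is an integer and is $\equiv b\pmod{q^r}$, which is exactly the assertion.

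There is no serious obstacle; the whole content is the strict averaging inequality $\bar m<n/q$, which holds for the trivial reason $q>1$. The only delicate point is the bookkeeping with the auxiliary parameter $y$: one must notice that it is the congruence modulo $q^{r+1}$ --- not merely modulo $q$, which would only ensure integrality of $\frac{b+y}{q}$ --- that puts the target value $(a-1)q^r+\frac{b+y}{q}$ into the residue class of $b$ modulo $q^r$, so that the elementary implication (if $m<M$ and $m\equiv M\pmod{q^r}$ then $m\le M-q^r$) may be applied. It is also worth stressing that it is the strictness of $\bar m<n/q$, rather than a mere $\le$, that produces the extra summand $-q^r$; the parameter $y$ is present in the statement only to give later applications freedom in how coarsely to round.
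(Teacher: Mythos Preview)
Your proof is correct and follows essentially the same route as the paper's: average over all hyperplanes to get a hyperplane with multiplicity strictly below $n/q$, then use the congruence $\card{(\mathcal{C}\cap H)}\equiv b\equiv\frac{b+y}{q}\pmod{q^r}$ to round down by $q^r$. Your write-up is in fact more explicit than the paper's in justifying why $\frac{b+y}{q}$ lands in the correct residue class modulo $q^r$.
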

\begin{proof}
  Set $n=\card{\mathcal{C}}$ and choose a hyperplane $H$ such that
  $n':= \card{(\mathcal{C}\cap H)}$ is minimal. Then, by considering the
  average number of points per hyperplane, we have
  \begin{equation*}
    n'\le %%\underset{\text{average}}{\underbrace{
    \frac{1}{\gauss{v}{1}{q}}\cdot \sum_{\text{hyperplane } H'}
    \card{(\mathcal{C}\cap H')}%%}}
    =n\cdot\gauss{v-1}{1}{q}/\gauss{v}{1}{q}<\frac{n}{q}
    =a\cdot q^r+\frac{b}{q}\leq a\cdot q^r+\frac{b+y}{q}.
  \end{equation*}
  Since $n'\equiv b\equiv \frac{b+y}{q}\pmod{q^r}$, this implies
  $n'\leq(a-1)q^r+\frac{b+y}{q}$.
  \qed
\end{proof}

Note that the stated upper bound does not depend on the specific choice of $a$ and $b$, i.e., 
there is no need to take a non-negative or \textit{small} $b$.   
Choosing $y$ as small as possible clearly gives the sharpest bound.\footnote{Another parameterization for $y$ 
is given by $y=qb'-b$, where $b'\in\mathbb{Z}$ with $b'\ge\frac{b}{q}$ and $b'\equiv b\pmod{q^{r+1}}$, so that $y\in\mathbb{N}_0$. 
Due to $b'=\frac{b+y}{q}$, $y$ is minimal if and only if $b'$ is minimal.}
%% Zusammenhang mit dem folgenden Korollar:
%% Wenn man analog zu b'=(b+y)/q in der naechsten Stufe
%% b''=(b'+y)/q=((b+y)/q+y)/q usw. setzt erhalt man die selbe Schranke.
%% 
%% Immer b' nehmen liefert am Ende
%% <= (a-j)*q^(r+1-j)+ b'
%% Die y-Variante liefert mit y=b'q-b umgerechnet
%% <= (a-j)*q^(r+1-j)+ b' +(b'-b)*[j-1,1]_q/q^(j-1)
%% Fuer j=1, also eine Interation macht es keinen Unterschied, fuer j>1 und
%% (b'<b) sollte die y-Variante immer strikt besser sein.
If $b\ge 0$, which one can always achieve by suitably decreasing $a$, it is always possible to 
choose $y=(q-1)b$. However, for $q=3$, $r=2$, and $\card{\mathcal{C}}=1\cdot 3^{3}+16=43$, i.e., $a=1$ and $b=16$, 
Lemma~\ref{lemma_average} with $y=(2-1)16=16$ provides the existence of a hyperplane $H$ with 
$n'=\card{(\mathcal{C}\cap H)}\le 0\cdot 3^2+16=16$. Using $y=7$ gives
%% However, this result can be 
%% sharpened as follows.
%% Given the parameters from Lemma~\ref{lemma_average} let $y$ be a non-negative
%% integer such that $y\equiv (q-1)b\pmod {q^{r+1}}$. With this, we have
%% $\frac{b+y}{q}\equiv b\equiv n'\pmod{q^r}$ and via the argument of the
%% proof of Lemma~\ref{lemma_average} we conclude
%% $n'\le (a-1)\cdot q^r + \frac{b+y}{q}$. Here, negative values for $b$
%% are permitted as long as $y\ge 0$. Taking the smallest possibility for
%% $y$ clearly gives the sharpest bound. In our example $y=5$ yields 
$n'\le 7$ and $n'\equiv 7\pmod {3^2}$, so that $n'=7$. Applying the argument again yields a subspace 
of co-dimension $2$ containing exactly one hole. Indeed, Equation~(\ref{eq_ste3}) is 
needed additionally in order to exclude the possibility of $n'=7$, so that $\smax_3(8,6;3)\le 248$, i.e., 
$\deficiency\ge 4$, cf.~Theorem~\ref{thm_partial_spread_4} stating the same bound. 

\begin{corollary}
  \label{corollary_iterated_average}
  Suppose that $\mathcal{C}\neq\emptyset$ in $\aspace$ is $q^r$-divisible with
  $\card{\mathcal{C}}=a\cdot q^{r+1}+b$ for some $a,b,y\in\mathbb{Z}$
  with $y\equiv (q-1)b\pmod{q^{r+1}}$ and $y\ge 1$. Further, let $g\in\mathbb{Q}$ is the largest 
  number with $q^g\mid y$, and $j\in\mathbb{Z}$ satisfies $1\leq
  j<r+1-\max\{0,g-1\}$. Then there exists a $(v-j)$-subspace $U$ such
  that $\card{(\mathcal{C}\cap U)}\le (a-j)\cdot q^{r+1-j}+\frac{b+\gauss{j}{1}{q}\cdot y}{q^j}$ and
  $\card{(\mathcal{C}\cap U)}\equiv b\pmod {q^{r+1-j}}$.
\end{corollary}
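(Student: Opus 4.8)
The plan is to iterate Lemma~\ref{lemma_average} exactly $j$ times along a descending flag $\F_q^v=H^{(0)}\supset H^{(1)}\supset\dots\supset H^{(j)}$ with $\dim H^{(i)}=v-i$, keeping track of the restrictions $\mathcal{C}_i:=\mathcal{C}\cap H^{(i)}$ and of their divisibility. Lemma~\ref{lemma_heritable} will supply, at each stage, that $\mathcal{C}_i$ is $q^{r-i}$-divisible in $\PG(H^{(i)})$, a genuine statement throughout the relevant range $i\le j-1\le r-1$ (the bound $j\le r$ being forced by the hypothesis). The technical heart is that one \emph{single} value of the parameter $y$ serves all $j$ applications: whenever $\mathcal{C}_i$ is presented to Lemma~\ref{lemma_average} with a ``remainder'' congruent to $b$ modulo $q^{r+1-i}$, the relation $y\equiv(q-1)b\pmod{q^{r+1}}$ immediately gives $y\equiv(q-1)b\pmod{q^{r-i+1}}$, which is exactly the admissibility condition required for $y$ in that application.

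Carrying this out, I would put $d_i:=\bigl(b+\gauss{i}{1}{q}\,y\bigr)/q^i$, so that $d_0=b$ and $d_j$ is the quantity in the statement, and prove by induction on $i=0,1,\dots,j$ the invariant: $n_i:=\card{\mathcal{C}_i}$ satisfies $n_i\le(a-i)q^{r+1-i}+d_i$ and $n_i\equiv b\pmod{q^{r+1-i}}$, and for $i<j$ the set $\mathcal{C}_i$ is $q^{r-i}$-divisible. The base case $i=0$ is the hypothesis. For the step $i\to i+1$ one first observes $q^id_i=b+\gauss{i}{1}{q}y\equiv q^ib\pmod{q^{r+1}}$ (using $y\equiv(q-1)b$ and $\gauss{i}{1}{q}(q-1)=q^i-1$), whence $d_i\in\mathbb{Z}$ and $d_i\equiv b\pmod{q^{r+1-i}}$; consequently $a_i:=(n_i-d_i)/q^{r+1-i}$ is an integer with $a_i\le a-i$, and $n_i=a_iq^{r-i+1}+d_i$ is an admissible input for Lemma~\ref{lemma_average} applied to the $q^{r-i}$-divisible $\mathcal{C}_i$, with parameter $y$ (admissible by the previous paragraph, and with $(d_i+y)/q\in\mathbb{Z}$ because $d_i+y\equiv b-b\equiv 0\pmod q$). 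The lemma yields $H^{(i+1)}\subset H^{(i)}$ with $n_{i+1}\le(a_i-1)q^{r-i}+(d_i+y)/q$ and $n_{i+1}\equiv d_i\pmod{q^{r-i}}$. The identity $\gauss{i}{1}{q}+q^i=\gauss{i+1}{1}{q}$ gives precisely $(d_i+y)/q=d_{i+1}$, and combined with $a_i\le a-i$ and $d_i\equiv b\pmod{q^{r+1-i}}$ this reproduces the invariant for $i+1$; the $q^{r-i-1}$-divisibility of $\mathcal{C}_{i+1}$, needed only when $i+1<j$ (so that $i\le r-2$), then follows from Lemma~\ref{lemma_heritable}. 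Taking $U:=H^{(j)}$ completes the proof.

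The step I expect to be the genuine obstacle, and the only place the full strength of $1\le j<r+1-\max\{0,g-1\}$ is spent, is ensuring that $\mathcal{C}_i\neq\emptyset$ for every $i<j$, as required by Lemma~\ref{lemma_average}. I would argue by contradiction: if $n_i=0$ for some $i\le j-1$, the invariant forces $q^{r+1-i}\mid b$, and substituting $b=q^{r+1-i}b'$ into $y=(q-1)b+q^{r+1}m$ shows $q^{r+1-i}\mid y$, i.e.\ $g\ge r+1-i\ge r+2-j$; but the hypothesis says precisely $g<r+2-j$ (this is literally $j<r+1-\max\{0,g-1\}$ when $g\ge 1$, and when $g<1$ it holds automatically since $j\le r$ gives $r+2-j\ge 2>g$). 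The contradiction shows $n_i\ge 1$ for all $i<j$, so each invocation of Lemma~\ref{lemma_average} is legitimate and the induction runs to completion. (A routine side check is that $i\le r$ throughout, so that all exponents $r-i$, $r+1-i$ are non-negative integers and Lemma~\ref{lemma_average} applies in the stated form.)
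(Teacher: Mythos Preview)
Your proof is correct and follows essentially the same approach as the paper's: induction on $j$ via Lemma~\ref{lemma_average} and Lemma~\ref{lemma_heritable}, with the crucial nonemptiness check at each intermediate step. The paper compresses all of this into two sentences (noting only that $n'>0$ in all but the last step because $pq^g\nmid b$), whereas you have carefully unpacked the bookkeeping with the quantities $d_i$ and verified the nonemptiness by the same divisibility contradiction.
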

\begin{proof}
  In order to apply induction on $j$, using Lemma~\ref{lemma_heritable} and Lemma~\ref{lemma_average}, 
  we need to ensure $n'>0$ in all but the last step. The latter holds due to $pq^g\nmid b$.\footnote{The proof shows that the
  second assertion of the Corollary %%Lemma 
  is true for all $(v-j)$-subspaces $U$.}
  \qed
\end{proof}

Choosing the same value of $y$ in every step, in general is not the optimal way to iteratively 
apply Lemma~\ref{lemma_average}, even if $y$ is chosen optimal for the first step. To this end,  
consider a $3^3$-divisible set $\mathcal{C}\in\SPG{v-1}{\F_{3}}$ with $\card{\mathcal{C}}=31\cdot 3^4+49=2560$, 
which indeed exists as the disjoint union of $64$ solids is an example. Here $y=17$ with $y\equiv (3-1)\cdot 49\pmod{3^4}$ 
is the optimal choice in Lemma~\ref{lemma_average}, so that Corollary~\ref{corollary_iterated_average} 
guarantees the existence of a subspace $U$ with co-dimension $3$, $\card(\mathcal{C}\cap U)\le (31-3)\cdot 3^{4-3}+
\frac{49+\gauss{3}{1}{3}\cdot 17}{3^3}=94$ and $\card(\mathcal{C}\cap U)\equiv 49\equiv 1\pmod{3^1}$. However, applying 
Corollary~\ref{corollary_iterated_average} with $j=2$ and 
$y=17$ guarantees the existence of a subspace $U'$ with co-dimension $2$, $\card(\mathcal{C}\cap U')\le (31-2)\cdot 3^{4-2}+
\frac{49+\gauss{2}{1}{3}\cdot 17}{3^2}=29\cdot 3^2+13=30\cdot 3^2+4=274$, and $\card(\mathcal{C}\cap U')\equiv 49\equiv 4\pmod 9$. 
Since $\mathcal{C}\cap U'$ is $3^1$-divisible and $8\equiv 2\cdot 4\pmod{3^2}$, we can apply Lemma~\ref{lemma_average} with 
$y=8$ and deduce the existence of a hyperplane $H$ of $U'$ with $\card(\mathcal{C}\cap (U'\cap H)) \le 29\cdot 3^1 +\frac{4+8}{3}=91$ 
and $\card(\mathcal{C}\cap (U'\cap H))\equiv 4\equiv 1\pmod{3^1}$, while $U'\cap H$ has co-dimension $3$.
%%\footnote{Since 
%%$n'\equiv n\equiv 5\pmod {3^2}$ and no $3^1$-divisible set of cardinality }

In the context of partial spreads or, more generally, vector space
partitions another parameterization using the number of non-hole
elements of the vector space partition turns out to be very useful in
order to state a suitable formula for $y$. In what follows we will say that a 
 vector space partition $\mathcal{P}$ of $\SV{v}{q}$ has
 \emph{hole-type} $(t,s,m_1)$ if $\mathcal{P}$ has $m_1$ holes
 ($1$-subspaces), $2\leq s\leq t<v$, and $s\leq\dim(X)\leq t$ for all
 non-holes in $\mathcal{P}$. Additionally, we assume that there is at least one non-hole.
  % if it is of type $t^{m_t}\dotsm s^{m_s}1^{m_1}$, for some integers
 % $v>t\ge s\ge 2$, $m_i\in\mathbb{N}_0$ for $i\in\{1,s,\dots,t\}$, and
 % $\mathcal{P}$ is non-trivial.

\begin{corollary}
  \label{cor_j_times}
  Let $\mathcal{P}$ be a vector space partition of $\SV{v}{q}$ of hole-type $(t,s,m_1)$, $l,x\in\mathbb{N}_0$ with $\sum_{i=s}^t m_i =lq^s+x$, 
  and $b,c\in\mathbb{Z}$ with $m_1=bq^s+c\ge 1$. If $x\ge 2$ and $g$ is the largest integer such that $q^g$ divides $x-1$, then for each
  $0\le j\le s-\max\{1,g\}$ there exists a $(v-j)$-dimensional subspace $U$
  containing $\widehat{m}_1$ holes with
  $\widehat{m}_1\equiv \widehat{c}~\pmod {q^{s-j}}$ and
  $\widehat{m}_1\le (b-j)\cdot q^{s-j}+\widehat{c}$, where
  $\widehat{c}=\frac{c+\gauss{j}{1}{q}\cdot (x-1)}{q^j}\in\mathbb{Z}$.
\end{corollary}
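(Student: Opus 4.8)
The plan is to derive the statement from Corollary~\ref{corollary_iterated_average} by specializing it to the hole set $\mathcal{C}$ of $\mathcal{P}$, i.e.\ the set of those points of $\aspace$ that occur as members of $\mathcal{P}$. Since $\mathcal{P}$ has at least one non-hole and every non-hole has dimension $\geq s\geq 2$, Theorem~\ref{thm:psp-vsp}(i) (applied with the smallest occurring non-hole dimension, which is at least $s$) shows that $\mathcal{C}$ is a $q^{s-1}$-divisible set of points, nonempty because $m_1\geq 1$. I would then put $r:=s-1$, so $r+1=s$, and observe that writing $\card{\mathcal{C}}=m_1=bq^{r+1}+c$ matches the parameterization $\card{\mathcal{C}}=aq^{r+1}+b$ of Corollary~\ref{corollary_iterated_average} with $a=b$ and with the role of ``$b$'' there played by $c$.

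The key point is that one may take $y:=x-1$ in Corollary~\ref{corollary_iterated_average}: this is $\geq 1$ because $x\geq 2$, and the congruence $y\equiv(q-1)c\pmod{q^{r+1}}$ follows from counting the points of $\SV{v}{q}$ according to the member of $\mathcal{P}$ containing them,
\begin{equation*}
  q^v-1=(q-1)\,m_1+\sum_{i=s}^{t}m_i\,(q^i-1).
\end{equation*}
Since $v>t\geq s$, reducing modulo $q^s$ gives $q^v\equiv 0$ and $q^i-1\equiv -1$ for $i\geq s$, so together with $\sum_{i=s}^{t}m_i=lq^s+x$ we obtain $-1\equiv(q-1)m_1-x\pmod{q^s}$, that is, $x-1\equiv(q-1)m_1\equiv(q-1)c\pmod{q^s}$, exactly the congruence needed.

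For $j=0$ the assertion is trivial ($U=\SV{v}{q}$, $\widehat m_1=m_1=bq^s+c$, $\widehat c=c$). For $1\leq j\leq s-\max\{1,g\}$ I would apply Corollary~\ref{corollary_iterated_average}. Its admissible range for the codimension is $1\leq j<r+1-\max\{0,g'-1\}$, where $g'\in\mathbb{Q}$ is the largest rational with $q^{g'}\mid y$; since $y=x-1$ is a positive integer we have $g'-1<g\leq g'$ for the integer $g=\lfloor g'\rfloor$ of the present statement, and distinguishing $g\leq 1$ from $g\geq 2$ one checks that $j\leq s-\max\{1,g\}$ forces $j<s-\max\{0,g'-1\}=r+1-\max\{0,g'-1\}$. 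Corollary~\ref{corollary_iterated_average} then yields a $(v-j)$-subspace $U$ with
\begin{equation*}
  \card{(\mathcal{C}\cap U)}\leq(b-j)q^{s-j}+\frac{c+\gauss{j}{1}{q}\,(x-1)}{q^j}
  \qquad\text{and}\qquad
  \card{(\mathcal{C}\cap U)}\equiv c\pmod{q^{s-j}}.
\end{equation*}
A hole of $\mathcal{P}$ is contained in $U$ exactly when the corresponding point lies in $U$, so $\card{(\mathcal{C}\cap U)}=\widehat m_1$, the number of holes contained in $U$, and the right-hand side of the bound is $(b-j)q^{s-j}+\widehat c$.

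It then remains to check that $\widehat c=\frac{c+\gauss{j}{1}{q}(x-1)}{q^j}$ is an integer and that $\widehat m_1\equiv\widehat c\pmod{q^{s-j}}$. Using $\gauss{j}{1}{q}\,(q-1)=q^j-1$ one gets $\widehat c-c=\gauss{j}{1}{q}\bigl((x-1)-(q-1)c\bigr)/q^j$; since $q^s\mid(x-1)-(q-1)c$ by the congruence above and $j\leq s$, the fraction $\bigl((x-1)-(q-1)c\bigr)/q^j$ is an integer divisible by $q^{s-j}$, whence $\widehat c\in\mathbb{Z}$ and $\widehat c\equiv c\pmod{q^{s-j}}$, which combined with $\card{(\mathcal{C}\cap U)}\equiv c\pmod{q^{s-j}}$ gives $\widehat m_1\equiv\widehat c\pmod{q^{s-j}}$. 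The only genuinely delicate steps will be the global counting identity that pins down the value $y=x-1$ and the bookkeeping needed to reconcile the integer quantity $g$ with the rational exponent appearing in Corollary~\ref{corollary_iterated_average}; everything else is a routine translation.
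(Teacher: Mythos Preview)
Your proof is correct and follows essentially the same route as the paper: apply Corollary~\ref{corollary_iterated_average} to the $q^{s-1}$-divisible hole set with $y=x-1$, after deriving the congruence $(q-1)c\equiv x-1\pmod{q^s}$ from the point-count of the partition. The paper's proof is extremely terse (and even writes ``$x=y-1$'' where ``$y=x-1$'' is meant); you have simply filled in the details it omits, in particular the reconciliation of the integer $g$ here with the rational exponent $g'$ in Corollary~\ref{corollary_iterated_average} and the verification that $\widehat c\in\mathbb{Z}$ with $\widehat c\equiv c\pmod{q^{s-j}}$.
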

\begin{proof}
  We have $\gauss{v}{1}{q} = m_1 + \sum_{i=s}^t m_i \gauss{i}{1}{q}$.
  Multiplication by $q-1$ and reduction modulo $q^{s}$ yields $-1 \equiv (q-1)c - x \pmod{q^{s}}$, allowing us to apply Corollary~\ref{corollary_iterated_average} 
  with $x = y-1$.
  Observe that the parameters $g$ from Corollary~\ref{corollary_iterated_average} 
  and Corollary~\ref{cor_j_times} differ by at most $1-1/e$ if $q=p^e$.  
  \qed
\end{proof}

So far, we can guarantee that some subspace contains not \textit{too
  many} holes, since the average number of holes per subspace would be
too large otherwise. The modulo-constraints captured in the definition
of a $q^r$-divisible set enable iterative rounding, thereby sharpening the
bounds. First we consider the special case of partial spreads, and then 
we will derive some non-existence results for vector space partitions
with \textit{few} holes.

\begin{lemma}
  \label{lemma_application_partial_spreads}
  Let $\mathcal{P}$ be a %%non-trivial 
  vector space partition of type $k^{m_k}1^{m_1}$ of $\SV{v}{q}$ with $m_k=lq^k\!+\!x$, 
  where $l=\frac{q^{v\!-\!k}\!-\!q^r}{q^k\!-\!1}$, $x\ge 2$, $k=\gauss{r}{1}{q}\!+\!1\!-\!z\!+\!u>r$, $q^g\mid x\!-\!1$, $q^{g\!+\!1}\!\nmid\! x\!-\!1$, 
  and $g,u,z,r,x\in\mathbb{N}_0$. For $\max\{1,g\}\le y\le k$ there exists 
  a $(v-k+y)$-subspace $U$ with $L\le (z+y-1-u)q^y+w$ holes, where $w=-(x-1)\gauss{y}{1}{q}$ and $L\equiv w \pmod {q^y}$. 
\end{lemma}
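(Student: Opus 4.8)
The plan is to reduce Lemma~\ref{lemma_application_partial_spreads} to a direct application of Corollary~\ref{cor_j_times}, after translating the hypotheses into the hole-type language used there. Observe first that a vector space partition of type $k^{m_k}1^{m_1}$ has hole-type $(k,k,m_1)$: all non-holes have dimension exactly $k$, so $s=t=k$, and the requirement $2\le s\le t<v$ holds because $k>r\ge 0$ forces $k\ge 1$, while $k=\gauss{r}{1}{q}+1-z+u>r$ together with the existence of the partition (which contains holes and $k$-subspaces and has $m_k\ge 2$) forces $k\ge 2$ and $k<v$. Next I would pin down the number of holes from the point count $\gauss{v}{1}{q}=m_1+m_k\gauss{k}{1}{q}$, i.e.\ $m_1=\frac{q^v-1}{q-1}-m_k\cdot\frac{q^k-1}{q-1}$, and rewrite this modulo a power of $q$. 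Using $m_k=lq^k+x$ with $l=\frac{q^{v-k}-q^r}{q^k-1}$, a short computation gives $m_1=q^r\gauss{t}{1}{q^k}-\bigl(\text{stuff}\bigr)$ in the notation of \eqref{eq:1st_ub}; the point is to read off integers $b,c$ with $m_1=bq^k+c$ and, crucially, to identify $c$ modulo $q^k$ so that $\widehat c$ in Corollary~\ref{cor_j_times} matches the claimed bound $L\le(z+y-1-u)q^y+w$ with $w=-(x-1)\gauss{y}{1}{q}$.

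The second step is the bookkeeping that lines up the ``$g$'' parameters. Corollary~\ref{cor_j_times} takes as input $\sum_{i=s}^t m_i=lq^s+x$, which here is exactly $m_k=lq^k+x$, and requires $x\ge 2$; this is hypothesis. It then defines $g$ as the largest integer with $q^g\mid x-1$ — matching the hypothesis $q^g\mid x-1$, $q^{g+1}\nmid x-1$ — and for each $0\le j\le s-\max\{1,g\}=k-\max\{1,g\}$ produces a $(v-j)$-subspace $U$ with $\widehat m_1\le(b-j)q^{k-j}+\widehat c$ holes, $\widehat m_1\equiv\widehat c\pmod{q^{k-j}}$, where $\widehat c=\bigl(c+\gauss{j}{1}{q}(x-1)\bigr)/q^j$. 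I would set $j=k-y$; since $\max\{1,g\}\le y\le k$ is exactly the stated range, $j$ runs over $0\le j\le k-\max\{1,g\}$ as required, and $U$ has dimension $v-j=v-k+y$, which is the dimension claimed in the lemma. The subspace dimension, the modulus $q^{k-j}=q^y$, and the congruence class all then come out correctly; what remains is to check that $\widehat c$ with $j=k-y$ equals $(z+y-1-u)q^y+w$ up to the part that gets absorbed into $(b-j)q^y$, equivalently that $(b-j)q^y+\widehat c=(z+y-1-u)q^y-( x-1)\gauss{y}{1}{q}$.

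That final identity is where the real work lies, and it is the step I expect to be the main obstacle. It is a pure computation in $q$, but it braids together three substitutions: $b$ and $c$ from $m_1=bq^k+c$ (themselves coming from the global point count and from $l=\frac{q^{v-k}-q^r}{q^k-1}$), the relation $k=\gauss{r}{1}{q}+1-z+u$, and the Gaussian-binomial identity $\gauss{j}{1}{q}=\frac{q^j-1}{q-1}$ with $j=k-y$, so that $\gauss{j}{1}{q}(x-1)=\frac{q^{k-y}-1}{q-1}(x-1)$ has to be divided by $q^{k-y}$ and the non-integral-looking pieces must cancel against the contribution of $c$. I would organize this by first computing $c=m_1\bmod q^k$ explicitly — I expect $c\equiv\gauss{r}{1}{q}-x\cdot\frac{q^k-1}{q-1}+\text{(small)}\pmod{q^k}$, using that $\gauss{t}{1}{q^k}\equiv 1\pmod{q^k}$ and $m_k\equiv x\pmod{q^k}$ — then substituting $k=\gauss{r}{1}{q}+1-z+u$ to turn the $\gauss{r}{1}{q}$ into $k-1+z-u$, and finally verifying the $q^y$-coefficient is $z+y-1-u$ and the remainder is $-(x-1)\gauss{y}{1}{q}$. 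One should also record that $\widehat c\in\mathbb Z$ (needed for the statement to make sense): this follows because $q^g\mid x-1$ and $q^g\mid c$ in the relevant range, which is precisely the ``$pq^g\nmid b$'' type bookkeeping already handled inside the proof of Corollary~\ref{corollary_iterated_average}; I would simply cite that the hypotheses of Corollary~\ref{cor_j_times} (hence of Corollary~\ref{corollary_iterated_average}) are met rather than re-derive the integrality. Beyond this identity the proof is just assembling the output of Corollary~\ref{cor_j_times}.
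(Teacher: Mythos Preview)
Your approach is correct and essentially the same as the paper's: set $s=t=k$, $j=k-y$, and invoke Corollary~\ref{cor_j_times}. The one place where you make life harder than necessary is in choosing $b,c$: you plan to take $c=m_1\bmod q^k$, but Corollary~\ref{cor_j_times} only needs $b,c\in\mathbb{Z}$ with $m_1=bq^k+c$, and the clean choice is $b=\gauss{r}{1}{q}$, $c=-(x-1)\gauss{k}{1}{q}$ (possibly negative), obtained directly from $m_1=\gauss{v}{1}{q}-m_k\gauss{k}{1}{q}=\gauss{r}{1}{q}q^k-(x-1)\gauss{k}{1}{q}$. With this, $b-j=\gauss{r}{1}{q}-(k-y)=z+y-1-u$ via $k=\gauss{r}{1}{q}+1-z+u$, and $\bigl(c+\gauss{j}{1}{q}(x-1)\bigr)/q^j=-(x-1)\bigl(\gauss{k}{1}{q}-\gauss{k-y}{1}{q}\bigr)/q^{k-y}=-(x-1)\gauss{y}{1}{q}=w$ using $\gauss{k}{1}{q}-\gauss{k-y}{1}{q}=q^{k-y}\gauss{y}{1}{q}$; no ``non-integral-looking pieces'' ever appear.
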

\begin{proof}
  Due to 
  $m_1=\gauss{v}{1}{q}-m_k\cdot \gauss{k}{1}{q}=\gauss{r}{1}{q}q^k-\gauss{k}{1}{q}(x-1)$,  we 
  have $m_1=bq^k+c$ for $b=\gauss{r}{1}{q}$ and $c=-\gauss{k}{1}{q}(x-1)$, where $q^{g'}\mid x-1$ if and only if $q^{g'} \mid c$. Setting $s=t=k$ 
  and $j=k-y$, we observe $0\le j\le k-\max\{1,g\}$, since $\max\{1,g\}\le y\le k$.  With this, we
  apply Corollary~\ref{cor_j_times} and obtain an $(n-k+y)$-subspace $U$ with\\[-5mm]  
  \begin{eqnarray*}
    L & \le & (b-j)\cdot q^{k-j}+\frac{c+\gauss{j}{1}{q}\cdot (x-1)}{q^j}
    = (z+y-1-u)\cdot q^y-(x-1)\cdot\frac{\gauss{k}{1}{q}-\gauss{k-y}{1}{q}}{q^{k-y}}\\
    &=& (z+y-1-u)q^y-(x-1)\gauss{y}{1}{q}=(z+y-1-u)q^y+w\\[-5mm]
  \end{eqnarray*}
  holes, so that $L\le (z+y-1-u)q^y+w$ and $L\equiv w \pmod {q^y}$.
  %% Apply Corollary~\ref{cor_j_times} with $s=k$, $j=k-y$, and $m_1=\gauss{v}{1}{q}-m_k\cdot \gauss{k}{1}{q}=\gauss{r}{1}{q}q^k-\gauss{k}{1}{q}(x-1)$. 
\qed\end{proof}

%%We remark that 
The parameter $l$ is chosen in such a way that $m_k=lq^k+x$ matches
the cardinality of the partial $k$-spread given by the construction in
Theorem~\ref{thm:multicomponent} for $x=1$. Thus the assumption
$x\ge 2$ is no real restriction. Actually, the chosen parameterization
using $x$ in %%Lemma~\ref{key_lemma} and 
Corollary~\ref{cor_j_times}
makes it very transparent why the construction of
Theorem~\ref{thm:multicomponent} is \textit{asymptotically
  optimal}---as stated in Theorem~\ref{thm_partial_spread_asymptotic}.
If the dimension $k$ of the elements of the partial spread is large
enough, a sufficient number of rounding steps can be performed while
the rounding process is stopped at $x=1$ for the other direction. For
\textit{small} $k$ we will not reach the lower bound of the
construction of Theorem~\ref{thm:multicomponent}, so that there
remains some room for better constructions.

\begin{lemma}
  \label{lemma_hyperplane_types_arithmetic_progression_c}
  Let $\Delta=q^{s-1}$, $m\in\mathbb{Z}$, and $\mathcal{P}$ be a vector space partition of $\SV{v}{q}$ of hole-type $(t,s,c)$. Then, 
  $\tau_q(c,\Delta,m)\cdot \frac{q^{v-2}}{\Delta^2}-m(m-1)\ge 0$ and $\tau_q(c,\Delta,m)\ge 0$, 
  where $\tau_q(c,\Delta,m)=m(m-1)\Delta^2q^2-c(2m-1)(q-1)\Delta q+c(q-1)\Big(c(q-1)+1\Big)$. 
  If $c>0$, then $\tau_q(c,\Delta,m)=0$ if and only if $m=1$ and $c=\gauss{s}{1}{q}$.
\end{lemma}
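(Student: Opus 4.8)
The plan is to derive the inequalities $\tau_q(c,\Delta,m)\ge 0$ and $\tau_q(c,\Delta,m)\cdot q^{v-2}/\Delta^2 - m(m-1)\ge 0$ directly from the standard equations of Lemma~\ref{lemma_standard_equations_q} applied to the hole set $\mathcal{C}$ of $\mathcal{P}$, which is $\Delta$-divisible with $\Delta=q^{s-1}$ by Theorem~\ref{thm:psp-vsp}(i). Write $n=\card{\mathcal{C}}=c$ and let $(a_i)$ be the hyperplane spectrum. By $\Delta$-divisibility, $a_i=0$ unless $i\equiv c\pmod\Delta$; moreover every hyperplane meeting $\mathcal{C}$ in exactly $i$ points contributes a nonnegative value $i$. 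The idea is the classical ``variance'' or second-moment trick used in the linear programming method: one forms a suitable nonnegative quadratic expression in the multiplicities $i$ (a quadratic that vanishes precisely at the two ``expected'' hyperplane multiplicities) and sums it against the $a_i$, then uses \eqref{eq_ste1}, \eqref{eq_ste2}, \eqref{eq_ste3} to evaluate the sum in closed form.

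Concretely, first I would record that, because $i\equiv c\pmod\Delta$ for every hyperplane with $a_i\ne 0$, writing $i=c+\Delta\mu$ we may apply the polynomial argument to the integers $\mu$. The relevant nonnegative quantity is the product $(i-\alpha)(i-\beta)$ where $\alpha,\beta$ are the two smallest residues $\equiv c\pmod\Delta$ straddling the average hyperplane multiplicity $\bar\imath = c\gauss{v-1}{1}{q}/\gauss{v}{1}{q}$; after clearing denominators this product becomes, up to the positive factor $q^2$, a quadratic with leading behaviour matching $\tau_q$. Then $\sum_i (i-\alpha)(i-\beta)a_i\ge 0$ since each factor pair has constant sign on the admissible residue class — here is where the definition of $\tau_q$ with its coefficients $m(m-1)\Delta^2 q^2$, $c(2m-1)(q-1)\Delta q$, $c(q-1)(c(q-1)+1)$ comes from, with $m$ playing the role of the lower ``rounding level''. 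Expanding $\sum_i(i-\alpha)(i-\beta)a_i = \sum i^2 a_i - (\alpha+\beta)\sum i a_i + \alpha\beta\sum a_i$ and substituting $\sum a_i=\gauss{v}{1}{q}$, $\sum i a_i = c\gauss{v-1}{1}{q}$, $\sum i^2 a_i = 2\binom{c}{2}\gauss{v-2}{1}{q}+c\gauss{v-1}{1}{q}$ (from \eqref{eq_ste3} and \eqref{eq_ste2}) yields, after simplification using $\gauss{v}{1}{q}-q\gauss{v-1}{1}{q}=1$ etc., exactly $\tau_q(c,\Delta,m)\cdot q^{v-2}/\Delta^2 - m(m-1)$ on one side; this gives the first inequality. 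The bare inequality $\tau_q(c,\Delta,m)\ge 0$ then follows because the extra term $m(m-1)\ge 0$ and the left side is a multiple of $q^{v-2}/\Delta^2>0$ — or, more robustly, by noting $\tau_q$ itself is (a positive multiple of) the value of the quadratic at the integer point nearest the mean, hence nonnegative as a sum of the form $\sum(\text{something})^2$-type expression; I would double-check which of these two framings gives $\tau_q\ge 0$ cleanly without circularity.

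For the equality characterization, suppose $c>0$ and $\tau_q(c,\Delta,m)=0$. Tracking back through the derivation, $\tau_q=0$ forces $m(m-1)=0$ in the first displayed inequality (since the nonnegative term $q^{v-2}\tau_q/\Delta^2$ is then zero and $m(m-1)\ge 0$ must also vanish), hence $m\in\{0,1\}$, and then $\sum_i(i-\alpha)(i-\beta)a_i=0$, meaning every hyperplane meets $\mathcal{C}$ in exactly $\alpha$ or $\beta$ points — a two-intersection-number (two-weight) situation. With $m=1$ the two values collapse appropriately and the standard equations pin down $c$: solving the resulting $2\times2$ linear system in $(a_\alpha,a_\beta)$ and imposing that \eqref{eq_ste3} also holds forces $c=\gauss{s}{1}{q}$, i.e.\ $\mathcal{C}$ is (the point set of) an $s$-subspace, the minimal $q^{s-1}$-divisible configuration noted in the paragraph before Lemma~\ref{lemma_average}; the case $m=0$ would make $\tau_q=c(q-1)(c(q-1)+1)>0$, a contradiction, so $m=1$. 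Conversely, for an $s$-subspace one checks by direct substitution ($c=\gauss{s}{1}{q}$, $\Delta=q^{s-1}$, $m=1$) that $\tau_q=0$.

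The main obstacle I anticipate is purely bookkeeping: getting the exact match between the expanded second-moment sum and the stated cubic-in-$q$ expression for $\tau_q$, including verifying that the ``rounding level'' parameter $m$ in the statement corresponds to $\alpha = c - \text{(appropriate multiple of }\Delta)$ rather than an off-by-one variant, and confirming the sign conventions so that both inequalities come out as written rather than reversed. A secondary subtlety is making sure the argument is valid when $\mathcal{C}$ is not spanning (so that $\aspace$ should really be replaced by $\PG(\langle\mathcal{C}\rangle)$) and when $c=0$ (where $\tau_q=m(m-1)\Delta^2q^2\ge 0$ trivially and the equality clause is vacuous by the hypothesis $c>0$); these edge cases need a line each but no real work.
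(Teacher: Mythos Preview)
Your approach is essentially the same as the paper's: form the quadratic $(i-\alpha)(i-\beta)$ with $\alpha=c-m\Delta$, $\beta=c-(m-1)\Delta$, sum against the hyperplane spectrum, and evaluate via the three standard equations. The paper writes this as an explicit linear combination---$(c-m\Delta)(c-(m-1)\Delta)$ times \eqref{eq_ste1}, minus $(2c-(2m-1)\Delta-1)$ times \eqref{eq_ste2}, plus twice \eqref{eq_ste3}---and after dividing by $\Delta^2/(q-1)$ obtains the identity
\[
(q-1)\sum_{h}(m-h)(m-h-1)\,a_{c-h\Delta}=\tau_q(c,\Delta,m)\cdot\frac{q^{v-2}}{\Delta^2}-m(m-1),
\]
from which both inequalities follow since $(m-h)(m-h-1)\ge 0$ for all integers $h$. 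Your first justification of $\tau_q\ge 0$ (from the first inequality together with $m(m-1)\ge 0$) is exactly right; drop the hedged ``more robust'' alternative, which is muddled.

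For the equality case the paper is much quicker than your two-weight detour: once $m\in\{0,1\}$, just solve $\tau_q(c,\Delta,m)=0$ as a quadratic in $c$. For $m=1$ one has $\tau_q(c,\Delta,1)=c(q-1)\bigl(c(q-1)+1-\Delta q\bigr)$, so $c>0$ forces $c(q-1)+1=q^s$, i.e.\ $c=\gauss{s}{1}{q}$; for $m=0$ one has $\tau_q(c,\Delta,0)=c(q-1)\bigl(c(q-1)+1\bigr)>0$. Also note that $m$ in the statement is a free integer parameter, not specifically the level ``straddling the mean''; the inequality holds for every $m\in\mathbb{Z}$.
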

\begin{proof}
  Adding $(c-m\Delta)\Big(c-(m-1)\Delta\Big)$ times the first,
  $-\Big(2c-(2m-1)\Delta-1\Big)$ times the second and twice the third
  standard equation from Lemma~\ref{lemma_standard_equations_q}, and
  dividing the result by $\Delta^2/(q-1)$ gives
  $ (q-1)\cdot\sum_{h=0}^{\left\lfloor c/\Delta\right\rfloor}
  (m-h)(m-h-1)a_{c-h\Delta}=\tau_q(c,\Delta,m)\cdot
  \frac{q^{n-2}}{\Delta^2}-m(m-1), $ due to the
  $q^{s-1}$-divisibility. We observe $a_i\ge 0$ and
  $(m-h)(m-h-1)\ge 0$ for all $m,h\in\mathbb{Z}$. %, so that the right-hand side is non-negative. 
  If $m\notin\{0,1\}$, then $\tau_q(c,\Delta,m)> 0$. Solving $\tau_q(c,\Delta,0)=0$ 
  %%for $c$ 
  yields $c\in\left\{0,-\frac{q^s+1}{q-1}\right\}$ and solving $\tau_q(c,\Delta,1)=0$ 
  yields $c\in\left\{0,\gauss{s}{1}{q}\right\}$.
  
\qed\end{proof}

We remark that, in the case of $\tau_q(c,\Delta,m)=0$, their are either no holes at all or the holes 
form an $s$-subspace. 
\cite[Theorem 1.B]{bose1952orthogonal} is quite similar to Lemma~\ref{lemma_hyperplane_types_arithmetic_progression_c} 
and its implications. The multipliers used in the proof %%of Lemma~\ref{lemma_hyperplane_types_arithmetic_progression_c} 
can be directly read 
off from the inverse matrix of 
  $$\mat{A}=
    \begin{pmatrix}
      1     & 1     & 1     \\
      a     & b     & c     \\
      a^2-a & b^2-b & c^2-c
    \end{pmatrix},
  $$
%% the following observation.
which is given by 
$$\mat{A}^{-1}=\frac{1}{(c-a)(c-b)(b-a)}
    \begin{pmatrix}
       bc(c-b) & -(c+b-1)(c-b) &  (c-b) \\
      -ac(c-a) &  (c+a-1)(c-a) & -(c-a) \\
       ab(b-a) & -(b+a-1)(b-a) &  (b-a)
    \end{pmatrix}
  $$
for distinct numbers $a,b,c$.
%% \begin{lemma}
%%   \label{lemma_binomial_matrix_3}
%%   For distinct numbers $a,b,c$ the inverse matrix of 
%%   $$\mat{A}=
%%     \begin{pmatrix}
%%       1     & 1     & 1     \\
%%       a     & b     & c     \\
%%       a^2-a & b^2-b & c^2-c \\
%%     \end{pmatrix}
%%   $$
%%   is 
%%   $$\mat{A}^{-1}=\frac{1}{(c-a)(c-b)(b-a)}
%%     \begin{pmatrix}
%%        bc(c-b) & -(c+b-1)(c-b) &  (c-b) \\
%%       -ac(c-a) &  (c+a-1)(c-a) & -(c-a) \\
%%        ab(b-a) & -(b+a-1)(b-a) &  (b-a) \\
%%     \end{pmatrix}.
%%   $$
%% \end{lemma}
%% \begin{proof}
%%   Use the well-known inverse of a Vandermonde matrix or the formula
%%   for $\mat{A}^{-1}$ in terms of the adjoint matrix of
%%   $\mat{A}$.
%%   %%\marginpar{Das Zitat von Krattenthaler habe ich entfernt,
%%   %%  weil nun wirklich offensichtlich ist, wie man aus $\mat{A}$ eine
%%   %%  Vandermonde matrix macht.}
%%   % Just utilize the adjoint matrix. (We remark that the determinant of
%%   % the original matrix can easily be evaluated by a generalization of
%%   % the Vandermonde determinant, see e.g.~\cite[Proposition
%%   % 1]{krattenthaler2001advanced_determinants}.)  
%%   \qed\end{proof}
With this, 
%%Although 
Lemma~\ref{lemma_hyperplane_types_arithmetic_progression_c}
%%looks a bit like magic, it 
can be derived in a conceptual way. Consider the linear programming method with just the first three
MacWilliams identities. For parameters excluded by
Lemma~\ref{lemma_hyperplane_types_arithmetic_progression_c} this small
linear program is infeasible, which can %% and the infeasibility can 
be seen at a
certain basis solution, i.e., a choice of linear inequalities that are
satisfied with equality. Solving for these equations, i.e., a change
of basis, corresponds to a non-negative linear combination of the
inequality system.\footnote{If we relax $\ge 0$-inequalities by adding
  some auxiliary variable on the left hand side and the minimization
  of this variable, we can remove the infeasibility, so that we apply
  the duality theorem of linear programming. Then, the mentioned
  multipliers for the inequalities are given as the solution values of
  the dual problem.} In the parametric case we have to choose the
basis solution also depending on the parameters. Actually, we have
implemented a degree of freedom in
Lemma~\ref{lemma_hyperplane_types_arithmetic_progression_c} using the
parameter $m$. Here, the basis consists of two neighboring non-zero
$a_i$-entries, parameterized by $m$, and an arbitrary $a_i$, which
plays no role when the resulting equation is solved for all remaining
$a_i$-terms. In this way we end up with an equation of the form
$\sum_{h=0}^{\left\lfloor c/\Delta\right\rfloor}
(m-h)(m-h-1)a_{c-h\Delta}=\beta$, where the $a_i$ and their
coefficients are non-negative. 
%%\marginpar{Was heisst ``$a_i$ and their coefficients''?} 
%%Vorfaktor der a_i ist auch nicht-negativ
The use of the underlying quadratic
polynomial is well known and frequently applied in the literature; see
the remarks after Theorem~\ref{thm_partial_spread_4}.

\begin{lemma}
  \label{lemma_special_excluded_vsp}
  For integers $v>k\ge s\ge 2$ and $1\le i\le s-1$, there exists no vector space partition $\mathcal{P}$ of $\SV{v}{q}$ of hole-type $(k,s,c)$, 
  where $c=i\cdot q^s-\gauss{s}{1}{q}+s-1$.\footnote{For more general non-existence results of vector space partitions see e.g. 
  \cite[Theorem 1]{heden2009length} and the related literature. Actually, we do not need the assumption of an underlying vector space 
  partition of the mentioned type. The result is generally true for $q^{s-1}$-divisible codes, since the parameter $x$ is just a nice 
  technical short-cut to ease the notation.} 
\end{lemma}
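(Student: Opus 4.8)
The plan is to derive a contradiction by feeding the hole set of a hypothetical such $\mathcal{P}$ into the quadratic bound of Lemma~\ref{lemma_hyperplane_types_arithmetic_progression_c}. Suppose $\mathcal{P}$ exists and let $\mathcal{C}$ be its set of holes, so $\card{\mathcal{C}}=c=iq^s-\gauss{s}{1}{q}+s-1$. Since $\mathcal{P}$ has hole-type $(k,s,c)$ with $k\ge s\ge 2$, Theorem~\ref{thm:psp-vsp}(i) shows that $\mathcal{C}$ is $q^{s-1}$-divisible; also $\mathcal{C}\neq\emptyset$ (from $i\ge1$ and $q^s>\gauss{s}{1}{q}$ one gets $c>s-1\ge1$) and $\mathcal{C}\neq\points$ (a non-hole has dimension $\ge s\ge 2$, hence contains points not in $\mathcal{C}$). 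Therefore Lemma~\ref{lemma_hyperplane_types_arithmetic_progression_c} applies with $\Delta=q^{s-1}$, and in particular $\tau_q(c,q^{s-1},m)\ge 0$ must hold for every $m\in\mathbb{Z}$; I will violate this for a single well-chosen $m$.

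The choice will be $m:=i(q-1)$, which is (up to rounding) the minimiser $m^{\ast}=\tfrac12+\tfrac{c(q-1)}{q^s}$ of the upward parabola $m\mapsto\tau_q(c,q^{s-1},m)$, whose minimum value is $c(q-1)-\tfrac{q^{2s}}{4}$. Writing $\delta:=1+(q-1)(s-1)$ and using $(q-1)\gauss{s}{1}{q}=q^s-1$, one finds $c(q-1)=iq^s(q-1)-q^s+\delta$ and hence $m^{\ast}=i(q-1)-\tfrac12+\tfrac{\delta}{q^s}$ with $0<\tfrac{\delta}{q^s}\le\tfrac12$ (using $2\delta\le 2q^{s-1}\le q^s$). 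Evaluating $\tau_q$ in vertex form at $m=i(q-1)$ then gives $\tau_q(c,q^{s-1},i(q-1))=\bigl(\tfrac{q^s}{2}-\delta\bigr)^2+c(q-1)-\tfrac{q^{2s}}{4}=c(q-1)-q^s\delta+\delta^2$, and substituting the value of $c(q-1)$ yields the closed form
\begin{equation*}
  \tau_q\bigl(c,q^{s-1},i(q-1)\bigr)=q^{s}\bigl((q-1)(i-s+1)-2\bigr)+\delta(\delta+1).
\end{equation*}
Because $1\le i\le s-1$ the bracket is at most $-2$, so this quantity is at most $\delta(\delta+1)-2q^s$.

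It then remains to establish the elementary inequality $\delta(\delta+1)<2q^s$ for all prime powers $q\ge2$ and $s\ge 2$, with $\delta=1+(q-1)(s-1)$; this makes $\tau_q(c,q^{s-1},i(q-1))<0$, contradicting Lemma~\ref{lemma_hyperplane_types_arithmetic_progression_c} and finishing the proof. For $s=2$ one has $\delta=q$ and the inequality reads $q(q-1)>0$; for $s\ge3$ I would use induction on $s$, with base case $s=3$ amounting to $2q(q-1)^2>0$ and the step from $s$ to $s+1$ combining the inductive estimate with $\delta_{s+1}=\delta_s+(q-1)<q^s$ (which follows from $q^s\ge qs\ge(q-1)s+2>\delta_{s+1}$, the first step being $q^{s-1}\ge s$).

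The only place that needs genuine care is the closed form for $\tau_q(c,q^{s-1},i(q-1))$: a termwise expansion of the definition of $\tau_q$ is error-prone, so I would run the computation through the vertex form as above; everything else is bookkeeping together with the two short inequalities $\tfrac{\delta}{q^s}\le\tfrac12$ and $\delta(\delta+1)<2q^s$. (In the subcases with $c<\gauss{s}{1}{q}$, such as $q=2$, $i=1$ where $c=s$, one can alternatively finish via the bound that a nonempty $q^{s-1}$-divisible set has at least $\gauss{s}{1}{q}$ points, by the Griesmer bound; but the argument via Lemma~\ref{lemma_hyperplane_types_arithmetic_progression_c} handles all admissible $i$ at once, which is why I prefer it. This is also the sense in which the vector space partition structure is inessential, as the footnote claims: only the $q^{s-1}$-divisibility of $\mathcal{C}$ and its cardinality enter.)
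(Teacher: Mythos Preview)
Your proof is correct and follows essentially the same route as the paper: both arguments apply Lemma~\ref{lemma_hyperplane_types_arithmetic_progression_c} with the choice $m=i(q-1)$, arrive at the identical closed form $\tau_q(c,q^{s-1},i(q-1))=\bigl((q-1)(i-s+1)-2\bigr)q^{s}+\delta(\delta+1)$ (the paper writes this as $(m-1-a)q^s+a(a+1)$ with $a=\delta$), and then verify negativity. The only difference is cosmetic---where the paper finishes with a short case split ($q=2$; $s=2$; $q,s\ge 3$), you reduce everything to the single elementary inequality $\delta(\delta+1)<2q^{s}$; your induction sketch for that step is a bit terse (the passage from $s$ to $s+1$ needs the expansion $\delta_{s+1}(\delta_{s+1}+1)=\delta_s(\delta_s+1)+(q-1)(2\delta_s+q)$ together with $2\delta_s+q\le 2q^s$, which does follow from your bound $\delta_{s+1}<q^s$), but the claim is true and routine to complete.
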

\begin{proof}
  Since we have $c<0$ for $i\le 0$, we can assume $i\ge 1$ in the following.
  Let, to the contrary, $\mathcal{P}$ be such a vector space partition and apply 
  Lemma~\ref{lemma_hyperplane_types_arithmetic_progression_c} with $m=i(q-1)$ onto $\mathcal{P}$. 
  We compute $\tau_q(c,q^{s-1},m) =\left(m-1-a\right)q^{s}+a(a+1)$ using $c(q-1)=q^s(m-1)+a$, where $a:=1+(s-1)(q-1)$.
  Setting $i=s-1-y$, we have $0\le y\le s-2$ and
  $
    \tau_q(c,q^{s-1},m)=-q^s(y(q-1)+2)+(s-1)^2q^2 -q(s-1)(2s-5) +(s-2)(s-3).
    %%&\le& -2q^s+(s-1)^2q^2 -q(s-1)(2s-5)+(s-2)(s-3).
  $ 
  If $q=2$, then $y\ge 0$ and $s\ge 2$ yields\\[-2mm] 
  $$
    \tau_2(c,2^{s-1},m)=-2^s(y+2)+s^2+s  \le \left(s^2-s-2^s\right)+\left(2s-2^{s}\right) <0.
  $$    
  If $s=2$, then we have $y=0$ and $\tau_q(c,q^{s-1},m)= -q^2+q<0$. 
  If $q,s\ge 3$, then we have $q(2s-5)\ge s-3$, so that $\tau_q(c,q^{s-1},m)\le -2q^s+(s-1)^2q^2\le -2\cdot 3^{s-2}q^2+(s-1)^2q^2 $ 
  due to $y\ge 0$ and $q\ge 3$. Since $2\cdot 3^{s-2}>(s-1)^2$ for $s\ge 3$, we have $\tau_q(c,q^{s-1},m)<0$ in all cases.  
  Thus, Lemma~\ref{lemma_hyperplane_types_arithmetic_progression_c} yields a contradiction, since $q^{n-2s}>0$ and 
  $m(m-1)\ge 0$ for every integer $m$.
  %% 
  %% Assume the contrary and apply Lemma~\ref{lemma_hyperplane_types_arithmetic_progression_c} with $m=i(q-1)$. Setting $i=s-1-y$ we compute
  %% $
  %% \tau_q(c,\Delta,m)=-q\Delta(y(q-1)+2) +(s-1)^2q^2 -q(s-1)(2s-5) +(s-2)(s-3)
  %% %% &\le& -2q^s+(s-1)^2q^2 -q(s-1)(2s-5)+(s-2)(s-3).
  %% $.  Using $y\ge 0$ we obtain
  %% $\tau_2(c,\Delta,m)\le s^2+s-2^{s+1}<0$. For $s=2$, we have
  %% $\tau_q(c,\Delta,m)= -q^2+q<0$ and for $q,s>2$ we have
  %% $\tau_q(c,\Delta,m)\le -2q^s+(s-1)^2q^2<0$. Thus
  %% Lemma~\ref{lemma_hyperplane_types_arithmetic_progression_c} yields
  %% a contradiction.  
\qed\end{proof}

Now we are ready to present the first improved (compared to Theorem~\ref{thm_partial_spread_4}) upper bound for partial spreads, 
which also covers Theorem~\ref{thm_partial_spread_asymptotic} setting $z=0$.

\begin{theorem}
  \label{main_theorem_1}
  For integers $r\ge 1$, $t\ge 2$, $u\ge 0$, and $0\le z\le \gauss{r}{1}{q}/2$ with $k=\gauss{r}{1}{q}+1-z+u>r$ we have
  $\smax_q(v,2k;k)\le lq^k+1+z(q-1)$, where $l=\frac{q^{v-k}-q^r}{q^k-1}$ and $v=kt+r$.   
\end{theorem}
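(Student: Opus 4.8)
The plan is to argue by contradiction. It suffices to show that every partial $k$-spread $\mathcal{S}$ in $\F_q^v$ of size $\card{\mathcal{S}}=lq^k+x$ satisfies $x\le 1+z(q-1)$, so suppose $x\ge 2+z(q-1)$; in particular $x\ge 2$. By Theorem~\ref{thm:psp-vsp}(ii) the hole set $\mathcal{C}$ of $\mathcal{S}$ is $q^{k-1}$-divisible, and --- as computed in the proof of Lemma~\ref{lemma_application_partial_spreads} --- has cardinality $m_1=\gauss{r}{1}{q}q^k-(x-1)\gauss{k}{1}{q}$; since $m_1\ge 0$, we get $x-1<q\gauss{r}{1}{q}<q^{r+1}$, so the integer $g\ge 0$ for which $q^g\mid x-1$ but $q^{g+1}\nmid x-1$ satisfies $g\le r<k$. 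Together with $\mathcal{C}$, $\mathcal{S}$ forms a vector space partition of type $k^{m_k}1^{m_1}$ fitting the hypotheses of Lemma~\ref{lemma_application_partial_spreads}, which we apply for a value of $y$ with $\max\{1,g\}\le y\le k$ (all such $y$ are admissible since $g\le r<k$), obtaining a subspace $U$ of codimension $k-y$ carrying exactly $L$ holes, where
\[
  L\le (z+y-1-u)q^y-(x-1)\gauss{y}{1}{q},\qquad
  L\equiv -(x-1)\gauss{y}{1}{q}\pmod{q^y}.
\]
By Lemma~\ref{lemma_heritable}, $\mathcal{C}\cap U$ is a $q^{y-1}$-divisible point set of size $L\ge 0$.

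For $z=0$, which is exactly Theorem~\ref{thm_partial_spread_asymptotic}, one finishes at once by taking $y=\max\{1,g\}$: if $g\ge 1$ then $x-1\ge q^{g}=q^{y}$, so $(x-1)\gauss{y}{1}{q}\ge q^{y}\gauss{y}{1}{q}\ge yq^{y}>(y-1-u)q^{y}$, and if $g=0$ the displayed upper bound on $L$ is $-uq-(x-1)$; in either subcase it is strictly negative, contradicting $L\ge 0$. (This is the averaging argument of N{\u{a}}stase and Sissokho.)

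For $z\ge 1$ the upper bound on $L$ need not be negative, and here one has to exploit the divisibility of $\mathcal{C}\cap U$ through the non-existence results of this section. The idea is to choose $y$ --- for instance $y=\max\{1,g+1\}$, so that $q^{y}\nmid x-1$, whence the forced residue class of $L$ modulo $q^{y}$ is non-zero, so $L>0$ and thus $L\ge\gauss{y}{1}{q}$ by the Griesmer bound for the code associated with $\mathcal{C}\cap U$ --- in such a way that one of the following is reached: the upper bound on $L$ is smaller than $\gauss{y}{1}{q}$, which is impossible; or $L$ is forced to a value of the forbidden shape $iq^{y}-\gauss{y}{1}{q}+y-1$ with $1\le i\le y-1$, contradicting Lemma~\ref{lemma_special_excluded_vsp}; or the quadratic inequality of Lemma~\ref{lemma_hyperplane_types_arithmetic_progression_c}, applied to $\mathcal{C}\cap U$ inside $U$ with a suitable integer $m$ close to $L(q-1)/q^{y}$, is violated. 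The hypothesis $z\le\gauss{r}{1}{q}/2$ enters precisely here: it keeps $k-1-u=\gauss{r}{1}{q}-z\ge z$, which makes enough rounding steps admissible and keeps the relevant cardinalities in the ranges where the non-existence lemmas apply. I expect this last part --- selecting $y$ (and the auxiliary $m$) as a function of $x$, $z$, $u$ and the $q$-adic valuation $g$ of $x-1$ so that the resulting $L$ always lands in an excluded regime --- to be the main obstacle; the underlying estimates are the Gaussian-binomial manipulations already encountered in Theorem~\ref{thm:lb_def} and Lemma~\ref{lemma_application_partial_spreads}.
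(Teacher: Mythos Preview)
Your setup is correct, and the case $z=0$ is handled cleanly. The genuine gap is in the case $z\ge 1$: you explicitly leave the choice of $y$ open and flag it as ``the main obstacle.'' Your tentative suggestion $y=\max\{1,g+1\}$ does not work in general. For instance, take $q=2$, $z=2$, $u=0$ (so $k=\gauss{r}{1}{2}-1$) and $x=2+z(q-1)=4$; then $x-1=3$, $g=0$, and your choice gives $y=1$. The bound becomes $L\le (z-u)q-(x-1)=1$ with $L\equiv 1\pmod 2$, forcing $L=1$, which is a perfectly good $q^0$-divisible set and yields no contradiction.

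The missing idea is twofold. First, it suffices to treat the boundary value $x=2+z(q-1)$ exactly (a partial spread of larger size contains one of this size). Second, with this $x$ one sets $y=z+1$. The hypothesis $z\le\gauss{r}{1}{q}/2$ is precisely what guarantees $y=z+1\le\gauss{r}{1}{q}+1-z=k-u\le k$, so the choice is admissible; and since $x-1=1+z(q-1)\ge q^{g}$ forces $g\le z<y$, the lower constraint $y\ge\max\{1,g\}$ holds as well. Now the algebra collapses: using $z=y-1$ one computes
\[
(x-1)\gauss{y}{1}{q}=\bigl(1+(y-1)(q-1)\bigr)\gauss{y}{1}{q}=(y-1)q^{y}+\gauss{y}{1}{q}-(y-1),
\]
so the upper bound on $L$ becomes $(y-1-u)q^{y}-\gauss{y}{1}{q}+(y-1)$, and the congruence forces $L=(y-1-u-j)q^{y}-\gauss{y}{1}{q}+(y-1)$ for some $j\ge 0$. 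Writing $i=y-1-u-j$, either $i\le 0$ and $L<0$, or $1\le i\le y-1$ and Lemma~\ref{lemma_special_excluded_vsp} (with $s=y$) applies directly. No appeal to Lemma~\ref{lemma_hyperplane_types_arithmetic_progression_c} or to a separate Griesmer step is needed; the single choice $y=z+1$ lands $L$ exactly in the excluded family. Your framework was right, but the proof hinges on this specific, non-obvious choice of $y$ and on fixing $x$ at its critical value.
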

\begin{proof}
  %% Apply Lemma~\ref{lemma_application_partial_spreads} with $x=2+z(q-1)$ and $y=z+1$. 
  %% If $z=0$, then $L<0$. For $z\ge 1$, apply Lemma~\ref{lemma_special_excluded_vsp}.
  %% Thus $\smax_q(n,2k;k)\le lq^k+ x-1$.
  Apply Lemma~\ref{lemma_application_partial_spreads} with $x=2+z(q-1)\ge 2$ %% and $y=z+1$ 
  in order 
  to deduce the existence of a $(v-k+y)$-subspace $U$ with $L\le 
  (z+y-1-u)q^y-(x-1)\gauss{y}{1}{q}$ holes, where $L\equiv -(x-1)\gauss{y}{1}{q}\pmod{q^y}$. 
  Now, we set $y=z+1$.  
  Observe that $q^g\mid x-1$ implies $g\le z<y$ and we additionally have $1\le y=z+1\le \gauss{r}{1}{q}+1-z\le t$. If $z=0$, then $y=1$, $x=2$, and $L\le -uq-1 <0$. 
  For $z\ge 1$, we apply Lemma~\ref{lemma_special_excluded_vsp} 
  to the subspace $U$ with $s=y$, $c=%%L=
    (z\!+\!y\!-\!1\!-\!u)q^y-(x\!-\!1)\gauss{y}{1}{q}-jq^y = (y\!-\!1\!-\!j\!-\!u)q^y-\gauss{y}{1}{q}\!+\!y\!-\!1
  $
  for some $j\in\mathbb{N}_0$, and $i=y-1-j-u\in\mathbb{Z}$.
  Thus, $\smax_q(n,2k;k)\le lq^k+ x-1$.
\qed\end{proof}

The case $z=0$ covers Theorem~\ref{thm_partial_spread_asymptotic}. The
non-negativity of the number of holes in a certain carefully chosen 
subspace is sufficient to prove this fact.  The case $z=1$ was
announced in \cite[Lemma 10]{nastase2016maximum} and proven in
\cite{nastase2016maximumII}.  Since the known constructions for
partial $k$-spreads give $\smax_q(kt+r,2k;k)\ge lq^k+1$, see
e.g.~\cite{beutelspacher1975partial} or
Theorem~\ref{thm:multicomponent}, Theorem~\ref{main_theorem_1} is
tight for $k\ge\gauss{r}{1}{q}+1$ and $\smax_2(8,6;3)=34$.
%% Wann ist Theorem 9 die schaerfste Schranke in der DB?
%% import sct.problemfunctions
%% for q in [2,3,4,5,7,8,9]:
%%     for n in range(4,20):
%%         for k in range(2,n/2+1):
%%             d=2*k
%%             try:
%%                 a=sct.problemfunctions.cdc_notderived_upper_bound_partial_spread_kurz16_28_func_fixedversion(q,n,d,k)
%%             except:
%%                 if a == sct.problemfunctions.Au(q,n,d,k):
%%                     print q,n,d,k
%% 
%% # 2 6 6 3
%% # 3 6 6 3
%% # 4 6 6 3
%% # 4 8 8 4
%% # 5 6 6 3
%% # 5 8 8 4
%% # 7 6 6 3
%% # 7 8 8 4
%% # 8 6 6 3
%% # 8 8 8 4
%% # 8 12 12 6
%% # 9 6 6 3
%% # 9 8 8 4
%% # 9 12 12 6

So far Lemma~\ref{lemma_hyperplane_types_arithmetic_progression_c} was
just applied in the case of Lemma~\ref{lemma_special_excluded_vsp}
excluding the existence of some very special vector space
partitions. Next, we look at a subspace and consider the number of
holes, i.e., we apply Lemma~\ref{lemma_application_partial_spreads}
giving us the freedom to choose the dimension of the subspace.  Then
Lemma~\ref{lemma_hyperplane_types_arithmetic_progression_c}, stating
that a certain quadratic polynomial is non-negative, can be
applied. By minimizing this function in terms of the free parameter
$m$, we obtain the following result.

\newcommand{\uu}{\lambda}
\begin{theorem}
  \label{main_theorem_2}
  For integers $r\ge 1$, $t\ge 2$, $y\ge \max\{r,2\}$, %%$y\ge r+1$, 
  $z\ge 0$ with $\uu=q^{y}$, $y\le k$, %%$z+y-1\le\gauss{r}{1}{q}$, 
  $k=\gauss{r}{1}{q}+1-z>r$, $v=kt+r$, and  $l=\frac{q^{v-k}-q^r}{q^k-1}$, we have $$\smax_q(v,2k;k)\le 
       lq^k+\left\lceil \uu -\frac{1}{2}-\frac{1}{2}
    \sqrt{1+4\uu\left(\uu-(z+y-1)(q-1)-1\right)} \right\rceil.
  $$   
\end{theorem}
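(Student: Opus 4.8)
The plan is to reduce to a well-chosen subspace via Lemma~\ref{lemma_application_partial_spreads}, exploit the $q^{y-1}$-divisibility of the hole set there supplied by Theorem~\ref{thm:psp-vsp}(i), and then feed the quadratic inequality of Lemma~\ref{lemma_hyperplane_types_arithmetic_progression_c} into a rounding argument. Throughout put $\uu=q^y$ and $\mu=(z+y-1)(q-1)$, and let $X_-=\uu-\tfrac{1}{2}-\tfrac{1}{2}\sqrt{1+4\uu(\uu-\mu-1)}$ be the smaller root of $t^2-(2\uu-1)t+\mu\uu$, so that the claim reads $\smax_q(v,2k;k)\le lq^k+\lceil X_-\rceil$. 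Let $\mathcal{S}$ be a partial $k$-spread in $\SV{v}{q}$ of maximal size $\smax_q(v,2k;k)=lq^k+x$; we must show $x\le\lceil X_-\rceil$. Since $\mu\ge q-1>0$ one checks $X_->0$, hence $\lceil X_-\rceil\ge 1$ and the cases $x\le 1$ are trivial; so assume $x\ge 2$. The number of holes equals $m_1=\gauss{v}{1}{q}-(lq^k+x)\gauss{k}{1}{q}=\gauss{r}{1}{q}q^k-\gauss{k}{1}{q}(x-1)\ge 0$, which forces $x-1\le q^r-1<q^y$; in particular the largest power $q^g$ dividing $x-1$ has $g<r\le y$, so the hypothesis $\max\{1,g\}\le y\le k$ of Lemma~\ref{lemma_application_partial_spreads} holds. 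Applying that lemma with $u=0$ produces a $(v-k+y)$-subspace $U$ whose number $L$ of holes satisfies
\begin{equation*}
  L\le(z+y-1)q^y-(x-1)\gauss{y}{1}{q}\qquad\text{and}\qquad L\equiv-(x-1)\gauss{y}{1}{q}\pmod{q^y}.
\end{equation*}

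Next I would observe that $\mathcal{S}$ induces on $U$ a vector space partition of hole-type $(k,y,L)$: every $W\in\mathcal{S}$ meets $U$ in dimension at least $\dim W+\dim U-v=y\ge 2$, so no spread element contributes a single point, and the $L$ holes of $\mathcal{S}$ lying in $U$ together with the spaces $W\cap U$ partition the points of $U$. By Theorem~\ref{thm:psp-vsp}(i) the $L$ holes form a $q^{y-1}$-divisible point set in $\PG(U)$, so Lemma~\ref{lemma_hyperplane_types_arithmetic_progression_c}, applied with $\Delta=q^{y-1}$ and a free integer parameter $m$, gives $\tau_q(L,q^{y-1},m)\ge 0$; writing $\ell:=L(q-1)$ and using $\Delta q=\uu$, this becomes
\begin{equation*}
  m(m-1)\uu^2-\ell(2m-1)\uu+\ell(\ell+1)\ge 0\qquad\text{for every }m\in\mathbb{Z}.
\end{equation*}

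The decisive step uses the identity $(q-1)\gauss{y}{1}{q}=q^y-1$ to recast the two conditions on $L$ as $\ell\equiv x-1\pmod{\uu}$ and $\ell\le\mu\uu-(x-1)(\uu-1)$; since $0\le x-1<\uu$, these force $\ell=(x-1)+j\uu$ for some integer $j$ with $0\le j\le\mu-(x-1)$. Substituting this value of $\ell$ into the quadratic inequality with the choice $m=j+1$, the $\uu^2$-terms cancel identically and there remains $(x-1-j)\uu\le(x-1)x$; combining this with $j\le\mu-(x-1)$ yields
\begin{equation*}
  (x-1)^2-(2\uu-1)(x-1)+\mu\uu\ge 0 ,
\end{equation*}
so $x-1$ lies outside the open interval between the two roots of $t^2-(2\uu-1)t+\mu\uu$. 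The hypotheses $k>r$ and $r\le y\le k$ give $z+y-1\le k-1+z=\gauss{r}{1}{q}\le\gauss{y}{1}{q}$ with the two inequalities not simultaneously tight (that would force $r=k$), hence $\mu\le\uu-2$ and thus $X_-<\uu-1$; from $L\ge 0$ one then gets $x-1\le\mu<\uu<X_+$, so $x-1\le X_-$. Moreover $x-1=X_-$ would require simultaneously $j=\mu-(x-1)$ and $\tau_q(L,q^{y-1},j+1)=0$, the latter forcing (by the last assertion of Lemma~\ref{lemma_hyperplane_types_arithmetic_progression_c}) $j=0$ and $L=\gauss{y}{1}{q}$, whence $x-1=\uu-1$ and $\mu=\uu-1$, contradicting $\mu\le\uu-2$. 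Therefore $x-1<X_-$, so $x-1\le\lceil X_-\rceil-1$ and $x\le\lceil X_-\rceil$, which is the assertion.

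The main obstacle is the algebra turning $\tau_q\ge 0$ into the clean quadratic in $x-1$: confirming the exact cancellation of the $\uu^2$-terms for the choice $m=j+1$ (which is really the statement that $\ell=(x-1)+j\uu$ must sit in the allowed window near $j\uu$ of the $q^{y-1}$-divisibility spectrum), together with the edge-case analysis---verifying that the hypotheses force $\mu\le\uu-2$ strictly, so that the ceiling in the conclusion is not off by one, and checking $0\le x-1<\uu$, $x-1\le\mu$ and $X_-<\uu-1<X_+$ so that the correct branch of the quadratic is selected and $x-1<X_-$ upgrades to $x\le\lceil X_-\rceil$.
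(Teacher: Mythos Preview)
Your proof is correct and follows essentially the same route as the paper's: apply Lemma~\ref{lemma_application_partial_spreads} to pass to a $(v-k+y)$-subspace, invoke the $q^{y-1}$-divisibility of the induced hole set, and feed Lemma~\ref{lemma_hyperplane_types_arithmetic_progression_c} with the choice $m=j+1$ (which is exactly the paper's $m=i(q-1)-(x-1)+1$ under the change of variables $j=i(q-1)-(x-1)$) to obtain the governing quadratic in $x-1$, then dispose of the boundary case via the final clause of Lemma~\ref{lemma_hyperplane_types_arithmetic_progression_c}.

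The only notable difference is organisational. The paper fixes $x$ to be the ceiling value and shows $\tau_q\le 0$ for \emph{every} admissible level $i$ of $L$, obtaining a contradiction; you instead take $x$ to be the true maximum, work with the \emph{one} value of $j$ determined by the actual $L$, and squeeze $j$ between $(x-1)-\tfrac{(x-1)x}{\uu}$ and $\mu-(x-1)$ to force the quadratic constraint directly. Your packaging via $\ell=L(q-1)=(x-1)+j\uu$ makes the cancellation of the $\uu^2$-terms especially transparent. Both arguments need and supply the same edge-case analysis (that $\mu\le\uu-2$ strictly, ruling out $\tau_q=0$), and your derivation of $x-1\le q^r-1$ from $m_1\ge 0$ correctly uses the integrality of $x-1$ together with $k>r$.
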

\begin{proof}
  From Lemma~\ref{lemma_application_partial_spreads} we conclude $L\le (z+y-1)q^y-(x-1)\gauss{y}{1}{q}$ and $L\equiv -(x-1)\gauss{y}{1}{q} 
  \pmod {q^y}$ for the number of holes of a certain $(v\!-\!k\!+\!y)$-subspace $U$. 
  Using the notation %%and assumptions 
  of Lemma~\ref{lemma_application_partial_spreads}, $\mathcal{P}\cap U:=\{P\cap U\mid P\in\mathcal{P}\}$ 
  is of hole-type $(k,y,L)$ if $y\ge 2$. 
  Next, we will show that 
  $\tau_q(c,\Delta,m)\le 0$, where $\Delta=q^{y-1}$ and $c=iq^{y}-(x-1)\gauss{y}{1}{q}$ with $1\le i\le z+y-1$, for suitable integers $x$ and $m$. 
  Note that, in order to apply Lemma~\ref{lemma_application_partial_spreads}, we have to satisfy $x\ge 2$ and $y\ge g$ for all integers $g$ 
  with $q^g\mid x-1$. Applying Lemma~\ref{lemma_hyperplane_types_arithmetic_progression_c} then gives the desired contradiction, so that 
  $\smax_q(n,2k;k)\le lq^k+x-1$.
  
  We choose\footnote{%The choice for $m$ is obtained by 
  Solving $\frac{\partial \tau_q(c,\Delta,m)}{\partial m}=0$, i.e., minimizing $\tau_q(c,\Delta,m)$, yields 
  $m=i(q-1)-(x-1)+\frac{1}{2}+\frac{x-1}{q^y}$. 
  For $y\ge r$ we can assume $x-1<q^y$ due to Theorem~\ref{thm:multicomponent}, so that up-rounding yields the optimum integer choice. For $y<r$ 
  the interval $\left[\uu+\frac{1}{2}- \frac{1}{2}\theta(i),\uu+\frac{1}{2}+ \frac{1}{2}\theta(i)\right]$ may contain no integer. 
  %%and up-rounding the unique solution.
  %% Observe $x-1\le q^r -1$ due to Theorem~\ref{main_theorem_1}. -> reicht hier nicht
  } $m=i(q-1)-(x-1)+1$, so that $\tau_q(c,\Delta,m)=x^2-(2\uu+1)x+
  \uu(i(q-1)+2)$. Solving $\tau_q(c,\Delta,m)=0$ for $x$ gives 
  $x_0=\uu+\frac{1}{2}\pm \frac{1}{2}\theta(i)$,  
  where $$\theta(i)=\sqrt{1-4i\uu(q-1)+4\uu(\uu-1)}.$$ We have $\tau_q(c,\Delta,m)\le 0$ for 
  $\left|2x-2\uu-1\right|\le \theta(i)$. We need to find an integer $x\ge 2$ such that this inequality is satisfied for all
  $1\le i\le z+y-1$. The strongest restriction is attained for $i=z+y-1$. Since $z+y-1\le\gauss{r}{1}{q}$ and $\uu=q^y\ge q^r$, we have 
  $\theta(i)\ge\theta(z+y-1)\ge 1$, so that $\tau_q(c,\Delta,m)\le 0$ for 
  $x= \left\lceil \uu+\frac{1}{2}- \frac{1}{2}\theta(z+y-1)\right\rceil$. 
  (Observe $x\le \uu+\frac{1}{2}+ \frac{1}{2}\theta(z+y-1)$ due 
  to $\theta(z+y-1)\ge 1$.) 
  Since $x\le \uu+1$, we have $x-1\le \uu=q^y$, so that $q^g \mid x-1$ implies $g\le y$ provided $x\ge 2$. The latter is true due to 
  $\theta(z+y-1)\le \sqrt{1-4\uu(q-1)+4\uu(\uu-1)}\le \sqrt{1+4\uu(\uu-2)}< 2(\uu-1)$, which implies $x\ge\left\lceil\frac{3}{2}\right\rceil=2$.
  
  So far we have constructed a suitable $m\in\mathbb{Z}$ such that $\tau_q(c,\Delta,m)\le 0$ for 
  $$x=\left\lceil \uu+\frac{1}{2}- \frac{1}{2}\theta(z+y-1)\right\rceil.$$ If $\tau_q(c,\Delta,m)< 0$, then 
  Lemma~\ref{lemma_hyperplane_types_arithmetic_progression_c} gives a contradiction, so that we assume 
  $\tau_q(c,\Delta,m)=0$ in the following. If $i<z+y-1$ we have $\tau_q(c,\Delta,m)<0$ due to $\theta(i)>\theta(z+y-1)$, so that we 
  assume $i=z+y-1$. Thus, $\theta(z+y-1)\in\mathbb{N}_0$. However, we can write  
  $
    \theta(z+y-1)^2=1+4\uu\left(\uu-(z+y-1)(q-1)-1\right)=(2w-1)^2 =1+4w(w-1)
  $
  for some integer $w$. If $w\notin\{0,1\}$, then $\gcd(w,w-1)=1$, so that either $\uu=q^y\mid w$ or $\uu=q^y\mid w-1$. Thus, in any case, $w\ge q^y$, which is 
  impossible since $(z+y-1)(q-1)\ge 1$. Finally, $w\in\{0,1\}$ implies $w(w-1)=0$, so that $\uu-(z+y-1)(q-1)-1=0$. Thus, 
  $z+y-1=\gauss{y}{1}{q}\ge \gauss{r}{1}{q}$ since $y\ge r$. The assumptions $y\le k$ and $k=\gauss{r}{1}{q}+1-z$ imply
  $z+y-1=\gauss{r}{1}{q}$ and $y=r$. This gives $k=r$, which is excluded.
  %%Since $-m(m-1)<0$ for all $m\in\mathbb{Z}\backslash\{0,1\}$,  
  %%Lemma~\ref{lemma_hyperplane_types_arithmetic_progression_c} gives a contradiction if $m\not\in\{0,1\}$.
  %%If $i<z+y-1$ we have $\tau_q(c,\Delta,m)<0$ due to $\theta(i)>\theta(z+y-1)$, so that we assume $i=z+y-1$ in the following.
  %%Due to Theorem~\ref{main_theorem_1} it suffices 
  %%to consider $x\le 1+z(q-1)$. Thus, $m =(z+y-1)(q-1)-(x-1)+1\ge (y-1)(q-1)+1\ge 2$.
  %% With respect to 
  %% Lemma~\ref{lemma_hyperplane_types_arithmetic_progression_c} we remark that $-m(m-1)<0$ for all $m\in\mathbb{Z}\backslash\{0,1\}$. 
  %% So, it remains to verify $\tau_q(c,\Delta,m)<0$ for $m\in\{0,1\}$. 
  %% If $i<z+y-1$ this is true due to $\theta(i)>\theta(z+y-1)$, so that we assume $i=z+y-1$. Due to Theorem~\ref{main_theorem_1} it suffices 
  %% to consider $x\le 1+z(q-1)$. Thus, $m\ge (y-1)(q-1)+1\ge 2$. 
\qed
\end{proof}

%%Setting $y=r+1$, we obtain:
%%\begin{corollary}
%%  \label{main_cor}
%%  For $r\ge 1$, $t\ge 2$, $z\ge 0$ with $k=\gauss{r}{1}{q}+1-z>r$, $v=kt+r$,  $l=\frac{q^{v-k}-q^r}{q^k-1}$, we have
%%  $
%%    \smax_q(v,2k;k)\le lq^k +q^{r+1}
%%    -\left\lfloor\frac{1}{2}+\frac{1}{2}\cdot\sqrt{4q^{r+1}(q^{r+1}-(z+r)(q-1)-1)+1}\right\rfloor
%%  $.   
%%\end{corollary}
%% A weaker version of Theorem~\ref{main_theorem_2} was obtained independently and very recently in \cite[Theorem 6,7]{nastase2016maximumII}. 
%% Here, instead of Lemma~\ref{lemma_hyperplane_types_arithmetic_progression_c} known results about vector space partitions, i.e., 
%%Theorem~\ref{thm_length_of_tail}, are used. 
An example where Theorem~\ref{main_theorem_2} is strictly superior to the results of \cite[Theorem 6,7]{nastase2016maximumII} is given by 
$\smax_3(15,12;6)\le 19695$.\footnote{%%Instead of Lemma~\ref{lemma_hyperplane_types_arithmetic_progression_c}, known results about vector space partitions, i.e., 
%%Theorem~\ref{thm_length_of_tail}, are used to prove \cite[Theorem 6,7]{nastase2016maximumII}. 
For $2\le q\le 9, 1\le v,k\le 100$ the bounds of 
\cite[Theorem 6,7]{nastase2016maximumII} are covered by Theorem~\ref{main_theorem_2} and Corollary~\ref{cor_main_theorem_1}. In many cases the bounds coincide.}
Setting $y=k$, we obtain Theorem~\ref{thm_partial_spread_4}. Compared to \cite{bose1952orthogonal,nets_and_spreads}, the new ingredients 
essentially are $q^r$-divisible sets and Corollary~\ref{cor_j_times}, %%Lemma~\ref{lemma_average}, %%Lemma~\ref{key_lemma}, 
which allows us to choose $y<k$. %%This sometimes pays off since e.g.\
Theorem~\ref{thm_partial_spread_4}, e.g., gives $\smax_2(15,12;6)\le 516$, $\smax_2(17,14;7)\le 1028$, and $\smax_9(18,16;8)\le 3486784442$, 
while Theorem~\ref{main_theorem_2} gives $\smax_2(15,12;6)\le 515$, $\smax_2(17,14;7)\le 1026$, and $\smax_9(18,16;8)\le 3486784420$.
For $2\le q\le 9$, $5\le k\le 19$, there are $66$ improvements in total, i.e., almost $19\%$, and the maximum gap is $22$. Next, we provide an 
estimation of the bound of Theorem~\ref{thm_partial_spread_4}.

\begin{lemma}
  For integers $1\le r<k$ and $q\ge 2$ we have 
  $$
    2q^k-q^r-\frac{q^{2r-k}}{\underline{b}}
    <\sqrt{1+4q^k(q^k-q^r)}\le 
    2q^k-q^r-\frac{q^{2r-k}}{\overline{b}},
  $$
  where $\underline{b}=\frac{3+2\sqrt{2}}{2}>2.91$ and $\overline{b}=\frac{16}{3}<5.34$.
\end{lemma}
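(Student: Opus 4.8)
The plan is to reduce the statement to a one-variable estimate. Write $B := 2q^k - q^r$ and $A := 1 + 4q^k(q^k - q^r)$, so that $\sqrt{A}$ is the quantity to be bounded. A direct computation gives the identity $B^2 - A = q^{2r} - 1$, which is positive because $r \ge 1$; hence $0 < A < B^2$, the square root is real, and
\[
  D := B - \sqrt{A} = \frac{B^2 - A}{B + \sqrt{A}} = \frac{q^{2r}-1}{B + \sqrt{A}} > 0 .
\]
Since the two claimed inequalities read $\sqrt{A} > B - q^{2r-k}/\underline{b}$ and $\sqrt{A} \le B - q^{2r-k}/\overline{b}$, i.e.\ $D < q^{2r-k}/\underline{b}$ and $D \ge q^{2r-k}/\overline{b}$, they are equivalent to
\[
  \underline{b} < \frac{q^{2r-k}}{D} = \frac{q^{2r-k}\bigl(B + \sqrt{A}\bigr)}{q^{2r}-1} \le \overline{b} ,
\]
so it suffices to sandwich the middle quantity $R$ between $\underline{b}$ and $\overline{b}$.

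Now I would reparametrise. Put $t := q^{r-k}$ and $\varepsilon := \tfrac14 q^{-2k}$; since $1 \le r < k$ and $q \ge 2$ we have $0 < t \le q^{-1} \le \tfrac12$ and $0 < \varepsilon$. Then $B = q^k(2-t)$ and $\sqrt{A} = \sqrt{4q^{2k}(1-t) + 1} = 2q^k\sqrt{1 - t + \varepsilon}$, so $B + \sqrt{A} = q^k N$ with $N := (2-t) + 2\sqrt{1 - t + \varepsilon}$, and dividing numerator and denominator of $R$ by $q^{2r}$ gives $R = N/(1 - q^{-2r})$.

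For the lower bound, discard $\varepsilon \ge 0$: then $N \ge (2-t) + 2\sqrt{1-t} =: h(t)$, and $h$ is decreasing on $[0,1)$, so for $t \le \tfrac12$ we get $N \ge h(\tfrac12) = \tfrac32 + \sqrt{2} = \underline{b}$. As $1 - q^{-2r} < 1$, this yields $R > N \ge \underline{b}$. For the upper bound, the one thing to verify is the trivial inequality $\varepsilon \le t$ (equivalently $q^{k+r} \ge \tfrac14$), from which $1 - t + \varepsilon \le 1$, hence $\sqrt{1 - t + \varepsilon} \le 1$ and $N \le (2 - t) + 2 = 4 - t < 4$. Combined with $q^{-2r} \le q^{-2} \le \tfrac14$ (again using $r \ge 1$, $q \ge 2$), this gives $R = N/(1 - q^{-2r}) < 4/(1 - \tfrac14) = \tfrac{16}{3} = \overline{b}$, as required.

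The only real subtlety I anticipate is in that last step: one must bound $\sqrt{1 - t + \varepsilon}$ by $1$ (using $\varepsilon \le t$) rather than by the cruder $1 + \varepsilon/2$, since the latter would only give $N \le 4 + \varepsilon$, which for $q = 2$, $r = 1$, $k = 2$ already exceeds $\tfrac{16}{3}$ after division by $1 - q^{-2r} = \tfrac34$. Everything else is routine algebra.
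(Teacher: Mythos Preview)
Your proof is correct and takes a genuinely different route from the paper's. The paper verifies both inequalities by squaring: it expands $\bigl(2q^k-q^r-q^{2r-k}/b\bigr)^2$ for $b\in\{\underline{b},\overline{b}\}$ and compares termwise with $1+4q^k(q^k-q^r)$, using $k\ge r+1$ to control the cross terms $q^{3r-k}$ and $q^{4r-2k}$ against $q^{2r-1}$ and $q^{2r-2}$. Your approach instead uses the conjugate identity $B-\sqrt{A}=(B^2-A)/(B+\sqrt{A})=(q^{2r}-1)/(B+\sqrt{A})$ to convert the problem into bounding the single ratio $R=q^{2r-k}(B+\sqrt{A})/(q^{2r}-1)$, and then the substitution $t=q^{r-k}$ collapses everything to a one-variable estimate on $[0,\tfrac12]$. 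This has the advantage of making the constants transparent: $\underline{b}=\tfrac32+\sqrt{2}$ appears as the value of $(2-t)+2\sqrt{1-t}$ at the endpoint $t=\tfrac12$, and $\overline{b}=\tfrac{16}{3}$ as $4/(1-\tfrac14)$ from the extremal choices $q=2$, $r=1$. The paper's direct verification is shorter to write down but more opaque; your argument explains where the bounds come from and would adapt more easily if one wanted sharper constants under additional hypotheses on $q$ or $k-r$.

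One minor remark: your closing paragraph about the ``only real subtlety'' slightly overstates the danger (even the cruder bound $N\le 4-t+\varepsilon$ still works at $q=2$, $r=1$, $k=2$; it is only if one also discards the $-t$ that the estimate fails), but this is commentary, not part of the argument, and the proof itself is clean.
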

\begin{proof}
Due to $q\ge 2$ and $k\ge r+1\ge 2$, we have\\[-9mm] 
\begin{eqnarray*}
  &&1+4q^k(q^k-q^r) > 4q^k(q^k-q^r) -q^{2r}\cdot\overset{\ge 0}{\overbrace{\left(\frac{4}{\underline{b}}-1-\frac{2}{\underline{b}q}-\frac{1}{\underline{b}^2 q^2}\right)}}  \\
  &&\ge 4q^k(q^k-q^r)-\frac{4}{\underline{b}}q^{2r}+q^{2r}+\frac{2}{\underline{b}}q^{3r-k}+\frac{1}{\underline{b}^2}q^{4r-2k}
  = \left(2q^k-q^r-\frac{q^{2r-k}}{\underline{b}}\right)^2.
\end{eqnarray*}
Similarly, %%we conclude
$
  1\!+\!4q^k(q^k\!-\!q^r) \le  4q^k(q^k\!-\!q^r) -q^{2r}\cdot %%\underset{\le -\frac{1}{4}}{\underbrace{
  \left(\frac{4}{\overline{b}}-1\right)
  %%}}  %%\\
  %%&&\le 4q^k(q^k-q^r)-\frac{4}{\overline{b}}q^{2r}+q^{2r}+\frac{2}{\overline{b}}q^{3r-k}+\frac{1}{\overline{b}^2}q^{4r-2k} 
  %%= 
  \le\left(2q^k\!-\!q^r\!-\!\frac{q^{2r\!-\!k}}{\overline{b}}\right)^2
$.
\qed\end{proof}

\begin{corollary}
  \label{cor_df_estimation}
  For integers $1\le r<k$ and $t\ge 2$ we have $\smax_q(kt+r,2k;k)< lq^k+\frac{q^r}{2}+\frac{1}{2}+\frac{q^{2r-k}}{3+2\sqrt{2}}$, where $l=\frac{q^{(t-1)k+r}-q^r}{q^k-1}$. 
  If $k\ge 2r$, then $\smax_q(kt+r,2k;k)< lq^k+1+\frac{q^r}{2}$.
\end{corollary}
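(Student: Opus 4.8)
The plan is to simply combine Theorem~\ref{thm_partial_spread_4} with the two-sided estimate of the preceding lemma, tracking only the upper bound on $\smax_q(kt+r,2k;k)$. Recall that Theorem~\ref{thm_partial_spread_4} gives $\deficiency \ge \lceil\theta\rceil$ where $2\theta = \sqrt{1+4q^k(q^k-q^r)} - (2q^k - 2q^r + 1)$, and that $\smax_q(v,2k;k) = \sum_{s=0}^{t-1} q^{sk+r} - \deficiency$. Writing $\sum_{s=0}^{t-1} q^{sk+r} = q^r + \sum_{s=1}^{t-1} q^{sk+r} = q^r + lq^k$ with $l = \frac{q^{(t-1)k+r} - q^r}{q^k-1}$ (indeed $lq^k = \sum_{s=1}^{t-1} q^{sk+r}$, as one checks from the geometric series), we obtain
\[
  \smax_q(kt+r,2k;k) = lq^k + q^r - \deficiency \le lq^k + q^r - \theta,
\]
where I use $\deficiency \ge \lceil\theta\rceil \ge \theta$. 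So it suffices to produce a lower bound on $\theta$, equivalently on $2\theta = \sqrt{1+4q^k(q^k-q^r)} - (2q^k - 2q^r + 1)$.

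First I would apply the lemma's upper estimate $\sqrt{1+4q^k(q^k-q^r)} \le 2q^k - q^r - \frac{q^{2r-k}}{\overline b}$ — wait, that points the wrong way; for a lower bound on $2\theta$ I need a \emph{lower} bound on the square root, so I would instead use the lemma's first inequality, $\sqrt{1+4q^k(q^k-q^r)} > 2q^k - q^r - \frac{q^{2r-k}}{\underline b}$ with $\underline b = \frac{3+2\sqrt 2}{2}$. This gives
\[
  2\theta > \bigl(2q^k - q^r - \tfrac{q^{2r-k}}{\underline b}\bigr) - (2q^k - 2q^r + 1) = q^r - 1 - \tfrac{q^{2r-k}}{\underline b},
\]
hence $\theta > \frac{q^r - 1}{2} - \frac{q^{2r-k}}{2\underline b}$, i.e. $-\theta < -\frac{q^r-1}{2} + \frac{q^{2r-k}}{2\underline b} = -\frac{q^r}{2} + \frac12 + \frac{q^{2r-k}}{3+2\sqrt 2}$ (using $2\underline b = 3+2\sqrt 2$). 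Substituting into $\smax_q(kt+r,2k;k) \le lq^k + q^r - \theta$ yields
\[
  \smax_q(kt+r,2k;k) < lq^k + q^r - \tfrac{q^r}{2} + \tfrac12 + \tfrac{q^{2r-k}}{3+2\sqrt 2} = lq^k + \tfrac{q^r}{2} + \tfrac12 + \tfrac{q^{2r-k}}{3+2\sqrt 2},
\]
which is the first claimed inequality. For the second, when $k \ge 2r$ we have $2r - k \le 0$, so $q^{2r-k} \le 1$ and $\frac{q^{2r-k}}{3+2\sqrt 2} < \frac{1}{3+2\sqrt 2} < \frac12$; therefore $\frac{q^r}{2} + \frac12 + \frac{q^{2r-k}}{3+2\sqrt 2} < \frac{q^r}{2} + 1$, giving $\smax_q(kt+r,2k;k) < lq^k + 1 + \frac{q^r}{2}$ as claimed.

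There is no real obstacle here — the only things to double-check are the bookkeeping identity $lq^k = \sum_{s=1}^{t-1} q^{sk+r}$ (so that the ``$\theta$-free'' part of the Drake–Freeman bound is exactly $lq^k + q^r$) and the direction of the inequalities from the lemma, making sure the \emph{lower} estimate on the square root is the one feeding into the lower bound on $\theta$. The constant $3+2\sqrt 2 = 2\underline b$ appears precisely because the lemma was stated with $\underline b = \frac{3+2\sqrt 2}{2}$ and here the factor of $2$ from $2\theta$ cancels one of the halves. Everything else is routine arithmetic with geometric series.
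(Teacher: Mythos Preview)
Your proof is correct and is exactly the derivation the paper intends: the corollary is stated without proof as an immediate consequence of the preceding two-sided estimate on $\sqrt{1+4q^k(q^k-q^r)}$ combined with Theorem~\ref{thm_partial_spread_4}, and the paper's subsequent remark that the first inequality can be rewritten as $\deficiency\ge\frac{q^r-1}{2}-\frac{q^{2r-k}}{3+2\sqrt{2}}$ confirms your computation of the lower bound on $\theta$. The chaining of the non-strict inequality $\deficiency\ge\theta$ with the strict inequality from the lemma's lower estimate correctly yields the strict bound as stated.
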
  
  
\begin{corollary}
  \label{cor_main_theorem_1}
  For integers $r\ge 1$, $t\ge 2$, and $u,z\ge 0$ with $k=\gauss{r}{1}{q}+1-z+u>r$ we have
  $\smax_q(v,2k;k)\le lq^k+1+z(q-1)$, where $l=\frac{q^{v-k}-q^r}{q^k-1}$ and $v=kt+r$.   
\end{corollary}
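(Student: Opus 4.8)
The strategy is to derive the corollary from Theorem~\ref{main_theorem_1}, whose only extra hypothesis is $z\le\gauss{r}{1}{q}/2$, supplemented by the Drake-Freeman estimate of Corollary~\ref{cor_df_estimation} in the range of $z$ that this hypothesis excludes.

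First I would perform a monotonicity reduction. For fixed $k$ (equivalently fixed $v=kt+r$) the quantity $lq^k$ is fixed and the claimed bound $lq^k+1+z(q-1)$ is nondecreasing in $z$. The admissible pairs $(u,z)$ producing this $k$ are exactly those with $z-u=\gauss{r}{1}{q}+1-k$ and $u,z\ge 0$, so the smallest admissible value of $z$ is $z_0:=\max\{0,\gauss{r}{1}{q}+1-k\}$, attained together with $u_0:=\max\{0,k-\gauss{r}{1}{q}-1\}$. Since every admissible $z$ satisfies $z\ge z_0$, it suffices to prove $\smax_q(v,2k;k)\le lq^k+1+z_0(q-1)$. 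If $z_0\le\gauss{r}{1}{q}/2$ — which in particular holds whenever $k\ge\gauss{r}{1}{q}+1$, where $z_0=0$ — this is immediate from Theorem~\ref{main_theorem_1} applied to the tuple $(r,t,u_0,z_0)$. The only remaining case is $z_0>\gauss{r}{1}{q}/2$, which forces $z_0=\gauss{r}{1}{q}+1-k$ and hence $r<k<\gauss{r}{1}{q}/2+1$; there $1+z_0(q-1)=q^r-(k-1)(q-1)$, so what must be shown is
\[
  \smax_q(v,2k;k)\le lq^k+q^r-(k-1)(q-1).
\]

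For this last case I would invoke Corollary~\ref{cor_df_estimation}, which gives $\smax_q(v,2k;k)<lq^k+\tfrac12 q^r+\tfrac12+\frac{q^{2r-k}}{3+2\sqrt2}$. Since both $\smax_q(v,2k;k)$ and $lq^k+q^r-(k-1)(q-1)$ are integers, it is enough to check the elementary inequality
\[
  (k-1)(q-1)+\frac{q^{2r-k}}{3+2\sqrt2}\le\tfrac12 q^r+\tfrac12
\]
for all integers $k$ with $r+1\le k\le\lceil\gauss{r}{1}{q}/2\rceil$. The left-hand side is a convex function of $k$, so it suffices to verify the inequality at the two endpoints. At $k=r+1$ it becomes, after using $\gauss{r}{1}{q}(q-1)=q^r-1$ and $\tfrac{1}{3+2\sqrt2}=3-2\sqrt2<\tfrac15$, a polynomial inequality in $q$ valid for all $q\ge2$ and $r\ge2$. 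At $k=\lceil\gauss{r}{1}{q}/2\rceil$ one has $(k-1)(q-1)\le\tfrac12(q^r-q)$, so it remains to see that $q^{2r-k}\le\tfrac12(q+1)(3+2\sqrt2)$, which is clear because $2r-k\le0$ except for the pairs $(q,r)\in\{(4,2),(5,2),(2,3)\}$, each of which is checked directly.

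The hard part will be precisely this last book-keeping in the problematic range $r<k<\gauss{r}{1}{q}/2+1$: the summand $(k-1)(q-1)$ increases with $k$ while $q^{2r-k}$ decreases, so neither term is controlled by a single endpoint value, and one genuinely needs the convexity argument together with the short finite check. Everything else is a mechanical combination of tools already in hand — the monotonicity reduction, Theorem~\ref{main_theorem_1}, and the Drake-Freeman estimate.
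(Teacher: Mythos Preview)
Your strategy is the same as the paper's: use Theorem~\ref{main_theorem_1} when $z\le\gauss{r}{1}{q}/2$ and the Drake--Freeman estimate of Corollary~\ref{cor_df_estimation} for the complementary range, then verify an elementary inequality with finitely many leftover cases. The paper does not perform your monotonicity reduction to $z_0$ but instead bounds $z$ from below and splits on $k\ge 2r$ versus $r+1\le k\le 2r-1$; your convexity-in-$k$ argument is a clean alternative to that split, and both routes end with a short finite check (the paper's exceptional pairs are $(r,q)\in\{(2,2),(2,3),(3,2)\}$, yours are $(q,r)\in\{(4,2),(5,2),(2,3)\}$).

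Two small points of sloppiness to tighten. First, at $k=r+1$ the needed inequality is $r(q-1)+(3-2\sqrt2)q^{r-1}\le\tfrac12q^r+\tfrac12$; this is true for all $q\ge2$, $r\ge2$, but it is not ``a polynomial inequality in $q$'' and the identity $\gauss{r}{1}{q}(q-1)=q^r-1$ does not obviously enter---just prove it directly via $q^{r-1}\bigl(\tfrac{q}{2}-(3-2\sqrt2)\bigr)\ge r(q-1)-\tfrac12$. Second, your assertion that $2r-k\le0$ at the upper endpoint except for three pairs $(q,r)$ is correct, but deserves the one-line justification $\gauss{r}{1}{q}\le 4r-2$ together with the observation that the range $r+1\le k\le\lceil\gauss{r}{1}{q}/2\rceil$ is empty for $(q,r)\in\{(2,2),(3,2)\}$.
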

\begin{proof}
  Using Corollary~\ref{cor_df_estimation}, we can remove the upper bound $z\le \gauss{r}{1}{q}/2$ from Theorem~\ref{main_theorem_1}. If $z>\gauss{r}{1}{q}/2$, then 
  $z\ge \gauss{r}{1}{q}/2 +1/2$, so that 
  $
    \smax_q(v,2k;k)< lq^k+1+\frac{q^r}{2}\le lq^k+1+\frac{q^r-1}{2}+\frac{q-1}{2}\le lq^k+1+z(q-1)
  $ 
  for $k\ge 2r$. Thus, we can assume $r+1\le k\le 2r-1$ and $r\ge 2$. With this, we have $z\ge\gauss{r}{1}{q}-2(r-1)$ and  $lq^k+1+z(q-1)\ge lq^k+q^r-2(q-1)(r-1)$. 
  It remains to show $lq^k+q^r-2(q-1)(r-1)\ge lq^k+\frac{q^r}{2}+\frac{1}{2}+\frac{q^{2r-k}}{3+2\sqrt{2}} \ge lq^k+\frac{q^r}{2}+\frac{1}{2}+\frac{q^{r-1}}{3+2\sqrt{2}}$, i.e., 
  $q^r\ge 1+\frac{2q^{r-1}}{3+2\sqrt{2}}+4(q-1)(r-1)$. The latter inequality is valid for all pairs $(r,q)$ except $(2,2)$, $(2,3)$, and $(3,2)$. In those cases 
  it can be verified directly that $lq^k+1+z(q-1)$ is not strictly less than the upper bound of Theorem~\ref{thm_partial_spread_4}. Indeed, both bounds coincide. 
\qed\end{proof}
  
We remark that the first part of Corollary~\ref{cor_df_estimation} can be written as $\deficiency\ge \frac{q^r-1}{2}-\frac{q^{2r-k}}{3+2\sqrt{2}}$. Unfortunately, 
Theorem~\ref{main_theorem_2} is not capable to obtain $\deficiency\ge \left\lfloor(q^r-1)/2\right\rfloor$. For $\smax_2(17,12;6)$, i.e., $q=2$ and $r=5$, 
it gives $\deficiency\ge 13$ while $\left\lfloor(q^r-1)/2\right\rfloor=15$. In Lemma~\ref{lemma_implication_fourth_mac_williams} we give a cubic 
analog to Lemma~\ref{lemma_hyperplane_types_arithmetic_progression_c}, which yields $\deficiency\ge 14$ for these parameters.
  
%% We remark that the upper bound from upper bound $z\le \gauss{r}{1}{q}/2$ can be removed from Theorem~\ref{main_theorem_1} using
%% Theorem~\ref{main_theorem_2}.

\section{Constructions for $\bm{q^r}$-divisible sets}
\label{sec_constructions}

First note that we can embed every $\Delta$-divisible point set
$\mathcal{C}$ in $\aspace$ into ambient spaces with dimension larger
than $v$ and, conversely, replace $\F_q^v$ by the span
$\langle\mathcal{C}\rangle$ without destroying the
$\Delta$-divisibility. Since in this sense $v$ is not determined by
$\mathcal{C}$, we will refer to $\mathcal{C}$ as a
\emph{$\Delta$-divisible point set over $\F_q$}. %% in the sequel. 
%%\marginpar{Ich hoffe, dass dieser Terminus erst weiter hinten
%%  verwendet wird.} 
In the sequel we
develop a few basic constructions of $q^r$-divisible sets. For the
statement of the first lemma recall our convention that subspaces of
$\F_q^v$ are identified with subsets of the point set $\points$ of
$\aspace$.
\begin{lemma}
  \label{lemma_subspace}
  Every $k$-subspace $\mathcal{C}$ of $\aspace$ with $k\geq 2$ is
  $q^{k-1}$-divisible.
\end{lemma}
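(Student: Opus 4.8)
The plan is to exploit the elementary incidence fact that a hyperplane of $\aspace$ meets a fixed $k$-subspace in a subspace of dimension $k-1$ or $k$, and then to compare the two possible point counts modulo $q^{k-1}$.

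Concretely, I would fix an arbitrary hyperplane $H$ of $\aspace$ and split into two cases. If $\mathcal{C}\subseteq H$, then $\card{(\mathcal{C}\cap H)}=\card{\mathcal{C}}=\gauss{k}{1}{q}$. Otherwise $\mathcal{C}\cap H$ is a $(k-1)$-subspace, so $\card{(\mathcal{C}\cap H)}=\gauss{k-1}{1}{q}$. The one computation that matters is the identity $\gauss{k}{1}{q}-\gauss{k-1}{1}{q}=q^{k-1}$, i.e.\ $\gauss{k}{1}{q}=1+q+\dots+q^{k-1}\equiv 1+q+\dots+q^{k-2}=\gauss{k-1}{1}{q}\pmod{q^{k-1}}$. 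Hence in both cases $\card{(\mathcal{C}\cap H)}\equiv\gauss{k-1}{1}{q}\pmod{q^{k-1}}$, a value independent of $H$ and congruent to $\card{\mathcal{C}}$; by Definition~\ref{def_delta_divisible} this makes $\mathcal{C}$ (strongly) $q^{k-1}$-divisible.

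An equivalent route, which is the coding-theoretic shadow of the same argument (cf.\ the discussion preceding Lemma~\ref{lemma_standard_equations_q}), is to observe that the linear code associated with a $k$-subspace is the $q$-ary simplex code $[\gauss{k}{1}{q},k]$, whose nonzero codewords all have weight $q^{k-1}$; a constant-weight code is trivially $q^{k-1}$-divisible, and via the correspondence $C\leftrightarrow\mathcal{C}$ and formula~\eqref{eq:hweight-npoints} this is precisely the claim. I would probably present the geometric version as the main argument and mention the simplex-code interpretation as a remark.

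I do not expect any genuine obstacle here. The only points to handle with a little care are the boundary case $k=2$, where $\gauss{k-1}{1}{q}=\gauss{1}{1}{q}=1$ and the modulus is just $q$, and the role of the hypothesis $k\geq 2$ itself, which is exactly what guarantees $\Delta=q^{k-1}>1$ as demanded by the notion of $\Delta$-divisibility.
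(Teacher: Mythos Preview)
Your proof is correct and is essentially the same as the paper's: both rest on the single computation $\gauss{k}{1}{q}-\gauss{k-1}{1}{q}=q^{k-1}$, and the paper likewise remarks afterward that this is just the constant-weight-$q^{k-1}$ property of the simplex code. The only cosmetic difference is that the paper first invokes the ambient-space remark to assume $k=v$ (so $\mathcal{C}=\points$ and the case $\mathcal{C}\subseteq H$ disappears), whereas you handle both cases directly.
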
   
\begin{proof}
  By the preceding remark we may assume $k=v$ and hence
  $\mathcal{C}=\points$. In this case the result is clear,
  since $\card{\points}-\card{H}=q^{v-1}$ for each hyperplane $H$.
  % $\#(\mathcal{C}\cap H)=\#H=\gauss{v}{1}{q}\equiv
  % \gauss{r+1}{1}{q}=|\mathcal{C}| {\pmod{q^r}}$ for each hyperplane
  % $H$.
\qed\end{proof}
In fact a $k$-subspace of $\F_q^v$ is associated to the
$k$-dimensional simplex code over $\F_q$ and
Lemma~\ref{lemma_subspace} is well-known.

For a point set $\mathcal{C}$ in $\aspace$ we denote by
$\chi_{\mathcal{C}}$ its characteristic function, i.e., 
$\chi_{\mathcal{C}}\colon\points\to\{0,1\}\subset\mathbb{Z}$ 
with $\chi_{\mathcal{C}}(P)=1$ if and only if $P\in\mathcal{C}$.

\begin{lemma}
  \label{characteristic sum}
  Let $\mathcal{C}_i$ be $\Delta_i$-divisible point sets in $\aspace$
  and $a_i\in\mathbb{Z}$ for
  $1\le i\le m$. If $\mathcal{C}\subseteq\points$ satisfies
  $\chi_{\mathcal{C}}=\sum_{i=1}^m a_i\chi_{\mathcal{C}_i}$ then
  $\mathcal{C}$ is $\gcd(a_1\Delta_1,\dots,a_m\Delta_m)$-divisible.  
\end{lemma}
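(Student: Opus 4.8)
The plan is to exploit the correspondence between divisibility of a point set and a congruence on hyperplane multiplicities, established in Definition~\ref{def_delta_divisible} and used throughout. Since $\mathcal{C}_i$ is $\Delta_i$-divisible, we have $\card{(\mathcal{C}_i\cap H)}\equiv\card{\mathcal{C}_i}\pmod{\Delta_i}$ for every hyperplane $H$ of $\aspace$; here I am using the \emph{strong} divisibility form, which is automatic in the nontrivial cases by Theorem~\ref{thm:divisible} and holds trivially for $\mathcal{C}_i\in\{\emptyset,\points\}$ as well. The key observation is that $\card{(\mathcal{C}_i\cap H)}=\sum_{P\subseteq H}\chi_{\mathcal{C}_i}(P)$, so the hypothesis $\chi_{\mathcal{C}}=\sum_{i=1}^m a_i\chi_{\mathcal{C}_i}$ gives, upon summing over all points $P\subseteq H$,
\begin{equation*}
  \card{(\mathcal{C}\cap H)}=\sum_{i=1}^m a_i\card{(\mathcal{C}_i\cap H)}.
\end{equation*}

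Next I would set $\Delta=\gcd(a_1\Delta_1,\dots,a_m\Delta_m)$ and check that $\card{(\mathcal{C}\cap H)}$ is congruent modulo $\Delta$ to the fixed integer $u:=\sum_{i=1}^m a_i\card{\mathcal{C}_i}$, independent of $H$. Indeed, for each $i$ we have $\Delta\mid a_i\Delta_i$, and since $\Delta_i\mid\bigl(\card{(\mathcal{C}_i\cap H)}-\card{\mathcal{C}_i}\bigr)$ it follows that $a_i\Delta_i\mid a_i\bigl(\card{(\mathcal{C}_i\cap H)}-\card{\mathcal{C}_i}\bigr)$, hence $\Delta\mid a_i\bigl(\card{(\mathcal{C}_i\cap H)}-\card{\mathcal{C}_i}\bigr)$. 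Summing over $i$ yields $\card{(\mathcal{C}\cap H)}\equiv u\pmod{\Delta}$, which is exactly weak $\Delta$-divisibility of $\mathcal{C}$ in the sense of Definition~\ref{def_delta_divisible}. If one wants the strong form, it comes for free: taking $H$-independence and invoking Theorem~\ref{thm:divisible}(i) (or simply noting $u\equiv\card{\mathcal{C}}\pmod{\Delta}$ by the same argument applied with $\card{\mathcal{C}}=\sum_i a_i\card{\mathcal{C}_i}$, which is a special case of the displayed identity read on $H=\F_q^v$) upgrades it automatically.

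There is essentially no obstacle here; the only point requiring a word of care is the edge case where some $a_i\Delta_i=0$, i.e.\ $a_i=0$. Then that term contributes nothing to $\chi_{\mathcal{C}}$ and should simply be dropped from the gcd (with the convention $\gcd(\dots,0,\dots)$ ignoring zero entries, or equivalently assuming all $a_i\neq 0$ after pruning); if \emph{all} $a_i=0$ then $\mathcal{C}=\emptyset$, which is $\Delta$-divisible for every $\Delta$, consistent with the empty-gcd convention. A second minor subtlety is that $\Delta$ must be $>1$ for the statement to be meaningful as phrased; if $\gcd(a_1\Delta_1,\dots,a_m\Delta_m)=1$ the conclusion is vacuous. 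Modulo these bookkeeping remarks the proof is the two-line double-counting argument above, and I would present it exactly in that compressed form.
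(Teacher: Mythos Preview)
Your proof is correct and follows essentially the same approach as the paper's: sum the characteristic-function identity over the points of a hyperplane (and over all points) to obtain $\card{(\mathcal{C}\cap H)}=\sum_i a_i\card{(\mathcal{C}_i\cap H)}$ and $\card{\mathcal{C}}=\sum_i a_i\card{\mathcal{C}_i}$, then use $\Delta_i$-divisibility of each $\mathcal{C}_i$. The paper's proof is the same two-line argument without the edge-case discussion.
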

\begin{proof}
  We have
  $\card{\mathcal{C}}=\sum_{i=1}^ma_i\cdot\card{\mathcal{C}_i}$ and
  $\card{(\mathcal{C}\cap H)}=\sum_{i=1}^ma_i\cdot
  \card{(\mathcal{C}_i\cap H)}$ for each hyperplane $H$.  Since
  $\card{(\mathcal{C}_i\cap H)}\equiv
  \card{\mathcal{C}_i}\pmod{\Delta_i}$, the result follows.
\qed\end{proof}
Lemma~\ref{characteristic sum} shows in particular that the union
of mutually disjoint $q^r$-divisible sets is again
$q^r$-divisible. Another (well-known) corollary is the following,
which expresses the divisibility properties of the MacDonald
codes.\footnote{The generalization to more than one ``removed'' subspace is
  also quite obvious and expresses the divisibility properties of
  optimal linear codes of type BV in the projective case
  \cite{belov-etal74,hill92,ivan00}.}
\begin{corollary}
  \label{cor_affin}
  Let $X\subsetneq Y$ be subspaces of $\F_q^v$ and  
  $\mathcal{C}=Y\setminus X$. If $\dim(X)=s$ then $\mathcal{C}$ is
  $q^{s-1}$-divisible.
  % Each complement of an $r+1$-dimensional in an $r+2$-dimensional
  % subspace of $\aspace$ is $q^r$-divisible and has cardinality
  % $q^{r+1}$.  
\end{corollary}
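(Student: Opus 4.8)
The plan is to read off this statement directly from Lemma~\ref{characteristic sum} and Lemma~\ref{lemma_subspace}, exactly along the lines suggested by the remark preceding the corollary. The first step is the observation that, since $X\subseteq Y$, a point $P$ of $\aspace$ lies in $\mathcal{C}=Y\setminus X$ precisely when $P\subseteq Y$ but $P\nsubseteq X$; hence at the level of characteristic functions on $\points$ one has the identity $\chi_{\mathcal{C}}=\chi_{Y}-\chi_{X}$ (the points of $X$ contribute $1-1=0$, the points of $Y\setminus X$ contribute $1-0=1$, and the remaining points contribute $0-0=0$).

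Next I would record the divisibility of the two ``building blocks'' $Y$ and $X$. We may assume $s\ge 2$, the value $s=1$ being vacuous since then $q^{s-1}=q^{0}=1$. Because $X\subsetneq Y$ we have $\dim(Y)\ge s+1\ge 3$, so Lemma~\ref{lemma_subspace} applies to $Y$ and shows that $Y$ is $q^{\dim(Y)-1}$-divisible; since $\dim(Y)-1\ge s>s-1$, the same set is a fortiori $q^{s-1}$-divisible, because reducing the modulus from $q^{\dim(Y)-1}$ to $q^{s-1}$ only weakens the congruences of Definition~\ref{def_delta_divisible}. Likewise, as $s\ge 2$, Lemma~\ref{lemma_subspace} shows that $X$ is $q^{s-1}$-divisible.

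Finally I would invoke Lemma~\ref{characteristic sum} with $m=2$, $(\mathcal{C}_1,a_1,\Delta_1)=(Y,1,q^{s-1})$ and $(\mathcal{C}_2,a_2,\Delta_2)=(X,-1,q^{s-1})$: by the identity of Step~1 we have $\chi_{\mathcal{C}}=a_1\chi_{\mathcal{C}_1}+a_2\chi_{\mathcal{C}_2}$, so $\mathcal{C}$ is $\gcd(a_1\Delta_1,a_2\Delta_2)=\gcd(q^{s-1},q^{s-1})=q^{s-1}$-divisible, as claimed. There is essentially no obstacle in this argument—it is a two-line consequence of the two preceding results—and the only points needing a word of care are the degenerate case $s=1$ and the elementary fact that $q^{a}$-divisibility implies $q^{b}$-divisibility whenever $b\le a$, which is what lets us pass from the natural modulus $q^{\dim(Y)-1}$ of the subspace $Y$ down to $q^{s-1}$.
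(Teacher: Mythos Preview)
Your proof is correct and follows exactly the approach the paper intends: write $\chi_{\mathcal{C}}=\chi_{Y}-\chi_{X}$, invoke Lemma~\ref{lemma_subspace} for the divisibility of $Y$ and $X$, and conclude via Lemma~\ref{characteristic sum}. The handling of the degenerate case $s=1$ and the remark that $q^{\dim(Y)-1}$-divisibility implies $q^{s-1}$-divisibility are appropriate clarifications; one could equivalently plug the natural divisors $\Delta_1=q^{\dim(Y)-1}$ and $\Delta_2=q^{s-1}$ directly into Lemma~\ref{characteristic sum} and let the $\gcd$ do that work.
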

In particular affine $k$-subspaces of $\F_q^v$ are $q^{k-1}$ divisible.
\begin{lemma}
  \label{lemma_sum}
  Let $\mathcal{C}_1\in\points_1$, $\mathcal{C}_2\in\points_2$
  be $q^r$-divisible point sets in $\SPG{v_1-1}{\mathbb{F}_q}$, respectively,
$\SPG{v_2-1}{\mathbb{F}_q}$. Then
  there exists a $q^r$-divisible set $\mathcal{C}$ in
  $\SPG{v_1+v_2-1}{\mathbb{F}_q}$ with $\card{\mathcal{C}}=
  \card{\mathcal{C}_1}+\card{\mathcal{C}_2}$.
\end{lemma}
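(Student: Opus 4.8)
The plan is to realize $\SPG{v_1+v_2-1}{\mathbb{F}_q}$ as $\PG(V_1\oplus V_2)$, where $V_i=\F_q^{v_i}$, and to place $\mathcal{C}_1$ and $\mathcal{C}_2$ on the two ``coordinate'' subspaces $V_1\oplus\{\vek 0\}$ and $\{\vek 0\}\oplus V_2$. First I would fix the embeddings $\iota_1\colon\PG(V_1)\hookrightarrow\PG(V_1\oplus V_2)$ and $\iota_2\colon\PG(V_2)\hookrightarrow\PG(V_1\oplus V_2)$ induced by the inclusions $V_1\hookrightarrow V_1\oplus V_2$ and $V_2\hookrightarrow V_1\oplus V_2$. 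By the remark preceding Lemma~\ref{lemma_subspace}, $q^r$-divisibility is preserved under embedding into a larger ambient space, so $\iota_1(\mathcal{C}_1)$ is $q^r$-divisible in $\PG(V_1\oplus V_2)$ and likewise $\iota_2(\mathcal{C}_2)$. Since $V_1\cap V_2=\{\vek 0\}$ as subspaces of $V_1\oplus V_2$, the point sets $\iota_1(\mathcal{C}_1)$ and $\iota_2(\mathcal{C}_2)$ are disjoint. Then I would set $\mathcal{C}=\iota_1(\mathcal{C}_1)\cup\iota_2(\mathcal{C}_2)$, which has cardinality $\card{\mathcal{C}_1}+\card{\mathcal{C}_2}$ by disjointness.

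It remains to check $q^r$-divisibility of $\mathcal{C}$, which is exactly Lemma~\ref{characteristic sum} applied with $m=2$, $a_1=a_2=1$, $\Delta_1=\Delta_2=q^r$: the characteristic function of a disjoint union is the sum of the characteristic functions, and $\gcd(q^r,q^r)=q^r$. This disposes of the lemma in two lines once the embedding is set up, so strictly speaking the only content is the geometric bookkeeping. One subtlety worth a sentence: Lemma~\ref{characteristic sum} is stated for point sets in a \emph{fixed} $\aspace$, so I must first transport $\mathcal{C}_1,\mathcal{C}_2$ into the common space $\PG(V_1\oplus V_2)$ before invoking it — this is precisely what the embedding remark guarantees.

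I do not expect any real obstacle here; the lemma is a packaging statement. The only thing to be careful about is not to overclaim: the construction is the direct sum of the two configurations, so $\dim\langle\mathcal{C}\rangle=\dim\langle\mathcal{C}_1\rangle+\dim\langle\mathcal{C}_2\rangle$ in general, and one should not assert that $\mathcal{C}$ lives in a smaller space than $\SPG{v_1+v_2-1}{\mathbb{F}_q}$ unless one of the $\mathcal{C}_i$ fails to span. Also, if one wanted the \emph{largest} $\Delta$ for which $\mathcal{C}$ is divisible, that need not be $q^r$ — but the statement only asks for $q^r$-divisibility, so the $\gcd$ computation in Lemma~\ref{characteristic sum} suffices as written.
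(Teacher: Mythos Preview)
Your proposal is correct and follows exactly the paper's approach: embed $\mathcal{C}_1$ and $\mathcal{C}_2$ into the two complementary coordinate subspaces of $\PG(\F_q^{v_1}\times\F_q^{v_2})\cong\SPG{v_1+v_2-1}{\mathbb{F}_q}$ and take their (disjoint) union. The paper's proof is a one-liner that omits the explicit appeal to Lemma~\ref{characteristic sum}, but your invocation of it is the natural justification.
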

\begin{proof}
  Embed the point sets $\mathcal{C}_1,\mathcal{C}_2$ in the obvious way into
  $\PG(\F_q^{v_1}\times\F_q^{v_2})\cong
  \SPG{v_1+v_2-1}{\mathbb{F}_q}$, and
  take $\mathcal{C}$ as their union.
\qed\end{proof}
Let us note that the embedding dimension $v$ in Lemma~\ref{lemma_sum}
is usually not the smallest possible, and the isomorphism type of
$\mathcal{C}$ is usually not determined by $\mathcal{C}_1$ and
$\mathcal{C}_2$.\footnote{If not both $\mathcal{C}_1$ and
  $\mathcal{C}_2$ are subspaces, 
then disjoint embeddings into a geometry $\SPG{v-1}{\mathbb{F}_q}$
with $v<\dim\langle\mathcal{C}_1\rangle+\dim\langle\mathcal{C}_2\rangle$
exist as well.} 

In analogy to the \emph{Frobenius Coin Problem},
cf.~\cite{beutelspacher1978partitions,brauer1942problem,heden1984frobenius},
we define $\frobenius(q,r)$ as the smallest positive integer such that
a $q^r$-divisible set over $\mathbb{F}_q$ (i.e., with some ambient
space $\F_q^v$) with cardinality $n$ exists for all integers
$n>\frobenius(q,r)$. Using Lemma~\ref{lemma_subspace},
Corollary~\ref{cor_affin}, and Lemma~\ref{lemma_sum}, we conclude that
$\frobenius(q,r)\le \gauss{r+1}{1}{q}\cdot q^{r+1}-\gauss{r+1}{1}{q}-
q^{r+1}$, the largest integer
not representable as $a_1\gauss{r+1}{1}{q}+a_2q^{r+1}$ with
$a_1,a_2\in\mathbb{Z}_{\geq 0}$.\footnote{Note that
  $\gcd\!\left(\gauss{r+1}{1}{q},q^{r+1}\right)=1$ and recall the
  solution of the ordinary Frobenius Coin Problem.} The bound may also
be stated as $\frobenius(q,r)\leq\sum_{i=r+2}^{2r+1}q^i-\sum_{i=0}^rq^i$.

As the disjoint union of $q^r$-divisible sets is again $q^r$-divisible, one obtains 
a wealth of constructions. Consequently, the $q^r$-divisible point sets not arising in this way are of particular interest. They are called \emph{indecomposable}.

The next construction uses the concept of a
``sunflower'' of subspaces, which forms the $q$-analogue of
the $\Delta$-systems, or sunflowers, considered in extremal set theory
\cite{erdos-rado60}.\footnote{Our sunflowers need not have constant
dimension, however.}
% \footnote{Our definition may be regarded as a $q$-analogue
%   of the one in \cite{scott-wilmer14}, generalizing the
%   constant-dimension sunflowers of \cite{etzion-raviv15}.}
\begin{definition}
  Let $X$ be a subspace of $\F_q^v$ and $t\geq 2$ an integer.  A
  \emph{$t$-sunflower} in $\F_q^v$ with \emph{center} $X$ is a set
  $\{Y_1,\dots,Y_t\}$ of subspaces of $\F_q^v$ satisfying $Y_i\neq X$
  and $Y_i\cap Y_j=X$ for $i\neq j$. The point sets $Y_i\setminus X_i$
  are called \emph{petals} of the sunflower.
\end{definition}
\begin{lemma}
  \label{lma:sunflower}
  \begin{enumerate}[(i)]
  \item The union of the petals of a $q$-sunflower in $\F_q^v$ with
    $r$-dimensional center forms a $q^r$-divisible point set.
  \item The union of the petals and the center
    of a $q+1$-sunflower in $\F_q^v$ with $r$-dimensional center forms
    a $q^r$-divisible point set.
  \end{enumerate}
\end{lemma}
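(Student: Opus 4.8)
The plan is to compute, for an arbitrary hyperplane $H$ of $\aspace$, the number of points of the candidate set lying in $H$, reduce it modulo $q^r$, and verify that the residue is independent of $H$ and agrees with the residue of the total cardinality; that is exactly $q^r$-divisibility.

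First I would record two structural facts valid for any $t$-sunflower with $t\ge 2$ and $r$-dimensional center $X$. Since $X=Y_i\cap Y_j$ for $j\neq i$ we have $X\subseteq Y_i$, hence $d_i:=\dim Y_i\ge r+1$ and the petal $Y_i\setminus X$ has $\frac{q^{d_i}-q^r}{q-1}=q^r\cdot\frac{q^{d_i-r}-1}{q-1}$ points, a multiple of $q^r$. Moreover the petals (and, in case~(ii), also the center) are pairwise disjoint, so all relevant counts are obtained by adding the contributions of the individual sets $Y_i\setminus X$ (and of $X$); and because $X\subseteq Y_i$, one has $\card{((Y_i\setminus X)\cap H)}=\card{(Y_i\cap H)}-\card{(X\cap H)}$ for every hyperplane $H$.

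Now fix $H$ and split into cases according to whether $X\subseteq H$. If $X\subseteq H$, then for each $i$ either $Y_i\subseteq H$, so $(Y_i\setminus X)\cap H=Y_i\setminus X$ contributes $q^r\cdot\frac{q^{d_i-r}-1}{q-1}$ points, or $Y_i\not\subseteq H$, so $Y_i\cap H$ is a hyperplane of $Y_i$ still containing $X$ and the contribution is $\frac{q^{d_i-1}-q^r}{q-1}=q^r\cdot\frac{q^{d_i-1-r}-1}{q-1}$; in both subcases it is $\equiv 0\pmod{q^r}$. If $X\not\subseteq H$, then $X\subseteq Y_i$ forces $Y_i\not\subseteq H$ as well, so $\card{(X\cap H)}=\frac{q^{r-1}-1}{q-1}$ and $\card{(Y_i\cap H)}=\frac{q^{d_i-1}-1}{q-1}$, whence $\card{((Y_i\setminus X)\cap H)}=q^{r-1}\cdot\frac{q^{d_i-r}-1}{q-1}\equiv q^{r-1}\pmod{q^r}$, the last factor being $1+q+\dots+q^{d_i-r-1}\equiv 1\pmod q$.

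It remains to assemble the cases. For (i) I sum over the $q$ petals: in the first case the total is $\equiv 0\pmod{q^r}$, in the second it is $\equiv q\cdot q^{r-1}=q^r\equiv 0\pmod{q^r}$; since $\card{\mathcal{C}}=\sum_i q^r\cdot\frac{q^{d_i-r}-1}{q-1}\equiv 0\pmod{q^r}$ too, $\mathcal{C}$ is $q^r$-divisible. For (ii) I add the contribution of the center: if $X\subseteq H$ it contributes $\card{X}=\frac{q^r-1}{q-1}$ while the $q+1$ petals contribute $\equiv 0$, total $\equiv\frac{q^r-1}{q-1}$; if $X\not\subseteq H$ it contributes $\frac{q^{r-1}-1}{q-1}$ and the petals contribute $(q+1)q^{r-1}\equiv q^{r-1}$, and $q^{r-1}+\frac{q^{r-1}-1}{q-1}=\frac{q^r-1}{q-1}$ again; since also $\card{\mathcal{C}}\equiv\frac{q^r-1}{q-1}\pmod{q^r}$, $\mathcal{C}$ is $q^r$-divisible. (For (ii) with $r\ge 2$ one could instead invoke Lemma~\ref{characteristic sum} via $\chi_{\mathcal{C}}=\sum_{i=1}^{q+1}\chi_{Y_i}-q\,\chi_X$ together with Lemma~\ref{lemma_subspace}; the case $r=1$, where $X$ is a point, is still covered by the direct computation.) I expect the only genuinely delicate step to be the case bookkeeping for $Y_i$ relative to $H$, in particular noticing that $X\not\subseteq H$ forces $Y_i\not\subseteq H$; once that is in place everything reduces to the identity $q^{r-1}+\frac{q^{r-1}-1}{q-1}=\frac{q^r-1}{q-1}$ and the observation that a full petal, or a petal intersected with a hyperplane through $X$, always has a number of points divisible by $q^r$.
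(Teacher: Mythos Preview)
Your proof is correct and complete. The paper takes a more algebraic route: for (i) it writes $\chi_{\mathcal{C}}=\sum_{i=1}^q\chi_{Y_i}-q\chi_X$ and invokes Lemma~\ref{characteristic sum} together with Lemma~\ref{lemma_subspace} (each $Y_i$, being a subspace of dimension $\ge r+1$, is $q^r$-divisible, and $X$, being $r$-dimensional, is $q^{r-1}$-divisible, so the coefficient $-q$ lifts this to divisor $q\cdot q^{r-1}=q^r$); part (ii) then follows from (i) simply by adjoining one further subspace $Y_{q+1}$, which is itself $q^r$-divisible and disjoint from the union of the first $q$ petals. Your direct hyperplane-by-hyperplane count is more elementary and self-contained, not relying on the earlier lemmas, and---as you note yourself---handles the boundary case $r=1$ (center a single point) without further comment, whereas the characteristic-function argument implicitly treats a point as ``$q^0$-divisible'', which lies outside the formal scope of Definition~\ref{def_delta_divisible} (the underlying computation is of course trivially valid there too). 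The paper's approach is shorter and makes the structural reason for divisibility transparent; yours trades that conceptual economy for an explicit verification needing nothing beyond the dimension formulas for subspace--hyperplane intersections.
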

\begin{proof}
  (i) Let $\mathcal{F}=\{Y_1,\dots,Y_q\}$ and
  $\mathcal{C}=\bigcup_{i=1}^q(Y_i\setminus
  X_i)=\left(\bigcup\mathcal{F}\right)\setminus X$. We have
  $\chi_{\mathcal{C}}=\sum_{i=1}^q\chi_{Y_i}-q\chi_X$. Since
  $\dim(Y_i)\geq r+1$, $Y_i$ is $q^r$-divisible, and so is
  $q\chi_X$. Hence, by Lemma~\ref{characteristic sum}, $\mathcal{C}$ is
  $q^r$-divisible as well.
  
  (ii) follows from (i) by adding one further space $Y_{q+1}$ to
  $\mathcal{F}$. 
\qed\end{proof}
\begin{lemma}
  \label{lma:construction1}
  Let $r\ge 1$ be an integer and $1\le i\le q^r+1$. There exists a
  $q^r$-divisible set $\mathcal{C}_i$ over $\F_q$ with
  $\card{\mathcal{C}_i}
  =\gauss{2r}{1}{q}+i\cdot\left(q^{r+1}-q^r-\gauss{r}{1}{q}\right)$.
\end{lemma}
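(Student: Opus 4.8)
The plan is to realise each $\mathcal{C}_i$ by a sunflower-type construction anchored on a spread. The first move is purely arithmetic: using $\gauss{2r}{1}{q}=(q^r+1)\gauss{r}{1}{q}$ and $\gauss{r}{1}{q}+\bigl(q^{r+1}-q^r-\gauss{r}{1}{q}\bigr)=q^r(q-1)$, the target cardinality rewrites as
\[
  \card{\mathcal{C}_i}=(q^r+1-i)\,\gauss{r}{1}{q}+i\cdot q^r(q-1).
\]
Thus $\mathcal{C}_i$ should be assembled from $q^r+1$ blocks: $q^r+1-i$ of them $r$-subspaces (each contributing $\gauss{r}{1}{q}$ points) and $i$ of them $q^r$-divisible blocks of $q^r(q-1)$ points apiece. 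The count $q^r+1\equiv 1\pmod q$ is precisely the congruence that makes the petals-and-center of a $(q^r+1)$-sunflower $q^r$-divisible, by the reasoning behind Lemma~\ref{lma:sunflower}(ii).

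Concretely I would fix a spread $\mathcal{S}=\{S_1,\dots,S_{q^r+1}\}$ of $\F_q^{2r}$ into $r$-subspaces, which exists by Theorem~\ref{thm_spread}, and an index set $I$ with $\card{I}=i$. Passing to an ambient space $\F_q^{2r}\oplus W$ with $W$ of sufficiently large dimension, I would, for each $j\in I$, replace $S_j$ by a $q^r$-divisible block $\widehat S_j\supseteq S_j$ with $\card{\widehat S_j}=\card{S_j}+\bigl(q^{r+1}-q^r-\gauss{r}{1}{q}\bigr)$, the enlargement reaching into a private portion of $W$ attached along $S_j$ through a sunflower; this is where Lemmas~\ref{lemma_subspace},~\ref{cor_affin},~\ref{characteristic sum} and~\ref{lma:sunflower} enter. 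Since the blocks meet $\F_q^{2r}$ only in their respective $S_j$ and use disjoint fresh directions, $\mathcal{C}_i:=\F_q^{2r}\cup\bigcup_{j\in I}(\widehat S_j\setminus S_j)$ is a clean overlay, and writing $\chi_{\mathcal{C}_i}=\chi_{\F_q^{2r}}+\sum_{j\in I}(\chi_{\widehat S_j}-\chi_{S_j})$ reduces the $q^r$-divisibility to that of the individual differences $\chi_{\widehat S_j}-\chi_{S_j}$ via Lemma~\ref{characteristic sum}, because $\chi_{\F_q^{2r}}$ is $q^{2r-1}$-divisible. A direct computation with Gaussian binomials then returns $\card{\mathcal{C}_i}=(q^r+1-i)\gauss{r}{1}{q}+i\,q^r(q-1)$, which is the asserted value; the borderline cases $i=0$ (giving $\F_q^{2r}$ itself) and $i=q^r+1$ serve as useful consistency checks.

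The hard part will be the design of the blocks $\widehat S_j$ and the verification that each $\chi_{\widehat S_j}-\chi_{S_j}$ is $q^r$-divisible. The obstruction is a congruence: the increment $q^{r+1}-q^r-\gauss{r}{1}{q}$ is $\equiv-1\pmod q$, while the number of points of any single subspace-petal is $\equiv 0\pmod q$ and an $r$-subspace alone is only $q^{r-1}$-divisible; hence no enlargement of $S_j$ by one further subspace, and no block that merely contains $S_j$ and is $q^r$-divisible, can do the job block by block. Overcoming this calls for a two-layered design in which the $i$ enlargements share a common external axis, so that the $r$-subspace contributions cancel modulo $q^r$ \emph{collectively} rather than individually — or, equivalently, presenting $\mathcal{C}_i$ as the petal-and-center set of one $(q^r+1)$-sunflower whose $q^r+1-i$ short petals have dimension $r+1$ and whose $i$ long petals carry the extra $W$-directions. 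Nailing down these petal dimensions so that both the $q^r$-divisibility and the cardinality land exactly, together with the required pairwise-intersection pattern, is the technical core of the proof.
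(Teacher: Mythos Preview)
Your framework---anchor on an $r$-spread $\{X_1,\dots,X_{q^r+1}\}$ of a $2r$-space $Y$, enlarge $i$ of the spread members by sunflower petals reaching into fresh directions, and verify $q^r$-divisibility via Lemma~\ref{characteristic sum}---is exactly the paper's. But you stop short of the construction and defer the ``technical core'' to a vague two-layer or $(q^r+1)$-sunflower design. That deferral is where the gap lies, and the resolution is simpler than you suspect.

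The paper does \emph{not} try to make each increment $\chi_{\widehat S_j}-\chi_{S_j}$ individually $q^r$-divisible (as you correctly diagnose, the $-1\pmod q$ obstruction blocks this). Instead it also \emph{removes} the center $X_j$ from the set. Concretely: for each $j\le i$ pick $q-1$ fresh $(r{+}1)$-subspaces $E_{j,1},\dots,E_{j,q-1}$ meeting $Y$ exactly in $X_j$ (and pairwise disjoint outside $Y$ across all $j$); then $\{Y,E_{j,1},\dots,E_{j,q-1}\}$ is a $q$-sunflower with center $X_j$. Define
\[
\mathcal{C}_i=\Bigl(Y\cup\bigcup_{j\le i}\bigcup_{l}E_{j,l}\Bigr)\setminus(X_1\cup\dots\cup X_i).
\]
Since each $X_j$ lies in $Y$ and in all $q-1$ of the $E_{j,l}$, one gets
\[
\chi_{\mathcal{C}_i}=\chi_Y+\sum_{j\le i}\sum_{l=1}^{q-1}\chi_{E_{j,l}}-q\sum_{j\le i}\chi_{X_j},
\]
and now every summand is $q^r$-divisible: $\chi_Y$ because $\dim Y=2r\ge r+1$, each $\chi_{E_{j,l}}$ because $\dim E_{j,l}=r+1$, and each $q\chi_{X_j}$ because $\chi_{X_j}$ is $q^{r-1}$-divisible. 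The cardinality is $\gauss{2r}{1}{q}+i(q-1)q^r-i\gauss{r}{1}{q}$, as required.

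So the missing idea is: keep $Y$ itself as one member of each $q$-sunflower and delete the $i$ centers from $\mathcal{C}_i$; this turns the troublesome coefficient of $\chi_{X_j}$ into $-q$ rather than $-(q-1)$, and the obstruction vanishes. Your speculative $(q^r+1)$-sunflower reformulation is not needed and does not match the target cardinalities cleanly.
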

\begin{proof}
  Let $Y=\F_q^{2r}$ and $X_1,\dots,X_{q^r+1}$ an
  $r$-spread in $Y$. After embedding $Y$ in a space $\F_q^v$ of
  sufficiently large dimension $v$, it is possible to choose
  $q$-sunflowers $\mathcal{F}_1,\dots,\mathcal{F}_{q^r+1}$ in $\F_q^v$
  with the following properties: $Y\in\mathcal{F}_i$ for all $i$;
  $\dim(Z)=r+1$ for $Z\in\mathcal{F}_i\setminus\{Y\}$;
  $\mathcal{F}_i$ has center $X_i$; petals in different sunflowers
  $\mathcal{F}_i$ and $\mathcal{F}_j$ are either equal (to $Y$)
  or disjoint. Having made such a choice, we set
  $\mathcal{C}_i
  =\left(\bigcup\mathcal{F}_1\cup\dots\cup\bigcup\mathcal{F}_i\right)
  \setminus(X_1\cup\dots\cup X_i)$ for $1\leq i\leq q^r+1$. Then
  $\chi_{\mathcal{C}_i}=\sum_{Z\in\mathcal{F}_1\cup\dots\cup\mathcal{F}_i}\chi_Z
  -q\chi_{X_1}-\dots-q\chi_{X_i}$ is $q^r$-divisible (again by
  Lemma~\ref{characteristic sum}), and $\card{\mathcal{C}_i}$ is as
  asserted.
  % We set $\mathcal{C}_0=S$.  For each $i$ we
  % recursively choose $q-1$ different $(r+1)$-dimensional subspaces
  % $E_1^i,\dots, E_{q-1}^i$ with $E_j^i\cap S=R_i$.  Now we embed the
  % $E_j^i$ in such a way that $\mathcal{C}_{i-1}\cap E_j^i=R_i$ for all
  % $1\le j\le q-1$ and set
  % $\mathcal{C}_i=\mathcal{C}_{i-1}\backslash R_i\cup E_1^i\backslash
  % R_i\cup\dots\cup E_{q-1}^i\backslash R_i$. Since
  % $\chi_{\mathcal{C}_i}^v=\chi_{\mathcal{C}_{i-1}}^v+\sum_{j=1}^{q-1}\chi_{E_{j}^i}^v-q\cdot
  % \chi_{R_{i}}^v$, all $\mathcal{C}_i$ are $q^r$-divisible.
\qed\end{proof}

Replacing $S$ by some arbitrary $q^1$-divisible set, we similarly obtain:
\begin{lemma}
  Let $\mathcal{C}'$ be a $q^1$-divisible set of cardinality $n$, then
  there exist $q^1$-divisible sets of
  cardinality $n+i\cdot\left(q^2-q-1\right)$ for all $0\le i\le n$.  
\end{lemma}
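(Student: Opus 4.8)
The plan is to transcribe the proof of Lemma~\ref{lma:construction1}, specialised to $r=1$, with the subspace $\F_q^{2}$ (carrying its $1$-spread, i.e.\ its $q+1$ points) replaced throughout by the given $q^1$-divisible set $\mathcal{C}'$; the $n$ points of $\mathcal{C}'$ then take over the role played there by the $q^r+1$ spread elements $X_1,\dots,X_{q^r+1}$. This dictionary also explains why $i$ now ranges over $0\le i\le n=\card{\mathcal{C}'}$.

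For $i=0$ one takes $\mathcal{C}=\mathcal{C}'$. For $1\le i\le n$ I would choose $i$ distinct points $P_1,\dots,P_i\in\mathcal{C}'$ and, after embedding $\mathcal{C}'$ into a projective space over $\F_q$ of sufficiently large dimension --- harmless for $q^1$-divisibility by the remark at the start of this section --- attach to each $P_j$ a bundle of $q-1$ ``new'' lines $L_1^{(j)},\dots,L_{q-1}^{(j)}$ through $P_j$: each $L_\ell^{(j)}$ should meet $\langle\mathcal{C}'\rangle$ only in $P_j$, two of them should meet exactly in $P_j$ when they share the index $j$, and be disjoint otherwise. A clean way to secure this is to enlarge the ambient space by one fresh basis vector $e_m$ for each of the $i(q-1)$ lines and set $L_\ell^{(j)}=\langle P_j,e_m\rangle$. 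The candidate set is then
\begin{equation*}
  \mathcal{C}=\Bigl(\mathcal{C}'\cup\bigcup_{j=1}^{i}\bigcup_{\ell=1}^{q-1}L_\ell^{(j)}\Bigr)\setminus\{P_1,\dots,P_i\}.
\end{equation*}

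Two verifications remain. The cardinality count is routine: deleting the $i$ centres costs $i$ points, each of the $i(q-1)$ added lines contributes its $q+1$ points of which only the centre is already accounted for, and the petal sets $L_\ell^{(j)}\setminus P_j$ are pairwise disjoint and disjoint from $\mathcal{C}'$ by construction, giving $\card{\mathcal{C}}=n-i+i(q-1)q=n+i(q^2-q-1)$. For divisibility I would either (a) check, distinguishing the four possible positions of a point, the identity
\begin{equation*}
  \chi_{\mathcal{C}}=\chi_{\mathcal{C}'}+\sum_{j=1}^{i}\Bigl(\sum_{\ell=1}^{q-1}\chi_{L_\ell^{(j)}}-q\,\chi_{P_j}\Bigr),
\end{equation*}
which is the exact analogue of the identity in the proof of Lemma~\ref{lma:construction1} with $\chi_{\mathcal{C}'}$ in the role of $\chi_Y$, and then invoke Lemma~\ref{characteristic sum}: $\mathcal{C}'$ is $q^1$-divisible by hypothesis, each $L_\ell^{(j)}$ is $q^1$-divisible by Lemma~\ref{lemma_subspace}, and each weighted point $q\,\chi_{P_j}$ meets every hyperplane in $0$ or $q$ points and so enters the $\gcd$ as $q$, exactly like the term $q\chi_X$ in the proof of Lemma~\ref{lma:sunflower}(i); or (b) argue directly that for an arbitrary hyperplane $H$ one has $\card{(\mathcal{C}\cap H)}=\card{(\mathcal{C}'\cap H)}+\sum_{j,\ell}\card{(L_\ell^{(j)}\cap H)}-q\,\card{\{j:P_j\subseteq H\}}\equiv n-i\equiv\card{\mathcal{C}}\pmod q$, using $\card{(L_\ell^{(j)}\cap H)}\in\{1,q+1\}$ and the $q$-divisibility of $\mathcal{C}'$.

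I do not expect a real obstacle: this is in essence a copy of Lemma~\ref{lma:construction1}. The one delicate point is arranging the lines $L_\ell^{(j)}$ with the prescribed intersection pattern, which is precisely why one passes to a large enough ambient space and why the conclusion is naturally stated for a $q^1$-divisible set ``over $\F_q$'' rather than inside a fixed $\F_q^v$.
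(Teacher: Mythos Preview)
Your proposal is correct and follows exactly the route the paper intends: the sentence preceding the lemma (``Replacing $S$ by some arbitrary $q^1$-divisible set, we similarly obtain'') is shorthand for the very substitution you carry out, namely rerunning the proof of Lemma~\ref{lma:construction1} with $r=1$ and the $2$-space $Y$ (with its $q+1$ spread points) replaced by $\mathcal{C}'$ (with its $n$ points). Your characteristic-function identity and the appeal to Lemma~\ref{characteristic sum} mirror that proof precisely; the only cosmetic point is that a single point $P_j$ is not covered by Lemma~\ref{lemma_subspace} (which needs $k\ge 2$), so your alternative~(b), or the direct observation that $q\chi_{P_j}$ contributes a multiple of $q$ to every hyperplane count, is the cleanest way to justify that term.
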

%% Another construction:
%% Choose q+1 planes an arrange them as (0|I),(I,0),(I,1I),...,(I,(q-1)I). In each step switch a line against add (q-1) planes
%% can be generalized to higher dimensions 
%
%\cite[Cor.~2]{delsarte72}
%\cite[Prop.~13]{ward2001divisible_survey}

Our last construction in this section uses the concept of a cone.

\begin{definition}
  \label{dfn:cone}
  Let $X$, $Y$ be complementary subspaces of $\F_q^v$
  with $\dim(X)=s$, $\dim(Y)=t$ (hence $v=s+t$)
  and $\mathcal{B}$ a set of points in $\PG(Y)$. The \emph{cone} with
  \emph{vertex} $X$ and 
  \emph{base} $\mathcal{B}$ is the point set
  $\mathcal{C}=\bigcup_{P\in\mathcal{B}}(X+P)$.
\end{definition}
\begin{lemma}
  \label{lma:cone}
  Let $\mathcal{B}$ be a $q^r$-divisible point set in $\PG(v-1,\F_q)$
  with $\card{\mathcal{B}}=m$ and $s\geq 1$ an integer.
  \begin{enumerate}[(i)]
  \item If $m\equiv 0\pmod{q^{r+1}}$ then there exists a
    $q^{r+s}$-divisible point set $\mathcal{C}$ in $\PG(v+s-1,\F_q)$
    of cardinality $\card{\mathcal{C}}=mq^s$.
  \item If $m(q-1)\equiv -1\pmod{q^{r+1}}$ then there exists a
    $q^{r+s}$-divisible point set $\mathcal{C}$ in $\PG(v+s-1,\F_q)$
    of cardinality $\card{\mathcal{C}}=\gauss{s}{1}{q}+mq^s$.
  \end{enumerate}
\end{lemma}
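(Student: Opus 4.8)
The plan is to realise $\mathcal{C}$ as a cone in the sense of Definition~\ref{dfn:cone}. I would work inside $\F_q^{v+s}$, fix complementary subspaces $X,Y$ with $\dim X=s$ and $\dim Y=v$, and regard $\mathcal{B}$ as a point set of $\PG(Y)$ (legitimate since $\mathcal{B}$ is a $q^r$-divisible set over $\F_q$, hence may be embedded in any large enough ambient space). In case~(ii) I take $\mathcal{C}$ to be the cone with vertex $X$ and base $\mathcal{B}$, and in case~(i) the same cone with the vertex deleted, i.e.\ $\mathcal{C}=\bigcup_{P\in\mathcal{B}}\big((X+P)\setminus X\big)$. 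The first step is a point count: for distinct $P,P'\in\mathcal{B}$ the $(s+1)$-subspaces $X+P$ and $X+P'$ meet exactly in $X$, since their images in $\PG(\F_q^{v+s}/X)\cong\PG(Y)$ are the distinct points $P$ and $P'$; hence the affine parts $(X+P)\setminus X$, each of size $q^s$, are pairwise disjoint (and disjoint from $X$). This yields $\card{\mathcal{C}}=mq^s$ in case~(i) and $\card{\mathcal{C}}=\gauss{s}{1}{q}+mq^s$ in case~(ii), as required.

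It then remains to verify $\card{(\mathcal{C}\cap H)}\equiv\card{\mathcal{C}}\pmod{q^{r+s}}$ for every hyperplane $H$ of $\F_q^{v+s}$, and I would split on whether $H$ contains the vertex. If $X\subseteq H$, then $H\cap Y$ is a hyperplane of $Y$ (complementarity forces $\dim(H\cap Y)=v-1$), the assignment $H\mapsto H\cap Y$ matches the hyperplanes through $X$ with the hyperplanes of $\PG(Y)$, one has $X+P\subseteq H$ exactly when $P\subseteq H\cap Y$, and otherwise $(X+P)\cap H=X$. Counting the $q^s$ petal points over each point of $\mathcal{B}\cap(H\cap Y)$ gives $\card{(\mathcal{C}\cap H)}=q^s\cdot\card{(\mathcal{B}\cap(H\cap Y))}$ in case~(i), and $\gauss{s}{1}{q}+q^s\cdot\card{(\mathcal{B}\cap(H\cap Y))}$ in case~(ii). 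Since $\mathcal{B}$ is $q^r$-divisible, $\card{(\mathcal{B}\cap(H\cap Y))}\equiv m\pmod{q^r}$, and after multiplication by $q^s$ the desired congruence modulo $q^{r+s}$ holds in both cases.

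If $X\nsubseteq H$, then $X\cap H$ is an $(s-1)$-subspace, and for every $P\in\mathcal{B}$ the subspace $X+P$ is not contained in $H$, so $(X+P)\cap H$ is an $s$-subspace meeting $X$ in $X\cap H$; thus each petal contributes $\gauss{s}{1}{q}-\gauss{s-1}{1}{q}=q^{s-1}$ points of $H$, again pairwise disjoint over the different $P$. Hence $\card{(\mathcal{C}\cap H)}=mq^{s-1}$ in case~(i) and $\gauss{s-1}{1}{q}+mq^{s-1}$ in case~(ii). Comparing with $\card{\mathcal{C}}$, the difference equals $mq^{s-1}(q-1)$ in case~(i), which is a multiple of $q^{r+s}$ precisely because $q^{r+1}\mid m$, and $q^{s-1}\big(1+m(q-1)\big)$ in case~(ii), which is a multiple of $q^{r+s}$ precisely because $m(q-1)\equiv-1\pmod{q^{r+1}}$. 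Together with the previous case this covers all hyperplanes and finishes the argument.

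I expect the only real care to go into the three geometric facts invoked above --- that two distinct cone pieces meet exactly in the vertex, that a hyperplane through $X$ restricts to a hyperplane of the base via intersection with $Y$, and that a hyperplane missing $X$ cuts each petal in exactly $q^{s-1}$ further points --- together with the bookkeeping of the vertex's own contribution $\gauss{s}{1}{q}$ (respectively $\gauss{s-1}{1}{q}$) in case~(ii); the remaining steps are elementary modular arithmetic. It is worth noting how the hypotheses enter: the $q^r$-divisibility of $\mathcal{B}$ is exactly what is needed for hyperplanes through the vertex, while the stated congruence on $m$ is exactly what cancels the discrepancy for hyperplanes missing it. As an alternative one could prove the case $s=1$ directly and then induct, writing a cone with $s$-dimensional vertex as an iterated cone with one-dimensional vertices, but the direct count above seems cleanest.
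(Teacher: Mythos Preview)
Your proposal is correct and follows essentially the same approach as the paper: construct the cone $\mathcal{K}$ over $\mathcal{B}$ with $s$-dimensional vertex $X$, take $\mathcal{C}=\mathcal{K}$ in case~(ii) and $\mathcal{C}=\mathcal{K}\setminus X$ in case~(i), and verify divisibility by splitting hyperplanes according to whether they contain $X$. The paper phrases the count in terms of $\card{(\mathcal{K}\setminus H)}$ rather than $\card{(\mathcal{C}\cap H)}$, but this is the same computation, and your write-up is in fact more careful about the geometric facts (disjointness of petals, how $H$ meets $Y$ and each $X+P$) than the paper's terse version.
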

\begin{proof}
  Embed $\F_q^v$ into $\F_q^{v+s}$ as $Y$ and consider a cone
  $\mathcal{K}$ in $\PG(v+s-1,\F_q)$ with base $\mathcal{B}$ and
  $s$-dimensional vertex $X$. The hyperplanes $H\supseteq X$ satisfy
  $\card{(\mathcal{K}\setminus H)}=q^s\cdot\card{(\mathcal{K}\setminus
    H)}\equiv0\pmod{q^{r+s}}$. The hyperplanes $H\nsupseteq X$
  intersect $X+P$, $P\in\mathcal{B}$, in an $s$-subspace $\neq X$,
  hence contain $\gauss{s}{s-1}{q}=$ points in $X$ and $q^{s-1}$ points
  in $\mathcal{K}\setminus X$. It follows that $\card{(\mathcal{K}\setminus
    H)}=q^{s-1}+m(q^s-q^{s-1})=(1+m(q-1))q^{s-1}$. Thus in Case~(ii) we
  can take $\mathcal{C}=\mathcal{K}$ and in Case~(i) we can take
  $\mathcal{C}=\mathcal{K}\setminus X$.\footnote{Note that
    $m(q-1)\equiv 0\pmod{q^{r+1}}$ is equivalent to $m\equiv
    0\pmod{q^{r+1}}$.}
\qed\end{proof}
The preceding constructions can be combined in certain nontrivial ways
to yield further constructions of $q^r$-divisible point sets.  We will
return to this topic in Section~\ref{ssec:better}. 

Nonetheless, we are
only scratching the surface of a vast subject. Projective two-weight
codes with weights $w_1,w_2$ satisfying $w_2>w_1+1$ are
$q^r$-divisible by Delsarte's Theorem \cite[Cor.~2]{delsarte72}. This
yields many further examples of $q^r$-divisible point sets; see
\cite{calderbank1986geometry} and the online-table at
\url{http://moodle.tec.hkr.se/\textasciitilde
  chen/research/2-weight-codes}. Codes meeting the Griesmer bound
whose minimum distance is a multiple of $q$ are $q^r$-divisible
\cite[Prop.~13]{ward2001divisible_survey}.\footnote{In the case $q=p$,
  and in general for codes of type BV, such codes are even
  $q^e$-divisible, where $q^e$ is the largest power of $p$ dividing
  the minimum distance \cite[Th.~1 and
  Prop.~2]{ward1998divisibility}.}  Optimal codes of lengths strictly
above the Griesmer bound tend to have similar divisibility properties;
see, e.g., \emph{Best Known Linear Codes} in \texttt{Magma}.

\section{More non-existence results for $\bm{q^r}$-divisible 
sets}\label{sec_non_existence}

% \begin{definition}
%   \label{def_hyperplane_types}
  For a point set $\mathcal{C}$ in $\aspace$ let
  $\mathcal{T}(\mathcal{C}):=\left\{0\le i\le c\mid a_i>0\right\}$,
  where $a_i$ denotes the number of hyperplanes with
  $\card{(\mathcal{C}\cap H)}=i$.
% \end{definition}

\begin{lemma}
  \label{lemma_hyperplane_types_arithmetic_progression_2}
  For integers $u\in\mathbb{Z}$, $m\ge 0$ and $\Delta\ge 1$ let
  $\mathcal{C}$ in $\aspace$ be $\Delta$-divisible of cardinality
  $n=u+m\Delta\ge 0$.  Then, we have
  $(q-1)\cdot\sum_{h\in\mathbb{Z},h\le m}
  ha_{u+h\Delta}=\left(u+m\Delta-uq\right)\cdot
  \frac{q^{v-1}}{\Delta}-m $, where we set $a_{u+h\Delta}=0$ if
  $u+h\Delta<0$.
\end{lemma}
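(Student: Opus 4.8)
The plan is to derive the identity by a short linear manipulation of the first two standard equations of Lemma~\ref{lemma_standard_equations_q}, after reindexing the hyperplane spectrum of $\mathcal{C}$ along the arithmetic progression $u,u+\Delta,u+2\Delta,\dots$. First I would note that $\Delta$-divisibility of $\mathcal{C}$ together with $\card{\mathcal{C}}=n=u+m\Delta$ forces $\card{(\mathcal{C}\cap H)}\equiv n\equiv u\pmod{\Delta}$ for every hyperplane $H$; hence $a_i=0$ unless $i\equiv u\pmod\Delta$, and the nonzero $a_i$ occur exactly for $i=u+h\Delta$ with $h\in\mathbb{Z}$ and $0\le u+h\Delta\le n$. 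Since $u+h\Delta\le n=u+m\Delta$ is equivalent to $h\le m$, and since terms with $u+h\Delta<0$ vanish by the stated convention, equations~\eqref{eq_ste1} and~\eqref{eq_ste2} can be rewritten as $\sum_{h\le m}a_{u+h\Delta}=\gauss{v}{1}{q}$ and $\sum_{h\le m}(u+h\Delta)\,a_{u+h\Delta}=n\cdot\gauss{v-1}{1}{q}$. I would stress that~\eqref{eq_ste1} and~\eqref{eq_ste2} hold for an arbitrary point set (indeed multiset) in $\aspace$, so no spanning or projectivity hypothesis on $\mathcal{C}$ is required; the third standard equation~\eqref{eq_ste3} is not used at all.

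The second step is the algebra. Splitting the second displayed sum as $u\sum_{h\le m}a_{u+h\Delta}+\Delta\sum_{h\le m}h\,a_{u+h\Delta}$ and substituting the first sum via~\eqref{eq_ste1} gives
\[
\Delta\sum_{h\le m}h\,a_{u+h\Delta}=(u+m\Delta)\cdot\frac{q^{v-1}-1}{q-1}-u\cdot\frac{q^{v}-1}{q-1}.
\]
Multiplying through by $q-1$, the two ``$-1$'' contributions collapse to $-(u+m\Delta)+u=-m\Delta$, while the remaining terms combine to $uq^{v-1}+m\Delta q^{v-1}-uq^{v}=(u+m\Delta-uq)q^{v-1}$; dividing by $\Delta$ then yields
\[
(q-1)\sum_{h\le m}h\,a_{u+h\Delta}=(u+m\Delta-uq)\cdot\frac{q^{v-1}}{\Delta}-m,
\]
which is exactly the claim.

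I do not expect a genuine obstacle here: the whole argument is two lines of bookkeeping on top of Lemma~\ref{lemma_standard_equations_q}. The only points deserving a moment's care are the reindexing $i\mapsto h$ together with the convention $a_{u+h\Delta}=0$ for $u+h\Delta<0$ (which makes the sum over $h\le m$ finite and matches it with the range $0\le i\le n$), and the observation that every nonzero $a_i$ lies in the single residue class $i\equiv u\pmod\Delta$. One may also remark in passing that the asserted equation is one of rationals---the factor $q^{v-1}/\Delta$ need not be an integer on its own for general $\Delta$, although the full expression is, and is an integer outright once one invokes $\Delta\mid q^{v-2}$ from Theorem~\ref{thm:divisible}(iii)---but this does not affect the derivation above.
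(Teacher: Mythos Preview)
Your proof is correct and follows essentially the same approach as the paper: reindex the first two standard equations of Lemma~\ref{lemma_standard_equations_q} along the residue class $u\pmod\Delta$, then take the linear combination ``second minus $u$ times first'' (the paper phrases it as ``$u$ times first minus second'', a harmless sign flip) and divide by $\Delta$. Your write-up is slightly more explicit about why the reindexing is valid and about the integrality side remark, but the mathematical content is identical to the paper's three-line argument.
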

\begin{proof}
  Rewriting the equations from Lemma~\ref{lemma_standard_equations_q}  
  yields 
    $(q-1)\cdot\sum_{h\in\mathbb{Z},h\le m} a_{u+h\Delta} = q\cdot q^{v-1}-1 $ and $
    (q-1)\cdot\sum_{h\in\mathbb{Z},h\le m} (u+h\Delta)a_{u+h\Delta} = (u+m\Delta)(q^{v-1}-1)$.
  $u$ times the first equation minus the second equation gives $\Delta$ times the stated equation. 
\qed\end{proof}
 
\begin{corollary}
  \label{cor_nonexistence_arithmetic_progression_2}
  %%For integers $u,m\ge 0$ and $\Delta\ge 1$ 
  Let $\mathcal{C}$ in $\aspace$ satisfy 
  %%$n=|\mathcal{C}|=u+m\Delta$
  $n=\card{\mathcal{C}}=u+m\Delta$ 
  and $\mathcal{T}(\mathcal{C})\subseteq\{u,u+\Delta,\dots,u+m\Delta\}$. 
  %%and $\mathcal{T}(\mathcal{C})\subseteq\{u,u+1,\dots,n\}$.
  Then $u<\frac{n}{q}$ or $u=n=0$.
\end{corollary}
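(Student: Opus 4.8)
The plan is to apply Lemma~\ref{lemma_hyperplane_types_arithmetic_progression_2} directly. The hypothesis $\mathcal{T}(\mathcal{C})\subseteq\{u,u+\Delta,\dots,u+m\Delta\}$ says precisely that $a_i=0$ unless $i=u+h\Delta$ for some integer $h$ with $0\le h\le m$; in particular every hyperplane meets $\mathcal{C}$ in a number of points congruent to $u$ modulo $\Delta$, so $\mathcal{C}$ is $\Delta$-divisible of cardinality $n=u+m\Delta$, and the lemma gives
\[
  (q-1)\sum_{h\in\mathbb{Z},\,h\le m}h\,a_{u+h\Delta}
  =(n-uq)\cdot\frac{q^{v-1}}{\Delta}-m .
\]
First I would observe that on the left-hand side every term with $h<0$ vanishes (such an $h$ gives $u+h\Delta<u$, hence $u+h\Delta\notin\mathcal{T}(\mathcal{C})$), every term with $h>m$ vanishes, and the $h=0$ term is zero; thus the left-hand side equals $(q-1)\sum_{h=1}^{m}h\,a_{u+h\Delta}\ge0$.

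Since $q^{v-1}/\Delta>0$ and $m\ge0$, this forces $n-uq\ge0$, i.e.\ $u\le n/q$. To obtain the sharper dichotomy of the corollary I would then inspect the equality case $uq=n$: here the identity above collapses to $(q-1)\sum_{h=1}^{m}h\,a_{u+h\Delta}=-m$, whose left side is $\ge0$, so $m\le0$, hence $m=0$; then $n=u$, and $uq=n=u$ together with $q>1$ gives $u(q-1)=0$, i.e.\ $u=n=0$. Combining the two cases yields $u<n/q$ or $u=n=0$, as claimed.

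I do not expect any real obstacle; the only delicate point is the sign bookkeeping on the left-hand side, where it is essential that the hyperplane multiplicities lie in the \emph{one-sided} progression $u,u+\Delta,u+2\Delta,\dots$ (ruling out negative contributions from $h<0$) rather than merely that $\mathcal{C}$ is $\Delta$-divisible. It may be worth remarking, as the discussion around Lemma~\ref{lemma_hyperplane_types_arithmetic_progression_2} already indicates, that the whole argument is the averaging bound in disguise: the displayed identity is just a recombination of the standard equations \eqref{eq_ste1} and \eqref{eq_ste2}, and since $u$ is a lower bound for the number of points of $\mathcal{C}$ on each hyperplane it is at most their average $n\cdot\gauss{v-1}{1}{q}/\gauss{v}{1}{q}$, which is $<n/q$ whenever $n>0$.
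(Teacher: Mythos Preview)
Your argument is correct and is precisely the intended one: the paper states the corollary immediately after Lemma~\ref{lemma_hyperplane_types_arithmetic_progression_2} without proof, remarking only that it ``corresponds to the average argument that we have used in the proof of Lemma~\ref{lemma_average}''. Your derivation---reading off non-negativity of the left-hand side from the one-sided hypothesis $\mathcal{T}(\mathcal{C})\subseteq\{u,\dots,u+m\Delta\}$ and then disposing of the boundary case $uq=n$---fills in exactly this gap, and your closing remark identifying the identity with the averaging bound matches the paper's own commentary.
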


While the quadratic inequality of
Lemma~\ref{lemma_hyperplane_types_arithmetic_progression_c} is based
on the first three MacWilliams identities, the linear inequality of
Lemma~\ref{lemma_hyperplane_types_arithmetic_progression_2} is based
on the first two MacWilliams identities. Corollary~\ref{cor_nonexistence_arithmetic_progression_2}
corresponds to the average argument that we have used in the proof of
Lemma~\ref{lemma_average}. Lemma~\ref{lemma_hyperplane_types_arithmetic_progression_c} can of course be applied in the general case of $q^r$-divisible sets. 
First we characterize the part of the parameter space where $\tau_q(c,\Delta,m)\le 0$ and then we analyze the right side of
the corresponding interval, 

\begin{lemma}
  \label{lemma_negative_tau}    
  For %%$m\in\mathbb{N}_{>0}$, 
  $m\in\mathbb{Z}$, we have $\tau_q(c,\Delta,m)\le 0$ if and only if $(q-1)c-(m-1/2)\Delta q+\frac{1}{2}\in$ 
  \begin{equation}
    \label{ie_forbidden_interval}
    \left[-\frac{1}{2}\cdot\sqrt{q^2\Delta^2-4qm\Delta+2q\Delta+1},
    \frac{1}{2}\cdot\sqrt{q^2\Delta^2-4qm\Delta+2q\Delta+1}\right].
  \end{equation}
  The last interval is non-empty, i.e., the radicand is non-negative
  if and only if %%$1\le 
  $m\le \left\lfloor(q\Delta+2)/4\right\rfloor$.  
  We have $\tau_q(u,\Delta,1)=0$ if and only if $u=(\Delta q-1)/(q-1)$ or $u=0$.
\end{lemma}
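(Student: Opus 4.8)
The plan is to read $\tau_q(c,\Delta,m)$ as a univariate quadratic polynomial in its first argument $c$. Expanding the definition and collecting powers of $c$ gives
\[
  \tau_q(c,\Delta,m)=(q-1)^2c^2+(q-1)\bigl(1-(2m-1)\Delta q\bigr)c+m(m-1)\Delta^2q^2 ,
\]
with positive leading coefficient $(q-1)^2>0$ since $q>1$. Hence $\tau_q(c,\Delta,m)\le 0$ holds exactly when $c$ lies between the two roots, equivalently when $\bigl|\,2(q-1)^2c+(q-1)(1-(2m-1)\Delta q)\,\bigr|\le\sqrt{D}$, where $D$ is the discriminant. The only real computation is the simplification of $D$: factoring out $(q-1)^2$ and using the elementary identity $(2m-1)^2-4m(m-1)=1$ yields
\[
  D=(q-1)^2\bigl(q^2\Delta^2-4qm\Delta+2q\Delta+1\bigr).
\]
Dividing the inequality by $2(q-1)>0$ and rewriting $\tfrac12\bigl((2m-1)\Delta q-1\bigr)=(m-\tfrac12)\Delta q-\tfrac12$ turns the condition $\tau_q(c,\Delta,m)\le 0$ into membership of $(q-1)c-(m-\tfrac12)\Delta q+\tfrac12$ in the interval \eqref{ie_forbidden_interval}. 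This proves the first assertion and simultaneously shows that the interval is non-empty precisely when the radicand $q^2\Delta^2-4qm\Delta+2q\Delta+1$ is non-negative.

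For the second assertion I would solve $q^2\Delta^2-4qm\Delta+2q\Delta+1\ge 0$ for $m$, obtaining $m\le\frac{(q\Delta+1)^2}{4q\Delta}$, and then exploit the identity
\[
  \frac{(q\Delta+1)^2}{4q\Delta}=\frac{q\Delta+2}{4}+\frac{1}{4q\Delta}.
\]
Since $q\ge 2$ and $\Delta\ge 1$ we have $q\Delta\ge 2$, hence $0<\frac1{4q\Delta}\le\frac18$. Writing $q\Delta+2=4a+b$ with $a=\lfloor(q\Delta+2)/4\rfloor$ and $b\in\{0,1,2,3\}$, one checks in each of the four cases that $a+\tfrac b4+\tfrac1{4q\Delta}<a+1$, so that for an integer $m$ the bound $m\le\frac{q\Delta+2}{4}+\frac1{4q\Delta}$ is equivalent to $m\le a$, giving the claimed characterization.

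For the last assertion, substituting $m=1$ makes the $m(m-1)$-term vanish and leaves $\tau_q(u,\Delta,1)=u(q-1)\bigl(u(q-1)+1-\Delta q\bigr)$, whose zeros are $u=0$ and $u=\frac{\Delta q-1}{q-1}$; alternatively one specializes the root formula from the first part, where the radicand becomes $q^2\Delta^2-2q\Delta+1=(q\Delta-1)^2$ and the two roots collapse to $0$ and $\frac{q\Delta-1}{q-1}$, which serves as a consistency check.

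There is no genuine obstacle here: the proof is essentially a discriminant computation together with one floor-function bookkeeping step. The only point demanding a little care is the equivalence of the two formulations of the non-emptiness condition in the second part, where one must verify that the fractional correction $\frac1{4q\Delta}$ never pushes $\frac{q\Delta+2}{4}$ up to the next integer; this is exactly where the hypothesis $q\ge 2$ (so that $q\Delta\ge 2$) is used.
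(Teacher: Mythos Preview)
Your proof is correct and follows essentially the same approach as the paper's own proof: treat $\tau_q(c,\Delta,m)$ as a quadratic in $c$, compute the discriminant, and handle the floor by observing that $\frac{1}{4q\Delta}<\frac{1}{4}$. Your write-up is in fact more detailed than the paper's (which omits the explicit discriminant computation and the case analysis for the floor), but the underlying argument is identical.
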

\begin{proof}
  Solving $\tau_q(c,\Delta,m)=0$ for $c$ yields the boundaries for $c$
  stated in \eqref{ie_forbidden_interval}).  Inside this interval we
  have $\tau_q(c,\Delta,m)\le 0$. Now,
  $q^2\Delta^2-4qm\Delta+2q\Delta+1\ge 0$ is equivalent to
  $m\le \frac{q\Delta}{4}+\frac{1}{2}+\frac{1}{4q\Delta}$. Rounding
  downward the right-hand side, while observing
  $\frac{1}{4q\Delta}< \frac{1}{4}$, yields
  $\left\lfloor(q\Delta+2)/4\right\rfloor$.  \qed\end{proof}

\begin{lemma}
  \label{lemma_m_bound_rhs}
  For $1\le m\le \left\lfloor\sqrt{(q-1)q\Delta}-q+\frac{3}{2}\right\rfloor$, we have 
  $
    (q-1)n-(m-1/2)\Delta q+\frac{1}{2}\le\frac{1}{2}\cdot\sqrt{q^2\Delta^2-4qm\Delta+2q\Delta+1}
  $,
  where $n=m\cdot \gauss{r+1}{1}{q}-1$ and $\Delta=q^r$.
\end{lemma}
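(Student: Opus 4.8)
The plan is to rewrite the claimed inequality as a purely algebraic estimate in $m$, $q$ and $\Delta$, and then verify it by squaring. First I would substitute $n=m\gauss{r+1}{1}{q}-1$ into the left-hand side. Since $(q-1)\gauss{r+1}{1}{q}=q^{r+1}-1=q\Delta-1$, we get $(q-1)n=mq\Delta-m-q+1$, and after the $\pm mq\Delta$ terms cancel the left-hand side collapses to
\[
  (q-1)n-(m-\tfrac12)\Delta q+\tfrac12=\tfrac12\bigl(q\Delta-2m-2q+3\bigr),
\]
so, writing $A:=q\Delta-2m-2q+3$ for the bracketed expression and $R:=q^2\Delta^2-4qm\Delta+2q\Delta+1$ for the radicand on the right-hand side, the assertion is equivalent to $A\le\sqrt{R}$.

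Next I would compute $R-A^2$ directly. Expanding $A=q\Delta-2m-(2q-3)$, the two terms $\pm 4qm\Delta$ cancel, one has $1-(2q-3)^2=-4(q-1)(q-2)$, and collecting the remaining terms gives
\[
  R-A^2=4\bigl[(q-1)(q\Delta-q+2)-m^2-(2q-3)m\bigr].
\]
So it suffices to prove $m^2+(2q-3)m\le(q-1)(q\Delta-q+2)$. This is exactly where the hypothesis enters: $m$ is an integer with $m\le\lfloor\sqrt{(q-1)q\Delta}-q+\tfrac32\rfloor$, hence $m+q-\tfrac32\le\sqrt{(q-1)q\Delta}$, and both sides are positive since $m\ge1$ and $q\ge2$. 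Squaring yields $m^2+(2q-3)m\le(q-1)q\Delta-(q-\tfrac32)^2$, and since $(q-\tfrac32)^2=q^2-3q+\tfrac94>q^2-3q+2=(q-1)(q-2)$ we obtain
\[
  m^2+(2q-3)m\le(q-1)q\Delta-(q-\tfrac32)^2<(q-1)q\Delta-(q-1)(q-2)=(q-1)(q\Delta-q+2),
\]
even with a strict inequality. Hence $R-A^2>0$; in particular $R\ge A^2\ge0$, so $\sqrt{R}$ is real and $\sqrt{R}>|A|\ge A$, which gives $\tfrac12\sqrt{R}>\tfrac12 A$, i.e.\ the claim (and in fact a little more than the non-strict inequality stated).

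The only genuine work is the bookkeeping in the expansion of $R-A^2$; there is no real obstacle, since once the left-hand side is rewritten as $\tfrac12 A$ the whole statement reduces to an elementary quadratic estimate, and the bound on $m$ is precisely tailored (up to the harmless slack $\tfrac94>2$ coming from $(q-\tfrac32)^2$ versus $(q-1)(q-2)$) to make the square come out right. Two small points worth recording are that $m+q-\tfrac32>0$ before squaring is legitimate, and that the non-negativity of the radicand $R$ falls out for free here rather than requiring the separate analysis of Lemma~\ref{lemma_negative_tau}.
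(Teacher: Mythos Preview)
Your proof is correct and follows essentially the same approach as the paper: substitute $n$ to reduce the left-hand side to $\tfrac12(q\Delta+3-2m-2q)$, then square and simplify. The paper compresses the squaring step into the single equivalent condition $m\le\sqrt{(q-1)q\Delta+1/4}-q+\tfrac32$, whereas you expand $R-A^2$ explicitly; your version has the minor advantage of handling the sign of $A$ cleanly via $\sqrt{R}>|A|\ge A$, which the paper leaves implicit.
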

\begin{proof}
  Plugging in %%the expression for $n$ %%and simplifying
  yields $\frac{1}{2}\cdot\left(q\Delta+3-2m-2q\right)\le\frac{1}{2}\sqrt{q^2\Delta^2-(4m-2)q\Delta+1}$, so
  that squaring and simplifying gives
  $m\le \sqrt{(q-1)q\Delta+1/4}-q+\frac{3}{2}$.  \qed\end{proof}

\begin{theorem}
  \label{thm_exclusion_r_1_to_ovoid}
  Let $\mathcal{C}$ in $\aspace$ be $q^1$-divisible with
  $2\le n=\card{\mathcal{C}}\le q^2$, then either $n=q^2$ or $q+1$
  divides $n$.
\end{theorem}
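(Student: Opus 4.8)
The plan is to pin down the possible hyperplane spectra of $\mathcal{C}$ using the standard equations of Lemma~\ref{lemma_standard_equations_q} together with the quadratic inequality of Lemma~\ref{lemma_hyperplane_types_arithmetic_progression_c}. First I would normalize: by the remark preceding Lemma~\ref{lemma_subspace} we may replace the ambient space by $\langle\mathcal{C}\rangle$ without affecting $q^1$-divisibility, so we assume $\mathcal{C}$ spans $\F_q^v$; since $n\ge 2$ this gives $v\ge 2$, hence no hyperplane contains all of $\mathcal{C}$, i.e.\ $a_n=0$. Write $n=tq+c_0$ with $0\le c_0\le q-1$ and $t=\lfloor n/q\rfloor$; note $t\le q$ because $n\le q^2$. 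By Theorem~\ref{thm:divisible}(i) we have $\card{(\mathcal{C}\cap H)}\equiv c_0\pmod q$ for every hyperplane $H$, and together with $0\le\card{(\mathcal{C}\cap H)}<n$ (using $a_n=0$) this confines all hyperplane multiplicities to the $t$-element set $M=\{c_0,c_0+q,\dots,c_0+(t-1)q\}$, whose largest element is $n-q$. In particular $M\ne\emptyset$, so $t\ge 1$ and hence $n\ge q$.

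Next I would dispose of the case $t=1$, i.e.\ $q\le n\le 2q-1$: here $\card{(\mathcal{C}\cap H)}=c_0$ for \emph{every} $H$, so eliminating $\sum_i a_i$ between \eqref{eq_ste1} and \eqref{eq_ste2} yields $q^{v-2}\bigl(n(q-1)-q^2\bigr)=-1$, which forces $v=2$ and $n(q-1)=q^2-1$, i.e.\ $n=q+1$; the subcase $c_0=0$, $n=q$, is immediately absurd since a spanning set cannot be missed by every hyperplane. So for $t=1$ only the admissible value $n=q+1$ survives.

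For $t\ge 2$ the workhorse is Lemma~\ref{lemma_hyperplane_types_arithmetic_progression_c} with $s=2$, $\Delta=q$, applied to $\mathcal{C}$: it gives $\tau_q(n,q,m)\ge 0$ for every $m\in\mathbb{Z}$, and in fact $\tau_q(n,q,m)>0$ because the equality clause requires $n=q+1$, which is now excluded. A short computation shows that the two roots of $m\mapsto\tau_q(n,q,m)$ are $\tfrac12+\tfrac{n(q-1)}{q^{2}}\pm\tfrac12\sqrt{1-\tfrac{4n(q-1)}{q^{4}}}$, and since $2\le n\le q^2$ the radicand lies in $[0,1)$, so this is a real interval of length $<1$; the inequality is therefore equivalent to the congruence-type condition $\operatorname{dist}\!\bigl(\tfrac12+\tfrac{n(q-1)}{q^{2}},\mathbb{Z}\bigr)\ge\tfrac12\sqrt{1-\tfrac{4n(q-1)}{q^{4}}}$. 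Substituting $n=tq+c_0$, one checks that for $c_0=0$ this condition is violated whenever $1\le t\le q-1$, so $t=q$ and $n=q^2$. For $c_0\ge 1$ I would combine the same bound — sharpened at its right endpoint via Lemmas~\ref{lemma_negative_tau} and~\ref{lemma_m_bound_rhs}, which in particular kill the family $n=m(q+1)-1$ — with the non-negativity of the frequencies $a_i$, $i\in M$; for $t\in\{2,3\}$ the latter is completely explicit, since \eqref{eq_ste1}--\eqref{eq_ste3} then determine all $a_i$ and one additionally substitutes the solution back into \eqref{eq_ste3}. This leaves, in these ranges, only those $n$ that are multiples of $q+1$.

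The step I expect to be the main obstacle is the remaining family of sizes $n\equiv 1\pmod{q+1}$ lying strictly below $q^2$ (the largest being $n=q^2-q-1$, for which $t=q-2$): for these the plain inequality $\tau_q(n,q,m)\ge 0$ turns out to hold for all $m$, and $t$ is too large for \eqref{eq_ste1}--\eqref{eq_ste3} to determine the spectrum. Here I would exploit $a_n=0$ to delete the $h=0$ term from the identity underlying Lemma~\ref{lemma_hyperplane_types_arithmetic_progression_c} (gaining the extra slack $m(m-1)$), and, where that is still not enough, invoke the cubic refinement of that lemma — the analogue coming from the first four standard equations, cf.\ Lemma~\ref{lemma_implication_fourth_mac_williams} — to show that no non-negative spectrum supported on $M$ exists unless $q+1$ divides $n$. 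A cleaner-looking alternative would be to prove directly that every $q^1$-divisible set of size $\le q^2-1$ contains a full line and then induct by deleting it (Lemma~\ref{characteristic sum} makes the remainder again $q^1$-divisible), but establishing the ``contains a line'' claim seems to reduce to essentially the same spectral estimates.
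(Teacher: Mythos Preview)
Your setup is sound, but you have the difficulty exactly backwards. The case you flag as the main obstacle---$n\equiv 1\pmod{q+1}$ with $n<q^2$---is in fact the easiest, and falls immediately to the averaging argument you have already used (equivalently, Corollary~\ref{cor_nonexistence_arithmetic_progression_2}). Writing $n=j(q+1)+1$ with $1\le j\le q-2$, you get $c_0=j+1$ and $t=j$; so every hyperplane multiplicity is at least $c_0=j+1$. But the average multiplicity is strictly less than $n/q=j+(j+1)/q<j+1$, a contradiction. No cubic identity, no $a_n=0$ slack, no line-deletion induction is needed. This is precisely how the paper disposes of these values.

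Conversely, your treatment of the remaining forbidden sizes---those with $c_0\ge 1$ and $n\not\equiv 1\pmod{q+1}$---is incomplete. You invoke Lemma~\ref{lemma_m_bound_rhs} only for the right boundary $n=m(q+1)-1$ and fall back on ``explicit spectra for $t\in\{2,3\}$'' otherwise; for large $q$ this leaves infinitely many values (e.g.\ $q=7$, $n=30$, where $t=4$) unaddressed. The paper instead handles \emph{all} $n\in[(m-1)(q+1)+2,\,m(q+1)-1]$ uniformly by checking, via elementary inequalities on both sides of the interval in Lemma~\ref{lemma_negative_tau}, that $\tau_q(n,q,m)\le 0$ for this specific $m$; then Lemma~\ref{lemma_hyperplane_types_arithmetic_progression_c} gives the contradiction directly. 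Your distance-to-integer reformulation is equivalent to the existence of such an $m$, so you are one clean estimate away from the same conclusion---but that estimate is the actual work, and you have not supplied it.
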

\begin{proof}
  First we show $n\notin \left[(m-1)(q+1)+2,m(q+1)-1\right]$ for $1\le m\le q-1$. For $m=1$ this 
  statement follows from Lemma~\ref{lemma_negative_tau} and Lemma~\ref{lemma_hyperplane_types_arithmetic_progression_c}. 
  For $m\ge 2$ let $(m-1)(q+1)+2\le n\le m(q+1)-1$. Due to Lemma~\ref{lemma_hyperplane_types_arithmetic_progression_c} 
  it suffices to verify $\tau_{q}(n,q,m)\le 0$. From $n\ge (m-1)(q+1)+2$ we conclude
  \begin{eqnarray*}
  &&(q-1)n-(m-1/2)\Delta q+\frac{1}{2} \ge -\frac{1}{2}\cdot\left(q^2-4q+1+2m\right)\ge
   -\frac{1}{2}\cdot%%\overset{\ge 0}{\overbrace{
   \left(q^2-2m-3\right)\\
   %%}}\\
 &&\ge -\frac{1}{2}\cdot\sqrt{q^4-4mq^2+2q^2+1} 
  = -\frac{1}{2}\cdot\sqrt{q^2\Delta^2-4qm\Delta+2q\Delta+1}
  \end{eqnarray*}
  and from $n\le  m(q+1)-1$ we conclude
  $$
    (q\!-\!1)n-(m\!-\!1/2)\Delta q+\frac{1}{2}\le \frac{1}{2}\cdot\left(q^2\!-\!2m\! -\!2q\!+\!3\right)
    \overset{\star}{\le} %%\frac{1}{2}\cdot\sqrt{q^4-4mq^2+2q^2+1} = 
    \frac{1}{2}\cdot\sqrt{q^2\Delta^2\!-\!4qm\Delta\!+\!2q\Delta\!+\!1}.
  $$
  With respect to the estimation $\star$, we remark that 
  $
    -4q^3+8q^2-12q+8 +4m(m+2q-3) \overset{m\le q-1}{\le} -4(q-1)(q^2-4q+6)\overset{q\ge 2}{\le} 0 
  $.
  Thus, Lemma~\ref{lemma_negative_tau} gives $\tau_{q}(n,q,m)\le 0$.
  
  Applying Corollary~\ref{cor_nonexistence_arithmetic_progression_2} with $u=m$ and $\Delta=q$ 
  yields $n\neq (m-1)(q+1)+1$ for all $1\le m\le q-1$.
\qed\end{proof}

The existence of ovoids %%, for $q$ odd, 
shows that the upper bound $n\le q^2$ is sharp in Theorem~\ref{thm_exclusion_r_1_to_ovoid}.

\begin{theorem}
  \label{thm_exclusion_q_r}
  For the cardinality $n$ of a $q^r$-divisible set $\mathcal{C}$ over
  $\F_q$ we have
  $$
    n\notin\left[(a(q-1)+b)\gauss{r+1}{1}{q}+a+1,(a(q-1)+b+1)\gauss{r+1}{1}{q}-1\right],
  $$
  where $a,b\in\mathbb{N}_0$ with $b\le q-2$, $a\le r-1$, and $r\in\mathbb{N}_{>0}$.
  
  In other words, if $n\le rq^{r+1}$, %%Theorem~\ref{thm_exclusion_q_r} says that 
  then $n$ can be written as
$a\gauss{r+1}{1}{q}+bq^{r+1}$ for some $a,b\in\mathbb{N}_0$.
\end{theorem}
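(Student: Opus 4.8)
The plan is to prove the two equivalent assertions by an induction on $r$, using the constructions of Section~\ref{sec_constructions} for the realizable cardinalities and Lemma~\ref{lemma_negative_tau} together with Lemma~\ref{lemma_hyperplane_types_arithmetic_progression_c} (the quadratic non-existence bound) to exclude the forbidden windows. First I would verify the reformulation: since $\gcd\!\left(\gauss{r+1}{1}{q},q^{r+1}\right)=1$, the set of non-negative integer combinations $a\gauss{r+1}{1}{q}+bq^{r+1}$, once we also impose $a\le r-1$ and $0\le b$, covers exactly the integers in $[0,rq^{r+1}]$ that are \emph{not} in one of the stated intervals; this is an elementary bookkeeping argument about the structure of the numerical semigroup generated by two coprime elements, restricted to small multiples of the smaller generator. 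So the ``in other words'' clause and the interval statement are the same, and it suffices to establish the forbidden intervals.

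Next I would handle the base case $r=1$: then only $a=0$ is allowed, $b\le q-2$, and the claim is that a $q^1$-divisible set of cardinality $n\le q^2$ has $n\in\{0\}\cup(q+1)\mathbb{Z}$ or $n=q^2$ — which is precisely Theorem~\ref{thm_exclusion_r_1_to_ovoid}, already proved in the excerpt. For the inductive step I would fix $r\ge 2$, assume the result for $r-1$, and let $\mathcal{C}$ be $q^r$-divisible with $\card{\mathcal{C}}=n$ landing in one of the claimed forbidden intervals, parametrized by $(a,b)$ with $b\le q-2$ and $a\le r-1$. The idea is to run the same machinery as in Theorem~\ref{thm_exclusion_r_1_to_ovoid}, but now iterated: write $n = u + m\Delta$ with $\Delta = q^r$ and $u = a(q-1)+b+1$ (so that $m$ is forced and $\mathcal{T}(\mathcal{C})$, were $\mathcal{C}$ to exist, would have to avoid the value $u$ below a certain threshold). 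Concretely, I expect the argument to split into two moves: (1) use Lemma~\ref{lemma_hyperplane_types_arithmetic_progression_c} with a well-chosen multiplier $m$ (as in Lemma~\ref{lemma_negative_tau}, the optimal $m$ is near $(q\Delta+2)/4$, and the bound $a\le r-1$, $b\le q-2$ is exactly what keeps $m$ in the admissible range $m\le\lfloor(q\Delta+2)/4\rfloor$ while forcing $\tau_q(n,q^{r-1},m)\le 0$) to show that $\mathcal{C}\cap H$ for a suitable hyperplane $H$ has a cardinality that, after stripping a factor, violates the inductive hypothesis for $r-1$; or (2) use Corollary~\ref{cor_nonexistence_arithmetic_progression_2} to exclude the single ``left endpoint minus one'' residue value $n = (a(q-1)+b)\gauss{r+1}{1}{q}+a+1$, exactly as the last line of the proof of Theorem~\ref{thm_exclusion_r_1_to_ovoid} disposes of $n=(m-1)(q+1)+1$.

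The realizability half — that every $n\le rq^{r+1}$ of the form $a\gauss{r+1}{1}{q}+bq^{r+1}$ with $a,b\ge 0$ actually occurs as the cardinality of some $q^r$-divisible set — I would dispatch by Lemma~\ref{lemma_sum} (disjoint sums preserve $q^r$-divisibility and add cardinalities) applied to $b$ copies of an affine $(r+1)$-subspace, which is $q^r$-divisible of size $q^{r+1}$ by Corollary~\ref{cor_affin}, together with $a$ copies of an $(r+1)$-subspace, which is $q^r$-divisible of size $\gauss{r+1}{1}{q}$ by Lemma~\ref{lemma_subspace}; here one checks that the bound $a\le r-1$ is never needed for constructions, only for exclusions. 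The main obstacle I anticipate is the careful estimation in move (1): one must show that the $\tau_q$-inequality, after passing to the hyperplane section and reducing the exponent from $r$ to $r-1$, lands the section's cardinality squarely inside the $(r-1)$-forbidden interval rather than on its boundary — i.e., controlling the rounding so that the induction actually bites. This is the same kind of bracketing computation as in the chain of inequalities proving Theorem~\ref{thm_exclusion_r_1_to_ovoid} (the estimates marked $\star$ there), but it now has to be done uniformly in the extra parameter $a$; keeping the case $\tau_q=0$ (equality, handled in Lemma~\ref{lemma_hyperplane_types_arithmetic_progression_c} via $c=\gauss{s}{1}{q}$, $m=1$) from leaking a spurious cardinality is the delicate point.
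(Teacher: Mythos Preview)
Your skeleton is right---induction on $r$ with base case Theorem~\ref{thm_exclusion_r_1_to_ovoid}, and the two tools Lemma~\ref{lemma_hyperplane_types_arithmetic_progression_c}/Lemma~\ref{lemma_negative_tau} (quadratic) and Corollary~\ref{cor_nonexistence_arithmetic_progression_2} (linear)---but the way you wire them together is reversed, and as written the plan would not execute.

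Lemma~\ref{lemma_hyperplane_types_arithmetic_progression_c} does not \emph{produce} a hyperplane; it is a direct non-existence constraint on $\mathcal{C}$ itself (with $\Delta=q^r$, not $q^{r-1}$). In the paper's proof one writes $n=(m-1)\gauss{r+1}{1}{q}+x$ with $m-1=a(q-1)+b$, so that $m\le r(q-1)$---a small integer, nowhere near $(q\Delta+2)/4$, which is merely the threshold past which Lemma~\ref{lemma_negative_tau} becomes vacuous. For the \emph{upper} part of each forbidden interval, namely $\gauss{r}{1}{q}\le x\le\gauss{r+1}{1}{q}-1$, one checks via Lemma~\ref{lemma_m_bound_rhs} and a companion estimate for the left endpoint that $\tau_q(n,q^r,m)\le 0$, which together with Lemma~\ref{lemma_hyperplane_types_arithmetic_progression_c} kills $\mathcal{C}$ outright. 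No induction is used in this half.

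The induction hypothesis enters only for the \emph{lower} part $a+1\le x\le\gauss{r}{1}{q}-1$, and there Corollary~\ref{cor_nonexistence_arithmetic_progression_2} handles the whole range, not a single endpoint. Every hyperplane section is $q^{r-1}$-divisible (Lemma~\ref{lemma_heritable}) with $\card{(\mathcal{C}\cap H)}\equiv n\pmod{q^r}$; the induction hypothesis for $r-1$ (applied with $a'=a$ when $a\le r-2$, and with $a'=a-1$ when $a=r-1$) rules out the small residues in that arithmetic progression, so $\mathcal{T}(\mathcal{C})\subseteq\{u,u+q^r,\dots\}$ with a value of $u$ large enough that $u>n/q$, and Corollary~\ref{cor_nonexistence_arithmetic_progression_2} finishes. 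So the flow is ``induction restricts $\mathcal{T}(\mathcal{C})$, then the averaging bound contradicts''---the opposite of your ``quadratic bound finds a hyperplane, then induction contradicts''.

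Finally, the theorem is purely a non-existence statement; the ``in other words'' clause is just a reformulation of the forbidden intervals, not an existence claim, so your realizability paragraph (while correct and easy) is not part of what needs to be proved.
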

\begin{proof}
  We prove by induction on $r$, set $\Delta=q^r$, and write $n=(m-1)\gauss{r+1}{1}{q}+x$, where $a+1\le x\le \gauss{r+1}{1}{q}-1$ and 
  $m-1=a(q-1)+b$ for integers $0\le b\le q-2$, $0\le a\le r-1$. 
  The induction start $r=1$ is given by Theorem~\ref{thm_exclusion_r_1_to_ovoid}. 
  
  Now, assume $r\ge 2$ and conclude that for $0\le b'\le q-2$, $0\le a'\le r-2$ we have 
  $n'\notin \left[(a'(q-1)+b')\gauss{r}{1}{q}+a'+1,(a'(q-1)+b'+1)\gauss{r}{1}{q}-1\right]$ for the cardinality 
  $n'$ of a $q^{r-1}$-divisible set. If $a\le r-2$ and $x\le \gauss{r}{1}{q}-1$, then $b'=b$, $a'=a$ yields 
  $\mathcal{T}(\mathcal{C})\subseteq \{u,u+\Delta,\dots,u+(m-2)\Delta\}$ for $u=\Delta+(m-1)\gauss{r}{1}{q}+x$. We compute
  $
    (q-1)u=q^{r+1}-q^r+(m-1)q^r-(m-1)+(q-1)x\overset{x\ge a+1}{\ge} (m-2)q^r+q^{r+1}>(m-2)\Delta
  $,
  so that we can apply Corollary~\ref{cor_nonexistence_arithmetic_progression_2}. If $a=r-1$ and $a+1\le x\le \gauss{r}{1}{q}-1$, 
  then $b'=b$, $a'=a-1$ yields $\mathcal{T}(\mathcal{C})\subseteq \{u,u+\Delta,\dots,u+(m-1)\Delta\}$ for 
  $u=(m-1)\gauss{r}{1}{q}+x$. We compute $(q-1)u=(m-1)q^r-(m-1)+x(q-1)> (m-1)\Delta$ using $x\ge a+1$, so that we can 
  apply Corollary~\ref{cor_nonexistence_arithmetic_progression_2}. Thus, we can assume $\gauss{r}{1}{q}\le x\le \gauss{r+1}{1}{q}-1$ 
  in the remaining part. Additionally we have $m\le r(q-1)$.
  
  We aim to apply Lemma~\ref{lemma_negative_tau}. Due to Lemma~\ref{lemma_m_bound_rhs} for the upper bound of the interval it suffices 
  to show  
  $
    r(q-1)\le  \left\lfloor\sqrt{(q-1)q\Delta}-q+\frac{3}{2}\right\rfloor
  $.
  For $q=2$ the inequality is equivalent to $r\le \left\lfloor\sqrt{2^{r+1}}-\frac{1}{2}\right\rfloor$, which is valid for $r\ge 2$. 
  Since the right hand side is larger then $(q-1)(\sqrt{\Delta}-1)$, it suffices to show $q^{r/2}-1\ge r$, which is valid for $q\ge 3$ 
  and $r\ge 2$. For the left hand side of the interval if suffices to show
  $$
    (q-1)n-(m-1/2)\Delta q+\frac{1}{2}\ge -\frac{1}{2}\cdot\sqrt{(\Delta q)^2-(4m-2)\Delta q+1},
  $$
  which can be simplified to 
  $
    \Delta q+2m-3-2(q-1)x\le \sqrt{(\Delta q)^2-(4m-2)\Delta q+1}
  $
  using $n=(m-1)\gauss{r+1}{1}{q}+x$. Since $(q-1)x\ge q^r-1$ and $m\le r(q-1)$ it suffices to show
  \begin{equation}
    \label{ie_numeric_1}
    -\Delta^2+2rq\Delta-2r\Delta-\Delta-r+r^2q-r^2\le 0.
  \end{equation} 
  For $q=2$ this inequality is equivalent to $-2^{2r}+r2^{r+1}+r^2-2-2^r\le 0$, which is valid for $r\ge 2$. For $r=2$ 
  Inequality~(\ref{ie_numeric_1}) is equivalent to $-q^4+4q^3-4q^2-q^2+4q-6$, which is valid for $q\in\{2,3\}$ and $q\ge 4$. 
  For $q\ge 3$ and $r\ge 3$ we have $\Delta\ge 3rq$, so that Inequality~(\ref{ie_numeric_1}) is satisfied. 
\qed\end{proof}

This classification result enables us to decide the existence
problem for $q^r$-divisible sets over $\F_q$ of cardinality $n$ in
many further cases. We restrict ourselves to the cases $q=2$,
$r\in\{1,2,3\}$, and refer to \cite{dsmta:q-r-divisble} for further
results. 

% For small parameters it is possible to give exact
% answers, which $q^r$-divisible sets exist \cite{dsmta:q-r-divisble}.

\begin{theorem}
  \label{thm:picture}
  \begin{enumerate}[(i)]
  \item\label{picture_q_2_r_1} $2^1$-divisible sets over $\F_2$ of
    cardinality $n$ exist for all $n\geq 3$ and do not exist for
    $n\in\{1,2\}$; in particular, $\frobenius(2,1)=2$.
  \item\label{picture_q_2_r_2} $2^2$-divisible sets over $\F_2$ of
    cardinality $n$ exist for $n\in\{7,8\}$ and all $n\geq 14$, and do
    not exist in all other cases;
    % the remaining cases
    % $1\leq n\leq 6$, $9\leq n\leq 13$;
    in particular,
    $\frobenius(2,2)=13$.
  \item\label{picture_q_2_r_3} $2^3$-divisible sets over $\F_2$ of
    cardinality $n$ exist for
    $$n\in\{15,16,30,31,32,45,46,47,48,49,50,51\},$$ for all $n\geq 60$,
    and possibly for $n=59$; in all other cases they do not exist;
    thus $\frobenius(2,3)\in\{58,59\}$.
  \end{enumerate}
\end{theorem}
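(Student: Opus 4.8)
The plan is to prove the three statements by pairing the construction catalogue of this section with the non-existence results just obtained, handling the ``eventually all $n$'' part through the Frobenius-type estimate $\frobenius(q,r)\le\gauss{r+1}{1}{q}q^{r+1}-\gauss{r+1}{1}{q}-q^{r+1}$, which follows from Lemma~\ref{lemma_subspace}, Corollary~\ref{cor_affin} and Lemma~\ref{lemma_sum} exactly as in the classical coin problem (since $\gcd(\gauss{r+1}{1}{q},q^{r+1})=1$). Concretely, for $q=2$ every $n$ beyond $5$ (resp.\ $41$, resp.\ $209$) is a non-negative combination $a\gauss{r+1}{1}{q}+bq^{r+1}$ of an $(r{+}1)$-subspace and an affine $(r{+}1)$-subspace, hence realizable; it then remains to realize the finitely many smaller admissible cardinalities and to exclude the inadmissible ones.

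For the non-existence half I would simply read off Theorem~\ref{thm_exclusion_q_r} (with its base case Theorem~\ref{thm_exclusion_r_1_to_ovoid}) specialized to $q=2$: it forbids the cardinalities in $[1,2]$ when $r=1$; in $[1,6]\cup[9,13]$ when $r=2$; and in $[1,14]\cup[17,29]\cup[33,44]$ when $r=3$. Equivalently, any $q^r$-divisible set with $n\le rq^{r+1}$ has $n=a\gauss{r+1}{1}{q}+bq^{r+1}$. This already closes the non-existence side of (i) and (ii) completely, and in (iii) it leaves only the window $49\le n\le 59$ to analyse. There the forbidden cardinalities $52,\dots,58$ (and $n=58$ in particular) have to be removed by the sharper cubic modular inequality indicated after Lemma~\ref{lemma_hyperplane_types_arithmetic_progression_c} (cf.\ Lemma~\ref{lemma_implication_fourth_mac_williams}), supplemented by the finer structural analysis of \cite{dsmta:q-r-divisble}.

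For the existence half I would assemble the small cardinalities from the building blocks at hand. In (i) a line gives $n=3$, an affine plane gives $n=4$, and the projective even-weight $[5,4]$ code (equivalently an ovoid of $\PG(3,\F_2)$) gives $n=5$; with the Frobenius bound this yields all $n\ge 3$. In (ii), $3$-subspaces and affine $3$-subspaces give $n=7,8$, and $\mathcal{C}_1,\dots,\mathcal{C}_5$ from Lemma~\ref{lma:construction1} give $n=16,\dots,20$; since $14=2\cdot 7$, $15=7+8$, $21=3\cdot 7$, this produces the seven consecutive values $14,\dots,21$, so adjoining further $3$-subspaces via Lemma~\ref{lemma_sum} yields every $n\ge 14$. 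In (iii), $4$-subspaces and affine $4$-subspaces give $n=15,16$; cones over the $q^1$- and $q^2$-divisible sets already built (Lemma~\ref{lma:cone}) and sunflowers (Lemma~\ref{lma:sunflower}) supply further values such as $31,32,47,48,63,64$; and Lemma~\ref{lma:construction1} gives the nine consecutive values $64,\dots,72$. The still-missing cardinalities $49,50,51$, and whatever else is needed to reach a block of $\gauss{r+1}{1}{q}$ consecutive realizable values below $60$, I would take from the combined constructions developed in Section~\ref{ssec:better}; disjoint unions then cover all $n\ge 60$.

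Reading off the largest non-occurring cardinality finally gives $\frobenius(2,1)=2$ and $\frobenius(2,2)=13$; for $r=3$ the same bookkeeping shows $58$ does not occur while every $n\ge 60$ does, so $\frobenius(2,3)\in\{58,59\}$. I expect the genuine obstacle to be the single value $n=59$: neither one of the constructions above nor the exclusion arguments of Theorem~\ref{thm_exclusion_q_r} and its refinements settles it, which is exactly why the Frobenius number for $(q,r)=(2,3)$ cannot be pinned down here. A secondary difficulty is the bookkeeping needed in case~(iii) to be sure that the combined constructions really do produce a run of enough consecutive cardinalities to bridge the interval $[60,209]$ before the Frobenius bound takes over.
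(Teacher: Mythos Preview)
Your overall architecture---pair Theorem~\ref{thm_exclusion_q_r} with the construction catalogue and the coin-problem bound---is exactly the paper's, and parts~(i) and~(ii) go through as you describe. The difficulties in~(iii) are more specific than you indicate, however.

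On the non-existence side: the exclusion of $53\le n\le 58$ is done with the \emph{quadratic} Lemma~\ref{lemma_hyperplane_types_arithmetic_progression_c} at $m=4$ (one checks $\tau_2(n,8,4)=n^2-111n+3072<0$ precisely on this range), not the cubic. The value $n=52$ falls just outside this window and needs a separate argument: the paper combines Lemma~\ref{lemma_heritable} with part~(ii) to see that no hyperplane meets $\mathcal{C}$ in $4$ or $12$ points (so $A_{40}=A_{48}=0$), and then the first four MacWilliams identities, with $y=2^{v-3}$ as a free parameter, force the contradiction $y\le 22$ and $y\ge 96$ simultaneously. Your cubic suggestion (Corollary~\ref{cor_implication_fourth_mac_williams} with $t=3$) does also dispose of $n=52$, as the paper remarks afterwards, but you should not expect it to cover the whole range $52$--$58$ uniformly.

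On the existence side there is a genuine gap. Your plan to find ``a block of $15$ consecutive realizable values below $60$'' cannot succeed, since $52,\dots,58$ are excluded; the longest run below $60$ is $45,\dots,51$, of length~$7$. What the paper actually does is cover every residue class modulo~$15$ by an explicit example of size at most~$74$: the values $45$--$51$ and $63$--$72$ (from Lemma~\ref{lma:construction1}) handle residues $0,\dots,12$, but residues $13$ and $14$ require $n=73$ and $n=74$. These, together with $n=49,50,51$, are \emph{not} obtainable from Section~\ref{ssec:better}; the paper cites a known projective two-weight $[73,9]$ code, the irreducible cyclic $[51,8]$ code, and resorts to computer search for $49$, $50$, and $74$. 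Without $73$ and $74$ you cannot bridge to the Frobenius bound, so this is the step where your outline actually breaks down.
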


% \begin{lemma}
%   \label{lemma_picture_q_2_r_1}
%   Let $\mathcal{C}\subseteq\aspace$ be non-trivial and $2^1$-divisible of cardinality $n$, then $n\ge 3$ and all cases can be realized, 
%   i.e., $\frobenius(2,1)=2$. 
% \end{lemma}

% \begin{lemma}
%   \label{lemma_picture_q_2_r_2}
%   Let $\mathcal{C}\subseteq\aspace$ be non-trivial and $2^2$-divisible of cardinality $n$, then $n\in\{7,8\}$ or $n\ge 14$ 
%   and all mentioned cases can be realized, i.e., $\frobenius(2,2)=13$. 
% \end{lemma}

% \begin{lemma}
%   \label{lemma_picture_q_2_r_3}
%   Let $\mathcal{C}\subseteq\aspace$ be non-trivial and $2^3$-divisible of cardinality $n$, then 
%   $n\in\{15,16,30,31,32,45,46,47,48,49,50,51\}$ 
%   or $n\ge 59$ and all cases, except possibly $n=59$, can be realized, i.e., $\frobenius(2,3)\in\{58,59\}$. 
% \end{lemma}

\begin{proof}
  \eqref{picture_q_2_r_1} The non-existence for $n\in\{1,2\}$ is
  obvious. Existence for $n\geq 3$ can be shown by taking
  $\mathcal{C}$ as a projective basis in $\PG(n-2,\F_q)$. The
  corresponding code $C$ is the binary $[n,n-1,2]$ even-weight code.

  \eqref{picture_q_2_r_2} The non-existence part follows from
  Theorem~\ref{thm_exclusion_q_r}. Existence for $n\in\{7,8\}$ is
  shown by the $[7,3,4]$ simplex code and its dual (the $[8,4,4]$
  extended Hamming code). These two examples and Lemma~\ref{lemma_sum}
  in turn yield examples of $4$-divisible point sets for
  $n\in\{14,15,16\}$.\footnote{The three examples are realized in
    dimensions $6$, $7$ and $8$, respectively. Alternative solutions
    for $n\in\{15,16\}$, having smaller ambient space dimensions, are
    the $[15,4,8]$ simplex code and the $[16,5,8]$ first-order
    Reed-Muller code.}  For $n\in\{17,18,19,20\}$
  Lemma~\ref{lma:construction1} provides examples.\footnote{These
    examples can be realized in $\F_2^6$ for $n\in\{17,18\}$ and in
    $\F_2^7$ for $n\in\{19,20\}$.} Together these represent all
  arithmetic progressions modulo $7$, showing existence for $n>20$.  

  \eqref{picture_q_2_r_3} Existence of $8$-divisible sets for the
  indicated cases with $n\leq 48$ is shown in the same way as in
  \eqref{picture_q_2_r_2}.  Examples for $n=49$, $n=50$, and $n=74$ were found
  by computer search; we refer to \cite{dsmta:q-r-divisble} for
  generator matrices of the corresponding $8$-divisible codes. %%Last,
  The binary irreducible cyclic $[51,8]$ code, which is a two-weight
  code with nonzero weights $24$ and $32$ (see, e.g.,
  \cite{macwilliams-seery}), provides an example for $n=51$.\footnote{It might look tempting to construct a projective $8$-divisible binary code of length $50$ by shortening such a code
  $C$ of length $51$. However, this does not work:
  By Lemma~\ref{lemma_classification_n_51}, $C$ is the concatenation of an ovoid in $\PG(3,\F_4)$ with the binary $[3,2]$ simplex code.
  By construction, the corresponding $8$-divisible point set $\mathcal{C}$ is the disjoint union of $17$ lines.
  In particular, each point of $\mathcal{C}$ is contained in a line in
  $\mathcal{C}$. Consequently, shortening $C$ in any coordinate never
  gives a projective code.}

  For $n\in\{63,\dots,72\}$ Lemma~\ref{lma:construction1} provides examples. 
  For $n=73$, a suitable example is given by the projective $[73,9]$ two-weight code with non-zero weights $32$ and $40$ in \cite{kohnert2007constructing_twoweight}. 
  Together with the mentioned example for $n=74$ these represent all
  arithmetic progressions modulo $15$, showing existence for $n>74$.
  
  The non-existence part follows for $n\leq 48$ from
  Theorem~\ref{thm_exclusion_q_r} and for $53\leq n\leq 58$ from
  Lemma~\ref{lemma_hyperplane_types_arithmetic_progression_c} with
  $m=4$.
  It remains to exclude an $8$-divisible point set in
  $\aspace$ with $\card{\mathcal{C}}=52$. For this we will use a
  variant of the linear programming method, which treats different
  ambient space dimensions simultaneously. Since $\mathcal{C}$ is in
  particular $4$-divisible, we conclude from
  Lemma~\ref{lemma_heritable} and Part~\eqref{picture_q_2_r_2} that
  there are no $4$- or $12$-hyperplanes, i.e., $A_{40}=A_{48}=0$.
  Using the parametrization $y=2^{v-3}$, the first four MacWilliams
  identities for the associated code $C$ are
  % see~(\ref{mac_williams_identies}) are given by
  \begin{equation*}
    \begin{array}{rrrrrrrrrrl}
    1&+&A_8&+&A_{16}&+&A_{24}&+&A_{32} &=& 8y,\\
    52&+&44A_8&+&36A_{16}&+&28A_{24}&+&20A_{32} &=& 4y\cdot 52,\\
    {52\choose 2}&+&{44\choose 2}A_8&+&{36\choose 2}A_{16}&+&{28\choose 2}A_{24}
    &+&{20 \choose 2}A_{32} &=& 2y\cdot{52 \choose 2},\\
    {52 \choose 3}&+&{44 \choose 3} A_8&+&{36 \choose 3} A_{16}
    &+&{28 \choose 3} A_{24}&+&{20 \choose 3} A_{32} &=& y\left({52\choose
                                                     3}+A_3^\perp\right).         
    \end{array}                                                 
 \end{equation*} 
  Substituting $x=yA_3^\perp$ and solving for $A_8$, $A_{16}$,
  $A_{24}$, $A_{32}$ yields  
    $A_8 = -4+\frac{1}{512} x+\frac{7}{64}y$,
    $A_{16} = 6-\frac{3}{512} x-\frac{17}{64}y$,
    $A_{24} = -4+ \frac{3}{512} x+\frac{397}{64}y$,  and 
    $A_{32} = 1- \frac{1}{512} x+\frac{125}{64}y$.
  Since $A_{16},x\ge 0$, we have $y\le \frac{384}{17}<23$.
  %so that $v\le 7$. 
  On the other hand, since $3A_8+A_{16}\ge 0$, 
  we also have $-6+\frac{y}{16}\ge 0$, i.e., $y\ge 96$---a contradiction.
  \qed
\end{proof}

The non-existence of a $2^3$-divisible set of cardinality $n=52$
implies the (compared with Theorem~\ref{thm_partial_spread_4}) tightened 
upper bound $\smax_2(11,8;4)\le 132$\footnote{Consequently, for all $t\geq 2$ the
  upper bound for $\smax_2(4t+3,8;4)$ is tightened by one; cf.\
  Lemma~\ref{lemma_extend_construction}.}
% $\smax_2(15,8;4) \le 2180$, and $\smax_2(19,8;4) \le 34948$. Here, and in general the exclusion of a specific $q^r$-divisible set 
% give rise to an infinite sequence of improved upper bounds for partial spreads if it gives an improvement for at least 
% one value of $v$. 
%Lemma~\ref{lemma_no_8_div_52} 
%%The non-existence of a $2^3$-divisible point set over
%%$\F_2$ of size $52$ 
and can also be obtained from a more general result, viz.,
Corollary~\ref{cor_implication_fourth_mac_williams} with
$t=3$. Combining the first four MacWilliams identities we obtain an
expression involving a certain cubic polynomial
\cite{dsmta:q-r-divisble}:

\begin{lemma}
  \label{lemma_implication_fourth_mac_williams}
  Let $\mathcal{C}$ be $\Delta$-divisible over $\F_q$ of cardinality
  $n>0$ and $t\in\mathbb{Z}$. Then
  $ \sum_{i\ge 1} \Delta^2(i-t)(i-t-1)\cdot (g_1\cdot i+g_0)\cdot
  A_{i\Delta}\,\,+qhx = n(q-1)(n-t\Delta)(n-(t+1)\Delta)g_2 $, where
  $g_1=\Delta qh$, $g_0=-n(q-1)g_2$,
  $g_2=h-\left(2\Delta qt+\Delta q-2nq+2n+q-2\right)$ and
  $ h= \Delta^2q^2t^2+\Delta^2q^2t-2\Delta nq^2t-\Delta nq^2+2\Delta
  nqt+n^2q^2+\Delta nq-2n^2q+n^2+nq-n $.
\end{lemma}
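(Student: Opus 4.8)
The strategy is to generalise the technique of Lemma~\ref{lemma_hyperplane_types_arithmetic_progression_c} — where a quadratic combination of the first three standard equations (equivalently, the first three MacWilliams identities) produced the polynomial $\tau_q(c,\Delta,m)$ — by using instead a cubic combination of the first \emph{four} MacWilliams identities \eqref{mac_williams_identies} for the projective $[n,v]$-code $C$ associated with $\mathcal{C}$. The first step is to write these four identities down explicitly. Since $C$ is projective we have $A_1^\perp=A_2^\perp=0$, so the only dual weight that enters is $A_3^\perp$, and I would collect the corresponding contribution into the single quantity $x$ (the fixed multiple $q^{v-3}A_3^\perp$ of it, exactly as in the proof of Theorem~\ref{thm:picture}). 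Because $\mathcal{C}$ is $\Delta$-divisible, $A_w=0$ unless $\Delta\mid w$, so the weight distribution is supported on $w=i\Delta$, $i\ge 0$, with $A_0=1$ at $i=0$.

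Next I would look for four scalar multipliers — one per identity — which, as a linear combination, achieve two things at once. First, the resulting coefficient of $A_{i\Delta}$ should carry the factor $(i-t)(i-t-1)$, so that it takes the displayed shape $\Delta^{2}(i-t)(i-t-1)(g_1 i+g_0)$; since that coefficient is automatically a polynomial of degree $\le 3$ in $i$, this amounts to requiring it to vanish at $i=t$ and $i=t+1$, i.e.\ two linear conditions on the multipliers. Second, the $v$-dependent part of the combination that is \emph{not} proportional to $A_3^\perp$ — namely the pieces coming from the powers $q^{v},q^{v-1},q^{v-2}$ and from the $q^{v-3}\binom{n}{3}$ term — should cancel, so that the only surviving $v$-dependence is the term $qhx$. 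These requirements pin the multipliers down up to one free normalisation; fixing it by demanding that the affine factor be $g_1 i+g_0$ with $g_1=\Delta qh$ is what \emph{defines} $h$, after which $g_0$ (equivalently $g_2$, via $g_0=-n(q-1)g_2$) is forced.

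What remains is purely computational: expand the resulting annihilator polynomial in the basis $\bigl\{\binom{n-w}{s}\bigr\}_{s=0}^{3}$ of degree-$\le 3$ polynomials, read off the four multipliers, substitute the four MacWilliams identities, and simplify. The left-hand side then becomes $\sum_{i\ge 1}\Delta^{2}(i-t)(i-t-1)(g_1 i+g_0)A_{i\Delta}+qhx$, while the right-hand side collapses to a polynomial in $n,\Delta,q,t$ which one checks equals $n(q-1)(n-t\Delta)(n-(t+1)\Delta)g_2$ with the stated explicit cubic $h$ and $g_2=h-(2\Delta qt+\Delta q-2nq+2n+q-2)$; the quadratic factor $(n-t\Delta)(n-(t+1)\Delta)$ appearing there is just the quadratic part of the annihilator evaluated at the top weight $w=n$, which explains its shape.

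The conceptual content ends with the choice of multipliers; the main obstacle is the bookkeeping — arranging that vanishing at $i=t,t+1$ and the cancellation of the extraneous $v$-terms hold simultaneously, and then verifying that the explicit cubic $h$ and the affine forms $g_0,g_1,g_2$ come out exactly as written and that the right-hand side factors as claimed. The clean way to do this is symbolically, treating $n,\Delta,q,t$ as indeterminates (which is presumably how it was carried out in \cite{dsmta:q-r-divisble}). One point is worth isolating: the $\Delta$-divisibility hypothesis is used \emph{only} to guarantee that the coefficient of $A_{i\Delta}$ can be written in the factored form with $i-t\in\mathbb{Z}$ — the feature exploited for the sign argument behind Corollary~\ref{cor_implication_fourth_mac_williams}; the underlying identity, in unfactored MacWilliams form, holds for every projective code over $\F_q$.
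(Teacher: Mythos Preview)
Your proposal is correct and matches the approach the paper indicates: the paper does not actually prove this lemma here but only states it, prefaced by the remark ``Combining the first four MacWilliams identities we obtain an expression involving a certain cubic polynomial \cite{dsmta:q-r-divisble}'', deferring the computation to that reference. Your outline---choosing multipliers for the first four identities so that the coefficient of $A_{i\Delta}$ vanishes at $i=t,t+1$ and the stray $q^{v}$-powers cancel, leaving only the $A_3^\perp$ contribution packaged as $x$---is exactly the intended cubic analogue of Lemma~\ref{lemma_hyperplane_types_arithmetic_progression_c}, and your identification of $x$ with (a fixed multiple of) $q^{v-3}A_3^\perp$, as in the proof of Theorem~\ref{thm:picture}, is the right reading of the otherwise undefined symbol.
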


\begin{corollary}
  \label{cor_implication_fourth_mac_williams}
  If there exists $t\in\mathbb{Z}$, 
  using the notation of
  Lemma~\ref{lemma_implication_fourth_mac_williams}, with
  $n/\Delta\notin [t,t\!+\!1]$, $h\ge 0$, and $g_2<0$, then 
  there %%exists
  is 
  no $\Delta$-divisible %%point 
  set over $\F_q$ of cardinality $n$.
\end{corollary}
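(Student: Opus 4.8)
The plan is to argue by contradiction, feeding the hypotheses directly into the identity of Lemma~\ref{lemma_implication_fourth_mac_williams}. Suppose a $\Delta$-divisible point set $\mathcal{C}$ of cardinality $n$ over $\F_q$ exists. Replacing the ambient space by $\langle\mathcal{C}\rangle$, which preserves $\Delta$-divisibility (as recalled at the beginning of Section~\ref{sec_constructions}), we may assume $\mathcal{C}$ is spanning, so that the associated linear code $C$ is a full-length projective $[n,k]$-code; then the first four MacWilliams identities are available, $A_1^\perp=A_2^\perp=0$, and the auxiliary quantity $x$ occurring in Lemma~\ref{lemma_implication_fourth_mac_williams} is non-negative, being a positive multiple of $A_3^\perp\ge 0$. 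Applying Lemma~\ref{lemma_implication_fourth_mac_williams} with the prescribed value of $t$ produces an identity whose left-hand side I will show is $\ge 0$ while its right-hand side is $<0$.

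For the left-hand side $\sum_{i\ge1}\Delta^2(i-t)(i-t-1)(g_1 i+g_0)A_{i\Delta}+qhx$, I check each factor. The quantities $\Delta^2$, $q$, $A_{i\Delta}$ and $x$ are non-negative; for integer $i,t$ the product $(i-t)(i-t-1)$ is a product of two consecutive integers, hence $\ge 0$; and $g_1=\Delta q h\ge 0$ by the hypothesis $h\ge 0$. Finally $g_0=-n(q-1)g_2>0$, since $n>0$, $q\ge 2$ and $g_2<0$, so that $g_1 i+g_0>0$ for every $i\ge 1$. Therefore each summand is non-negative and $qhx\ge 0$, whence the left-hand side is $\ge 0$.

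For the right-hand side $n(q-1)(n-t\Delta)(n-(t+1)\Delta)g_2$, I use $n/\Delta\notin[t,t+1]$: this means $n<t\Delta$ or $n>(t+1)\Delta$, so $n-t\Delta$ and $n-(t+1)\Delta$ are nonzero of the same sign and their product is strictly positive; combined with $n>0$, $q-1\ge 1$ and $g_2<0$, the right-hand side is strictly negative. The identity of Lemma~\ref{lemma_implication_fourth_mac_williams} then reads $0\le(\text{left-hand side})=(\text{right-hand side})<0$, a contradiction, which proves that no such $\mathcal{C}$ exists. The only point requiring any care is the positivity $g_0>0$, which reduces the sign of the cubic weight $(g_1 i+g_0)$ on $i\ge 1$ to the hypotheses $g_2<0$ and $h\ge 0$; the genuinely substantial input, namely the algebraic identity of Lemma~\ref{lemma_implication_fourth_mac_williams} coming from a suitable integer combination of the first four MacWilliams identities, is assumed here.
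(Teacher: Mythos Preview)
Your proof is correct and matches the argument the paper leaves implicit: the corollary is stated without proof immediately after Lemma~\ref{lemma_implication_fourth_mac_williams}, and the intended justification is precisely the sign analysis you carry out---nonnegativity of $(i-t)(i-t-1)$, of $g_1 i+g_0$ via $h\ge 0$ and $g_2<0$, and of $qhx$, against strict negativity of the right-hand side from $n/\Delta\notin[t,t+1]$ and $g_2<0$. Your remark that $x\ge 0$ because it arises as a nonnegative multiple of $A_3^\perp$ (cf.\ the substitution $x=yA_3^\perp$ in the $n=52$ argument of Theorem~\ref{thm:picture}) is exactly the point one needs to make explicit.
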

Applying Corollary~\ref{cor_implication_fourth_mac_williams} with $t=6$ implies 
the non-existence of a $2^4$-divisible set $\mathcal{C}$ over $\F_2$ with 
$\card\mathcal{C}=200$, so that $\smax_2(16,12;6)\le 1032$, while Theorem~\ref{main_theorem_2} 
gives $\smax_2(16,12;6)\le 1033$. There is no $8^5$-divisible set $\mathcal{C}$ over $\F_8$ with 
$\card\mathcal{C}=6\cdot 8^6+3=1572867$, which can be seen as follows. Corollary~\ref{corollary_iterated_average} 
implies the existence of a subspace $U$ of co-dimension $4$ such that $\mathcal{C}\cap U$ is $8^1$-divisible, 
$\card\mathcal{C}\cap U\le 2\cdot 8^2+3=131$, and $\card\mathcal{C}\cap U\equiv 3\pmod {8^2}$. 
Applying Lemma~\ref{lemma_hyperplane_types_arithmetic_progression_c} with $m=1$ and $m=8$ excludes the existence 
of a $8^1$-divisible set with cardinality $3$ or $67$, respectively. Cardinality $131$ is finally excluded by 
Corollary~\ref{cor_implication_fourth_mac_williams} with $t=14$. Thus, $\smax_8(14;12;6)\le 16777237$, while 
Theorem~\ref{main_theorem_2} gives $\smax_8(14,12;6)\le 16777238$ %, Theorem~\ref{main_theorem_1} gives $\smax_8(14,12;6)\le 16777245$, 
and Theorem~\ref{thm_partial_spread_4} gives $\smax_8(14,12;6)\le 16777248$. See also 
\cite{TableSubspacecodes,kurzspreadsII} for a few more such examples.

Most of the currently best known bounds for $\smax_q(v,2k;k)$ can also be directly %%are
derived by linear programming; cf.\ the online tables at
\url{http://subspacecodes.uni-bayreuth.de} \cite{TableSubspacecodes}. The following lemma 
gives a glimpse on coding-theoretic arguments dealing with the MacWilliams equations and 
its non-negative integer solutions.

\begin{lemma}
  \label{lemma_classification_n_51}
  %%The unique projective $8$-divisible binary code of length $n=51$
  Let $C$ be a projective $8$-divisible binary code of length $n=51$.
  Then $C$ is isomorphic to the concatenation of an ovoid in $\PG(3,\F_4)$ with the
  binary $[3,2]$ simplex code.
  The code $C$ has the parameters $[51,8]$ and the weight enumerator $1 + 204X^{24} + 51 X^{32}$.
\end{lemma}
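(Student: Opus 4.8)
The plan is the following. Let $\mathcal{C}$ be the point set in $\PG(k-1,\F_2)$ associated with $C$, so that $k=\dim C=\dim\langle\mathcal{C}\rangle$ and $\card{\mathcal{C}}=51$; by \eqref{eq:hweight-npoints} the $8$-divisibility of $C$ says $\card{(\mathcal{C}\cap H)}\equiv 51\equiv 3\pmod 8$ for every hyperplane $H$, i.e.\ $\mathcal{C}$ is strongly $2^3$-divisible. I would first determine the parameters and the weight enumerator. Projectivity forces $2^k-1\geq 51$, hence $k\geq 6$; and $k\neq 6$, since otherwise the complement $\points\setminus\mathcal{C}$ in $\PG(5,\F_2)$ would be a $2^3$-divisible set of cardinality $63-51=12$ (it meets each hyperplane in $31-\card{(\mathcal{C}\cap H)}\equiv 4\pmod 8$ points), contradicting Part~\eqref{picture_q_2_r_3} of Theorem~\ref{thm:picture}. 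Thus $k\geq 7$, so $\mathcal{C}$ is not contained in a hyperplane and $\card{(\mathcal{C}\cap H)}\in\{3,11,19,27,35,43\}$. By Lemma~\ref{lemma_heritable} each restriction $\mathcal{C}\cap H$ is $2^2$-divisible, and since Part~\eqref{picture_q_2_r_2} of Theorem~\ref{thm:picture} rules out $2^2$-divisible sets of cardinality $3$ or $11$, only the values $19,27,35,43$ survive; equivalently the nonzero weights of $C$ lie in $\{8,16,24,32\}$. Now set $y=2^{k-3}$ and $x=yA_3^{\perp}\geq 0$ (using $A_1^{\perp}=A_2^{\perp}=0$) and write out the first four MacWilliams identities \eqref{mac_williams_identies} for $C$; the resulting linear system in $(A_8,A_{16},A_{24},A_{32})$ has an invertible (Vandermonde-type, in the distinct numbers $43,35,27,19$) coefficient matrix, so each $A_w$ becomes an explicit affine function of $y$ and $x$. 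Imposing $A_8,A_{16},A_{24},A_{32}\geq 0$, $x\geq 0$ and $y\in\{16,32,64,\dots\}$ then leaves only $y=32$ (hence $k=8$) and $A_8=A_{16}=0$, i.e.\ the weight enumerator $1+204X^{24}+51X^{32}$; the relations $1+204+51=2^{8}$ and $24\cdot 204+32\cdot 51=51\cdot 2^{7}$ serve as checks.

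Next I would pin down the geometry. Fixing $P\in\mathcal{C}$ and counting, over the $127$ hyperplanes through $P$, incidences with the points and with the pairs of points of $\mathcal{C}$ — using that a line (resp.\ plane) of $\PG(7,\F_2)$ lies in $63$ (resp.\ $31$) hyperplanes, together with the standard equations of Lemma~\ref{lemma_standard_equations_q} — shows first that $108$ of these hyperplanes meet $\mathcal{C}$ in $27$ points and $19$ of them in $19$ points, and then that the number $f_P$ of lines of $\PG(7,\F_2)$ through $P$ lying inside $\mathcal{C}$ satisfies $32f_P=32$, so $f_P=1$. Hence $\mathcal{C}$ is a disjoint union of $17$ lines $\ell_1,\dots,\ell_{17}$. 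Since $\ell_i$ meets a hyperplane $H$ in $1$ or $3$ points according as $\ell_i\nsubseteq H$ or $\ell_i\subseteq H$, a hyperplane containing exactly $a$ of the $\ell_i$ meets $\mathcal{C}$ in $17+2a$ points; the two possible section sizes $19$ and $27$ therefore force $a\in\{1,5\}$, i.e.\ every hyperplane of $\PG(7,\F_2)$ contains exactly $1$ or exactly $5$ of the $17$ lines.

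Finally, I would identify $C$ via field reduction. Choosing an $\F_4$-structure $\F_2^{8}\cong\F_4^{4}$, the $\F_2$-hyperplanes of $\PG(7,\F_2)$ map onto the hyperplanes of $\PG(3,\F_4)$ (the kernel of an $\F_2$-functional together with the kernel of its twist by a primitive element of $\F_4$ is an $\F_4$-hyperplane), and the property derived above says exactly that the $17$ lines $\ell_i$, seen as the $\F_4$-points they span, form a two-intersection set $\mathcal{O}\subseteq\PG(3,\F_4)$ with intersection numbers $1$ and $5=q+1$ — which for a set of $q^{2}+1=17$ points means an ovoid. The main obstacle is precisely this identification step: one must verify that the partial line spread $\{\ell_1,\dots,\ell_{17}\}$ of $\mathcal{C}$ really is $\F_4$-linear (that the $\ell_i$ are $\F_4$-subspaces for one common $\F_4$-structure), after which the uniqueness of the ovoid of $\PG(3,\F_4)$ (all its ovoids being elliptic quadrics) — equivalently the known classification of projective $[51,8]_2$ two-weight codes (cf.\ \cite{calderbank1986geometry} and the two-weight code table cited earlier) — shows that $C$ is equivalent to the concatenation of the $[17,4]_4$ code of an ovoid (whose nonzero weights are $12$ and $16$, since a plane of $\PG(3,\F_4)$ meets an ovoid in $1$ or $5$ points) with the binary $[3,2]$ simplex code (which doubles weights to $24$ and $32$). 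The stated parameters $[51,8]$ and weight enumerator $1+204X^{24}+51X^{32}$ are then immediate and agree with the computation in the first step.
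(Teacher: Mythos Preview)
Your approach largely parallels the paper's, but there is a genuine gap: the first four MacWilliams identities together with non-negativity (and even integrality) do \emph{not} rule out $k=9$. Concretely, with $y=2^{k-3}=64$ and $A_3^\perp=1$ (so $x=64$) one obtains the non-negative integer solution $A_8=2$, $A_{16}=0$, $A_{24}=406$, $A_{32}=103$, and all four identities are satisfied. The paper excludes this surviving case by a short combinatorial argument: if $A_8=2$ and $A_{16}=0$, the sum of the two weight-$8$ codewords is a third nonzero codeword of weight at most $16$, hence (by $8$-divisibility and $A_{16}=0$) of weight $8$, contradicting $A_8=2$. You need to insert this (or an equivalent) step before concluding $k=8$.

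Once $k=8$ and the two-weight spectrum are established, the paper simply quotes \cite{bouyukliev2006projective} for the uniqueness of the projective binary $[51,8]$ two-weight code and observes that concatenating the ovoid code of $\PG(3,\F_4)$ with the $[3,2]_2$ simplex code produces such a code. Your geometric detour---showing that each point of $\mathcal{C}$ lies on a unique secant line inside $\mathcal{C}$, so that $\mathcal{C}$ is a disjoint union of $17$ lines with every hyperplane containing exactly $1$ or $5$ of them---is correct and attractive, but as you yourself acknowledge it does not by itself force these $17$ lines to be the $\F_4$-points for a common $\F_4$-structure on $\F_2^8$; you then fall back on the same classification citation. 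So the extra geometry, while illuminating, does not replace the external reference and could be omitted without loss.
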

\begin{proof}
  With the notation $k=\dim(C)$ and $y=2^{k-3}$, solving the equation system of the first three MacWilliams equations 
  yields $A_0=1$,
  $
    A_{16} = -6-3A_8+\frac{3}{16}y$,
    $A_{24} = 8+3A_8+\frac{49}{8}y$, and 
    $A_{32} = -3-A_8+\frac{27}{16}y$. 
  Since $A_{16}\ge 0$, we have $y\ge 32$ and hence $k\ge 8$.
  Plugging the stated equations into the fourth MacWilliams equation and solving for $A_8$ gives $A_8=\frac{y A_3^\perp}{512}+\frac{47 y}{512}-4$ and $A_{16}=6-\frac{3yA_3^\perp}{512}-\frac{45y}{512}$.
  Since $A_{16}\ge 0$ and $yA_3^\perp\ge 0$, we have $6-\frac{45y}{512}\ge 0$, so that $y\le 68+\frac{4}{15}$ and therefore $k\le 9$.

  For $k=9$, i.e., $y=64$, $A_{16}\ge 0$ gives $A_3^\perp\le 1$. 
  $A_3^\perp=0$ leads to $A_8=\frac{15}{8}$, which is impossible.
  For $A_3^\perp=1$ the resulting weight enumerator of $C$ is $1 + 2X^8 + 406 X^{24} + 103 X^{32}$.
  However, there is no such code, as the sum of the two codewords
  of weight $8$ would be a third non-zero codeword of weight at most $16$, which does not exist.

  In the case $k=8$, i.e., $y = 32$, the first MacWilliams equation forces $A_{16}=A_8=0$. 
  The resulting weight enumerator of $C$ is given by $1 + 204X^{24} + 51 X^{32}$.
  In particular, $C$ is a projective $[51,8]$ two-weight code.
  By \cite{bouyukliev2006projective}, this code is unique.
  The proof is concluded by the observation that an ovoid in $\PG(3,\F_4)$ is a projective quaternary $[17,4]$ two-weight code, such that the concatenation with the binary $[3,2]$ simplex code yields a projective binary $[51,8]$ two-weight code.
\qed
\end{proof}

\section{Open research problems}
\label{sec_open_problems}

In this closing section we have collected some open research problems
within the scope of this article. All of them presumably are
accessible using the theoretical framework of $q^r$-divisible
sets. Considerably more challenging is the question whether similar
methods can be developed for arbitrary constant-dimension codes in
place of of partial spreads (or vector space partitions). We only
mention the following example: The proof of $\smax_2(6,4;3)=77$ still
depends on extensive computer calculations providing the upper bound
$\smax_2(6,4;3)\leq 77$ \cite{hkk77}. The known theoretical upper
bound of $81$ may be sharpened to $77$ (along the lines
of~\cite{nakic2016extendability}), if only the existence of a
$(6,81,4;3)_2$ code can be excluded. The $81$ planes in such a code
would form an exact $9$-cover of the line set of $\PG(5,\F_2)$.

\subsection{Better constructions for partial
  spreads}\label{ssec:better}

%%So far, 
The only known cases in which the construction of
Theorem~\ref{thm:multicomponent} has been surpassed are derived from the
sporadic example of a partial $3$-spread of cardinality $34$ in
$\mathbb{F}_2^8$ \cite{spreadsk3}, which has $17$ holes and can
be used to show $\smax_2(3m+2,6;3)\geq(2^{3m+2}-18)/7$ by adding $m-2$
layers of lifted MRD codes; cf.\
Lemma~\ref{lemma_extend_construction} and
Corollary~\ref{cor_partial_spread_k3}.
%$\smax_2(8+3k,6;3)\ge \frac{2^{8+3k-3}-2^2}{2^3-1}\cdot 2^3+2$, where
%$k\in\mathbb{N}$. 
A first step towards the understanding of
the sporadic example is the classification of all $2^2$-divisible
point sets of cardinality $17$ in $\PG(k-1,\F_2)$. It turns out that
there are exactly $3$ isomorphism types, one configuration
$\mathcal{H}_k$ for each dimension
$k\in\{6,7,8\}$. Generating matrices for the corresponding
doubly-even codes are
\begin{equation}
  \label{eq:17gen}
  \footnotesize
\left(\begin{array}{c}
10000110010101110\\
01000010111011100\\
00100100000011000\\
00010111001110100\\
00001001100111110\\
00000011100111011\\
\end{array}\right),
\;
\left(\begin{array}{c}
10000011110100110\\
01000001111111000\\
00100010000110000\\
00010010000101000\\
00001001001000100\\
00000101001000010\\
00000010101011111\\
\end{array}\right),
\;
\left(\begin{array}{c}
10000000111011110\\
01000000010110000\\
00100000011100000\\
00010000001110000\\
00001001100000010\\
00000101000001010\\
00000011000000110\\
00000001111011101\\
\end{array}\right).
\normalsize
\end{equation}

While the classification, so far, is based on computer
calculations,\footnote{See \url{http://www.rlmiller.org/de\_codes} and
  \cite{doran2011codes} for the classification of, possibly
  non-projective, doubly-even codes over $\mathbb{F}_2^v$.}  one can
easily see that there are exactly three solutions of the MacWilliams
identities.
\begin{lemma}
  Let $\mathcal{C}$ be a $2^2$-divisible point set over $\F_2$
  of cardinality $17$. Then
  $k=\dim\langle\mathcal{C}\rangle\in\{6,7,8\}$, and the  
  solutions of the MacWilliams identities are unique for each $k$: 
  $(k;a_5,a_9,a_{13};A_3^\perp)=(6;12,49,2;6)$, $(7;25,95,7;2)$,
  $(8;51,187,17;0)$.
  %%\begin{center}
  %%  \begin{tabular}{rrrrr}
  %%    \hline
  %%    $k$ & $a_5$ & $a_9$ & $a_{13}$ & $A_3^\perp$\\
  %%    \hline
  %%    6 & 12 & 49 & 2 & 6 \\
  %%    7 & 25 & 95 & 7 & 2 \\
  %%    8 & 51 & 187 & 17 & 0\\
  %%    \hline
  %%  \end{tabular}
  %%\end{center}
\end{lemma}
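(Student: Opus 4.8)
The plan is to translate everything into the projective linear $[n,k]$ code $C$ over $\mathbb{F}_2$ associated with $\mathcal{C}$, where $n=|\mathcal{C}|=17$ and $k=\dim\langle\mathcal{C}\rangle$, and to read off the admissible weight enumerators from the first four MacWilliams identities. Since $C$ is projective, $C^\perp$ has minimum distance at least $3$, so $A_1^\perp=A_2^\perp=0$; since $\mathcal{C}$ is $2^2$-divisible, Theorem~\ref{thm:divisible}(ii) shows $C$ is a $4$-divisible code, so its nonzero weights lie in $\{4,8,12,16\}$ (the multiples of $4$ not exceeding $17$). A set of $17$ points forces $2^k-1\geq 17$, hence $k\geq 5$. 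I write $y=2^{k-3}$ and, noting that among the dual weights only $A_3^\perp$ enters the four identities we use, I set $x=yA_3^\perp$; both $x,y$ are non-negative and $y$ is a power of two.

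The computational core is to solve \eqref{mac_williams_identies} for $i=0,1,2,3$, which (using $A_0=A_0^\perp=1$ and $A_1^\perp=A_2^\perp=0$) become the linear system
$$1+A_4+A_8+A_{12}+A_{16}=8y,\qquad 17+13A_4+9A_8+5A_{12}+A_{16}=68y,$$
$$136+78A_4+36A_8+10A_{12}=272y,\qquad 680+286A_4+84A_8+10A_{12}=680y+x,$$
whose unique solution for fixed $x,y$ is
$$A_4=\frac{42y+x-256}{64},\quad A_8=\frac{362y-3x+384}{64},\quad A_{12}=\frac{110y+3x-256}{64},\quad A_{16}=\frac{64-x-2y}{64}.$$
I would obtain these by straightforward Gaussian elimination and verify them by back-substitution. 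Because $A_j^\perp$ for $j\geq 4$ does not occur in these identities, the tuple $(A_4,A_8,A_{12},A_{16},A_3^\perp)$, with $A_3^\perp=x/y$, carries all the information the first four MacWilliams identities impose on $\mathcal{C}$.

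Next I extract the admissible values of $k$. Adding $A_4\geq 0$ and $A_{16}\geq 0$ gives $64(A_4+A_{16})=40y-192\geq 0$, so $y\geq 8$ (being a power of two), i.e.\ $k\geq 6$; meanwhile $64A_{16}=64-x-2y\geq 0$ together with $x\geq 0$ gives $2y\leq 64$, i.e.\ $k\leq 8$. For $k=8$ ($y=32$) the bound $A_{16}\geq 0$ forces $x\leq 0$, hence $x=0$, $A_3^\perp=0$, and the formulas yield $A_4=17$, $A_8=187$, $A_{12}=51$, $A_{16}=0$. For $k=7$ ($y=16$) and $k=6$ ($y=8$) I would use that $64\mid 42y+x-256$, which confines $x$ to a single residue class modulo $64$; intersecting this class with $0\leq x\leq 64-2y$ leaves exactly $x=32$ when $k=7$ and $x=48$ when $k=6$, in which cases $A_3^\perp=x/y$ comes out a non-negative integer ($2$, resp.\ $6$) and the remaining $A_i$ are the asserted non-negative integers. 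Converting back via $A_i=(q-1)a_{n-i}=a_{17-i}$ gives $a_{13}=A_4$, $a_9=A_8$, $a_5=A_{12}$, and $a_1=A_{16}=0$, which is precisely the list $(k;a_5,a_9,a_{13};A_3^\perp)=(6;12,49,2;6),(7;25,95,7;2),(8;51,187,17;0)$.

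The only step requiring genuine care is this last one for $k\in\{6,7\}$: the inequalities $A_i\geq 0$ by themselves leave a short interval of candidate values of $x$, and it is the divisibility by $64$ coming from $A_4\in\mathbb{Z}$ that collapses this interval to one point; one must check that this congruence-plus-interval argument is airtight, i.e.\ that no second doubly-even length-$17$ weight enumerator survives. Everything else is routine arithmetic with the binomial coefficients already appearing in the MacWilliams identities.
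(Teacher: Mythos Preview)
Your argument is correct. The approach is close to the paper's but differs in two noteworthy ways. The paper works with only the three standard equations of Lemma~\ref{lemma_standard_equations_q} in the unknowns $a_5,a_9,a_{13}$, tacitly using that $a_1=0$ (which follows from Lemma~\ref{lemma_heritable}: any hyperplane section is $2$-divisible, hence cannot consist of a single point); this yields closed formulas $a_5=\tfrac{13}{16}2^{k-2}-1$, $a_9=\tfrac{23}{8}2^{k-2}+3$, $a_{13}=\tfrac{5}{16}2^{k-2}-3$, from which $k\geq 6$ is read off, and then obtains $k\leq 8$ from the fact that a doubly-even binary code is self-orthogonal. You instead keep $A_{16}$ (equivalently $a_1$) as a genuine unknown, bring in the fourth MacWilliams identity with the extra parameter $A_3^\perp$, and squeeze out $k\in\{6,7,8\}$ and the uniqueness of the solution purely from nonnegativity and the integrality constraint $64\mid 42y+x-256$. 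Your route is a bit more computational but entirely self-contained: it does not rely on the hereditary divisibility of hyperplane sections or on the self-orthogonality of doubly-even codes, and it produces $a_1=0$ and the value of $A_3^\perp$ as outputs rather than inputs.
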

\begin{proof}
  The unique solution of the standard equations
  %%\begin{eqnarray*}
  %%  a_5+a_9+a_{13} &=& 2^k-1\\
  %%  5a_5+9a_9+13a_{13} &=& 17(2^{k-1}-1)\\
  %%  20a_5+72a_9+156a_{13} &=& 172(2^{k-2}-1)
  %%\end{eqnarray*}
  is given by $a_5=\frac{13}{16}\cdot 2^{k-2}-1$,
  $a_9=\frac{23}{8}\cdot 2^{k-2}+3$, and
  $a_{13}=\frac{5}{16}\cdot 2^{k-2}-3$. Hence $k\ge 6$, since
  otherwise $a_{13}<0$, and $k\leq 8$, since $C$ is
  self-orthogonal.\footnote{Alternatively, the 4th MacWilliams
  identity yields $64-2^{k-2}=2^{k-3}\cdot A_3^\perp$ and hence
  $k\le 8$.}
\qed\end{proof}
The hole set of the partial $3$-spread in \cite{spreadsk3} corresponds to
$\mathcal{H}_7$. A geometric description
%using coordinates, 
of this configuration is given in
\cite[p.~84]{lambert2013random}. We have computationally checked that
indeed all three hole configurations can be realized by a partial $3$-spread
of cardinality $34$ in $\mathbb{F}_2^8$.\footnote{$624$ non-isomorphic
  examples can be found at
  \url{http://subspacecodes.uni-bayreuth.de}
  \cite{TableSubspacecodes}.}  All three configurations have
a non-trivial automorphism group, and hence there is a chance to
find a partial $3$-spread with nontrivial symmetries and eventually discover
an underlying more general construction.%
\footnote{In a forthcoming
  paper we classify the $2612$ non-isomorphic partial $3$-spreads of
  cardinality $34$ in $\mathbb{F}_2^8$ that admit an automorphism
  group of order exactly $8$, which is possible for $\mathcal{H}_6$ only, 
  and show that the automorphism groups of all other examples have order at most $4$.}
  So far we can
only describe the geometric structure of 
$\mathcal{H}_6$, $\mathcal{H}_7$, $\mathcal{H}_8$.
The hole configuration $\mathcal{H}_6$ consists of two
disjoint planes $E_1,E_2$ in $\PG(5,\F_2)$ and a solid $S$ spanned by
lines $L_1\subset E_1$ and $L_2\subset E_2$.  The $17=4+4+9$ points of
$\mathcal{H}_6$ are those in $E_1\setminus L_1$, $E_2\setminus L_2$,
and $S\setminus(L_1\cup L_2)$. An application of
Lemma~\ref{lma:construction1} (the case $q=r=i=2$) gives that
$\mathcal{H}_6$ is $4$-divisible.  In sunflower terms, %$s=2$ and
$\mathcal{F}_1=\{S,E_1\}$, $\mathcal{F}_2=\{S,E_2\}$ with centers
$L_1$, $L_2$, respectively.

The hole configuration $\mathcal{H}_7$ can be obtained by modifying a
$3$-sunflower $\mathcal{F}=\{E,S_1,S_2\}$, whose petals are a plane
$E$ and two solids $S_1,S_2$ and whose base is a line $L$. By
Lemma~\ref{lma:sunflower}(ii), the point set $E\cup S_1\cup S_2$, of
cardinality $3+4+12+12=31$ is $4$-divisible. From this set we remove
two planes $E_1\subset S_1$, $E_2\subset S_2$ intersecting $L$ in
different points. This gives $\mathcal{H}_7$.

The hole configuration $\mathcal{H}_8$ can be obtained by modifying
the cone construction of Lemma~\ref{lma:cone}. We start with a
projective basis $\mathcal{B}$ in $\F_2^4$, i.e., $m=5$ points with no
$4$ of them contained in a plane. Such point sets $\mathcal{B}$ are
associated to the $[5,4,2]_2$ even-weight code and hence
$2$-divisible. Since $m\equiv 1\pmod{4}$, the proof of
Lemma~\ref{lma:cone} shows that a generalized cone $\mathcal{K}$ over
$\mathcal{B}$ with $1$-dimensional vertex $Q$ of multiplicity
$-m(q-1)\equiv-m\equiv 3\pmod{4}$ is $4$-divisible. Working over
$\mathbb{Z}$, we can set $\chi_{\mathcal{K}}(Q)=-1$ as well. Adding
any $4$-divisible point set $\mathcal{D}$ containing $Q$ (i.e.,
$\chi_{\mathcal{D}}(Q)=1$) to $\mathcal{K}$ then produces a
$4$-divisible multiset set $\mathcal{C}$ with
$\card{\mathcal{C}}=10-1+\card{\mathcal{D}}$ and
$\chi_{\mathcal{C}}(Q)=0$. By making the ambient spaces of
$\mathcal{K}$ and $\mathcal{C}$ intersect only in $Q$ (which requires
embedding the configuration into a larger space $\F_2^v$), we can
force $\mathcal{C}$ to be a set. The configuration $\mathcal{H}_8$ is
obtained by choosing $\mathcal{D}$ as an affine solid.\footnote{Since
  punctured affine solids are associated to the $[7,4,3]_2$ Hamming
  code, we may also think of $\mathcal{H}_8$ as consisting of the
  $2$-fold repetition of the $[5,4,2]$-code and the Hamming code ``glued
  together'' in $Q$. In fact the doubly-even $[17,8,4]_2$ code
  associated with 
  $\mathcal{H}_8$ is the code $\overline{I}_{17}^{(3)}$ in
  \cite[p.~234]{pless1975classification}. The glueing construction
  is visible in the generator matrix.}

% So, all possible hole types of a partial $3$-spread of cardinality
% $34$ in $\mathbb{F}_2^8$, i.e., $2^2$-divisible sets of cardinality
% $17$, belong to infinite families of $q^r$-divisible sets.  
From the
preceding discussion it is clear that all possible hole types of a
partial $3$-spread of cardinality $34$ in $\mathbb{F}_2^8$ belong to
infinite families of $q^r$-divisible sets.  Can further
$2^2$-divisible sets of small cardinality be extended to an infinite
family?

%%There is another angle to look at Construction 1 and Construction 2:
\begin{Construction}
  \label{construction_4}
  For integers $r\ge 1$ and $0\le m\le r$ let $S$ be a $2r$-subspace. By $F_1,\dots,F_{\gauss{m}{1}{q}}$ we denote 
  the $(2r+1)$-subspaces of $\mathbb{F}_{2r+m}$ that contain $S$ and by $L_1,\dots,L_{\gauss{m}{1}{q}}$ we denote 
  a list of $r$-subspaces of $S$ with pairwise trivial intersection. Let $0\le a\le \gauss{m}{1}{q}$ and $0\le b_i\le q^{r-1}-1$ 
  for all $1\le i\le a$. For each $1\le i\le a$ we choose $q-1+b_iq=:c_i$ different $(r+1)$-subspaces $E_{i,j}$ of $F_i$ with $F_i\cap S=L_i$. 
  With this, we set $\mathcal{C}=\left(S\backslash \cup_{i=1}^a L_i\right)\,\cup\, \left(\cup_{i=1}^a \cup_{j=1}^{c_i} \left(E_{i,j}\backslash L_i\right)\right)$ 
  and observe $\dim(\mathcal{C})\le 2r+m$, $\card{\mathcal{C}}=\gauss{2r}{1}{q}+a\cdot\left(q^{r+1}-\gauss{r+1}{1}{q}\right)+b\cdot q^{r+1}$, 
  where $b=\sum_{i=1}^{a}b_i$, and that $\mathcal{C}$ is $q^r$-divisible.    
\end{Construction}
\begin{proof}
Apply Lemma~\ref{lemma_subspace}, \ref{characteristic sum} using $\chi_{\mathcal{C}}^v=\chi_{S}^v+\sum\limits_{i=1}^a\sum\limits_{j=1}^{c_i} \chi_{E_{i,j}}^v-q\sum\limits_{i=1}^a (b_i+1)\chi_{L_i}^v$.
\qed\end{proof}
We remark that the construction can easily be modified to obtain $q^r$-divisible sets of cardinality 
$n=\gauss{2r}{1}{q}+a\cdot\left(q^{r+1}-\gauss{r+1}{1}{q}\right)+b\cdot q^{r+1}$ and dimension $v$ for all 
$2r+m \le v\le 2r+a(q-1)+bq$. Choosing $m=r$, $a=q^{r-1}$, and $b=0$ we obtain a $q^r$-divisible set $\mathcal{C}$ of cardinality $n=q^{2r}+\gauss{r-1}{1}{q}$ 
and dimension $v=3r$. For which parameters $r$ and $q$ do partial $(r+1)$-spreads of cardinality $q^{2r+1}+q^{r-1}$, with 
hole configuration $\mathcal{C}$,  %%(there are degrees of freedom in the above construction), 
in $\mathbb{F}_q^{3r+2}$ exist?  
So far, such partial spreads are known for $r=1$, $(q,r)=(2,2)$ and all further examples would be strictly larger than the ones from 
Theorem~\ref{thm:multicomponent}.  

\medskip

For the corresponding parameters over the ternary field we currently
only know the bounds $244\le \smax_3(8,6;3)\le 248$. A putative plane
spread in $\F_3^8$ of size $248$ would have a $3^2$-divisible hole
configuration $\mathcal{H}$ of cardinality $56$. Such a point set is
unique up to isomorphism and has dimension $k=6$. It corresponds to an
optimal two-weight code with weight distribution
$0^1 36^{616} 45^{112}$. The set $\mathcal{H}$ was first described by R.~Hill in
\cite{hill1978caps} and is known as the \emph{Hill cap}. A generator
matrix for %%$\mathcal{H}$ 
is
\begin{equation*}
  \footnotesize
  \setlength{\arraycolsep}{0.05em}
  \left(
  \begin{array}{cccccccccccccccccccccccccccccccccccccccccccccccccccccccc}
    1&0&0&0&0&0&2&2&1&1&0&1&0&0&1&1&0&2&0&2&1&1&1&1&0&0&1&0&1&2&0&1&0&2&1&2&1&1&1&1&1&2&2&0&0&1&2&0&0&2&0&1&2&2&1&1\\
    0&1&0&0&0&0&1&1&1&0&1&2&1&0&1&0&1&1&2&1&1&2&0&0&1&0&2&1&1&2&2&2&1&1&1&2&1&0&0&0&0&2&1&2&0&2&2&2&0&0&2&2&2&0&1&0\\
    0&0&1&0&0&0&2&2&2&2&0&2&2&1&0&2&0&0&1&1&2&0&0&1&0&1&1&2&0&0&2&0&2&0&2&0&0&2&1&1&1&2&2&1&2&1&1&2&2&2&0&0&1&1&1&2\\
    0&0&0&1&0&0&1&0&1&1&2&2&2&2&0&2&2&1&0&2&0&0&2&2&1&0&0&1&0&1&0&1&0&0&2&2&2&2&1&0&0&2&2&2&1&1&2&1&2&2&2&2&1&2&0&0\\
    0&0&0&0&1&0&2&0&1&2&1&0&2&2&1&1&2&1&1&2&0&0&1&0&2&1&1&0&2&2&1&1&1&2&1&0&0&0&0&2&1&2&0&2&2&2&0&2&1&2&2&0&1&0&0&1\\
    0&0&0&0&0&1&1&2&2&0&2&0&0&2&2&0&1&0&1&2&1&2&2&0&0&2&0&1&1&0&2&0&1&2&1&2&2&2&2&2&1&2&0&0&2&1&0&0&2&0&2&1&1&2&2&2
  \end{array}\right).
\end{equation*}
% Its \textit{free} automorphism group has order $80640$ while the
% corresponding image $G'$ in the $\operatorname{PGL}$ has order
The automorphism group %%of $\mathcal{H}$ 
has order $40320$. 
% Embedding the $6$-dimensional hole configuration
% $\mathcal{C}$ in the first six coordinates of $\mathbb{F}_3^8$ the
% stabilizer of the hole configuration increases further. Here we have
% order $1028529653760$ in $\operatorname{PGL}$ and order
% $2057059307520$ in $\operatorname{GL}$.  
Given the large automorphism
group of $\mathcal{H}$, is it possible to construct a partial
plane spread in $\mathbb{F}_3^8$ with size larger than $244$?

\medskip

For $q=2$, the first open case is $129\le \smax_2(11,8;4)\le 132$. A
putative partial $4$-spread of size 132 has a $2^3$-divisible hole
configuration of cardinality $67$. Such exist for all dimensions $9\le
k\le 11$; cf.\ Theorem~\ref{thm:picture}\eqref{picture_q_2_r_3}.
%cf.\ Construction~\ref{construction_4}.  
Can one such $2^3$-divisible set be completed to a partial $4$-spread?

\medskip

Already the smallest open cases pose serious computational
challenges. A promising approach is the prescription of automorphisms
in order to reduce the computational complexity; see,
e.g., \cite{kohnert2008construction} for an application of this
so-called \emph{Kramer-Mesner method} to constant-dimension codes. Of
course, the automorphisms have to stabilize the hole configuration,
whose automorphism group is known or can be easily computed in many
cases.

 \subsection{Existence results for $\bm{q^r}$-divisible sets}

 %%Already 
 Even for rather small parameters $q$ and $r$ we %%currently 
 cannot decide the existence question, %%for $q^r$-divisible sets, 
 see Table~\ref{table_open_cases}.\\[-7mm]%%\marginpar{Es w\"are vern\"unftiger,
 %%  eine Tabelle zu geben, die auch die existenten und nichtexistenten
 %%  F\"alle listet; siehe
 %%  den Tafelanschrieb in Michaels B\"uro.}

 \begin{table}
   \begin{center}
     \begin{tabular}{rrl}
       \hline
       $q$ & $r$ & $n$ \\
       \hline
       2 & 3 & 59 \\
       2 & 4 & %%129, 
             130, 131, 163, 164, 165, 185, 215, 216, 232, 233, 244, 245, 246, 247 \\%%, 274, 275, 277,\\
         %%& &   278, 306, 309 \\
       3 & 2 & 70, 77, 99, 100, 101, 102, 113, 114, 115, 128\\
       4 & 2 & 129, 150, 151, 172, 173, 193, 194, 195, 215, 216, 217, 236, 237,238, 239, 251, 258,\\  
         &   & 259, 261, 272, 279, 280, 282, 283, 293, 301, 305, 313, 314, 322, 326, 333, 334, 335,\dots \\
       5 & 1 & 40\\
       7 & 1 & 75, 83, 91, 92, 95, 101, 102, 103, 109, 110, 111, 117, 118, 119, 125, 126, 127, 133, \\
         &   & 134, 135, 142, 143, 151, 159, 167 \\  
       8 & 1 & 93, 102, 111, 120, 121, 134, 140, 143, 149, 150, 151, 152, 158, 159, 160, 161, 167,\\
         &   & 168, 169, 170, 176, 177, 178, 179, 185, 186, 187, 188, 196, 197, 205, 206, 214, 215,\\
         &   & 223, 224, 232, 233, 241, 242, 250, 251\\  
       9 & 1 & 123, 133, 143, 153, 154, 175, 179, 185, 189, 195, 196, 199, 206, 207, 208, 209, 216, \\
         &   & 217, 218, 219, 226, 227, 228, 229, 236, 237, 238, 239, 247, 248, 249, 257, 258, 259, \\
         &   & 267, 268, 269, 277, 278, 279, 288, 289, 298, 299, 308, 309, 318, 319, 329, 339, 349,\\
         &   & 359\\  
       \hline
     \end{tabular} 
     \caption{Undecided cases for the existence of $q^r$-divisible sets.}
     \label{table_open_cases}     
   \end{center}
 \end{table}

\subsection{Vector space partitions}

The most general result on the non-existence of vector space
partitions (without conditions on the ambient space dimension $v$)
seems to be:

\begin{theorem} (Theorem 1 in \cite{heden2009length})
  \label{thm_length_of_tail}
  Let $\mathcal{C}$ be a vector space partition of type
  $k^{z}\dotsm {d_2}^{b}{d_1}^{a}$ of $\SV{v}{q}$, where
  $a,b>0$.
  \begin{enumerate}[(i)]
  \item If $q^{d_2-d_1}$ does not divide $a$ and if $d_2<2d_1$, then
    $a\ge q^{d_1}+1$;
  \item if $q^{d_2-d_1}$ does not divide $a$ and if
    $d_2\ge 2d_1$, then $a>2q^{d_2-d_1}$ or $d_1$ divides $d_2$ and
    $a=\left(q^{d_2}-1\right)/\left(q^{d_1}-1\right)$;
  \item if $q^{d_2-d_1}$ divides $a$ and $d_2<2d_1$, then
    $a\ge q^{d_2}-q^{d_1}+q^{d_2-d_1}$;
  \item if $q^{d_2-d_1}$ divides $a$ and $d_2\ge 2d_1$, then
    $a\ge q^{d_2}$.
  \end{enumerate}   
\end{theorem}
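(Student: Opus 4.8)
\emph{Proof proposal.} The plan is to reduce the statement to a divisibility property of the tail and then feed it into the averaging and heritability machinery of Section~\ref{section_q_r_divisible}. First I would show that the union $\mathcal{D}:=\bigcup\{W\in\mathcal{C}\mid\dim W=d_1\}$, regarded as a set of points in $\aspace$, is a partial $d_1$-spread of size $a$ (its members are pairwise disjoint because $\mathcal{C}$ is a vector space partition) which is, moreover, $q^{d_2-1}$-divisible. For the divisibility I would argue exactly as in the proof of Theorem~\ref{thm:psp-vsp}: fix a hyperplane $H$ and partition the $q^{v-1}$ points of $\points\setminus H$ into the affine pieces $X\setminus H$ over the members $X\in\mathcal{C}$ with $X\nsubseteq H$; every piece coming from a member of dimension $\ge d_2$ has $q^{\dim X-1}\equiv 0\pmod{q^{d_2-1}}$ points, and $q^{v-1}\equiv 0\pmod{q^{d_2-1}}$ since $v>k\ge d_2$, so the $d_1$-members contribute a multiple of $q^{d_2-1}$ in total; as each contributes $q^{d_1-1}$ points this reads $\card{(\mathcal{D}\setminus H)}\equiv 0\pmod{q^{d_2-1}}$. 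Equivalently, every hyperplane avoids a multiple of $q^{d_2-d_1}$ of the $d_1$-members, so the number of $d_1$-members it contains is $\equiv a\pmod{q^{d_2-d_1}}$; whether $q^{d_2-d_1}\mid a$ therefore decides whether a hyperplane can contain no $d_1$-member at all. After this reduction the assertion becomes a purely intrinsic statement about $q^{d_2-1}$-divisible partial $d_1$-spreads, the hypothesis $b>0$ entering only to fix $d_2$ as the second dimension and to furnish a $d_2$-subspace (any $d_2$-member) disjoint from $\mathcal{D}$.

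The four cases are then organised by the dichotomy $d_2<2d_1$ versus $d_2\ge 2d_1$, whose geometric content is precisely whether a $d_2$-subspace can accommodate two disjoint $d_1$-subspaces, hence a $d_1$-spread. In the range $d_2\ge 2d_1$ with $d_1\mid d_2$, a $d_1$-spread of a $d_2$-space is already $q^{d_2-1}$-divisible (Lemma~\ref{lemma_subspace}), which accounts for the exceptional value $a=(q^{d_2}-1)/(q^{d_1}-1)$. To rule out every smaller value I would descend from $\mathcal{D}$ by repeatedly passing to a hyperplane (or lower-codimension subspace) meeting it in few points, using the averaging bound of Lemma~\ref{lemma_average}, keeping the intersection nonzero by means of the modulo constraint just derived --- this is the mechanism of Corollary~\ref{corollary_iterated_average} and Lemma~\ref{lemma_application_partial_spreads}, which already handle the case $d_1=1$ --- and using Lemma~\ref{lemma_heritable} to retain divisibility. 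The descent stops at a subspace of dimension roughly $d_2$ on which $\mathcal{D}$ restricts to a (partial) $d_1$-spread; Theorem~\ref{thm_spread} then forces the length of that spread, and a local count of the residual divisible point set pins the admissible sizes down to $a>2q^{d_2-d_1}$ respectively $a\ge q^{d_2}$. When $d_2<2d_1$ the same descent cannot collapse $\mathcal{D}$ inside a single $d_2$-space, so a $d_1$-member necessarily spills over; tracing one such member $W$ together with the $(d_1+1)$-subspaces through it --- each of which meets the remaining members of $\mathcal{C}$ in single points, exactly $q^{d_1}$ of them --- yields the weaker bounds $a\ge q^{d_1}+1$ and $a\ge q^{d_2}-q^{d_1}+q^{d_2-d_1}$.

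The main obstacle is the exhaustive elimination of the intermediate values of $a$ with the \emph{exact} constants claimed. This forces one to keep track of the \emph{type} of the vector space partition induced on every subspace visited during the descent, so that its own minimal and second dimensions --- and hence the divisibility one may invoke at that stage --- remain under control; and it requires the boundary analysis of configurations that are ``almost'' spreads, which is in effect a miniature copy of the partial-spread bounds of Section~\ref{sec_classical} carried out inside a $d_2$- or $(d_1+d_2)$-dimensional ambient space. Replacing the general tail by the point-set classification of Section~\ref{section_q_r_divisible} is not quite enough for this, since a $q^{d_2-1}$-divisible partial $d_1$-spread carries strictly more structure than a bare $q^{d_2-1}$-divisible point set; supplying that extra layer is where the real work lies.
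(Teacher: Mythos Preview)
The paper does not contain a proof of this statement. Theorem~\ref{thm_length_of_tail} is quoted from Heden~\cite{heden2009length} in Section~\ref{sec_open_problems} (the open-problems section) precisely as a benchmark that the paper's own methods do \emph{not} yet reach. Immediately after the statement the authors write that for $d_1=1$ their Theorems~\ref{thm_exclusion_r_1_to_ovoid} and~\ref{thm_exclusion_q_r} give tighter results, sketch how for $d_1>1$ one can replace the tail by its point set and invoke $q^r$-divisibility, work one small example ($d_1=3$, $d_2=4$, $q=2$) where an extra hyperplane-type argument is needed, and then conclude: ``However, we are very far from the generality and compactness of Theorem~\ref{thm_length_of_tail}. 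Nevertheless, the sketched approach seems to be a very promising research direction.'' So there is nothing in the paper to compare your proposal against; the theorem is imported wholesale from Heden.

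What you have written is, in effect, a programme for that open research direction: your first reduction (the tail is $q^{d_2-1}$-divisible, and the number of $d_1$-members inside any hyperplane is $\equiv a\pmod{q^{d_2-d_1}}$) is correct and is exactly the observation the paper makes. But the subsequent case analysis is not a proof, and you are candid about this (``supplying that extra layer is where the real work lies''). A couple of specific points where the sketch is looser than it sounds: in case~(i) your claim that a $(d_1+1)$-subspace through a fixed $d_1$-member $W$ ``meets the remaining members of $\mathcal{C}$ in single points'' is not justified---another member of dimension $\geq d_2$ can meet such a subspace in a line or more; and in cases~(ii)/(iv) the descent via Lemma~\ref{lemma_average}/Corollary~\ref{corollary_iterated_average} controls the \emph{cardinality} of the restricted point set, not the number of $d_1$-members surviving intact, which is what you need. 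Heden's original proof in~\cite{heden2009length} proceeds differently (via a careful analysis of the tail and an associated code/hyperplane spectrum argument specific to the partition structure), and the paper's explicit remark that its $q^r$-divisibility machinery ``does not obtain all results of Theorem~\ref{thm_length_of_tail} that easy'' should be read as a warning that the route you propose is not currently known to close.
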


For the special case $d_1=1$,
Theorems~\ref{thm_exclusion_r_1_to_ovoid} and~\ref{thm_exclusion_q_r},
presented in Section~\ref{sec_non_existence}, provide tighter
results. For $d_1>1$ we can replace $d_1$-subspaces by the
corresponding point sets and apply results for $q^r$-divisible
sets. For vector space partitions of type $k^z\dotsm 4^b2^{a}$ in
$\SV{v}{2}$ we obtain $2^3$-divisible sets of cardinality $n=3a$, so
that $a\in\{5,10,15,16\}$ or $a\ge 20$ by
Theorem\ref{thm:picture}\eqref{picture_q_2_r_3}. 
Theorem~\ref{thm_length_of_tail}
gives $a=5$ or $a\ge 9$, and $4\mid a$ implies
$a\ge 16$. However, not all results of
Theorem~\ref{thm_length_of_tail} can be obtained that easy. For vector
space partitions of type $k^z\dotsm 4^b3^a$ in $\SV{v}{2}$ we
obtain $2^3$-divisible sets of cardinality $n=7a$, giving
$a=7$ or $a\ge
9$. Theorem~\ref{thm_length_of_tail} gives $a\ge 9$, and
$2\mid a$ implies $a\ge 10$.  In order to exclude
$a=7$ one has to look at hyperplane intersections in this new
setting. So far, we have used
$\card{(H\cap \mathcal{C})}\equiv\card{\mathcal{C}}\pmod {q^r}$. The sets
$H\cap \mathcal{C}$ have to come as a partition of type
$s^d (s-1)^c$, where $c+d=a$. Here the possible values for $c$
are restricted by the cases of $q^{r-1}$-divisible sets admitting the
partition type $(s-1)^c$. This further reduces the possible
hyperplane types, so that eventually the linear programming method can be
applied. In our situation we have
$\mathcal{T}(\mathcal{C})\subseteq\{25,49\}$, which is excluded by
Lemma~\ref{lemma_hyperplane_types_arithmetic_progression_2}. For the
general case we introduce the following notation:
A point set $\mathcal{C}$ in $\aspace$ admits the partition type 
$s^{m_s}\dotsm 1^{m_1}$ if there exists a vector space partition of $\F_q^v$
of type $s^{m_s}\dotsm 1^{m_1}$ that covers the points in $\mathcal{C}$
and no other points. 
% can be partitioned into 
% $m_i$ disjoint subspaces of dimension $i$ for $1\le i\le s$.
In terms of this, we may restate the previous result as ``there is no
$2^3$-divisible set admitting the partition type $3^7$''. However, we
are very far from the generality and compactness of
Theorem~\ref{thm_length_of_tail}. Nevertheless, the sketched approach
seems to be a very promising research direction %from our point of view
(and a natural extension of the study of $q^r$-divisible sets).

%% \vspace*{-5mm}

\section*{Acknowledgement}
The authors would like to acknowledge the financial support provided by COST -- 
\emph{European Cooperation in Science and Technology}.
The first author was also supported by the National Natural Science Foundation of China under Grant 61571006.
%% The second and the third author are members of the Action IC1104 \emph{Random Network Coding and Designs over GF(q)}.
The third author was supported in part by the grant KU 2430/3-1 -- Integer Linear Programming Models for Subspace Codes and Finite Geometry from the German Research Foundation.

%% \vspace{-8mm}
 
%% \bibliography{partial_spreads_and_vsp_rev}
%% \bibdata{partial_spreads_and_vsp_rev}
%% %\bibliographystyle{amsplain}
%% \bibliographystyle{abbrv}

\end{document}